\title{A Strong Intuitionistic Theory of Functionals}
\date{}
\keywords{Intuitionistic theory, lawless sequence, creating subject, Kripke schema, Beth model, forcing, consistency proof, truth predicate}
\subjclass{03F55 Intuitionistic mathematics; 03F50 Metamathematics of constructive systems}
\author{Farida Kachapova}
\address{School of Computer and Mathematical Sciences\\
Auckland University of Technology\\
Auckland, New Zealand}
\email{farida.kachapova@aut.ac.nz}
\thanks{ }
\newtheorem{theorem}{Theorem}[section]
\newtheorem{lemma}{Lemma}[section]
\theoremstyle{definition}
\newtheorem{definition}{Definition}[section]
\begin{document}

\begin{abstract}
In this paper we construct a Beth model for intuitionistic functionals of high types and use it to create a relatively strong theory $SLP$ containg intuitionistic principles for functionals, in particular, the theory of the "creating subject", axioms for lawless functionals and some versions of choice axioms.

We prove that the intuitionistic theory $SLP$ is equiconsistent with a classical typed set theory $TI$, where the comprehension axiom for sets of type $n$ is restricted to formulas with no parameters of types $>n$. We show that each fragment of $SLP$ with types $\leqslant s$ is equiconsistent with the corresponding fragment of $TI$ and that it is stronger than the previous fragment of $SLP$. Thus, both $SLP$ and $TI$ are much stronger than the second order arithmetic. By constructing the intuitionistic theory $SLP$ and interpreting in it the classical set theory $TI$, we contribute to the program of justifying classical mathematics from the intuitionistic point of view.

\end{abstract}

\maketitle

\section{Introduction}

The program of justifying classical mathematics from the intuitionistic point of view was developed by several authors (see, for example, \cite{myhi68, frie77, bern76, lyub97}). One approach is to construct an axiomatic theory with intuitionistic principles where a classical set theory is interpretable. Myhill \cite{myhi68} showed the consistency of a classical analysis with respect to an intuitionistic analysis with Kripke schema. Bernini \cite{bern76} constructed such an intuitionistic axiomatic theory with functionals of high types and interpreted a classical typed set theory in it. However, his theory turned out to be inconsistent \cite{wend78}. The consistency of its modifications \cite{bern78, wend78} was not proven.

In this paper we construct a consistent modification of Bernini's theory \cite{bern76} in the following steps using reverse mathematics approach. First we consider a basic axiomatic theory $L$ with intuitionistic logic and construct a Beth model $\mathcal{B}_s$ for its fragment $L_s$ with types $\leqslant s$. The model $\mathcal{B}_s$ is the generalisation to high-order functionals of the Beth model for choice sequences from \cite{vand78}.

Then we check what intuitionistic principles for high-order functionals hold in the Beth models $\mathcal{B}_s (s\geqslant 1)$. Combining those together we create a relatively strong intuitionistic theory $SLP$; it includes axioms for lawless functionals, some versions of choice axioms and axioms for the "creating subject". This is reflected in the name: $S$ is for strong, $L$ is for lawless and $P$ is for proofs, i.e. the theory of the "creating subject". The Beth model $\mathcal{B}_s$  is used to prove the consistency of $SLP$. 

Next we consider a sub-theory $TI$ of the classical typed set theory $TT$ and interpret $TI$ in $SLP$. $TI$ has a restricted comprehension axiom, but it is still quite strong; we believe that many parts of classical mathematics can be developed in $TI$ and therefore justified from the intuitionistic point of view, due to the interpretation of $TI$ in $SLP$.   

In section 2 we remind the reader the general definition and basic properties of a Beth model. In section 3 we define the basic theory $L$. In section 4 we construct our Beth model for the fragment $L_s (s\geqslant 1)$. In section 5 we extend $L$ to a theory $LP$ by axioms for the "creating subject" and in section 6 we extend our Beth model to the fragment of $LP$ with types $\leqslant s$. Section 7 is technical and contains lemmas about the Beth model used in further sections. In sections 8-11 we consider different intuitionistic principles in our Beth model. The principles that hold in the model are combined to create the theory $SLP$ in section 12. In the rest of section 12 we interpret $TI$ in $SLP$. 

The original theory of Bernini \cite{bern76} was intended to be a very strong intuitionistic theory. So in the last two sections we investigate the proof-theoretical strengths of our theories and show that $SLP$ is relatively strong, though weaker than the full typed set theory $TT$. In section 13 we construct a truth predicate in the language $TI$ and use it to prove that each fragment of $TI$ with types $\leqslant s$ is stronger than the previous fragment. Similarly, in section 14 we formalise the forcing relation of our Beth model in the language $TI$ and show that $SLP$ is equiconsistent with $TI$ and that each fragment of $SLP$ with types $\leqslant s$ is stronger than the previous fragment. In particular, $SLP$ is stronger than the second-order arithmetic.

In the rest of the Introduction we explain some notations and terminology. 

In this paper we will use the same terminology as in the books by Kolmogorov and Dragalin \cite{kolm82,drag87}; in particular, this relates to the following concepts of a logical-mathematical language and an axiomatic theory.

A \textit{logical-mathematical language of first order} (or \textit{logical-mathematical language,} or just \textit{language} in short) is defined as $ \Omega=\langle Srt, Cnst, Fn, Pr \rangle $, where

$ Srt $ is a non-empty set of sorts of objects, and for each sort $ \pi \in Srt $ there is a countable collection of variables of this sort;

$ Cnst $ is the set of all constants of the language;

$ Fn $ is the set of all functional symbols of the language;

$ Pr $ is the set of all predicate symbols of the language.

In the language $ \Omega $ we can construct terms, atomic formulas and formulas. 

Terms of different sorts are constructed recursively from constants and variables using functional symbols. The complexity of a term $ t $ is the number of occurrences of functional symbols in $ t $.

For each predicate symbol $P\left(x_1,\ldots,x_k \right) $ and terms $ t_1,\ldots,t_k $ of corresponding sorts we construct an atomic formula $P\left(t_1,\ldots,t_k \right) $.

Formulas are constructed from atomic formulas and logical constant $ \bot $ using logical connectives $ \wedge $ (conjunction), $ \vee $ (disjunction) and $ \supset $ (implication), and quantifiers $ \forall $ and $ \exists $. The negation $\neg\varphi$ is an abbreviation for $\varphi\supset\perp$.
The logical connective $ \equiv $ (equivalence) is defined as $\left( \varphi\supset\psi\right) \wedge\left( \psi\supset\varphi\right)  $. The complexity of a formula $ \varphi $ is the number of occurrences of logical symbols (the main three connectives and quantifiers) in $ \varphi $.

Next we define an \textit{axiomatic theory} (or just \textit{theory} in short)  as $ Th = \langle \Omega, l, A \rangle $, where each of the three objects is described as follows.

$ \Omega $ is a logical-mathematical language.

$ l $ is the logic of the theory. We will consider only two well-known logics: the classical logic CPC (classical predicate calculus) and the intuitionistic logic HPC (Heyting's predicate calculus); their detailed description can be found, for example, in \cite{drag87}.

$ A $ is some set of closed formulas of the language $ \Omega $ called the set of non-logical axioms of $ Th $. When axioms are stated as non-closed formulas, it means that they must be closed by universal quantifiers over all parameters. 

The notation $ Th \vdash \varphi $ (formula $ \varphi $ is derivable in the theory $Th$) means that $ \varphi $ is derivable in the logic $ l $ from a finite subset of the axiom set $ A $. The theory $ Th $ is consistent if it is not true that 
$ Th \vdash \bot $.

In this paper we consider axiomatic theories where variables have indices for types. We will often omit an index of a variable when its value is clear; usually it is the same index as in a previous occurrence of the variable.

\section{Definitions}

\subsection{Definition of Beth model}
A \textit{path} in a partially ordered set $b$ is defined as a maximal linearly ordered subset of $b$. A \textit{path through} an element $x$ is defined as a path containing $x$. 

The following algebraic definition of a Beth model is a slight modification of the definition by van Dalen \cite{vand78}.

\begin{definition}
A \textbf{Beth model} $ \mathcal{B} $ for a logical-mathematical language $ \Omega $ is a quadruple of objects $\langle M, \leqslant, D, Val \rangle$ described as follows.

$\langle M, \leqslant \rangle$ is a partially ordered set, called the \textit{domain} of the model; elements of $M$ are denoted $\alpha, \beta, \gamma,\ldots$.
\smallskip

$D$ defines domains for variables of the language: for each sort $ \pi $, $ D_\pi $ is the domain for objects of the sort $ \pi $.

We define an \textit{evaluated formula} as a formula of the language $ \Omega $, in which all parameters are replaced by objects from suitable domains. Another way to describe an evaluated formula is given in \cite{vand78}: we assume that the language $ \Omega $ is extended by constants for all elements of $ D_\pi $ for each sort $ \pi $ ; then an evaluated formula is defined as a closed formula of the extended language.

$Val$ is a \textit{valuation mapping} that assigns $ T $ or $\bot$ to any couple $ (\alpha,\varphi) $, where \\$\alpha\in M$ and $ \varphi $ is an atomic evaluated formula, in such a way that this monotonicity condition holds:
\[(\beta\leqslant\alpha \& Val(\alpha,\varphi)=T)\Rightarrow Val(\beta,\varphi)=T.\]

\end{definition}

Elements of $M$ are interpreted as moments or worlds, the relation $ < $ on $M$ is interpreted as "later in time". 

A topological definition of a Beth model can be found in \cite{drag87}.

\begin{definition}
In the Beth model a \textbf{forcing} relation $\alpha\Vdash\varphi $ is defined for any $\alpha \!\in \!M$  and evaluated formula $\varphi$ by induction on the complexity of $\varphi$.

For an atomic formula $ \varphi$, $\alpha\Vdash\varphi $
iff for any path $S$ in $M$ through $\alpha$, there is
$\beta \in S$ such that $Val(\beta,\varphi)=T$;

\smallskip

$\alpha\not\Vdash \perp$ (falsity is never forced);
\smallskip

$\alpha\Vdash \psi \wedge \eta $ iff $(\alpha\!\Vdash\psi\:\&\:\alpha\!\Vdash \eta)$;
\smallskip      

$\alpha\Vdash\psi\vee\eta$ iff for any path $S$ through $\alpha$,
there is $\beta\!\in\!S$ such that $\beta\Vdash\psi$  or $\beta\Vdash\eta$;
\smallskip

$\alpha\Vdash\psi\supset\eta\ $ iff for any $ \beta\leqslant\alpha,(\beta\Vdash\psi\Rightarrow\beta\Vdash\eta)$;
\smallskip

$\alpha\Vdash\forall x\psi(x)$ iff for any element $c$ of the appropriate domain,
$\alpha\Vdash\psi(c)$;
\smallskip

$\alpha\Vdash\exists x\psi(x)$ iff for any path $ S $ through $\alpha$,
 there is $\beta\!\in$ S and an element
$c$ of the appropriate domain, such that $\beta\Vdash\psi(c).$

\label{def:forcing}
\end{definition}

We will assume that $\langle M, \leqslant \rangle$ is a tree, which is the usual case. 

For the Beth model $ \mathcal{B} $, $\mathcal{B}\Vdash \varphi$ (an evaluated formula $\varphi$ holds in  $\mathcal{B}$, or forced in $\mathcal{B}$) iff  $ \varepsilon \Vdash  \varphi$, where $\varepsilon $ is the root of the tree $M$.

\subsection{Facts about Beth models}
The following lemma and other facts about Beth models are explained in more detail in \cite{vand78}.
\begin{lemma}
A Beth model $ \mathcal{B} $ for a logical-mathematical language $ \Omega $ has the following properties.
\begin{enumerate}
\item Monotonicity: 
$ (\alpha\Vdash\varphi \wedge \beta\leqslant\alpha)\Rightarrow\beta\Vdash\varphi$. 
\medskip

\item $ \alpha\Vdash\varphi\Leftrightarrow $ for any path $ S $ in $ M $ through $ \alpha $ there is $ \beta\in S $ with $ \beta\Vdash\varphi. $
\medskip

\item Soundness Theorem. For any closed formula $ \varphi $ of the language $\Omega$,
\[ (HPC\vdash\varphi)\Rightarrow (\mathcal{B}\Vdash\varphi). \]

\end{enumerate}  \label{lemma:Beth}
\end{lemma}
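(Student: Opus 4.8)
The plan is to prove the three properties of Lemma~\ref{lemma:Beth} by induction on the complexity of the evaluated formula $\varphi$, treating monotonicity and property (2) together since they reinforce each other, and then deriving the Soundness Theorem from them by induction on the length of an $HPC$-derivation.

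First I would establish monotonicity. The base case, atomic $\varphi$, follows directly from the forcing clause for atomic formulas: if $\alpha\Vdash\varphi$ and $\beta\leqslant\alpha$, then since $\langle M,\leqslant\rangle$ is a tree, every path through $\beta$ is (the upper part of) a path through $\alpha$, so it meets a node where $Val$ gives $T$; hence $\beta\Vdash\varphi$. (One also uses the monotonicity condition on $Val$ itself for the conjunct clause.) The inductive step splits on the outermost connective. For $\wedge$ it is immediate from the IH; for $\supset$ and $\forall$ it is immediate from the quantifier/implication clauses, which quantify over all $\beta'\leqslant\beta$ (resp. all $c$), and transitivity of $\leqslant$; for $\vee$ and $\exists$ it again uses the tree property, that paths through $\beta$ are tails of paths through $\alpha$. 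Property (2) is then proved: the direction ($\Rightarrow$) is trivial (take $\beta=\alpha$), and ($\Leftarrow$) is where the argument has content — I would show that if every path through $\alpha$ contains some $\beta$ with $\beta\Vdash\varphi$, then $\alpha\Vdash\varphi$, again by induction on $\varphi$, using monotonicity to push the forcing back down and, in the atomic and disjunctive/existential cases, a ``path surgery'' argument: a path through any $\gamma\leqslant\alpha$ extends to a path through $\alpha$, on which some $\beta$ forces $\varphi$; if $\beta\leqslant\gamma$ use monotonicity, otherwise $\gamma\leqslant\beta$ and one continues along the path from $\gamma$.

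Finally, for the Soundness Theorem I would fix a derivation of $\varphi$ in $HPC$ and argue by induction on its length that every evaluated instance of every formula on the derivation is forced at the root $\varepsilon$ (indeed at every node, by monotonicity it suffices to check the root, but for the induction it is cleaner to carry ``forced at every $\alpha$''). This reduces to: (i) every axiom scheme of $HPC$ is forced at every node — a finite check, one scheme at a time, using the forcing clauses and properties (1)--(2); and (ii) each inference rule (modus ponens, and the quantifier rules with their variable conditions) preserves ``forced at every node'' — modus ponens is immediate from the $\supset$-clause applied at each $\alpha$, and the quantifier rules follow from the $\forall$/$\exists$ clauses. I expect the main obstacle to be the $\Leftarrow$ direction of property (2) and the verification of the disjunction and ex-falso/implication axioms in the Soundness step, since these are exactly the clauses where Beth forcing genuinely differs from Kripke forcing and where the tree/path combinatorics must be handled carefully; the paper defers the details to \cite{vand78}, and I would do the same for the routine schemes while spelling out the path-surgery lemma that makes property (2) work.
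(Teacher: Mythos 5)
The paper gives no proof of this lemma at all—it simply records these as standard facts and defers to van Dalen \cite{vand78}—so there is nothing to compare against except the standard argument, which is exactly what you reconstruct: simultaneous induction on $\varphi$ for monotonicity and property (2), with the tree property guaranteeing that every path through $\beta\leqslant\alpha$ passes through $\alpha$, followed by induction on the $HPC$-derivation for soundness. Your sketch is correct, including the key ``path surgery'' step in the $\Leftarrow$ direction of (2) (where the node $\beta$ forcing $\varphi$ on a path through $\gamma\leqslant\alpha$ may lie either above or below $\gamma$, each case handled by monotonicity); the only quibble is that the monotonicity condition on $Val$ is not actually needed for monotonicity of forcing in the path-based (Beth) formulation, since that already follows from paths through $\beta$ being paths through $\alpha$.
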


We use classical logic in metamathematics and in this lemma, in particular.

Van Dalen \cite{vand78} constructed a Beth model for the language of intuitionistic analysis $FIM$, which has two types of variables: over natural numbers and over choice sequences. We generalize this model to a Beth model for a language with many types of functionals.

\section{Axiomatic theories $L$ and $L_s$}

The language of theory $ L $ has the following variables:
\begin{list}{}{}
\item $x,y,z,\ldots$ over natural numbers (variables of type 0),
\item and variables of type $ n$ $(n\geqslant 1):$
\begin{list}{}{}
\item $F^n,G^n,H^n,\ldots$ over $n$-functionals (functionals of type $n$);
\item $A^n,B^n,C^n,\ldots$ over lawlike $n$-functionals;
\item $\mathcal{F}^n,\mathcal{G}^n,\mathcal{H}^n,\ldots$ over lawless $n$-functionals.
\end{list}
\end{list}
\smallskip

Constants: 0 of type 0 and for each $n\geqslant 1$ a constant $K^n$ (an analog of 0 for type $ n $).
\smallskip

Functional symbols: $N^n,Ap^n (n\geqslant 1) $, $S$ for successor, $\cdot$ and $+$.

\smallskip 
Predicate  symbols: $=_n$ for each $n\geqslant 0$.

\medskip
Terms and $n$-functionals are defined recursively as follows.
\begin{enumerate}
\item Every numerical variable is a term.
\item Constant 0 is a term.
\item Every variable of type $ n $ is an $ n $-functional.
\item Constant $ K^n $ is an $ n $-functional.
\item If $t$ is a term, then $St$ is a term.
\item If $t_1$ and $t_2$ are terms, then $t_1+t_2$ and $t_1\cdot t_2$ are terms.
\item If $Z$ is an $n$-functional, then $ N^n(Z) $ is an $n$-functional (a successor of $Z$).
\item If $Z$ is a $1$-functional, $t$ is a term, then $Ap^1(Z,t) $ is a term.
\item If $Z$ is an $(n+1)$-functional, $t$ is a term, then $ Ap^{n+1}(Z,t) $
 is an $n$-functional.
\end{enumerate}

$ Ap^n(Z,t) $ is interpreted as the result of application of functional $ Z $ to term $ t $. We also denote $ Ap^n(Z,t) $ as $Z(t)$.

Here 1-functional is interpreted as a function from natural numbers to natural numbers, and $ (n+1) $-functional is interpreted as a function from natural numbers to $n$-functionals.

As usual, lawlike (or constructive) functionals are interpreted as determined by laws or algorithms.
Lawless functionals are opposite extreme: they could not be determined by any law. At each moment only an initial segment of a lawless functional is known, and there are no restrictions on its future values.
\smallskip

Atomic formulas:
\begin{list}{}{}
\item $ t=_0\tau$, where $t$ and $\tau$ are terms,
\item $ Z=_nV $, where $Z$ and $V$ are  $n$-functionals ($ n\geqslant1 $).
\end{list}

Formulas are constructed from atomic formulas using logical connectives and quantifiers.

For a formula $ \varphi $, its sort $sort(\varphi)$ is the maximal type of parameters in $ \varphi $; it is 0 if $ \varphi $ has no parameters. 

The theory $ L $ has intuitionistic predicate logic $HPC$ with equality axioms and the following non-logical axioms.

\begin{enumerate}
\item $ Sx \neq 0$, \qquad $Sx=Sy\supset x=y. $

\item $x+0=x$, \qquad $x+Sy=S(x+y)$.

\item $x\cdot 0=0$, \qquad $x\cdot Sy=x\cdot y+x$.

The first four axioms define arithmetic at the bottom level. As usual, we introduce terms for all primitive recursive functions in the theory $L$.

\item Induction for natural numbers: $ \varphi(0) \wedge \forall x \left(\varphi(x)\supset \varphi(Sx)\right) \supset \forall x \varphi(x)$, 
\\
where $\varphi$ is any formula of $L$.
\smallskip
\item $ K^{n+1}(x)=K^n$,\qquad $\neg \left( N^n\left( F^n\right)=K^n \right)$.
\smallskip
\item $ N^{n+1}\left(F^{n+1} \right)(x)=N^n\left( F(x)\right)$,\qquad $ N^{n}\left(F^{n} \right) = N^{n}\left(G^{n} \right) \supset F=G. $

The axioms 5 and 6 describe $ K^n $ and $ N^n $ as analogs of zero and successor function, respectively, on level $ n $.

\item Principle of primitive recursive completeness of lawlike functions:
\[ \exists A^1 \forall x \left(A(x)=t \right), \]
where $t$ is any term containing only variables of type 0 and variables over lawlike 1-functionals.
\end{enumerate}

$L$ is a basic theory with $n$-functionals, which is a modification of Bernini's theory $HL$ \cite{bern76}. Later we extend it to a richer theory using reverse mathematics approach.

Denote $L_s$ the fragment of the theory $L$ with types not greater than $ s $. The language of $L_1$ has one type of functionals and is essentially the language of the intuitionistic analysis $ FIM $.

\section{Beth model for language $L_s$}
Any formal proof in the theory $L$ is finite and therefore it is a proof in some fragment $L_s$. So instead of constructing a model for the entire language $L$ with infinitely many types of functionals, we construct a Beth model $\mathcal{B}_s$ for the fragment $L_s$ ($s\geqslant 1$) by generalizing the model of van Dalen \cite{vand78}. 

\subsection{Notations}
A sequence of elements $ x_{0}, x_{1}, \ldots,x_{n} $ is denoted 

\noindent $ x=\langle x_{0}, x_{1}, \ldots,x_{n}\rangle $, and we denote $lth(x)=n+1$. Symbol $ \ast $ denotes the concatenation function: 
\smallskip

$ \langle x_{0}, x_{1}, \ldots,x_{n}\rangle * \langle y_{0}, y_{1}, \ldots,y_{k}\rangle = \langle x_{0}, x_{1}, \ldots,x_{n}, y_{0}, y_{1}, \ldots,y_{k}\rangle.$
\smallskip

For a sequence $ x $, $ \langle x \rangle_{n}$ denotes its $n^{th}$ element and $ \overline{x}(n) $ denotes the initial segment of $ x $ of length $ n $ for any $n<lth(x)$. So if $x=\langle x_{0}, x_{1}, \ldots,x_{m}\rangle$ and $n\leqslant m$, then $\bar{x}(n)=\langle x_{0}, x_{1}, \ldots,x_{n-1}\rangle$.

For a function $ f $ on natural numbers, $ \overline{f}(n) $ denotes the initial segment of $ f $ of length $ n $, that is the sequence  $ \langle f(0), f(1),\ldots,f(n-1) \rangle $.

The notation $ f:a \dashrightarrow b $ means that $ f $ is a partial function from set $ a $ to set $b$. 

For a function $ f $ of two variables denote $ f^{[x]}=\lambda y.f(x,y) $. 

The notation $ Z\downarrow $ means that the object $ Z $ is defined.
\medskip

Next we introduce a few notations for a fixed set $ b $. 
\begin{enumerate}[(a)]
\item $ b^{(n)} $ is the set of all sequences of elements of $ b $ that have length $ n $.

\item $ b^{*} $ is the set of all finite sequences of elements of $ b $.

\item On the set $ b^{*} $ a partial order $ \leqslant $ is defined by the following:
\[ y \leqslant x \textup{  iff  } x \textup{ is an initial segment of }y. \]

\end{enumerate}

With this order $ b^{*} $ is a tree growing down; its root is the empty sequence $< >$.
\smallskip

Suppose $\langle d,\leqslant \rangle$ is a partially ordered set and $ f: (d\times \omega) \dashrightarrow c$. 
 \begin{enumerate}[(a)]
\item $f$ is called \textit{monotonic} on $d$ iff for any $ x, y\in d, (y \leqslant x \Rightarrow f^{[x]} \subseteq f^{[y]})$. 
\medskip
\item $f$ is called \textit{complete} on $d$ iff
for any path $S$ in $d$,   $\bigcup \left\lbrace f^{[x]} \mid x\in S\right\rbrace $ is a total function on $ \omega $.
\end{enumerate}

\subsection{Definition of Beth model $\mathcal{B}_s$}
The following Beth model $\mathcal{B}_s$ is a new, improved version of the Beth-Kripke model that we introduced in \cite{kash89}. 
\begin{definition} We fix $ s\geqslant1 $ and define a Beth model $\mathcal{B}_s$ for the language $L_s$:
\[ \mathcal{B}_s=~\langle M, \preccurlyeq, D, Val \rangle, \] 
where each component is described as follows.

1) First we introduce a triple of objects $\langle a_k,d_k, \preccurlyeq_k \rangle$ by induction on $k$; here $\preccurlyeq_k$ is a partial order on $d_k$.

$a_0=\omega$, the set of all natural numbers.

$d_0=\lbrace \langle x\rangle \mid x\in a_0^*\rbrace$; thus, each element of $d_0$ is a sequence of length 1.

$\preccurlyeq_0$ is generated by the order $ \leqslant $ on $a_0^*$: $\langle x\rangle\preccurlyeq_0 \langle y\rangle$ iff $x\leqslant y$, that is the sequence $y$ is an initial segment of the sequence $x$.

For $\langle x\rangle\in d_0$ we denote $lh(\langle x\rangle)=lth(x)$ and call it the \textit{length} of $\langle x\rangle$.

For $k\geqslant 1$,
\[ a_{k}= \{ f|f:(d_{k-1} \times\omega) \dashrightarrow a_{k-1}, \textit{f is monotonic and complete on } d_{k-1}\}; \]

\[ d_k=\displaystyle\bigcup_{m=0}^\infty \left( {a_0^{(m)}} \times{a_1^{(m)}}\times\ldots \times {a_{k}^{(m)}}\right); \]
\medskip

for $x,y\in d_k$, $x\preccurlyeq_k y$ iff for each $i\leqslant k$, $\langle x\rangle_i\; \leqslant \langle y\rangle_i$, that is the sequence $\langle y\rangle_i$ is an initial segment of the sequence $\langle x\rangle_i$.
\medskip

If $x \in {a_0}^{(m)} \times\ldots \times {a_{k}}^{(m)} $, we denote $ lh(x)=m$ and call it the \textit{length} of $x$. 
\medskip

3) Domain of the Beth model is $M=d_{s-1}$ with partial order $\preccurlyeq_{s-1}$, which we will denote just $\preccurlyeq$. 

With this order $M$ is a subtree of the direct product of trees ${a_0}^* ,\ldots, {a_{s-1}}^* $. $M$ is a non-countable tree growing down. Its root is 
$ \varepsilon = <\underbrace{< >,\ldots,< >}_s>$. 
Elements of $M$ are denoted $\alpha,\beta, \gamma,\ldots $. 

4) Domain for natural numbers is $a_0 =\omega$.

5) Domain for $k$-functionals is $ a_{k}$ $(k=1,2,\ldots,s)$.

6) Domain for lawlike $k$-functionals: 
\[ b_{k}=\{f \in a_{k} | f^{[<>]} \textit{  is a total function} \} . \]

7) Before defining a domain for lawless functionals we introduce an auxiliary set $c_{k} (k\geqslant 1)$: 
\[c_{k}= \{\ \xi |\ \xi :(\omega \times a_{k-1}) \rightarrow a_{k-1} 
\textit{  and  }\forall n (\xi^{[n]} \textit{ is a bijection on } a_{k-1}) \}.\]

Elements of $ c_{k} $ are called $k\textit{-permutations}$. For any $k\textit{-permutation }  \xi$ we define $\nu_k(\xi)$ as the function  $f \in a_{k}$ such that for any $ n\in\omega, $ $x \in d_{k-1} $:  
\begin{displaymath}
f(x,n) =
\begin{cases}
\xi^{[n]}(\langle \langle x\rangle_{k-1}\rangle_n)  & \text{ if } n<lh(x),\\
\text{undefined}& \text{otherwise },
\end{cases}
\end{displaymath} 

Domain for lawless for $k$-functionals is $l_{k}=\{\nu_k(\xi)| \xi\in c_{k} \}, k=1,2,\ldots,s$.
\medskip 

8) It remains to define a valuation mapping $Val$. The definition consists of steps (a)-(d).

(a) For each constant $ Q $ of the language $ L $ we define its interpretation $ \widehat{Q} $.

i) $ \widehat{0}=0 $.

ii) The interpretation of constant $ K^k$  $(k=1,\ldots,s) $ is given by the following:
\smallskip
\\$\widehat{K}^{1} (x,n)=0$ for any $ n\in \omega$ and $ x\in d_0 $;
\medskip\\
$\widehat{K}^{k+1}(x,n)=\widehat{K}^{k}$ for any $ n\in \omega$ and $ x\in d_k $.
\medskip

(b) For any $ \alpha\in M $ and functional symbol $ h $ of the language $ L_{s} $ we define its interpretation $ \widehat{h}^{[\alpha]}$ as follows.
\medskip

i) If $\theta$ is $\cdot$ or +, then $\widehat{\theta}^{[\alpha]}=\theta$.
\medskip

ii) Next we define a successor function $S^{k} $ of type $ k$  $(k=0, 1,\ldots, s) $:\\
$S^{0}(x)=x+1$;   $S^{n+1}(f)=S^{n} \circ f.$
\medskip

Functional symbol $S$ is interpreted by: $ \widehat{S}^{[\alpha]} = S^{0}$.
\medskip

Functional symbol $ N^{k} $ is interpreted by: $ \widehat{N}^{k[\alpha]} = S^{k}$.
\medskip

iii) Functional symbol $ Ap^{k} $ is interpreted by: 
$ \widehat{Ap}^{k[\alpha]}(f,n)=f(\bar{\alpha}(k),n) $ for any $ n\in \omega, f\in a_{k}, k=1, \ldots, s$.

Thus, a $ k$-functional $ f $ depends not on the entire node $ \alpha $ but only on its first $k$ components.
\medskip

We define an \textit{evaluated term} as a term of the language $ L_s $, in which all parameters are replaced by objects from suitable domains. We define an
\textit{evaluated k-functional}  as a $ k $-functional of the language $ L_s $, in which all parameters are replaced by objects from suitable domains. These definitions are similar to the definition of an evaluated formula.

(c) Suppose $ Z $ is an evaluated term or $ n$-functional. We define its interpretation $ Z^{[\alpha]} $ at node $ \alpha $ by induction on the complexity of $ Z. $
\smallskip

i) If $ Z $ is an evaluated variable, then $ Z^{[\alpha]} = Z. $
\smallskip

ii) If $ Z $ is a constant, then $  Z^{[\alpha]} = \widehat{Z}. $
\smallskip

iii) If $ Z=h(V_{1}, \ldots, V_{m}), $ where $ h $ is a functional symbol, then 
\medskip
\\$  Z^{[\alpha]} = \widehat{h}^{[\alpha]}(V_{1}^{[\alpha]},\ldots, V_{m}^{[\alpha]}). $
\medskip

(d) The valuation mapping $ Val $ is defined by:

\[Val(\alpha,Z=_kV)=T \textup{    iff    } 
Z^{[\alpha]}\downarrow \& V^{[\alpha]}\downarrow \& Z^{[\alpha]}=V^{[\alpha]},k=0,1,\ldots,s.\]

This completes the definition of the model 
$\mathcal{B}_s$. \label{def:my_beth_model}
\end{definition}

Clearly, $ b_{k}\subseteq a_{k}$ and $l_{k}\subseteq a_{k}, k=1,\ldots,s$. 
For $ \alpha\in M, lh(\alpha)=lth(\langle\alpha\rangle_k) $ for any $ k<s $.

The main difference with the Beth model in \cite{kach13} is in the definition of a $k$-functional and $\widehat{Ap}^k$: at each node $\alpha$ the $k$-functional in the previous model depended only on one component $\langle\alpha\rangle_{k-1}$, while in the current model it depends on components $\langle\alpha\rangle_{0}, \ldots,\langle\alpha\rangle_{k-1}$. This modification allows some versions of choice axioms to be forced in the new model, as will be shown further.
   
According to Definition \ref{def:forcing}, the forcing relation $\alpha\Vdash\varphi $ is defined in the Beth model $\mathcal{B}_s$.

Next lemma states natural properties of the application predicate.

\begin{lemma}
Suppose $k=1, \ldots, s$,  $f\!\in \! a_{k}$,  $n\in \omega $ and $ \widehat{Ap}^{k[\alpha]}(f,n) \downarrow$. Then 

\begin{enumerate}
\item $ \widehat{Ap}^{k[\alpha]}(f,n) \in a_{k-1};$ 
\bigskip

\item $ \beta\preccurlyeq\alpha  \Rightarrow \widehat{Ap}^{k[\beta]}(f,n) \downarrow \& \widehat{Ap}^{k[\beta]}(f,n) = \widehat{Ap}^{k[\alpha]}(f,n).$ 

\end{enumerate} \label{lemma:ap_predicate}
\end{lemma}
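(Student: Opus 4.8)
The plan is to reduce both parts to the defining clauses for the set $a_k$ in Definition~\ref{def:my_beth_model}. First I would recall that, by clause (b)(iii), $\widehat{Ap}^{k[\alpha]}(f,n) = f(\bar\alpha(k),n)$, where $\bar\alpha(k) = \langle\,\langle\alpha\rangle_0,\ldots,\langle\alpha\rangle_{k-1}\,\rangle$ is the tuple of the first $k$ components of $\alpha$. Since $lh(\alpha) = lth(\langle\alpha\rangle_i)$ for every $i<s$ and $k\leqslant s$, this tuple lies in $a_0^{(lh(\alpha))}\times\cdots\times a_{k-1}^{(lh(\alpha))}\subseteq d_{k-1}$, so $f(\bar\alpha(k),n)$ is a (possibly undefined) value of the partial function $f$.

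For part~1, I would invoke directly that every $f\in a_k$ is by definition a partial function $(d_{k-1}\times\omega)\dashrightarrow a_{k-1}$; hence, as soon as $f(\bar\alpha(k),n)$ is defined, its value lies in $a_{k-1}$, which is exactly the claim. For part~2, assume $\beta\preccurlyeq\alpha$, i.e.\ $\langle\alpha\rangle_i$ is an initial segment of $\langle\beta\rangle_i$ for every $i\leqslant s-1$. Restricting to the indices $i\leqslant k-1$ and comparing with the definition of $\preccurlyeq_{k-1}$ on $d_{k-1}$ gives $\bar\beta(k)\preccurlyeq_{k-1}\bar\alpha(k)$, with $\bar\beta(k)\in d_{k-1}$ as above. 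Now I would use that $f$ is monotonic on $d_{k-1}$ (part of the definition of $a_k$): from $\bar\beta(k)\preccurlyeq_{k-1}\bar\alpha(k)$ we get $f^{[\bar\alpha(k)]}\subseteq f^{[\bar\beta(k)]}$. Since $\widehat{Ap}^{k[\alpha]}(f,n) = f^{[\bar\alpha(k)]}(n)\downarrow$, it follows that $f^{[\bar\beta(k)]}(n)\downarrow$ and $f^{[\bar\beta(k)]}(n) = f^{[\bar\alpha(k)]}(n)$; that is, $\widehat{Ap}^{k[\beta]}(f,n)\downarrow$ and equals $\widehat{Ap}^{k[\alpha]}(f,n)$.

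There is no genuinely hard step here: the only point needing care is the bookkeeping that identifies $\bar\alpha(k)$ and $\bar\beta(k)$ as elements of $d_{k-1}$ and verifies that the restriction of $\preccurlyeq = \preccurlyeq_{s-1}$ to the first $k$ coordinates is precisely $\preccurlyeq_{k-1}$ --- both immediate from the componentwise definitions. The codomain condition and monotonicity built into the definition of $a_k$ then do all the work. Note that $\beta\preccurlyeq\alpha$ need not imply $lh(\beta)=lh(\alpha)$, but monotonicity of $f$ is stated for arbitrary comparable elements of $d_{k-1}$, so unequal lengths cause no difficulty.
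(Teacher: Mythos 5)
Your proof is correct and is exactly the argument the paper intends: the paper's own proof is the one-line remark that the lemma ``follows from the definition of $\widehat{Ap}$ and monotonicity of the $k$-functional $f$,'' and your write-up simply fills in that unfolding, including the correct identification of $\bar\alpha(k),\bar\beta(k)$ as elements of $d_{k-1}$ and the direction of the inclusion $f^{[\bar\alpha(k)]}\subseteq f^{[\bar\beta(k)]}$.
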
 

\begin{proofplain}
follows from the definition of $\widehat{Ap}$ and monotonicity of the $k$-functional $f$.
\end{proofplain}

In the next lemma we show the validity of the interpretations of terms and functionals. 

\begin{lemma}
Suppose $ \alpha\in M $, and $ Z, V $ are evaluated $ k $-functionals ($ k=1, \ldots, s $) or terms of the language $ L_{s} $ (in this case $ k=0$). Then the following holds.

\begin{enumerate}
\item $ \left(Z^{[\alpha]}\downarrow\right) \Rightarrow \left(Z^{[\alpha]} \in a_{k}\right)$ .
\bigskip

\item $ \left(Z^{[\alpha]}\downarrow\right) \& \beta\preccurlyeq\alpha \Rightarrow \left(Z^{[\beta]}\downarrow\right) \&  \left(Z^{[\beta]}= Z^{[\alpha]}\right)$.
\medskip

\item $\left[ Val(\alpha,Z=_kV)=T\right]  \& \beta\preccurlyeq\alpha \Rightarrow \left[ Val(\beta,Z=_kV)=T\right]  $.
\medskip

\item For any path $ S $ in $ M $ there is $ \gamma\in S$ such that $ Z^{[\gamma]}\downarrow $ .

\end{enumerate} \label{lemma:term_int}
\end{lemma}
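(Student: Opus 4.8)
The plan is to prove all four parts simultaneously by induction on the complexity of the evaluated term or $k$-functional $Z$ (and similarly $V$), since the definition of the interpretation $Z^{[\alpha]}$ in clause (c) of Definition \ref{def:my_beth_model} is itself by such induction. Parts 1 and 2 are the heart of the matter; parts 3 and 4 then follow quickly. For part 1, in the base cases $Z^{[\alpha]}$ is either an evaluated variable (so $Z^{[\alpha]}=Z$ lies in the appropriate domain $a_k$ by the definition of an evaluated term/functional) or a constant, where one checks directly from clause (a) that $\widehat 0=0\in a_0$ and that $\widehat K^{k}\in a_k$ (the monotonicity and completeness of $\widehat K^{k}$, and totality of $(\widehat K^{k})^{[<>]}$, are immediate from the defining equations). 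In the inductive step $Z=h(V_1,\dots,V_m)$ with $h$ a functional symbol; here one distinguishes the cases $h\in\{S,+,\cdot\}$ (trivial, image in $a_0$), $h=N^k$ (one verifies that $S^{k}=S^{k-1}\circ(\cdot)$ sends $a_k$ to $a_k$, using that composition with a fixed function preserves monotonicity and completeness), and the crucial case $h=Ap^k$: then $Z^{[\alpha]}=\widehat{Ap}^{k[\alpha]}(V_1^{[\alpha]},V_2^{[\alpha]})=V_1^{[\alpha]}(\bar\alpha(k),n)$ where $n=V_2^{[\alpha]}$, and by the induction hypothesis $V_1^{[\alpha]}\in a_k$, so Lemma \ref{lemma:ap_predicate}(1) gives $Z^{[\alpha]}\in a_{k-1}$.

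For part 2, the base cases are again immediate (a variable or constant has the same interpretation at every node, by clauses (c)(i) and (c)(ii)). In the step $Z=h(V_1,\dots,V_m)$, suppose $Z^{[\alpha]}\downarrow$ and $\beta\preccurlyeq\alpha$; since $Z^{[\alpha]}=\widehat h^{[\alpha]}(V_1^{[\alpha]},\dots,V_m^{[\alpha]})\downarrow$, each $V_i^{[\alpha]}\downarrow$, so by the induction hypothesis $V_i^{[\beta]}\downarrow$ and $V_i^{[\beta]}=V_i^{[\alpha]}$. For $h\in\{S,+,\cdot,N^k\}$ the interpretation $\widehat h^{[\alpha]}$ does not depend on $\alpha$, so $Z^{[\beta]}=\widehat h(V_1^{[\beta]},\dots)=\widehat h(V_1^{[\alpha]},\dots)=Z^{[\alpha]}$. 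For $h=Ap^k$ the dependence on the node is exactly the content of Lemma \ref{lemma:ap_predicate}(2): $\widehat{Ap}^{k[\beta]}(V_1^{[\beta]},V_2^{[\beta]})=\widehat{Ap}^{k[\beta]}(V_1^{[\alpha]},V_2^{[\alpha]})\downarrow$ and equals $\widehat{Ap}^{k[\alpha]}(V_1^{[\alpha]},V_2^{[\alpha]})=Z^{[\alpha]}$.

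Part 3 is then a direct consequence of part 2 together with clause (d) of the definition of $Val$: if $Val(\alpha,Z=_kV)=T$ then $Z^{[\alpha]}\downarrow$, $V^{[\alpha]}\downarrow$ and $Z^{[\alpha]}=V^{[\alpha]}$; applying part 2 to $Z$ and to $V$ gives $Z^{[\beta]}=Z^{[\alpha]}=V^{[\alpha]}=V^{[\beta]}$ with both defined, i.e. $Val(\beta,Z=_kV)=T$. (This also re-proves the monotonicity condition on $Val$ required in Definition 2.1 for the model $\mathcal B_s$.) For part 4, fix a path $S$ in $M$ and argue by induction on the complexity of $Z$: variables and constants are defined at every node, so any $\gamma\in S$ works. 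For $Z=h(V_1,\dots,V_m)$, apply the induction hypothesis to get $\gamma_i\in S$ with $V_i^{[\gamma_i]}\downarrow$; since $S$ is linearly ordered, take $\gamma$ to be the smallest of the $\gamma_i$ in the order $\preccurlyeq$, so by part 2 all $V_i^{[\gamma]}\downarrow$. For $h\in\{S,+,\cdot,N^k\}$ this already gives $Z^{[\gamma]}\downarrow$. For $h=Ap^k$ we have $V_1^{[\gamma]}\in a_k$ (part 1) and $n:=V_2^{[\gamma]}\in\omega$; since $V_1^{[\gamma]}$ is complete on $d_{k-1}$ and $S$ projects to a path in $d_{k-1}$, there is a node $\delta\in S$ with $lh(\delta)>n$, hence $\widehat{Ap}^{k[\delta']}(V_1^{[\delta']},n)\downarrow$ at the $\preccurlyeq$-smaller of $\gamma,\delta$, giving the required $\gamma'\in S$ with $Z^{[\gamma']}\downarrow$.

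The main obstacle is the $Ap^k$ case, and within it part 4: one has to extract from completeness of the $k$-functional and from the tree structure of $M$ the fact that along any path the length grows without bound (so that eventually $n<lh(\alpha)$), and to handle the nesting of applications cleanly within the induction — all the domain-membership bookkeeping (keeping track of which $a_i$ an intermediate value lands in, so that Lemma \ref{lemma:ap_predicate} applies at the next level) must be threaded through carefully. Everything else is routine verification from the definitions.
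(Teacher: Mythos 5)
Your overall plan --- induction on the complexity of $Z$ with the $Ap$ case as the only non-trivial one, parts 1 and 2 resting on Lemma \ref{lemma:ap_predicate}, part 3 as an immediate corollary of part 2 and clause (d) of the definition of $Val$, and part 4 obtained by taking the $\preccurlyeq$-minimum of the nodes supplied by the inductive hypothesis and then invoking completeness of the functional along the projection of $S$ together with monotonicity --- is exactly the paper's approach. (The paper in fact defers parts 1--3 to an earlier publication and writes out only part 4, which proceeds precisely along the lines you sketch.)

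There is, however, one step in your part 4 that would fail as written. In the $Ap^{k}$ case you argue that, since $V_1^{[\gamma]}$ is complete and $S$ projects to a path in $d_{k-1}$, ``there is a node $\delta\in S$ with $lh(\delta)>n$, hence $\widehat{Ap}^{k[\delta']}(V_1^{[\delta']},n)\downarrow$''; and in your closing paragraph you identify the key fact as ``along any path the length grows without bound, so that eventually $n<lh(\alpha)$.'' The implication $n<lh(\delta)\Rightarrow f(\bar{\delta}(k),n)\downarrow$ is the defining property of \emph{lawless} functionals (Lemma \ref{lemma:lawless1}); it is false for an arbitrary $f\in a_{k}$. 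Completeness only guarantees that $\bigcup\lbrace f^{[x]}\mid x\in S'\rbrace$ is total on $\omega$ for each path $S'$ in $d_{k-1}$, and says nothing about the depth at which $f^{[x]}(n)$ becomes defined: a complete monotone $f$ may remain undefined at arbitrarily long nodes before the particular path forces a value. The correct move --- the one the paper makes --- is to apply the completeness clause directly: with $S'=\lbrace\bar{\delta}(k)\mid\delta\in S\rbrace$ there is $x\in S'$ with $f^{[x]}(n)\downarrow$; choose $\delta\in S$ with $\bar{\delta}(k)=x$, set $\gamma'=\min\lbrace\gamma,\delta\rbrace\in S$, and conclude $f(\bar{\gamma'}(k),n)=f(x,n)\downarrow$ by monotonicity of $f$. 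With that single substitution your argument is complete and coincides with the paper's.
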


\begin{proof}
Parts 1, 2 and 3 have the same proof as in \cite{kach13}. Part 4 is proven by induction on the complexity of $ Z$. We consider only the non-trivial case $ Z=Ap^{k+1}(V,t) $. Consider a path $ S $ in $ M $. By the inductive assumption there are $ \alpha, \beta \in S $ such that $V^{[\alpha]}\downarrow $ and $ t^{[\beta]}\downarrow $. 
\medskip

Denote
$ f=V^{[\alpha]}, n= t^{[\beta]}$ and $ S'=\left\lbrace \bar{\delta}(k+1)\mid\delta\in S \right\rbrace $. Then $ S' $ is a path in $d_{k}$, $f \in a_{k+1}$ and $ n \in \omega$ by part 1.  
\medskip
\noindent Since the function $ f $ is complete, there is $ x \in S' $, for which $ f^{[x]}(n)\downarrow $. 

\medskip
For some $ \delta\in S$, $x= \bar{\delta}(k+1)$. Denote $ \gamma=min\left\lbrace \alpha,\beta,\delta \right\rbrace$. Then $\gamma\in S$. 

\medskip
By part 2, $ V^{[\gamma]}=V^{[\alpha]}=f , t^{[\gamma]}=t^{[\beta]}=n$ and 
$\bar{\gamma}(k+1)\preccurlyeq_{k+1} \bar{\delta}(k+1)=x$,

\medskip
\noindent since $\gamma\preccurlyeq\delta$. So $ Z^{[\gamma]} = V^{[\gamma]}(\bar{\gamma}(k+1), t^{[\gamma]}) =f(\bar{\gamma}(k+1), n)=f(x,n)=f^{[x]}(n) $, since $ f $ is monotonic. 
Hence $ Z^{[\gamma]}\downarrow $ .
\medskip
\end{proof}

Lemma \ref{lemma:term_int}.3 implies that  $\mathcal{B}_s$ is a Beth model. Clearly  $\mathcal{B}_s$ is embedded in  $\mathcal{B}_{s+1}$ and  $\mathcal{B}_1$ is essentially the van Dalen's model \cite{vand78}. Our interpretation of a $ k $-functional is similar to the interpretation of a choice sequence in \cite{vand78}; the difference is in the recursive use of the tree $d_{k-1}$ instead of the tree $ \omega ^{*} $.

\begin{lemma} 
Suppose $ \alpha\in M, f \in l_{k}, n\in\omega$. Then 

\[ (f(n)^{[\alpha]}\downarrow)   \Leftrightarrow (n < lh(\alpha)).\] 
\label{lemma:lawless1} 
\end{lemma}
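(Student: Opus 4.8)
The plan is to unwind the definitions of $f(n)^{[\alpha]}$, $\widehat{Ap}^k$, and the lawless domain $l_k$. Recall that $f = \nu_k(\xi)$ for some $k$-permutation $\xi \in c_k$, and that $f(n)$ denotes the evaluated $(k-1)$-functional $Ap^k(f,n)$. Therefore, by part (c) of the valuation definition, $f(n)^{[\alpha]} = \widehat{Ap}^{k[\alpha]}(f, n) = f(\bar\alpha(k), n)$, where $\bar\alpha(k)$ is the tuple of the first $k$ components of $\alpha$, an element of $d_{k-1}$ with $lh(\bar\alpha(k)) = lh(\alpha)$. Now I invoke the explicit case definition of $\nu_k(\xi)$: $f(\bar\alpha(k), n) = \xi^{[n]}(\langle\langle\bar\alpha(k)\rangle_{k-1}\rangle_n)$ if $n < lh(\bar\alpha(k))$, and undefined otherwise.

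First I would establish the backward direction ($\Leftarrow$): assuming $n < lh(\alpha) = lh(\bar\alpha(k))$, the case definition puts us in the first branch, so $f(\bar\alpha(k), n) = \xi^{[n]}(\langle\langle\bar\alpha(k)\rangle_{k-1}\rangle_n)$. Since $\xi \in c_k$, each $\xi^{[n]}$ is a (total) bijection on $a_{k-1}$, and $\langle\langle\bar\alpha(k)\rangle_{k-1}\rangle_n$ is a legitimate element of $a_{k-1}$ because $n < lh(\alpha)$ guarantees the $n$-th entry of the length-$lh(\alpha)$ sequence $\langle\bar\alpha(k)\rangle_{k-1}$ exists. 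Hence $f(n)^{[\alpha]}\downarrow$. For the forward direction ($\Rightarrow$): if $n \geqslant lh(\alpha) = lh(\bar\alpha(k))$, the case definition of $\nu_k(\xi)$ falls into the "otherwise" branch and yields "undefined", so $f(n)^{[\alpha]}$ is not defined; contrapositively, $f(n)^{[\alpha]}\downarrow$ forces $n < lh(\alpha)$.

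The argument is essentially a chain of definitional unfoldings, so there is no deep obstacle; the one point requiring slight care is bookkeeping the identity $lh(\bar\alpha(k)) = lh(\alpha)$ (noted right after Definition \ref{def:my_beth_model}, since $\alpha \in M = d_{s-1}$ and $k \leqslant s$, so $lh(\alpha) = lth(\langle\alpha\rangle_i)$ for every $i < s$, in particular for the components retained in $\bar\alpha(k)$) and confirming that restricting $\alpha$ to its first $k$ components does not change the length. Once that is in hand, the equivalence is immediate from the piecewise formula defining $\nu_k(\xi)$.
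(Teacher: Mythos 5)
Your proof is correct and follows exactly the route the paper intends: the paper's own proof is the one-line remark that the claim ``follows from the interpretation of a lawless functional,'' and your argument is simply the careful unfolding of that interpretation ($f=\nu_k(\xi)$, $\widehat{Ap}^{k[\alpha]}(f,n)=f(\bar\alpha(k),n)$, and the piecewise definition of $\nu_k(\xi)$ with the bookkeeping $lh(\bar\alpha(k))=lh(\alpha)$). Nothing is missing.
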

\begin{proof}
It follows from the interpretation of a lawless functional.
\end{proof} 

The following are some examples of $ k $-functionals in the model $\mathcal{B}_s$, $k=1,\ldots,s$.

\begin{enumerate}
\item $\widehat{K}^{k}$ is a lawlike $ k $-functional; it is defined at the root of $ M $ and it is the same at every node; $ \widehat{K}^{k}\in b_{k}$.
\medskip

\item For $ x\in d_{k-1}, n\in \omega $, define:
\begin{displaymath}
f(x,n) =
\begin{cases}
\langle\langle x\rangle_{k-1}\rangle_n & \text{ if } n<lh(x),\\
\text{undefined}& \text{otherwise}.
\end{cases}
\end{displaymath} 

Clearly $f=\nu_k(\xi)  $, where $ \xi$ is an element of $ c_{k} $ given by: $\xi(n,y)=y$. So $ f\in l_{k} $; $ f $ is a lawless $ k $-functional whose values are determined by the path.
\medskip

\item For $ x\in d_{k-1}, n\in \omega $, define:
\begin{displaymath}
g(x,n) =
\begin{cases}
\text{undefined}& \text{ if } lh(x)=0,\\
\langle\langle x\rangle_{k-1}\rangle _0  & \text{ if } lh(x)>0.
\end{cases}
\end{displaymath} 

The $ k$-functional $ g $ is neither lawlike, nor lawless.
\end{enumerate}

\begin{theorem}[Soundness for $ L_s $]
\[ (L_s\vdash\varphi)\Rightarrow  (\mathcal{B}_s \Vdash\bar{\varphi}), \]
where $ \bar{\varphi} $ is the closure of the formula $ \varphi $, that is the formula $ \varphi $ with universal quantifiers over all its parameters.
\label{theorem:soundness_l} 
\end{theorem}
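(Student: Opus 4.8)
The plan is to prove the Soundness Theorem for $L_s$ by reducing it to the general Soundness Theorem for Beth models (Lemma \ref{lemma:Beth}.3). Since that lemma already handles all purely logical derivations in $HPC$, together with the equality axioms, it suffices to verify that every non-logical axiom of $L_s$ (axioms 1--8 of Section 3, restricted to types $\leqslant s$) is forced at the root $\varepsilon$ of $\mathcal{B}_s$, in its universal closure. The reduction itself requires a small remark: the soundness lemma is stated for closed formulas, and the non-logical axioms are schemata or open formulas, so I would first note that $\mathcal{B}_s \Vdash \bar{\varphi}$ for every axiom instance $\varphi$, and then invoke monotonicity (Lemma \ref{lemma:Beth}.1) and the forcing clauses for $\forall$ and $\supset$ to close the derivation under $HPC$.

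The bulk of the work is therefore a case analysis over the axioms. For axioms 1--4 (the arithmetic axioms at type $0$), the point is that the valuation $Val(\alpha, t =_0 \tau)$ is defined directly by evaluating the closed numerical terms $t^{[\alpha]}, \tau^{[\alpha]}$ in $\omega$ with the standard interpretations of $S, +, \cdot$; these evaluations are node-independent and always defined, so forcing of an atomic numerical equation coincides with its truth in the standard model $\omega$, and the Peano-style axioms hold there. For the induction schema (axiom 5), I would argue as usual in Beth semantics: assume $\alpha \Vdash \varphi(0) \wedge \forall x(\varphi(x) \supset \varphi(Sx))$, and prove $\alpha \Vdash \varphi(\bar n)$ for every $n \in \omega$ by (meta-level) induction on $n$, using monotonicity and the forcing clause for $\supset$; then conclude $\alpha \Vdash \forall x\, \varphi(x)$ since $\omega$ is exactly the domain for type-$0$ variables. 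For axioms 6 and 7 (the $K^k$/$N^k$ clauses), I would unwind the interpretations $\widehat{K}^{k}$ and $\widehat{N}^{k[\alpha]} = S^k$ given in Definition \ref{def:my_beth_model}: e.g. $\widehat{K}^{k+1}(x,n) = \widehat{K}^{k}$ gives $K^{k+1}(x) =_k K^k$ at every node; $S^k f = S^{k-1}\circ f$ is never equal to $\widehat{K}^k$ because $S^0$ has no fixed point, giving $\neg(N^k(F^k) =_k K^k)$; the commutation $N^{n+1}(F^{n+1})(x) =_n N^n(F(x))$ and the injectivity of $N^n$ both follow from the definition of $S^k$ and the injectivity of the ordinary successor, using Lemma \ref{lemma:ap_predicate} and Lemma \ref{lemma:term_int} to handle definedness of applications. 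Axiom 8 (primitive recursive completeness of lawlike $1$-functionals) is where I would need the structure of $b_1$: given a term $t$ with only type-$0$ and lawlike-$1$ parameters, I must exhibit a single $f \in b_1$ with $f^{[<>]}$ total and $f(\bar\alpha(1), x) = t^{[\alpha]}$ for all $\alpha$; this is possible because $t$, evaluated at parameters that are themselves total at $<>$, defines a genuine total recursive function of $x$ that does not depend on the path, and one checks this $f$ is monotonic and complete on $d_0$, hence lies in $a_1$, and in $b_1$ since its value at $<>$ is total.

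The main obstacle, and the step I expect to require the most care, is axiom 8: one must verify that the witnessing $f$ constructed from the term $t$ genuinely belongs to $a_1$ (monotonicity and completeness on $d_0$) and to $b_1$, and that the forcing $\varepsilon \Vdash \forall x\,(A(x) =_0 t)$ holds with this choice — this means checking, via Lemma \ref{lemma:term_int}.4, that at every path the relevant application $\widehat{Ap}^{1[\alpha]}(f, n)$ eventually becomes defined and equals $t^{[\alpha]}$, which in turn relies on the fact that the lawlike parameters appearing in $t$ are total at the root and hence the whole term is evaluated uniformly. A secondary subtlety throughout is bookkeeping about definedness: atomic formulas $Z =_k V$ are only forced when both sides are defined, so for each axiom I must confirm that the relevant terms and functionals are defined at the nodes in question, which is exactly what Lemmas \ref{lemma:ap_predicate} and \ref{lemma:term_int} (especially parts 1, 2 and 4) are for.
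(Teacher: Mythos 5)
Your plan is correct and is the standard route: the paper itself gives no proof of this theorem, deferring entirely to \cite{kach13}, but your decomposition --- Lemma \ref{lemma:Beth}.3 to dispose of the intuitionistic logic and equality, followed by a case-by-case verification that each non-logical axiom (closed under its parameters) is forced, with the real work concentrated in the primitive-recursive-completeness axiom where one must exhibit a witness in $b_1$ and check monotonicity, completeness and totality at $<>$ --- is exactly what that reference carries out and mirrors the paper's own proof of the analogous Theorem \ref{theorem:soundness} for $LP_s$. Two cosmetic slips only: the theory $L$ has seven non-logical axioms, so your numbering is off by one throughout, and the fact needed for $\neg(N^k(F^k)=K^k)$ is that $0$ is not in the range of $S^0$ (so $S^k(f)$ can never be the constant-$\widehat{K}^{k-1}$ functional), not that $S^0$ lacks a fixed point.
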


\begin{proofplain}
was given in \cite{kach13}.
\end{proofplain}

\section{Axiomatic theories $LP$ and $LP_s$}

Since $L$ contains arithmetic, in the language of $L$ we can define G\"{o}del numbering of symbols and expressions. For an expression $q$ we will denote $\llcorner q \lrcorner$ the G\"{o}del number of $q$ in this numbering. Detailed explanations of G\"{o}del numbering can be found, for example, in \cite{mend09}.

The language of theory $ LP $ is the language of $ L $ with an extra predicate symbol $ P_{\llcorner \bar{X}.\varphi \lrcorner} (z,\bar{X})$ for every formula $ \varphi $ of $ L $, which has all its parameters in the list $ \bar{X} $; here $ \bar{X} $ denotes a list of variables $ X_1,\ldots,X_n $. According to tradition, we denote this symbol as $ \vdash_z \varphi(\bar{X}) $. It is interpreted that the formula $ \varphi(\bar{X}) $ has been proven by the "creating subject" at time $z$.

Axioms of the theory $ LP $ are all axioms of $ L $, where the axiom schemata are taken for the formulas of the new language, and the following axioms (CS1)-(CS3) for the "creating subject"; in all three of them $ \varphi $ is an arbitrary formula of $L$.
\medskip

\begin{center}
\textbf{Axioms for the "creating subject".} 
\end{center}

(CS1) $(\vdash_z \varphi)\vee \neg (\vdash_z \varphi)$. 
\medskip

(CS2) $(\vdash_z \varphi)\supset (\vdash_{z+y} \varphi)$. 
\medskip

(CS3) $\exists z (\vdash_z \varphi)\equiv \varphi $.
\medskip

The language of $ LP_s $ is the language of $ L_s $ with extra predicate symbols $P_{\llcorner \bar{X}.\varphi \lrcorner} (z,\bar{X})$, where both $ \varphi $ and $ \bar{X}$ belong to the language of $ L_s $. Axioms of the theory $ LP_s $ are all axioms of $ LP $, which are formulas of the language $ LP_s $.

If $(A)$ is a formula of $LP$, we denote $(A_s)$ the version of $(A)$ with types not greater than $s$.

\section{Extending the model $\mathcal{B}_s$ to the language $LP_s$}
We extend the valuation mapping $Val$ to each atomic formula $ \vdash _t \varphi $, where $t$ is an evaluated term and $\varphi$ is an evaluated formula of $L_s$.

\[Val(\alpha,\vdash _t \varphi)=T \textup{\quad iff \quad} 
t^{[\alpha]}\downarrow \& \exists \gamma \left[(\alpha \preccurlyeq\gamma) \& (lh(\gamma)=t^{[\alpha]}) \& (\gamma\Vdash\varphi)\right].\] 

\begin{lemma} 
\[\left[ Val(\alpha,\vdash _t \varphi)=T\right]  \& (\beta\preccurlyeq\alpha) \Rightarrow \left[ Val(\beta,\vdash _t \varphi)=T\right].\]
\label{lemma:proof_pred}
\end{lemma}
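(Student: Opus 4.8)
The plan is to unfold the newly extended clause for $Val(\alpha,\vdash_t\varphi)$ and transport its two conjuncts from $\alpha$ down to $\beta$ separately. Assume $Val(\alpha,\vdash_t\varphi)=T$ and $\beta\preccurlyeq\alpha$. By definition this gives, first, that $t^{[\alpha]}\downarrow$, and second, a witness node $\gamma$ with $\alpha\preccurlyeq\gamma$, $lh(\gamma)=t^{[\alpha]}$ and $\gamma\Vdash\varphi$. The goal is to produce a witness for $Val(\beta,\vdash_t\varphi)=T$, and the point of the argument is that the \emph{same} $\gamma$ works.

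For the term part I would invoke Lemma~\ref{lemma:term_int}.2 with the pair $\beta\preccurlyeq\alpha$: since $t^{[\alpha]}\downarrow$, it yields $t^{[\beta]}\downarrow$ and, crucially, $t^{[\beta]}=t^{[\alpha]}$. This equality is exactly what makes $\gamma$ reusable — its length $t^{[\alpha]}$ is then the value $t^{[\beta]}$ demanded by the definition of $Val(\beta,\vdash_t\varphi)$. For the witness part the only thing to watch is the orientation of $\preccurlyeq$: here $\gamma$ is an \emph{ancestor} of $\alpha$ (closer to the root), not a descendant, since $\alpha\preccurlyeq\gamma$. From $\beta\preccurlyeq\alpha$ and $\alpha\preccurlyeq\gamma$, transitivity of $\preccurlyeq$ gives $\beta\preccurlyeq\gamma$ — no appeal to the tree structure of $M$ is needed. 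Combining, $\gamma$ satisfies $\beta\preccurlyeq\gamma$, $lh(\gamma)=t^{[\beta]}$ and $\gamma\Vdash\varphi$, so by the defining clause $Val(\beta,\vdash_t\varphi)=T$.

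There is essentially no obstacle here; this lemma is just the monotonicity check that confirms the extended $Val$ still satisfies the monotonicity requirement on valuations, so that $\mathcal{B}_s$ remains a Beth model and the forcing relation of Definition~\ref{def:forcing} is well defined on the language $LP_s$. The one way a careless argument could go wrong is by attempting to push the witness $\gamma$ \emph{downward} to accommodate $\beta$; this is both unnecessary and in general impossible, precisely because $\gamma$ lies above $\alpha$ and hence above $\beta$ already.
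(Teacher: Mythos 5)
Your proof is correct and follows essentially the same route as the paper's: reuse the same witness $\gamma$, obtain $\beta\preccurlyeq\gamma$ by transitivity, and apply Lemma~\ref{lemma:term_int}.2 to get $t^{[\beta]}=t^{[\alpha]}$ so that $lh(\gamma)=t^{[\beta]}$. The additional remarks on the orientation of $\preccurlyeq$ are accurate but do not change the argument.
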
 
\begin{proof}
Suppose $ Val(\alpha,\vdash _t \varphi)=T $ and $ \beta\preccurlyeq\alpha$. Then there exists $ \gamma $ such that $ \alpha\preccurlyeq \gamma$, $lh(\gamma)=t^{[\alpha]}$ and $\gamma\Vdash\varphi$.
Since $\beta\preccurlyeq \alpha \preccurlyeq\gamma$, by Lemma \ref{lemma:term_int}.2, $lh(\gamma)=t^{[\alpha]}=t^{[\beta]}$ and $ Val(\beta,\vdash _t \varphi)=T $.
\end{proof}

The Lemma \ref{lemma:proof_pred} implies that $\mathcal{B}_s$ is a Beth model for the language $LP_s$.

\begin{lemma} 
Suppose $ \alpha\in M $, $ Z $ is an evaluated term or functional, and $ \varphi $ is an evaluated formula of the language $ LP_{s} $. Then 

\[ \left(Z^{[\alpha]}\downarrow\right)  \Rightarrow  \left[\alpha \Vdash \varphi(Z) \Leftrightarrow \alpha \Vdash \varphi(Z^{[\alpha]}) \right]. \]
 \label{lemma:sub_in_forcing} 
\end{lemma}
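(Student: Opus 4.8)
The plan is to prove Lemma~\ref{lemma:sub_in_forcing} by induction on the complexity of the evaluated formula $\varphi$, with the substitution $Z/x$ taking place at a fixed free variable $x$ of the appropriate sort. The key auxiliary observation is a term-level analogue: if $Z^{[\alpha]}\downarrow$ and $W$ is an evaluated term or functional containing $x$, then for every $\beta\preccurlyeq\alpha$ we have $W(Z)^{[\beta]}\simeq W(Z^{[\alpha]})^{[\beta]}$, where $\simeq$ means both sides are undefined or both are defined and equal. This follows from Lemma~\ref{lemma:term_int}.2 (so that $Z^{[\beta]}=Z^{[\alpha]}$ throughout the subtree below $\alpha$) together with a straightforward induction on the complexity of $W$ using clause (c) of Definition~\ref{def:my_beth_model}: the interpretation $W^{[\beta]}$ is built compositionally from the interpretations of its immediate subterms, and substituting $Z$ versus $Z^{[\alpha]}$ into any subterm produces interpretations that agree by the inductive hypothesis (noting that $(Z^{[\alpha]})^{[\beta]}=Z^{[\alpha]}=Z^{[\beta]}$ since evaluated objects interpret to themselves by clause (c)(i)). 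This term lemma immediately handles the atomic case: $\alpha\Vdash W(Z)=_k V(Z)$ iff (on a cofinal set of nodes in each path through $\alpha$) the interpretations are defined and equal, and by the term lemma this condition is unchanged when $Z$ is replaced by $Z^{[\alpha]}$.

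For the induction step I would proceed by the connectives and quantifiers, using the clauses of Definition~\ref{def:forcing}. The cases of $\wedge$, $\vee$, $\bot$ are purely propositional and reduce to the inductive hypotheses applied at $\alpha$ (for $\wedge$) or at the relevant nodes $\beta\in S$ (for $\vee$); here one uses that $Z^{[\alpha]}\downarrow$ implies $Z^{[\beta]}\downarrow$ and $Z^{[\beta]}=Z^{[\alpha]}$ for all $\beta\preccurlyeq\alpha$ by Lemma~\ref{lemma:term_int}.2, so the inductive hypothesis is available at every node below $\alpha$ with the same evaluated object $Z^{[\alpha]}$. The implication case $\psi(Z)\supset\eta(Z)$ quantifies over $\beta\preccurlyeq\alpha$; at each such $\beta$ we have $Z^{[\beta]}=Z^{[\alpha]}$, so by the inductive hypothesis $\beta\Vdash\psi(Z)\Leftrightarrow\beta\Vdash\psi(Z^{[\beta]})=\beta\Vdash\psi(Z^{[\alpha]})$ and similarly for $\eta$, whence the two forcing conditions coincide. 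The quantifier cases $\forall y\,\psi$ and $\exists y\,\psi$ are handled by applying the inductive hypothesis to $\psi(Z,c)$ for each element $c$ of the appropriate domain (with $y$ chosen distinct from $x$ and not occurring in $Z$, which is harmless since $Z$ is evaluated and hence variable-free except for the evaluated objects it carries); for $\exists$ one again passes to the witnessing node $\beta\in S$ where $Z^{[\beta]}=Z^{[\alpha]}$.

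The case requiring a little more care is the new atomic formula $\vdash_t\varphi$ introduced in section~6, since its valuation is defined in terms of the forcing relation itself. Here I would argue: $\alpha\Vdash\vdash_{t(Z)}\psi(Z)$ iff $t(Z)^{[\alpha]}\downarrow$ and there is $\gamma\succcurlyeq\alpha$ with $lh(\gamma)=t(Z)^{[\alpha]}$ and $\gamma\Vdash\psi(Z)$ --- using that for atomic formulas forcing at $\alpha$ amounts to $Val(\beta,\cdot)=T$ for some $\beta$ cofinally, combined with the defining clause for $Val(\alpha,\vdash_t\varphi)$ and Lemma~\ref{lemma:proof_pred}. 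By the term lemma $t(Z)^{[\alpha]}=t(Z^{[\alpha]})^{[\alpha]}$, and by the inductive hypothesis applied at $\gamma$ (where $Z^{[\gamma]}=Z^{[\alpha]}$) we get $\gamma\Vdash\psi(Z)\Leftrightarrow\gamma\Vdash\psi(Z^{[\alpha]})$; assembling these gives the equivalence. The main obstacle, modest as it is, is keeping the bookkeeping honest: one must verify that throughout every branch below $\alpha$ the relevant subterms of $Z$ retain defined, stable interpretations so that the inductive hypothesis genuinely applies at each node that the forcing clauses invoke --- but this is exactly what Lemma~\ref{lemma:term_int}.2 and Lemma~\ref{lemma:ap_predicate}.2 deliver, so the proof is routine once the term-level lemma is in place.
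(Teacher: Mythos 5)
Your induction on the complexity of $\varphi$, resting on a term-level substitution lemma, is the standard argument and is carried out correctly for all the cases belonging to the base language $L_s$: the term lemma, the ordinary atomic case, and the connective and quantifier cases (including the reduction, via monotonicity, of path-witnesses $\beta\in S$ to nodes $\beta\preccurlyeq\alpha$) are all fine. This is essentially the content of the external result the paper cites for this lemma, so up to that point you have only reconstructed what the paper delegates to Dragalin.

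The gap is exactly in the one case that the cited result cannot cover and that therefore required a new argument: the atomic formulas $\vdash_{t}\psi$. The valuation clause at a node $\beta\preccurlyeq\alpha$ produces a witness $\gamma$ with $\beta\preccurlyeq\gamma$ and $lh(\gamma)=t(Z)^{[\beta]}$. Since ancestors of $\beta$ are uniquely determined by their length, this $\gamma$ cannot be pushed down below $\alpha$ by monotonicity the way you handle $\vee$ and $\exists$; and whenever $t(Z)^{[\beta]}<lh(\alpha)$, the node $\gamma$ is a \emph{proper ancestor} of $\alpha$. At such a $\gamma$ your parenthetical claim ``where $Z^{[\gamma]}=Z^{[\alpha]}$'' is unjustified: Lemma \ref{lemma:term_int}.2 transfers defined values downward only, and $Z^{[\gamma]}$ may be undefined at an ancestor of $\alpha$, so the inductive hypothesis simply cannot be invoked at $\gamma$. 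This is not repairable bookkeeping. Take $Z=f(0)$ with $f\in l_{1}$ lawless, $\psi(x)$ the formula $x=5$, $t=0$ (so $\gamma$ is forced to be the root $\varepsilon$), and $\alpha$ of length $1$ with $Z^{[\alpha]}=5$: then $\varepsilon\Vdash\psi(Z^{[\alpha]})$ holds while $\varepsilon\Vdash\psi(Z)$ fails, because some path from the root evaluates $f(0)$ to a value other than $5$. Consequently $\alpha\Vdash(\vdash_{0}\psi(Z^{[\alpha]}))$ while $\alpha\nVdash(\vdash_{0}\psi(Z))$, so the equivalence you need at $\gamma$ is genuinely false and your argument for the $\vdash_t$ case does not close. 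Any correct treatment must either restrict the class of substitutions $Z$ (or of formulas $\varphi$) for which the lemma is asserted, or handle the clause $\gamma\Vdash\psi(\cdot)$ at ancestor nodes by a different mechanism than the inductive hypothesis; as written, the proposal does neither.
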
 
\begin{proof} 
It follows from statement 4.5.1 on pg. 85 of \cite{drag87}.
\end{proof} 

\begin{theorem}[Soundness for $ LP_s $]
\[ LP_s\vdash\varphi\Rightarrow  \mathcal{B}_s \Vdash\bar{\varphi}. \]
\label{theorem:soundness}
\end{theorem}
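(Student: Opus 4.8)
The plan is to derive the theorem from the Soundness Theorem for Beth models, Lemma \ref{lemma:Beth}.3. By Lemma \ref{lemma:proof_pred}, $\mathcal{B}_s$ is a Beth model for the whole language $LP_s$, so that theorem reduces the claim to showing $\varepsilon\Vdash A$ for each non-logical axiom $A$ of $LP_s$: these axioms are closed, so forcing at the root extends to all nodes by monotonicity (Lemma \ref{lemma:Beth}.1), and then the closure of any HPC-consequence of finitely many of them is forced at $\varepsilon$. The axioms split into those inherited from $L_s$ --- arithmetic, the $K^n,N^n$-axioms, primitive recursive completeness, the equality axioms and the induction schema --- and the three axioms (CS1)--(CS3).

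The inherited axioms require almost nothing. Arithmetic, the $K^n/N^n$-axioms and primitive recursive completeness are verbatim axioms of $L_s$ (since $LP_s$ adds only a predicate symbol, their $LP_s$-instances coincide with their $L_s$-instances), hence forced by Theorem \ref{theorem:soundness_l}. The equality axioms for the new symbols $\vdash_z\varphi(\bar X)$ are immediate, since equality of objects in $\mathcal{B}_s$ is genuine identity, so if a premiss $z=z'$ or $X_i=X_i'$ is forced then the two evaluated atoms in the conclusion literally coincide. The only genuinely new instances are those of the induction schema for formulas $\varphi$ of $LP_s$; but the usual verification --- fix a node forcing the induction hypothesis and prove that $\varphi(n)$ is forced there by external induction on $n\in\omega$, invoking Lemma \ref{lemma:sub_in_forcing} to pass from $\varphi(Sn)$ to $\varphi(n+1)$ --- is uniform in $\varphi$ and uses only that $\mathcal{B}_s$ interprets $\varphi$, so it goes through unchanged.

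The core of the argument is the verification of (CS1)--(CS3), and the first step I would take is to record a description of forcing for the new atoms. Unwinding the definition of $Val$ from Section 6 and the clause for atomic formulas in Definition \ref{def:forcing}, and using that $M$ is a tree in which every node has a descendant of each greater length, that every path meets every level, and that a path contains all ancestors of its members, one obtains: for $\alpha\in M$, $m\in\omega$ and an evaluated $L_s$-formula $\psi$, $Val(\alpha,\vdash_m\psi)=T$ iff $m\leqslant lh(\alpha)$ and the unique $\gamma$ with $\alpha\preccurlyeq\gamma$, $lh(\gamma)=m$ satisfies $\gamma\Vdash\psi$; consequently $\alpha\Vdash\vdash_m\psi$ iff the length-$m$ ancestor of $\alpha$ forces $\psi$ (when $lh(\alpha)\geqslant m$), respectively every node $\delta\preccurlyeq\alpha$ of length $m$ forces $\psi$ (when $lh(\alpha)<m$); in particular $\beta\Vdash\vdash_m\psi\Leftrightarrow\beta\Vdash\psi$ whenever $lh(\beta)=m$. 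Granting this: (CS1) is forced because on any path $S$ through $\varepsilon$ the node $\beta$ of length $m$ forces $\vdash_m\psi$ if $\beta\Vdash\psi$, and forces $\neg\vdash_m\psi$ otherwise (every descendant of $\beta$ has length-$m$ ancestor $\beta$, which does not force $\psi$). (CS2) is forced because, by Lemma \ref{lemma:Beth}.1, a node of length $m$ forcing $\psi$ forces it along all its descendants, so the length-$(m+k)$ ancestor, or all length-$(m+k)$ descendants, of a given node forces $\psi$ as well. And (CS3) splits as usual: $\psi\supset\exists z\,(\vdash_z\psi)$ holds because a node $\delta$ with $\delta\Vdash\psi$ already forces $\vdash_{lh(\delta)}\psi$, hence $\exists z\,(\vdash_z\psi)$; while $\exists z\,(\vdash_z\psi)\supset\psi$ holds because on a path $S$ through a node $\delta$ forcing $\exists z\,(\vdash_z\psi)$ there are, by the $\exists$-clause, some $\beta_0\in S$ and $m\in\omega$ with $\beta_0\Vdash\vdash_m\psi$, and then the node of $S$ at level $m$ --- the length-$m$ ancestor of $\beta_0$ if $lh(\beta_0)\geqslant m$, a descendant of $\beta_0$ lying on $S$ otherwise --- forces $\psi$, so $\delta\Vdash\psi$ by Lemma \ref{lemma:Beth}.2.

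I expect the only real difficulty to be bookkeeping: keeping the direction of $\preccurlyeq$ straight, so that ``$\alpha\preccurlyeq\gamma$ with $lh(\gamma)=m$'' names the length-$m$ ancestor of $\alpha$ (which exists and is unique exactly when $m\leqslant lh(\alpha)$), and carefully separating the cases $lh(\alpha)\geqslant m$ and $lh(\alpha)<m$ when unwinding the atomic and $\exists$ clauses of forcing. Once the description of $\alpha\Vdash\vdash_m\psi$ is in place, each of (CS1)--(CS3) reduces to a short computation.
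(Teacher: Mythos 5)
Your proposal is correct and follows essentially the same route as the paper: reduce to the non-logical axioms via the Beth soundness theorem (Lemma \ref{lemma:Beth}.3), dispose of the inherited axioms via Theorem \ref{theorem:soundness_l}, and verify (CS1)--(CS3) by locating the level-$z$ node on each path and using monotonicity of forcing together with Lemma \ref{lemma:Beth}.2. Your explicit characterisation of $\alpha\Vdash(\vdash_m\psi)$ in terms of the length-$m$ ancestor (resp.\ all length-$m$ descendants) is just a cleaner packaging of what the paper's argument uses inline, and the case analysis matches the paper's treatment of each axiom.
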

\begin{proof}
Proof is by induction on the length of derivation of $ \varphi $. 

For axioms and derivation rules of the intuitionistic logic HPC it follows from the Soundness Theorem for Beth models (Lemma \ref{lemma:Beth}.3). The axioms for equality hold in the model due to the definition of $ Val$. 

For the induction axiom (axiom 4) it easily follows from the definition of forcing. For axioms 1-3, 5-7 it follows from Theorem \ref{theorem:soundness_l}. 

It only remains to check the three axioms for the "creating subject".

\begin{center}
Proof for $ (CS1_s) $
\end{center}

Consider an arbitrary path $ S $ in $ M $. There is an element $\alpha\in S$ with $ lh(\alpha)=z$. If $\alpha \Vdash \varphi $, then $\alpha \Vdash \left(\vdash_z \varphi \right)$.

Suppose $\alpha \nVdash \varphi $. Since 
$ lh(\alpha)=z $, we have for any $ \beta\preccurlyeq\alpha $,  $\beta \nVdash \left(\vdash_z \varphi \right) $. So $\alpha\Vdash\neg \left(\vdash_z \varphi \right) $. 

This proves $ \mathcal{B}_s \Vdash \left(\vdash_z \varphi \right)\vee \neg \left(\vdash_z \varphi \right)$.

\begin{center}
Proof for $ (CS2_s) $
\end{center}

Suppose $\alpha \Vdash \left(\vdash_z \varphi \right)$. Consider an arbitrary path $S$ in $M$ through $\alpha$ and the element $ \gamma\in S$ with $lh(\gamma)=z$. Then $\gamma\Vdash\varphi$. 

Choose $ \beta\in S $ with $ lh(\beta)=z+y$. Then $\beta\preccurlyeq\gamma $ and by monotonicity of forcing $\beta\Vdash\varphi $. This implies $\alpha \Vdash \left(\vdash_{z+y} \varphi \right)$.

\begin{center}
Proof for $ (CS3_s)$
\end{center}

$ \Rightarrow $ Suppose $\alpha \Vdash \exists z\left(\vdash_z \varphi \right)$. 
Consider an arbitrary path $S$ in $M$ through $\alpha$. There are $z\in\omega$ and $\beta\in S$ such that $\beta \Vdash \left(\vdash_z \varphi \right)$. Then for some $\delta\in S$ we have $Val(\delta,\vdash_z\varphi)=T$. So there is $\gamma$ with $\gamma\Vdash\varphi$ and $\delta\preceq\gamma$, hence $\gamma\in S$.

Thus, for any path $S$ through $\alpha$ there is $\gamma\in S$ with $\gamma\Vdash\varphi$. By Lemma \ref{lemma:Beth}.2, $\alpha\Vdash\varphi$.

$ \Leftarrow $ Suppose $\alpha\Vdash\varphi$. For any path $S$ through $\alpha$ we have: $\alpha\in S$ and $\alpha \Vdash \left(\vdash_z \varphi \right)$ for $z=lh(\alpha)$. Therefore $\alpha \Vdash \exists z\left(\vdash_z \varphi \right)$.
\end{proof}

Next we consider some intuitionistic principles in the language \textit{LP} and check whether they hold in the model $ \mathcal{B}_s$.

\section{Permutations in the model $\mathcal{B}_s$}
This technical section contains some facts about the model $\mathcal{B}_s$ necessary for the rest of the paper. 
We defined a $ k $-permutation as an element of the set $ c_{k} $, see Definition \ref{def:my_beth_model}.

In this section we fix permutations $ \xi_{0} \in c_{1},\ldots, \xi_{s-1} \in c_{s}$. They generate a few mappings introduced in the following definition.

\begin{definition}
Suppose $ k=0,\ldots,s-1 $.
\begin{enumerate}
\item Define $ \eta_{k} $ by the following:
\[ \eta_{k}(x,n)=\left(\xi^{[n]}_{k}\right)^{-1}(x). \]

\item Define $\tilde{\tilde{\xi}}_k$ by the following: 
\[\tilde{\tilde{\xi}}_k(< x_{0},\ldots, x_{m}>) = <\xi^{[0]}_{k}(x_{0}),\ldots, \xi^{[m]}_{k}(x_{m})> .\]

\item Define $\tilde{\xi}_k$ by the following:
\[\tilde{\xi}_k(< y_{0},\ldots,y_{k}>)=< \tilde{\tilde{\xi}}_{0}(y_{0}),\ldots, \tilde{\tilde{\xi}}_{k}(y_{k})>. \]

\item \[ \tilde{\xi}=\tilde{\xi}_{s-1}. \] 

\item Define $ \Lambda_{k} $ by induction on $k$ $ (k=0,1,\ldots,s)$.

$ \Lambda_{0}(n)=n. $

Assume $k\geqslant 1$ and $ \Lambda_{k-1}$ is defined. Define $\Lambda_{k}(f) = g$, where
\[ g(x,n)= \Lambda_{k-1}\left( f\left( \tilde{\xi}_{k-1}(x),n\right) \right).\]
\end{enumerate} \label{def:permutations}
\end{definition}

This completes the Definition \ref{def:permutations}. 

Clearly, $\tilde{\tilde{\xi}}_k$ is a bijection on $a_k^*$ and $\tilde{\tilde{\eta}}_k$ is its inverse. Also $\tilde{\xi}_k$ is a bijection on $d_k$ and $\tilde{\eta}_k$ is its inverse. For a lawlike $ k $-functional $f$, $ \Lambda_{k}(f)=f $ because $f(x,n)$ does not depend on $x$.

The following technical lemma gives reasons for introducing the aforementioned mappings.

\begin{lemma}
Suppose $ k=0,\ldots,s-1 $.
\begin{enumerate}
\item $ \eta_{k}\in c_{k+1}$.
\medskip

\item $\tilde{\xi_{k}}$ is an isomorphism of the partially ordered set $d_{k} $, i.e. a bijection preserving the order:
\[y \preccurlyeq_k x\Leftrightarrow \tilde{\xi}_{k}(y) \preccurlyeq_k \tilde{\xi}_{k}(x).\]

\item $ \tilde{\xi} $  is an isomorphism of the partially ordered set $M$, so
\[\beta\preccurlyeq   \alpha\Leftrightarrow\tilde{\xi}(\beta) \preccurlyeq \tilde{\xi}(\alpha).\]

\item Suppose $ S $ is a path in $d_{k} $ through $ x $ and $ S'= \left\lbrace \tilde{\xi}_{k}(y) \mid y \in S \right\rbrace $. Then $ S' $ is a path in $ d_{k} $ through $ \tilde{\xi}_{k}(x) $.
\medskip

\item Suppose $ S $ is a path in $ M $ through $ \alpha $ and $ S'= \left\lbrace \tilde{\xi}(\beta) \mid \beta \in S \right\rbrace $. Then $ S' $ is a path in $ M$ through $ \tilde{\xi}(\alpha) $.
\medskip

\item For $ k=0, 1,\ldots,s, $
\[f \in a_{k}  \Leftrightarrow  \Lambda_{k}(f) \in a_{k}.\]
\end{enumerate}
\label{lemma:permutations}
\end{lemma}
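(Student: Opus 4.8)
The plan is to treat parts 1--5 as essentially componentwise bookkeeping and to reserve an induction on $k$ for part 6, which is the only part that genuinely uses the earlier ones. Part 1 is immediate from Definition~\ref{def:my_beth_model}: since $\xi_k\in c_{k+1}$, each $\xi_k^{[n]}$ is a bijection of $a_k$, hence so is its inverse $(\xi_k^{[n]})^{-1}=\eta_k^{[n]}$, so $\eta_k\in c_{k+1}$.

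For part 2 I would first note that $\tilde{\tilde{\xi}}_k$ sends a length-$m$ sequence $\langle x_0,\dots,x_m\rangle$ to the length-$m$ sequence obtained by applying the bijections $\xi_k^{[0]},\dots,\xi_k^{[m]}$ coordinatewise; since it preserves length and acts coordinatewise it preserves the initial-segment relation and is a bijection of $a_k^{*}$ with inverse $\tilde{\tilde{\eta}}_k$. Then $\tilde{\xi}_k$ applies $\tilde{\tilde{\xi}}_0,\dots,\tilde{\tilde{\xi}}_k$ to the respective coordinates of an element of $d_k$, so it is a bijection of $d_k$ with inverse $\tilde{\eta}_k$, and because $\preccurlyeq_k$ is the coordinatewise initial-segment order it is preserved; the stated equivalence $y\preccurlyeq_k x\Leftrightarrow\tilde{\xi}_k(y)\preccurlyeq_k\tilde{\xi}_k(x)$ then follows by applying the forward implication both to $\tilde{\xi}_k$ and to its inverse $\tilde{\eta}_k$. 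Part 3 is just the case $k=s-1$, since $M=d_{s-1}$, $\preccurlyeq$ is $\preccurlyeq_{s-1}$ and $\tilde{\xi}=\tilde{\xi}_{s-1}$. Parts 4 and 5 are immediate from parts 2 and 3, because an order isomorphism carries maximal linearly ordered subsets to maximal linearly ordered subsets, and $x\in S$ gives $\tilde{\xi}_k(x)\in S'$ (and likewise for $M$).

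The real work is part 6, which I would prove by induction on $k$, strengthening it to: $\Lambda_k$ restricts to a bijection of $a_k$ whose inverse is the analogously defined map $\Lambda_k'$ built from the $\eta_j$'s and the $\Lambda_j'$'s (throughout, $f$ is understood to be at least a partial function from $d_{k-1}\times\omega$ to $a_{k-1}$, so that $\Lambda_k(f)$ is meaningful). The base case $k=0$ is trivial since $\Lambda_0$ is the identity on $a_0=\omega$. For the step, take $f\in a_k$ and let $g=\Lambda_k(f)$, so $g(x,n)=\Lambda_{k-1}\bigl(f(\tilde{\xi}_{k-1}(x),n)\bigr)$. By the inductive hypothesis $\Lambda_{k-1}$ is a bijection of $a_{k-1}$, so $g$ is a partial function $d_{k-1}\times\omega\dashrightarrow a_{k-1}$ whose domain of definition is the $\tilde{\xi}_{k-1}$-preimage of that of $f$. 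Monotonicity of $g$ follows from part 2 (namely $y\preccurlyeq_{k-1}x\Rightarrow\tilde{\xi}_{k-1}(y)\preccurlyeq_{k-1}\tilde{\xi}_{k-1}(x)$) together with monotonicity of $f$ and the fact that applying $\Lambda_{k-1}$ preserves equalities. Completeness of $g$ is where the earlier parts are used: given a path $S$ in $d_{k-1}$, its image $\tilde{\xi}_{k-1}[S]$ is again a path by part 4, so completeness of $f$ gives for each $n$ some $y\in S$ with $f(\tilde{\xi}_{k-1}(y),n)\downarrow$, hence $g(y,n)\downarrow$, and monotonicity makes the resulting union a function. Thus $\Lambda_k(f)\in a_k$. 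The same argument with $\eta$'s in place of $\xi$'s shows $\Lambda_k'$ maps $a_k$ into $a_k$, and $\Lambda_k'\circ\Lambda_k$ is the identity on $a_k$ because $\tilde{\eta}_{k-1}\circ\tilde{\xi}_{k-1}$ is the identity and $\Lambda_{k-1}'\circ\Lambda_{k-1}$ is the identity by the inductive hypothesis; hence $\Lambda_k$ is a bijection of $a_k$ and in particular $f\in a_k\Leftrightarrow\Lambda_k(f)\in a_k$.

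I expect the main obstacle to be exactly the completeness clause in part 6: it is the single place where path-preservation (part 4) and hence the order-isomorphism property (part 2) are genuinely needed, and it is also the reason the induction must carry the full bijectivity of $\Lambda_{k-1}$ rather than merely ``$\Lambda_{k-1}$ maps $a_{k-1}$ into $a_{k-1}$'', so that both the domain-of-definition bookkeeping and the converse implication go through cleanly. Everything else reduces to the observation that all the tilde-maps act coordinatewise through the fixed permutations and preserve lengths.
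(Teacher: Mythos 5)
Your proposal is correct and follows essentially the same route as the paper: parts 1--5 by coordinatewise bookkeeping with the tilde-maps, and part 6 by induction on $k$ with the three checks (range, monotonicity via part 2, completeness via part 4). The only difference is presentational: you carry the full bijectivity of $\Lambda_k$ (with explicit inverse $\Lambda_k'$) through the induction, whereas the paper proves only the forward implication by induction and dispatches the converse with the remark that one may replace the $\xi_j$'s by the $\eta_j$'s -- your version just makes explicit the fact (left implicit in the paper) that the $\eta$-built map inverts $\Lambda_k$.
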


\begin{proof}
1. Follows from the definition.
\medskip

2. Clearly $\tilde{\tilde{\xi}}_k$ is an isomorphism of the partially ordered set $a_{k}^* $, i.e. a bijection preserving the order:
\[y\leqslant x\Leftrightarrow \tilde{\tilde{\xi}}_k(y) \leqslant \tilde{\tilde{\xi}}_k(x).\]
The rest follows from the definition of $\tilde{\xi}_k$.

3. Follows from part 2 for $k=s-1$.

4. Follows from part 2.

5. Follows from part 4 for $k=s-1$.

6. $ \Rightarrow $ The proof is by induction on $ k $. The basis step $k=0$ is obvious. We assume that the statement holds for $ k $ and prove it for $ k+1$. 

Suppose $ f\in  a_{k+1} $ and $g= \Lambda_{k+1}(f)  $. To show that $ g\in  a_{k+1} $ we check three conditions.

(i) We need to prove that $ g:(d_k \times\omega) \dashrightarrow a_k$. 

Suppose $ x\in d_k$, $ n\in\omega $ and $ y=g(x,n) $ is defined. Then $ \tilde{\xi}_{k} (x)\in d_k$ and $y=\Lambda_{k}\left( f\left( \tilde{\xi}_{k} (x),n  \right)  \right)  $ is defined. So $ f\left( \tilde{\xi}_{k} (x),n \right)  \in a_k $ and by the inductive assumption $ y=\Lambda_{k}\left( f\left( \tilde{\xi}_{k} (x),n  \right)  \right) \in a_k $.
\medskip

(ii) Monotonicity. 

Suppose $ y \preccurlyeq_k x $ and $ g(x,n)\downarrow\: $. Then $ f\left( \tilde{\xi}_{k} (x),n  \right)\downarrow$ and $ \tilde{\xi}_{k} (y) \preccurlyeq_k \tilde{\xi}_{k} (x) $ by part 2. By monotonicity of $ f $, $ f\left( \tilde{\xi}_{k} (y),n  \right)\downarrow$ and $ f\left( \tilde{\xi}_{k} (y),n  \right) = f\left( \tilde{\xi}_{k} (x),n  \right)$, so $ g(y,n)\downarrow $ and $g(y,n) = g(x,n)$.
\medskip

(iii) Completeness.

Consider $ n\in\omega $ and a path $S$ in $d_k$. Denote $ S'= \left\lbrace \tilde{\xi}_{k}(z) \mid z \in S \right\rbrace $. By part 4, $ S' $ is a path in $d_k$. Then there is $ y\in S' $ such that $ f(y,n) \! \downarrow $. For some $z\in S, y=\tilde{\xi}_{k} (z)$. So $ f\left( \tilde{\xi}_{k} (x),n  \right) $ is defined and belongs to $ a_k$. By the inductive assumption $ g(x,n)=\Lambda_{k} \left( f\left( \tilde{\xi}_{k} (x),n  \right)\right) \in a_{k} $ and $ g(x,n) $ is defined.
\medskip

$ \Leftarrow $ This follows from the previous if we use $\eta_{0},\ldots, \eta_{s-1} $ instead of $ \xi_{0},\ldots, \xi_{s-1}$.
\end{proof}

The following lemma describes a relation between permutations and forcing.

\begin{lemma}
Suppose:
\begin{list}{}{}
\item $ \bar{X} $ is a list of variables $ X_1,\ldots,X_r $ of types $ k_1,\ldots,k_r $, respectively;
\item $ \bar{f} $ is a list $ f_1,\ldots,f_r $  of objects from corresponding domains;
\item $ \bar{g} $ is the list $ g_1,\ldots,g_r $, where $ g_i=\Lambda_{k_i}\left( f_i\right)$;
\item $ \beta=\tilde{\xi} (\alpha) $.
\end{list}

\begin{enumerate}
\item Suppose $Z(\bar{X}) $ is an $ n $-functional ($ n=1,\ldots,s $) or a term (then $ n=0 $), and all its parameters are in the list $ \bar{X} $. Then 
\bigskip

i) $ \left(Z(\bar{f})^{[\beta]}\downarrow\right)\Rightarrow \left(Z(\bar{g})^{[\alpha]}\downarrow\right)\&  \left[Z(\bar{g})^{[\alpha]}=\Lambda_n \left( Z(\bar{f})^{[\beta]}\right)\right]. $
\bigskip

ii) $ \left(Z(\bar{f})^{[\beta]}\downarrow\right) \Leftrightarrow \left(Z(\bar{g})^{[\alpha]}\downarrow\right).$
\bigskip

\item Suppose $\varphi(\bar{X}) $ is a formula of $ L_s $ and all its parameters are in the list $ \bar{X} $. Then 
\[  \beta\Vdash \varphi(\bar{f})  \;\Leftrightarrow\;  \alpha\Vdash \varphi(\bar{g})  .\]

\item Suppose $\varphi(\bar{X}) $ is a formula of $ LP_s $ and all its parameters are in the list $ \bar{X} $. Then 
\[  \beta\Vdash \varphi(\bar{f})  \;\Leftrightarrow\;  \alpha\Vdash \varphi(\bar{g})  .\]
\end{enumerate} \label{lemma:equiv_nodes}
\end{lemma}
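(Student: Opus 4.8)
The plan is to prove the three parts in order, since Part~1 feeds Part~2 and Part~2 feeds Part~3, and inside each part to argue by induction on syntactic complexity. For \emph{Part~1} I would induct on the complexity of $Z(\bar X)$. The base cases are: $Z$ a variable $X_i$, where $Z(\bar f)^{[\beta]}=f_i$ and $Z(\bar g)^{[\alpha]}=g_i=\Lambda_{k_i}(f_i)$, so i) holds by definition (and for type $0$, $\Lambda_0$ is the identity); and $Z$ a constant, where $Z^{[\alpha]}=Z^{[\beta]}=\widehat Z$ and the claim reduces to $\Lambda_n(\widehat Z)=\widehat Z$, which holds because $\widehat 0$ and each $\widehat K^{k}$ is lawlike and $\Lambda_k$ fixes lawlike functionals (remark after Definition~\ref{def:permutations}). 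For the inductive step $Z=h(V_1,\dots,V_m)$ the only cases needing work are $h=N^{k}$ and $h=Ap^{k}$, since for $h$ among $S,+,\cdot$ everything lives at type $0$. For $h=N^{k}$ one needs the commutation identity $\Lambda_k\circ S^{k}=S^{k}\circ\Lambda_k$, which follows by a short induction on $k$ from the definitions of $S^{k}$ and $\Lambda_k$. For $h=Ap^{k}$ the key point is the compatibility of the node automorphism $\tilde\xi$ with truncation to the first $k$ components, namely
\[\tilde\xi_{k-1}\bigl(\bar\alpha(k)\bigr)=\overline{\tilde\xi(\alpha)}(k)=\bar\beta(k),\]
which is immediate from the componentwise definitions of $\tilde\xi_{s-1}$ and $\tilde\xi_{k-1}$. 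Combining this with the inductive hypothesis for $V$ and $t$ (their interpretations at $\beta$ being defined since $Z(\bar f)^{[\beta]}\downarrow$) and the defining equation $\Lambda_k(V^{[\beta]})(x,n)=\Lambda_{k-1}\bigl(V^{[\beta]}(\tilde\xi_{k-1}(x),n)\bigr)$, one computes $Z(\bar g)^{[\alpha]}=\Lambda_{k-1}\bigl(V(\bar f)^{[\beta]}(\bar\beta(k),t(\bar f)^{[\beta]})\bigr)=\Lambda_{k-1}\bigl(Z(\bar f)^{[\beta]}\bigr)$ and, since $\Lambda_{k-1}$ is total, gets the definedness claim simultaneously. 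Part~1.ii then follows by applying 1.i a second time with the permutations $\eta_0,\dots,\eta_{s-1}$ (which generate $\tilde\eta=\tilde\xi^{-1}$ and $\Lambda^{\eta}_k=\Lambda_k^{-1}$) in place of $\xi_0,\dots,\xi_{s-1}$.

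\emph{Parts~2 and 3} are both proven by induction on the complexity of $\varphi$ with identical inductive clauses, Part~3 only adding one atomic form; so I would carry out Part~2 in full and then indicate the extra clause for Part~3. For an atomic formula $Z=_kV$, unwinding the forcing clause for atomic formulas, we must compare, over paths $S$ through $\beta$ and paths through $\alpha$, the condition $Z(\cdot)^{[\gamma]}=V(\cdot)^{[\gamma]}$; Lemma~\ref{lemma:permutations}.5 gives a bijection $S\mapsto\{\tilde\xi(\gamma):\gamma\in S\}$ between paths through $\alpha$ and paths through $\beta$, and Part~1 turns the equation at a node $\gamma\preccurlyeq\alpha$ into the equation at $\tilde\xi(\gamma)\preccurlyeq\beta$ after applying $\Lambda_n$, which restricts to a bijection of $a_n$ onto itself (Lemma~\ref{lemma:permutations}.6 together with the inverse generated by the $\eta_i$). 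For $\wedge,\vee,\supset,\forall,\exists$ I would use the order-isomorphism $\tilde\xi$ (Lemma~\ref{lemma:permutations}.3) to translate ``$\delta\preccurlyeq\alpha$'' into ``$\tilde\xi(\delta)\preccurlyeq\beta$'', Lemma~\ref{lemma:permutations}.5 to translate paths, and the inductive hypothesis applied to the immediate subformula---in the quantifier cases to the subformula with the bound variable $x$ (say of type $j$) added to $\bar X$, the witness $c$ being matched with $\Lambda_j(c)$. The universal and existential clauses then require that $c\mapsto\Lambda_j(c)$ be a bijection of the relevant domain onto itself: for $a_j$ this is Lemma~\ref{lemma:permutations}.6, for $b_j$ it is the identity, and for $l_j$ one checks, just as in the proof that $\Lambda_k$ preserves $a_k$, that $\Lambda_j(\nu_j(\zeta))=\nu_j(\zeta')$ for a suitable $\zeta'\in c_j$ obtained by composing with the relevant $\xi$'s. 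For Part~3 the only new atomic formula is $\vdash_t\varphi$ with $t$ a term and $\varphi$ a formula of $L_s$; its valuation clause mentions $t^{[\gamma]}$, the existence of $\delta$ with $\gamma\preccurlyeq\delta$ and $lh(\delta)=t^{[\gamma]}$, and $\delta\Vdash\varphi$, and it is handled with Part~1 for $t$ (where $\Lambda_0$ is trivial), Part~2 for $\varphi$ at the node $\delta$, the fact that $\tilde\xi$ preserves $lh$ (each $\tilde{\tilde\xi}_i$ preserving sequence length), and the path translation as above.

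The crux is the $Ap^{k}$ case of Part~1: this is the only place where the ``vertical'' action $\Lambda_k$ of the permutations on functionals must be shown to mesh exactly with their ``horizontal'' action $\tilde\xi$ on nodes, and it succeeds precisely because applying a $k$-functional at $\alpha$ reads off the first $k$ components $\bar\alpha(k)$ while $\tilde\xi$ acts on those components by $\tilde\xi_{k-1}$---this is the structural reason the model was built with $\widehat{Ap}^{k}$ depending on $\bar\alpha(k)$ rather than on a single component. The only other point requiring genuine (if routine) verification is that $\Lambda_j$ restricts to a bijection of the lawless domain $l_j$, which is what makes the quantifier clauses over lawless functionals go through in Parts~2 and 3.
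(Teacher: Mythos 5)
Your proposal is correct and follows essentially the same route as the paper: induction on the complexity of $Z$ for Part~1 with the $Ap$ case resolved by the identity $\tilde{\xi}_{k-1}(\bar{\alpha}(k))=\bar{\beta}(k)$ and the defining recursion for $\Lambda_k$, the inverse permutations $\eta_0,\ldots,\eta_{s-1}$ for the converse implications, and induction on the complexity of $\varphi$ with the path bijection of Lemma~\ref{lemma:permutations}.5 for Parts~2 and~3, including the same treatment of the atomic case $\vdash_t\psi$. The only difference is that you spell out what the paper delegates to a citation for Part~2 — in particular the verification that $\Lambda_j$ restricts to a bijection of each quantifier domain $a_j$, $b_j$, $l_j$ — and those verifications are sound.
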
 

\begin{proof}
1. i) It is proven by induction on the complexity of $ Z $. We consider the case $ Z=Ap^{n+1}(V,t) $. Other cases are straightforward. Suppose $ Z(\bar{f})^{[\beta]}\downarrow $. Then 
\[  V(\bar{f})^{[\beta]}\downarrow \& t(\bar{f})^{[\beta]}\downarrow \& V(\bar{f})^{[\beta]} \left(\bar{\beta}(n),t(\bar{f})^{[\beta]}\right)\downarrow. \]
By the inductive assumption
\[  V(\bar{g})^{[\alpha]}\downarrow \& t(\bar{g})^{[\alpha]}\downarrow \& V(\bar{g})^{[\alpha]}=\Lambda_{n+1} \left( V(\bar{f})^{[\beta]}\right) \& t(\bar{g})^{[\alpha]}=t(\bar{f})^{[\beta]} . \]
\begin{multline*} 
Z(\bar{g})^{[\alpha]}=V(\bar{g})^{[\alpha]} \left(\bar{\alpha}(n),t(\bar{g})^{[\alpha]} \right) = \Lambda_{n+1}\left( V\left( \bar{f}\right) ^{[\beta]}\right)
\left(\bar{\alpha}(n),t(\bar{f})^{[\beta] } \right)
\\ \\
=\Lambda_{n}\left[ V(\bar{f})^{[\beta]}\left(\tilde{\xi}_n \left(\bar{\alpha}(n) \right),t(\bar{f})^{[\beta]}  \right)  \right]   =\Lambda_n  \left[V(\bar{f})^{[\beta]}\left(\bar{\beta}(n), t(\bar{f})^{[\beta]}\right)  \right]
\\ \\
=\Lambda_n \left(Z(\bar{f})^{[\beta]} \right). 
\end{multline*}

 So  $ Z(\bar{g})^{[\alpha]}\downarrow\;. $

ii) The implication from left to right follows from part i). The implication from right to left is proven using $ \eta_{0},\ldots,\eta_{s-1} $ instead of $ \xi_{0},\ldots,\xi_{s-1} $.

2. The proof involves the same steps as in \cite{kach13}. 

3. The proof is by induction on the complexity of $ \varphi $ using Definition \ref{def:forcing} of forcing. Let us consider the case when $ \varphi $ is an atomic formula $ \vdash_{t(\bar{X})} \psi(\bar{X}) $. All other cases are proven the same way as in \cite{kach13}.

$ \Rightarrow $ Suppose $ \beta \Vdash \varphi (\bar{f}) $. Consider an arbitrary path $ S $ in $ M $ through $ \alpha $. Denote $ S'= \left\lbrace \tilde{\xi}(\gamma) \mid \gamma\in S \right\rbrace $. By Lemma \ref{lemma:permutations}.5, $ S' $ is a path in $ M $ through $ \beta $. So there is $ \delta'\in S' $ such that $ Val\left(\delta',\vdash_{t(\bar{f})} \psi(\bar{f}) \right)=T $. There exists $ \gamma' $ such that $ \delta'\preccurlyeq\gamma' $, $lh(\gamma')=t(\bar{f})^{[\delta']}$ and $ \gamma' \Vdash \psi(\bar{f})$. Clearly, $\gamma' \in S'$ and for some $\gamma\in S,\gamma'=\tilde{\xi}(\gamma) $. By part 2, $\gamma \Vdash \psi (\bar{g})$.

For some $\delta \in S$, $\delta'=\tilde{\xi}(\delta)$ and $\delta\preccurlyeq\gamma$. By the definition of $ \tilde{\xi} $ and part 1,
\smallskip
\\$lh(\gamma)=lh(\gamma')=t(\bar{f})^{[\delta']}=t(\bar{g})^{[\delta]}$, so $ Val\left(\delta,\vdash_{t(\bar{g})} \psi(\bar{g}) \right)=T $. Hence $\alpha \Vdash \varphi (\bar{g}) $.
\bigskip

$ \Leftarrow $ follows from the previous if we take $ \eta_{0},\ldots,\eta_{s-1} $ instead of $ \xi_{0},\ldots,\xi_{s-1} $.
\end{proof}

The following lemma states some properties of forcing in the model $ \mathcal{B}_s $.

\begin{lemma}
Suppose:
\begin{list}{}{}
\item $ \bar{X} $ is a list of variables $ X_1,X_2,\ldots,X_r $; 
\item $ \bar{f} $ is a list $f_1, f_2,\ldots,f_r $  of objects from corresponding domains.
\end{list}

\begin{enumerate}
\item Suppose $\varphi(\bar{X}) $ is a formula of $ LP_s $ with all its parameters in the list $ \bar{X} $ and $ sort(\varphi)\leqslant m $, where $ m\geqslant 1$. Suppose 
\[ \bigwedge_{i=0}^{m-1} \left( \langle \alpha \rangle_i  = \langle \beta\rangle _i  \right).\]
Then 
\[ \left(\alpha\Vdash \varphi(\bar{f})\right)\Leftrightarrow\left(\beta\Vdash \varphi(\bar{f})\right).\]

\item Suppose $\varphi(\mathcal{H}^p,\bar{X}) $ is a formula of $ LP_s $ with all its parameters in the list $\mathcal{H}^p, \bar{X} $; $ sort(\varphi)\leqslant p $ and $ \varphi $ does not have non-lawlike parameters of type $ p $ other than $ \mathcal{H}^p $. Suppose $g,h\in l_{p}$ and 
\[(\forall n < lh(\alpha)) \left( g(n)^{[\alpha]} = h(n)^{[\alpha]}\right).  \]
Then 
\[ \left(\alpha\Vdash \varphi(g,\bar{f})\right)\Leftrightarrow\left(\alpha\Vdash \varphi(h,\bar{f})\right).\]

\end{enumerate} \label{lemma:lawless2}

\end{lemma}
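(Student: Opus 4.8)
The plan is to deduce both parts from Lemma~\ref{lemma:equiv_nodes}.3 by choosing the fixed permutations $\xi_0\in c_1,\dots,\xi_{s-1}\in c_s$ of Section~7 (and hence the induced isomorphism $\tilde{\xi}$ of $M$ and the maps $\Lambda_k$) in a way tailored to the given data, so that $\tilde{\xi}$ maps the relevant node onto itself in Part~2 (resp.\ maps $\alpha$ onto $\beta$ in Part~1), while every $\Lambda_{k_i}$ fixes the corresponding parameter $f_i$ (in Part~2, except that $\Lambda_p$ must send $g$ to $h$). I will use two standing observations: if $\xi_0,\dots,\xi_{k-1}$ are all the trivial permutation $\xi(n,y)=y$, then $\tilde{\xi}_{k-1}$ is the identity and hence, by induction on $k$ via Definition~\ref{def:permutations}, $\Lambda_k$ is the identity on $a_k$; and $\Lambda_k(A)=A$ for every lawlike $k$-functional $A$, as already noted after Definition~\ref{def:permutations}.

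\noindent\textbf{Part 1.} Since $m\geqslant1$ and $\langle\alpha\rangle_0=\langle\beta\rangle_0$, the two nodes have the same length $l:=lh(\alpha)=lh(\beta)$. Take $\xi_i$ trivial for $i<m$. For $m\leqslant i\leqslant s-1$, write $\langle\alpha\rangle_i=\langle x_0,\dots,x_{l-1}\rangle$ and $\langle\beta\rangle_i=\langle y_0,\dots,y_{l-1}\rangle$ in $a_i^{(l)}$, and let $\xi_i^{[j]}$ be, for $j<l$, a bijection of $a_i$ with $\xi_i^{[j]}(x_j)=y_j$ (the transposition of $x_j$ and $y_j$ works), and the identity for $j\geqslant l$; this defines $\xi_i\in c_{i+1}$. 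Then $\tilde{\xi}(\alpha)=\beta$ directly from the definition of $\tilde{\xi}$ and $\tilde{\tilde{\xi}}_i$. Since $\xi_0,\dots,\xi_{m-1}$ are trivial, $\Lambda_k$ is the identity for every $k\leqslant m$; because $sort(\varphi)\leqslant m$ every parameter $f_i$ has type $k_i\leqslant m$, so $\Lambda_{k_i}(f_i)=f_i$. Lemma~\ref{lemma:equiv_nodes}.3, applied with the permutations just chosen and $\beta=\tilde{\xi}(\alpha)$, yields $\beta\Vdash\varphi(\bar f)\Leftrightarrow\alpha\Vdash\varphi(\bar f)$.

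\noindent\textbf{Part 2.} Write $g=\nu_p(\zeta_g)$ and $h=\nu_p(\zeta_h)$ with $\zeta_g,\zeta_h\in c_p$ (Definition~\ref{def:my_beth_model}, item~7). Take $\xi_i$ trivial for every $i\neq p-1$, and set $\xi_{p-1}^{[n]}:=(\zeta_g^{[n]})^{-1}\circ\zeta_h^{[n]}$ for each $n$; each $\xi_{p-1}^{[n]}$ is a bijection of $a_{p-1}$, so $\xi_{p-1}\in c_p$. Unwinding the formulas for $\nu_p$, $\tilde{\tilde{\xi}}_{p-1}$, $\tilde{\xi}_{p-1}$ and $\Lambda_p$, and using that $\Lambda_{p-1}$ is the identity since $\xi_0,\dots,\xi_{p-2}$ are trivial, one computes $\Lambda_p(g)(x,n)=\zeta_g^{[n]}\big(\xi_{p-1}^{[n]}(\langle\langle x\rangle_{p-1}\rangle_n)\big)=\zeta_h^{[n]}(\langle\langle x\rangle_{p-1}\rangle_n)=h(x,n)$ for $n<lh(x)$, i.e.\ $\Lambda_p(g)=h$. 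For the remaining parameters $\Lambda_{k_i}(f_i)=f_i$: those of type $<p$ because $\xi_0,\dots,\xi_{p-2}$ are trivial, and those of type exactly $p$ because by hypothesis they are lawlike. Finally $\tilde{\xi}(\alpha)=\alpha$: components with index $\neq p-1$ are fixed since the corresponding $\xi_i$ are trivial, and for index $p-1$ the $n$-th entry of $\tilde{\tilde{\xi}}_{p-1}(\langle\alpha\rangle_{p-1})$ is $(\zeta_g^{[n]})^{-1}\big(\zeta_h^{[n]}(\langle\langle\alpha\rangle_{p-1}\rangle_n)\big)$, which equals $\langle\langle\alpha\rangle_{p-1}\rangle_n$ for $n<lh(\alpha)$ precisely because the hypothesis $g(n)^{[\alpha]}=h(n)^{[\alpha]}$ unwinds (via the interpretations of $\widehat{Ap}^{p}$ and $\nu_p$) to $\zeta_g^{[n]}(\langle\langle\alpha\rangle_{p-1}\rangle_n)=\zeta_h^{[n]}(\langle\langle\alpha\rangle_{p-1}\rangle_n)$ for $n<lh(\alpha)$. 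Then Lemma~\ref{lemma:equiv_nodes}.3, applied with parameter list $(\mathcal{H}^p,\bar X)$ and $\beta=\tilde{\xi}(\alpha)=\alpha$, gives $\alpha\Vdash\varphi(g,\bar f)\Leftrightarrow\alpha\Vdash\varphi(h,\bar f)$.

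\noindent\textbf{Main obstacle.} The construction of the permutations is routine; the single delicate point is the simultaneous unwinding in Part~2, where one must read off from the single hypothesis on the values $g(n)^{[\alpha]},h(n)^{[\alpha]}$ both that $\Lambda_p(g)=h$ and that $\tilde{\xi}$ fixes $\alpha$, which requires carefully composing the defining formulas of $\nu_p$ and $\widehat{Ap}^p$ in Definition~\ref{def:my_beth_model} with those of $\tilde{\tilde{\xi}}_{p-1}$, $\tilde{\xi}_{p-1}$ and $\Lambda_p$ in Definition~\ref{def:permutations}. No deeper difficulty is expected, because Lemma~\ref{lemma:equiv_nodes}.3 already carries the real load of transporting a node-automorphism through the entire forcing relation, including the clauses for the proof predicate $\vdash_t$.
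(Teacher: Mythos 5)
Your proof is correct and follows essentially the same route as the paper: both parts are reduced to Lemma \ref{lemma:equiv_nodes}.3 by constructing permutations that are trivial on the low-type components (so the $\Lambda_{k_i}$ fix the parameters) and, in Part 1, swap the entries of $\alpha$ and $\beta$ so that $\tilde{\xi}(\alpha)=\beta$, while in Part 2 the composite $(\zeta_g^{[n]})^{-1}\circ\zeta_h^{[n]}$ on component $p-1$ fixes $\alpha$ and carries one lawless functional to the other. The only difference from the paper is the immaterial choice of direction in Part 2 (you arrange $\Lambda_p(g)=h$ where the paper arranges $\Lambda_p(h)=g$).
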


\begin{proof} 
Denote $k_i$ the type of variable $X_i$, $i=1\ldots,r$.

1. Denote $ l=lh(\alpha)$. Since $ \langle \alpha \rangle_0  = \langle \beta\rangle _0 $, we have $lh(\beta)=lh(\alpha)=l$. For any $ k=0,\ldots,s-1 $ define $ \xi_{k} $ by the following: for $ n\in\omega,x\in a_k,$

\begin{displaymath}
\xi_k(n,x)=
\begin{cases}
\langle \langle \beta\rangle _k\rangle _n & \text{if } \left( n<l \& x= \langle \langle \alpha\rangle _k\rangle _n\right) ,\\
\langle \langle \alpha\rangle _k\rangle _n & \text{if } \left( n<l \& x= \langle \langle \beta\rangle _k\rangle _n\right) ,\\
x & \text{otherwise }.
\end{cases} 
\end{displaymath}

Then each $ \xi_k \in c_{k+1} $ and $ \beta=\tilde{\xi}(\alpha) $.
Denote $ g_i=\Lambda_{k_i}\left(f_i \right)$ for $ i=1,\ldots,r $. By Lemma \ref{lemma:equiv_nodes}.3, 

\begin{equation}
\beta\Vdash \varphi(\bar{f}) \Leftrightarrow\alpha\Vdash \varphi(\bar{g}). \label{eq:lawless1}
\end{equation} 

If $ k<m$, then $\langle \alpha \rangle_k  = \langle \beta\rangle _k$ and 
\begin{eqnarray}
(\forall n\in \omega)(\forall x \in a_k)\left(\xi_k^{[n]}(x)=x \right),
\nonumber\\ 
(\forall y \in d_k)\left(\tilde{\xi}_k(y)=y \right),
\nonumber
\end{eqnarray}
so for any $ j\leqslant m $,
\[ (\forall z\in a_j)\left(\Lambda_j(z)=z \right) . \]
Since $ sort(\varphi)\leqslant m $, for each $ i=1,\ldots,r$, we have $k_i\leqslant m $ and 
$ g_i=\Lambda_{k_i}(f_i)=f_i$. So by (\ref{eq:lawless1}) 
\[ \beta\Vdash \varphi(\bar{f})\Leftrightarrow\alpha\Vdash \varphi(\bar{f}).\]

2. For some $ \chi_1,\chi_2 \in c_{p} $, $ g=\nu_p(\chi_1) $ and $ h=\nu_p(\chi_2) $. Clearly $ p\geqslant 1$. Define permutations $ \xi_0,\ldots,\xi_{s-1} $:
\[\xi_{p-1}^{[n]}=\left( \chi_2^{[n]}\right)^{-1}\circ\chi_1^{[n]}; \quad   \xi_k^{[n]}(x)=x\;  \text{ for }k\neq p-1. \]

By Lemma \ref{lemma:equiv_nodes}.3, it is sufficient to prove the following:
\begin{equation}
\tilde{\xi}(\alpha)=\alpha; \label{eq:lawless2}
\end{equation}
\begin{equation}
\Lambda_{k_i}\left(f_i \right)=f_i,\;i=1,\ldots,r; \label{eq:lawless3}
\end{equation}
\begin{equation}
\Lambda_p(h)=g. \label{eq:lawless4}
\end{equation}

\begin{center}
Proof of (\ref{eq:lawless2}). 
\end{center}

Denote $\alpha=\langle\alpha_0,\ldots,\alpha_{s-1}\rangle $. Then $ \tilde{\xi} (\alpha)= \langle \tilde{\tilde{\xi}}_0(\alpha_0),\ldots,\tilde{\tilde{\xi}}_{s-1}(\alpha_{s-1}) \rangle$.
Clearly $\tilde{\tilde{\xi}}_k(\alpha_k)=\alpha_k$ for $ k\neq p-1 $.

Now consider $ k=p-1 $. Fix $ n<lh(\alpha) $. Since $ g=\nu_p(\chi_1) $ and $ h=\nu_p(\chi_2) $, 
\[ \chi_1^{[n]}\left( \langle \alpha_{p-1}\rangle_n \right) = g(\bar{\alpha}(p),n) =g(n)^{[\alpha]}= h(n)^{[\alpha]}=h(\bar{\alpha}(p),n)=\chi_2^{[n]}\left( \langle \alpha_{p-1}\rangle_n \right). \]

Thus, $ \chi_1^{[n]}\left( \langle \alpha_{p-1}\rangle_n \right) =\chi_2^{[n]}\left( \langle \alpha_{p-1}\rangle_n \right) $ and 
\[ \xi_{p-1}^{[n]}\left(\langle \alpha_{p-1} \rangle_n \right) = \left( \chi_2^{[n]}\right) ^{-1}
\left(  \chi_1^{[n]}\left( \langle \alpha_{p-1}\rangle_n \right) \right) =
\langle \alpha_{p-1} \rangle_n. \]

So for any $ n<lh(\alpha) $, $ \xi_{p-1}^{[n]}\left(\langle \alpha_{p-1} \rangle_n \right) = \langle \alpha_{p-1} \rangle_n$ and $\tilde{\tilde{\xi}}_{p-1}(\alpha_{p-1})=\alpha_{p-1} $.

\begin{center}
Proof of (\ref{eq:lawless3}). 
\end{center}

For $k<p-1,\; \tilde{\xi}_k(x)=x $ for any $ x\in d_k$ and 
\begin{equation}
(\forall y \in a_{j})\left( \Lambda_j(y)=y\right), j<p. \label{eq:lawless5}
\end{equation}
If $ f_i $ is a domain object for a lawlike $ k_i $-functional, then $\Lambda_{k_i}(f_i)=f_i  $; otherwise $ k_i<p $ and $\Lambda_{k_i}(f_i)=f_i  $ by (\ref{eq:lawless5}).

\begin{center}
Proof of (\ref{eq:lawless4}). 
\end{center}

For any $ n\in\omega,x\in d_{p-1}$,
\[ \Lambda_p(h)(x,n)=\Lambda_{p-1}\left(h\left(\tilde{\xi}_{p-1}(x),n \right) \right)= h\left( \tilde{\xi}_{p-1}(x),n \right) \text{ by } (\ref{eq:lawless5}).\]

Since $ \tilde{\xi}_{p-1}(x) $ has the same length as $ x $, it is sufficient to prove:
\[h\left( \tilde{\xi}_{p-1}(x),n \right)=g(x,n) \text{ for any }n<lh(x) .\]

Since $g=\nu_p(\chi_1)$ and $ h=\nu_p(\chi_2) $, we have: 
\begin{multline*}
h\left( \tilde{\xi}_{p-1}(x),n \right)= \chi_2^{[n]}\left(\langle \langle \tilde{\xi}_{p-1}(x)\rangle_{p-1}\rangle_n\right)=\chi_2^{[n]}\left(\xi_{p-1}^{[n]}\left(\langle \langle x\rangle_{p-1}\rangle_n \right)\right)\\
=\chi_2^{[n]}\left[\left(\chi_2^{[n]} \right)^{-1}\circ\chi_1^{[n]} \left(\langle \langle x\rangle_{p-1}\rangle_n  \right)\right]
=\chi_1^{[n]} \left(\langle \langle x\rangle_{p-1}\rangle_n \right)=g(x,n).
\end{multline*}
\end{proof}

\section{Axioms for lawless functionals}

We can introduce finite sequences in $LP$ and for any $k$-functional $F$ use the notation $\bar{F}(n)=<F(0),\ldots,F(n-1)>$.

The following are modifications for the language $ LP $ of the axioms for lawless sequences in \cite{vand78} and the axioms for lawless functionals in \cite{bern76}. 
\medskip

$ (LL1) $ The axiom of existence of lawless functionals:
\[ \exists \mathcal{F}^n (\forall y\leqslant x)(\mathcal{F}(y)=F^n(y)).\]

\medskip

$ (LL2) $ $\mathcal{F}^n =\mathcal{G}^n \vee \mathcal{F}^n\ne\mathcal{G}^n. $
\medskip

$ (LL3) $ The principle of open data:
\[\varphi(\mathcal{H}^n)\supset\exists x\forall\mathcal{G}^n [ \mathcal{\bar{G}}(x)=\mathcal{\bar{H}}(x) \supset\varphi(\mathcal{G})],\]
where $ \varphi $ is a formula of $ LP $, $sort(\varphi) \leqslant n$ and $\varphi$ does not have non-lawlike parameters of type $n$ other than $\mathcal{H}^n$.
\smallskip

Denote $ (LL) $ a conjunction of the closures of $ (LL1) $, $ (LL2) $ and $ (LL3) $.

\begin{theorem}
$\mathcal{B}_s \Vdash (LL _{s}) $.
\label{theorem:lawless}
\end{theorem}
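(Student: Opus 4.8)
The plan is to verify $\varepsilon \Vdash (LL_s)$ by checking each of the three conjuncts $(LL1_s)$, $(LL2_s)$, $(LL3_s)$ separately, using the explicit structure of the lawless domains $l_k = \{\nu_k(\xi) \mid \xi \in c_k\}$ and the basic forcing lemmas from Section 7. Since each axiom is closed by universal quantifiers over its parameters, I would work at an arbitrary node $\alpha \in M$ with arbitrary evaluated parameters and show $\alpha$ forces the relevant matrix.

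For $(LL1_s)$: given an evaluated $n$-functional $F \in a_n$ and a natural number $x$, I need to exhibit, along every path, a node forcing $\exists \mathcal{F}^n(\forall y \leqslant x)(\mathcal{F}(y) = F(y))$. Take a path $S$ through $\alpha$; by Lemma~\ref{lemma:term_int}.4 there is a node $\beta \in S$ deep enough that $F(y)^{[\beta]}\downarrow$ for all $y \leqslant x$, i.e. $lh(\beta) > x$. Now construct a permutation $\xi \in c_n$ so that $\nu_n(\xi)$ agrees with $F$ on the first $x+1$ path-coordinates at $\beta$: for each $m \leqslant x$ choose $\xi^{[m]}$ to be any bijection on $a_{n-1}$ sending $\langle\langle\beta\rangle_{n-1}\rangle_m$ to $F(\bar\beta(n),m)$, and let $\xi^{[m]}$ be the identity for $m > x$. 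Then $\mathcal{F} = \nu_n(\xi) \in l_n$ and $\beta \Vdash (\forall y \leqslant x)(\mathcal{F}(y) = F(y))$ by Lemma~\ref{lemma:lawless1} and the definition of $\widehat{Ap}$, which gives $\beta \Vdash \exists\mathcal{F}^n(\ldots)$ and hence $\alpha \Vdash (LL1_s)$.

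For $(LL2_s)$: given $g,h \in l_n$, I claim $\alpha$ decides $g = h$. Writing $g = \nu_n(\chi_1)$, $h = \nu_n(\chi_2)$, the interpretations $g^{[\alpha]}, h^{[\alpha]}$ are total functions on $\omega$ in $a_n$ (these are the full maps, not the truncated ones), so either $g = h$ as elements of $a_n$ or not, and this is decided outright. If $g = h$, then $Val(\beta, g =_n h) = T$ for every $\beta \preccurlyeq \alpha$, so $\alpha \Vdash g =_n h$. If $g \neq h$, then $Val(\beta, g=_n h) = \bot$ at every $\beta$ (including those where both are defined), so no $\beta \preccurlyeq \alpha$ forces $g =_n h$, whence $\alpha \Vdash \neg(g =_n h)$. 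Either way the disjunction is forced.

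The main work is $(LL3_s)$, the principle of open data. Suppose $\alpha \Vdash \varphi(g, \bar f)$ where $g = \nu_p(\chi) \in l_n$ (so $p = n$), $sort(\varphi) \leqslant n$, and $\varphi$ has no non-lawlike type-$n$ parameters besides $\mathcal{H}^n$. I must force $\exists x \forall \mathcal{G}^n[\bar{\mathcal{G}}(x) = \bar{g}(x) \supset \varphi(\mathcal{G}, \bar f)]$. The natural witness is $x = lh(\alpha)$: take any path through $\alpha$, the node $\alpha$ itself forces $\exists x(\ldots)$ with this $x$ provided $\alpha \Vdash \forall \mathcal{G}^n[\bar{\mathcal{G}}(lh(\alpha)) = \bar g(lh(\alpha)) \supset \varphi(\mathcal{G}, \bar f)]$. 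To check the latter, fix $h \in l_n$ and $\gamma \preccurlyeq \alpha$ with $\gamma \Vdash \bar{h}(lh(\alpha)) = \bar{g}(lh(\alpha))$; this forcing of an equality of finite sequences means $h(m)^{[\gamma]} = g(m)^{[\gamma]}$ for all $m < lh(\alpha)$, and since $lh(\gamma) \geqslant lh(\alpha)$ we in particular get $h(m)^{[\gamma]} = g(m)^{[\gamma]}$ for all $m < lh(\alpha) \leqslant lh(\gamma)$ — but I actually need agreement up to $lh(\gamma)$ to invoke Lemma~\ref{lemma:lawless2}.2 directly, so the honest route is to first pass to a deep enough $\gamma$ and note that the hypothesis $\bar{h}(lh(\alpha)) = \bar g(lh(\alpha))$ only guarantees agreement below $lh(\alpha)$; the fix is that the matrix uses $x = lh(\alpha)$ and I should instead verify the $\forall\mathcal{G}$ claim at $\alpha$ by monotonicity restricted to $\gamma$ with the agreement holding on all of $\bar\gamma$, which forces refining $x$ to $lh(\gamma)$ — cleanly handled by choosing, for the existential, the path-dependent witness and using that $(\forall n < lh(\alpha))(g(n)^{[\alpha]} = h(n)^{[\alpha]})$ is exactly the hypothesis of Lemma~\ref{lemma:lawless2}.2 at the node $\alpha$ (not $\gamma$). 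So the clean argument: at $\alpha$, witness $x := lh(\alpha)$; for $\gamma \preccurlyeq \alpha$ forcing $\bar{h}(x) = \bar{g}(x)$, by Lemma~\ref{lemma:term_int} this gives $h(m)^{[\alpha]} = g(m)^{[\alpha]}$ for all $m < lh(\alpha)$ (pulling back to $\alpha$ since values are preserved upward), so $(\forall m < lh(\alpha))(g(m)^{[\alpha]} = h(m)^{[\alpha]})$ holds; then Lemma~\ref{lemma:lawless2}.2 yields $\alpha \Vdash \varphi(g, \bar f) \Leftrightarrow \alpha \Vdash \varphi(h, \bar f)$, and by monotonicity from $\alpha \Vdash \varphi(g, \bar f)$ we get $\gamma \Vdash \varphi(h, \bar f)$. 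This establishes $\alpha \Vdash \forall \mathcal{G}^n[\ldots]$ and hence $(LL3_s)$. The subtle point to get exactly right is the direction of preservation of finite-sequence equality between $\gamma$ and $\alpha$, and the side condition on parameters needed to apply Lemma~\ref{lemma:lawless2}.2 — that is where I expect the only real friction.
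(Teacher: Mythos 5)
Your proposal is correct and follows essentially the same route as the paper: $(LL1_s)$ by building a permutation $\xi\in c_n$ whose sections match $F$ on the first $x+1$ coordinates of a sufficiently deep node of the given path (the paper uses an explicit transposition and takes $\gamma=\min\{\alpha,\beta\}$ to guarantee both $lh(\gamma)>x$ and definedness of $F(y)^{[\gamma]}$), $(LL2_s)$ from the node-independence of the valuation of equality between domain objects, and $(LL3_s)$ with witness $x=lh(\alpha)$ followed by Lemma \ref{lemma:lawless2}.2 applied at the node $\alpha$ and monotonicity down to $\gamma$. The only point to tidy is in $(LL1_s)$: definedness of $F(y)^{[\beta]}$ for all $y\leqslant x$ does not by itself give $lh(\beta)>x$ for an arbitrary $F\in a_n$, so one must descend further along the path to secure that inequality, exactly as the paper does.
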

\begin{proof}
$(LL1_s)$. Fix $f\in a_{n}$ and a path $ S $ in $ M $. By induction on $ x $ we will prove:
\[(\exists \alpha \in S)(\forall y\leqslant x)f(y)^{[\alpha]} \downarrow.\] 

For $ x=0 $ it follows from Lemma \ref{lemma:term_int}.4. 

Assume it holds for $ x $: for some $ \beta\in S $, $ (\forall y\leqslant x)f(y)^{[\beta]} \downarrow $. By Lemma \ref{lemma:term_int}.4, there is $ \gamma\in S $ such that $ f(x+1)^{[\gamma]}\downarrow $. For $ \alpha=min\left\lbrace \beta,\gamma \right\rbrace$ we have $ (\forall y\leqslant x+1)f(y)^{[\alpha]} \downarrow $.

Now let us prove that $(LL1_s)$ holds in the model. Fix an arbitrary $ x \in \omega$ and $ \alpha \in S$ such that $ (\forall y\leqslant x)f(y)^{[\alpha]} \downarrow$ . 
\smallskip

The path $ S=\left\lbrace \langle \bar{g}_0(k),\ldots,\bar{g}_{s-1}(k)\rangle \mid k\in \omega \right\rbrace$ for some functions 
\smallskip
\\$ g_0:\omega\rightarrow a_0,\ldots, g_{s-1}:\omega\rightarrow a_{s-1}$.
Denote $ \beta = \langle\bar{g}_0(x+1),\ldots, \bar{g}_{s-1}(x+1)\rangle$ 
\smallskip
\\and $\gamma=min\left\lbrace \alpha,\beta \right\rbrace$. 
Then $\beta, \gamma \in S$ and $ (\forall y\leqslant x)f(y)^{[\gamma]} \downarrow $.

Define $ \xi $ by:
\begin{displaymath}
\xi(y,z)=
\begin{cases}
f(y)^{[\gamma]} & \text{if } y\leqslant x\& z=g_{n-1}(y),\\
g_{n-1}(y) & \text{if } y\leqslant x\& z=f(y)^{[\gamma]},\\
z & \text{otherwise }.
\end{cases}
\end{displaymath} 

For any $ y\leqslant x$, $ g_{n-1}(y)\in a_{n-1} $ and $ f(y)^{[\gamma]}\in a_{n-1} $. So $ \xi:(\omega\times a_{n-1})\rightarrow a_{n-1}$ and for any $ y\in \omega,\; \xi^{[y]}$ is a bijection on $ a_{n-1} $. Therefore $ \xi\in c_{n} $. Take $h=\nu_n(\xi)\in l_{n} $. It remains to prove:
\[ \gamma\Vdash (\forall y\leqslant x)\left[ h(y)=f(y)\right].\]

For any $ y\leqslant x,h(y)^{[\gamma]}=h\left(\bar{\gamma}(n),y \right)=\xi^{[y]}\left( \langle\langle\gamma\rangle_{n-1}\rangle_y\right)$. Since $ \gamma\preccurlyeq\beta $, we have
\medskip\\
$\langle\gamma\rangle_{n-1} \leqslant \langle\beta\rangle_{n-1}$ and $\langle\langle\gamma\rangle_{n-1}\rangle_y=g_{n-1}(y) $. So $h(y)^{[\gamma]}=\xi^{[y]}(g_{n-1}(y))=f(y)^{[\gamma]}$
\smallskip
\\by the definition of $ \xi $.
\medskip

For $(LL2_s)$ the proof follows from the interpretation of equality.
\medskip

$(LL3_s)$. Suppose $ \bar{X} $ is a list of variables $ X_1,\ldots,X_r $ and $ \bar{f} $ is a list $ f_1,\ldots,f_r $ of objects from corresponding domains. Suppose all parameters of $ \varphi $ are in the list $\mathcal{H}^n, \bar{X} $. Suppose $h\in l_{n}$ and 

\begin{equation}
\alpha\Vdash\varphi(h,\bar{f}). \label{eq:lawless6}
\end{equation}

Take $ x=lh(\alpha) $. It is sufficient to prove that for any $ g\in l_{n}$, 
\[ \alpha\Vdash (\forall y < x)\left(g(y)=h(y)\right) \supset \varphi(g,\bar{f}) . \]

Suppose $ \beta\preccurlyeq \alpha$ and $ (\forall y<x)\beta\Vdash (g(y)=h(y))$. Since $ x=lh(\alpha) $, we have
\medskip\\
$(\forall y<x) \left[ g(y)^{[\alpha]}\downarrow \& h(y)^{[\alpha]}\downarrow\right]$ by Lemma \ref{lemma:lawless1}. 
So $(\forall y<x)\left( g(y)^{[\alpha]}=h(y)^{[\alpha]}\right)$.
\medskip\\
By (\ref{eq:lawless6}) and Lemma \ref{lemma:lawless2}.2, $ \alpha\Vdash \varphi(g,\bar{f})$.
\end{proof}

\section{Choice axioms in the model $\mathcal{B}_s$}
For an $m$-functional $ F$ of language $LP$ and $n\leqslant m$ denote  $F\underbrace{(0)\ldots(0)}_n$ as $ F(0)^n$ for brevity.
\medskip

We consider the following two axioms of choice and show that they hold in the model $\mathcal{B}_s$.

$ (C1) $ The axiom of choice for numbers:
\[ \forall x\exists y \varphi(x,y)\supset\exists F^m\forall x\varphi(x,F(x)(0)^{m-1}), \] 
where $ \varphi $ is a formula of $ LP $ and $m \geqslant max(sort(\varphi),1) $.
\bigskip

$ (C2) $ The axiom of choice with uniqueness:  
\[ \forall x\exists!G^n \varphi(x,G)\supset\exists F^m\forall x\varphi(x,F(x)(0)^{m-n-1}), \] 
where $ \varphi $ is a formula of $ LP $ and $m \geqslant max(sort(\varphi),n+1) $. 

Denote $ (C) $ a conjunction of the closures of $ (C1) $ and $ (C2) $.

\begin{theorem}
$\mathcal{B}_s \Vdash (C_{s}) $.
\label{theorem:choice}
\end{theorem}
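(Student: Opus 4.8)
The plan is to prove $\mathcal{B}_s\Vdash(C1_s)$ and $\mathcal{B}_s\Vdash(C2_s)$ separately, in each case unfolding the forcing of the implication: fix a node $\alpha$, assume $\alpha\Vdash\forall x\exists y\,\varphi(x,y)$ (resp.\ the uniqueness hypothesis), and build an $m$-functional $F\in a_m$ whose existence is forced at $\alpha$. The crucial feature of the model, highlighted right after Definition \ref{def:my_beth_model}, is that a $k$-functional at node $\alpha$ depends on all components $\langle\alpha\rangle_0,\ldots,\langle\alpha\rangle_{k-1}$; this is exactly what lets us read off a choice function from the forcing data along a path, so this remark is what I would lean on.

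For $(C1_s)$: fix $\alpha$ with $\alpha\Vdash\forall x\exists y\,\varphi(x,y)$, so for every $x\in\omega$ and every path $S$ through $\alpha$ there is $\beta\in S$ and $y\in\omega$ with $\beta\Vdash\varphi(x,y)$. I want to define $F\in a_m$ so that at each node extending $\alpha$, $F(x)(0)^{m-1}$ equals such a witness $y$. Concretely I would define a partial function $F:(d_{m-1}\times\omega)\dashrightarrow a_{m-1}$ by searching, for a node $z\in d_{m-1}$ extending $\bar{\alpha}(m)$ and input $x$, for the least (in some fixed well-ordering of the finitely much data visible at $z$) $\beta\preccurlyeq\alpha$ with $\bar\beta(m)$ an initial segment of the path determined by $z$, such that $\beta$ forces $\varphi(x,y)$ for some $y$, then set $F(z,x)$ to code that $y$ via the canonical element (the interpretation making $F(z,x)(0)^{m-1}=y$, e.g.\ a height-shifted constant). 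One checks $F$ is monotonic (once a witness appears it persists, by monotonicity of forcing, Lemma \ref{lemma:Beth}.1) and complete (a witness is eventually found on every path, by the $\exists$-clause of forcing and Lemma \ref{lemma:term_int}.4); then $F\in a_m$, and by construction $\alpha\Vdash\forall x\,\varphi(x,F(x)(0)^{m-1})$, so $\alpha\Vdash\exists F^m\forall x\,\varphi(x,F(x)(0)^{m-1})$. Off-path nodes can be handled by extending $F$ arbitrarily (say by $\widehat K$-values) since forcing at $\alpha$ only quantifies over $\beta\preccurlyeq\alpha$ and over paths through $\alpha$.

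For $(C2_s)$: the hypothesis $\alpha\Vdash\forall x\exists!G^n\,\varphi(x,G)$ gives, for each $x$, a forced $n$-functional $G_x$, unique in the forcing sense. Now I would define $F\in a_m$ so that at a node $z$ the $n$-functional $F(z,x)(0)^{m-n-1}$ agrees with (the interpretation at the corresponding node of) $G_x$ on the initial segment determined by $z$. The uniqueness clause is what makes this coherent: different witnesses found along the same path must be forced-equal, hence have the same interpretation on the common initial segment (Lemma \ref{lemma:term_int}.2, Lemma \ref{lemma:sub_in_forcing}), so the pieced-together $F$ is well-defined, monotonic, and complete. Then $\alpha\Vdash\forall x\,\varphi(x,F(x)(0)^{m-n-1})$ follows from Lemma \ref{lemma:sub_in_forcing} (substituting the interpretation) together with the defining property of $G_x$.

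The main obstacle I anticipate is the bookkeeping needed to make $F$ genuinely \emph{monotonic and complete on} $d_{m-1}$ while having the correct interpretation at every node extending $\alpha$ — in particular, the search that picks the witness $y$ (or the witness $n$-functional) must be performed uniformly from the finite data available at a node $z\in d_{m-1}$, must be stable under passing to longer nodes, and must be guaranteed to converge along every path; reconciling ``least witness found so far'' with monotonicity is where one has to be careful, and for $(C2_s)$ the additional wrinkle is propagating the forced-uniqueness into honest equality of interpretations so the values of $F$ cohere across the tree. Everything else — verifying the implication clause of forcing, handling parameters $\bar f$, and dealing with off-path nodes — is routine given Lemmas \ref{lemma:term_int}, \ref{lemma:sub_in_forcing} and the monotonicity of forcing.
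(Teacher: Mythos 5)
Your proposal takes essentially the same route as the paper's proof: for each node $u$ compatible with $\bar{\alpha}(m)$ the paper assigns the witness found at the unique shortest ancestor node where the existential is first forced (its predicate $\textit{shortest}$, matching your ``least witness'' search), codes it as an $(m-1)$-functional via nested $\widehat{K}$-like wrappers, uses $\widehat{K}$ on incompatible nodes, and verifies monotonicity and completeness exactly as you indicate, with the forced uniqueness in $(C2_s)$ upgraded to genuine equality of domain elements. The only device you do not name explicitly is the $extension$ operation together with Lemma \ref{lemma:lawless2}.1 (that forcing of a formula of sort $\leqslant m$ depends only on the first $m$ components of a node), which is how the paper makes the construction live entirely in $d_{m-1}$; this is a presentational refinement of the observation you already lean on, not a different idea.
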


\begin{proof}
In case $ s=1 $ the proof is the same as in \cite{vand78}, since the model described there is essentially $\mathcal{B}_1$. We will assume $ s\geqslant 2 $.
\medskip

For $a\in a_0^{(k)} \times\ldots\times a_{m-1}^{(k)}$, denote:
\[extension(a)=a\ast<<\underbrace{
\widehat{K}^m,\ldots,\widehat{K}^m}_k>,\ldots,<\underbrace{\widehat{K}^{s-1},\ldots,\widehat{K}^{s-1}}_k >>.\]

The operation of extension extends a node $a\in d_{m-1}$ by constant functionals to a node in the domain $M$; it preserves the length of the node. 
\medskip

By Lemma \ref{lemma:lawless2}.1, for any formula $\chi $ with $ sort(\chi)\leqslant m $ and its evaluation $ \tilde{\chi} $:

\begin{equation}
\bar{\alpha}(m)=\bar{\beta}(m)\Rightarrow\left(\alpha\Vdash\tilde{\chi}\Leftrightarrow \beta\Vdash\tilde{\chi}\right).  
\label{eq:contraction}
\end{equation}
\medskip

\begin{center}
PROOF for $(C1_s)$
\end{center}

Denote $ \psi(x,y) $ the formula $ \varphi(x,y) $, in which all parameters, except $x$ and $y$, are replaced by objects from corresponding domains.

Denote 

\[ \textit{shortest }(\alpha,\beta,x)\leftrightharpoons \alpha\preccurlyeq \beta\& \exists y \left[\beta\Vdash \psi(x,y)\right]  \& (\forall \gamma \succ\beta)\neg \exists z \left[\gamma\Vdash \psi(x,z)\right]. \]

We have: 
\begin{equation}
\exists y \left[\alpha\Vdash \psi(x,y)\right] \Rightarrow \exists! \beta \textit{shortest }(\alpha,\beta,x).
\label{eq:shortest1}
\end{equation}

Indeed, of the finite number of nodes $\beta\succcurlyeq \alpha$ with $\exists y \left[\beta\Vdash \psi(x,y)\right]$ we take the unique $\beta$ with smallest length.

Fix $\alpha\in M$. Suppose 
\[ \alpha\Vdash\forall x\exists y\psi(x,y). \]
Denote $\alpha'=extension(\bar{\alpha}(m))$. So by (\ref{eq:contraction}),
\begin{equation}
 \alpha'\Vdash\forall x\exists y\psi(x,y) 
\label{eq:choice1}
\end{equation}

CASE $m=2$. First we construct the proof for this case because it clearly demonstrates the main construction. Other cases will be considered later. 

Thus, $ sort(\varphi)\leqslant 2 $. 

By (\ref{eq:contraction}), it is sufficient to prove: 
\begin{equation}
\alpha'\Vdash \exists F^2\forall x\psi(x,F(x)(0)).
\label{eq:choice2}
\end{equation}

First we construct a function $f\in a_2$ that can be used in (\ref{eq:choice2}). 

For fixed $\beta$ and $x\in \omega$ define function $q_{\beta, x}$: 
\begin{displaymath}
q_{\beta,x}(v,k)=
\begin{cases}
0 \quad \text{ if } lh(v)\geqslant lh(\beta) \& \neg(v\preccurlyeq_0\bar{\beta}(1)),\\
\text{the smallest number }y\text{, for which }\beta\Vdash \psi(x,y) \\
\qquad \text{if } v\preccurlyeq_0\bar{\beta}(1)\& \exists z\left[\beta\Vdash \psi(x,z)\right] ,\\
\text{undefined } \qquad \text{ otherwise.}
\end{cases}
\end{displaymath} 

Next we construct a function that can be used in (\ref{eq:choice2}).
\begin{displaymath}
f(u,x)=
\begin{cases}
\widehat{K}^1 \qquad \text{ if } lh(u)\geqslant lh(\alpha) \& \neg(u\preccurlyeq_1\bar{\alpha}(2)),\\
q_{\beta, x} \text{ for the unique } \beta \text{ with } shortest(extension(u),\beta,x) \\ 
\qquad \qquad \text{ if } u\preccurlyeq_1\bar{\alpha}(2) \& \exists y\left[extension(u)\Vdash \psi(x,y)\right] ,\\
\text{undefined } \quad \text{ otherwise.}
\end{cases}
\end{displaymath} 

\begin{center}
Proof that $f\in a_2$.
\end{center}
 
1) First we prove that $f:(d_1\times\omega)\dashrightarrow a_1$. 
Suppose $f(u,x)\downarrow$. We need to prove that $f(u,x)\in a_1$. 

When $f(u,x)=\widehat{K}^1$, it is obvious. Assume $f(u,x)=q_{\beta, x}$. Then 
$u\preccurlyeq_1\bar{\alpha}(2)$, $shortest(extension(u),\beta,x)$ and 
\begin{equation}
\exists z\left[\beta\Vdash \psi(x,z)\right]. 
\label{eq:choice3}
\end{equation}

The proof that $q_{\beta,x}\in a_1$ consists of three parts.

a) Clearly, $q_{\beta,x}:(d_0\times\omega)\dashrightarrow a_0$.

b) Monotonicity of $q_{\beta,x}$.

Suppose $q_{\beta,x}(v,k)\downarrow$ and $w\preccurlyeq_0v$. Then $lh(v)\geqslant lh(\beta)$. There are two cases.
\smallskip

i) $\neg(v\preccurlyeq_0 \bar{\beta}(1))$.

If $w\preccurlyeq_0 \bar{\beta}(1)$, then $\bar{\beta}(1)=\bar{w}(m)$ and $v=\bar{w}(n)$ for some $m$ and $n$, which 
\smallskip
\\implies $v\preccurlyeq_0 \bar{\beta}(1)$ due to $lh(v)\geqslant lh(\beta)$. Thus, $\neg(w\preccurlyeq_0 \bar{\beta}(1))$ and $q_{\beta,x}(w,k)=0=q_{\beta,x}(v,k)$.
\smallskip

ii) $v\preccurlyeq_0 \bar{\beta}(1)$.

Then $w\preccurlyeq_0 \bar{\beta}(1)$. By (\ref{eq:choice3}) both $q_{\beta,x}(w,k)$ and $q_{\beta,x}(v,k)$ equal the smallest number $y$, for which $\beta\Vdash \psi(x,y)$.

c) Completeness of $q_{\beta,x}$.

Consider a path $S_0$ in $d_0$ and $k\in \omega$. Take $v\in S_0$ with length $lh(\beta)$. There are two cases.

i) $\bar{\beta}(1)\notin S_0$.

Then $\neg(v\preccurlyeq_0 \bar{\beta}(1))$ and $q_{\beta,x}(v,k)=0$.

ii) $\bar{\beta}(1)\in S_0$.

Then $v=\bar{\beta}(1) $ and by (\ref{eq:choice3}), $q_{\beta,x}(v,k)$ equals the smallest number $y$, for which $\beta\Vdash \psi(x,y)$.

This completes the proof that $q_{\beta,x}\in a_1$ and therefore $f:(d_1\times\omega)\dashrightarrow a_1$.

2) Monotonicity of $f$.

Suppose $f(u,x)\downarrow$ and $v\preccurlyeq_1 u$. Then $lh(v)\geqslant lh(u)\geqslant lh(\alpha)$. There are two cases.

i) $\neg(u\preccurlyeq_1\bar{\alpha}(2))$.

In this case $\neg(v\preccurlyeq_1\bar{\alpha}(2))$, so $f(v,x)=\widehat{K}^1=f(u,x)$.

ii) $u\preccurlyeq_1\bar{\alpha}(2)$.

Then $v\preccurlyeq_1\bar{\alpha}(2)$, there is a unique $\beta$ with $shortest(extension(u),\beta,x)$ and $f(u,x)=q_{\beta,x}$.

For some $y$ we have:  $extension(u)\Vdash \psi(x,y)$. 

Since $extension(v)\preccurlyeq extension(u)$, we have: $shortest(extension(v),\beta,x)$ and by monotonicity of forcing, $extension(v)\Vdash \psi(x,y)$. So $f(v,x)=q_{\beta, x}$ for the same $\beta$.

3) Completeness of $f$.

Consider $x\in \omega$ and a path $S_1$ in $d_1$. There are two cases.

i) $\bar{\alpha}(2)\notin S_1$.

Take $u\in S_1$ with length $lh(\alpha)$. Then $\neg(u\preccurlyeq_1\bar{\alpha}(2))$ and $f(u,x)=\widehat{K}^1$.

ii) $\bar{\alpha}(2)\in S_1$.

Denote $S'_1=\left\lbrace extension(v)\mid v\in S_1\right\rbrace $. Then $S'_1$ is a path in $M$ through $\alpha'$ and by (\ref{eq:choice1}), for some $\delta\in S'_1$, $\exists y\left[\delta\Vdash \psi(x,y)\right] $. 
\smallskip

Take $\gamma=min\{\alpha',\delta\}$ and $u=\bar{\gamma}(2)$. Then $u\preccurlyeq_1\bar{\alpha}(2)$, $\gamma=extension(u)$, $u\in S_1$ and $\exists y\left[\gamma\Vdash \psi(x,y)\right] $. By (\ref{eq:shortest1}), there is a unique $\beta$ with $shortest(\gamma,\beta,x)$. Therefore $f(u,x)=q_{\beta, x}$. This completes the proof of $f\in a_2$.
\medskip
\begin{center}
Proof of (\ref{eq:choice2}). 
\end{center}

It is sufficient to prove:
\[ \forall x\left[\alpha'\Vdash \psi(x,f(x)(0))\right].\]

Fix $x\in \omega$. By Lemma \ref{lemma:Beth}.2, it is sufficient to prove:
\[(\forall \text{ path }S_2\text{ through }\alpha')(\exists \gamma\in S_2)\left[\gamma\Vdash \psi(x,f(x)(0) \right] . \]

Consider a path $S_2$ in $M$ through $\alpha'$. Denote $S'_2=\left\lbrace extension(\bar{\sigma}(2))\mid \sigma\in S_2\right\rbrace$. Then $S'_2$ is also a path in $M$ through $\alpha'$. 
\smallskip

By (\ref{eq:choice1}), for some $\delta'\in S'_2$, $\exists y\left[\delta' \Vdash \psi(x,y) \right] $. Denote $\delta=min\{\alpha', \delta'\}$ and 
\smallskip
\\$u=\bar{\delta}(2)$. 
Then $\delta \in S'_2$,  $u\preccurlyeq_1 \bar{\alpha}(2)$, $\delta=extension(u)$ and $\exists y 
\left[\delta\Vdash \psi(x,y) \right]$. 
\medskip

By (\ref{eq:shortest1}), for some $\beta$ we have: $shortest(\delta,\beta,x),\delta\preccurlyeq\beta$ and $f(x)^{[\delta]}=f(u,x)=q_{\beta, x}$. 

Therefore $f(x)(0)^{[\delta]}=f(x)^{[\delta]}(\bar{\delta}(1),0)=q_{\beta, x}(\bar{\delta}(1),0)=$ the smallest $y$, for 
\medskip
\\
which $\beta \Vdash \psi(x,y)$. Then $f(x)(0)^{[\delta]}=y$ and $\beta \Vdash \psi(x,y)$. 
\medskip

So $\delta \Vdash \psi(x,f(x)(0)^{[\delta]})$ and by Lemma \ref{lemma:sub_in_forcing}, 
$\delta \Vdash \psi(x,f(x)(0))$. Since $\delta \in S'_2$, there is $\gamma \in S_2$ with $\bar{\gamma}(2)=\bar{\delta}(2)$ and by (\ref{eq:contraction}), $\gamma\Vdash \psi(x,f(x)(0))$. This completes the proof for case $m=2$.
\smallskip

CASE $m=1$ is similar with $f\in a_1$ defined by:
\begin{displaymath}
f(u,x)=
\begin{cases}
0 \quad \text{ if } lh(u)\geqslant lh(\alpha) \& \neg(u\preccurlyeq_0\bar{\alpha}(1)),\\
\text{the smallest number }y\text{, for which }\beta\Vdash \psi(x,y)\\
\qquad \qquad\text{for the unique } \beta \text{ with }shortest(extension(u),\beta,x) \\
\qquad \text{if } u\preccurlyeq_0\bar{\alpha}(1) \& \exists z\left[extension(u)\Vdash \psi(x,z)\right],\\ 
\text{undefined } \: \text{ otherwise.}
\end{cases}
\end{displaymath} 

CASE $m\geqslant 3$. 

We use the following functions defined for fixed $\beta$ and $x\in \omega$.

$q_{0,\beta, x}=q_{\beta, x}$ as defined in case $m=2$; 

\begin{displaymath}
q_{1,\beta,x}(u,k)=
\begin{cases}
\widehat{K}^1 & \text{if } lh(u)\geqslant lh(\beta) \& \neg(u\preccurlyeq_1\bar{\beta}(2)),\\
q_{0,\beta, x}  & \text{if } u\preccurlyeq_1\bar{\beta}(2),\\
\text{undefined } & \text{otherwise;}
\end{cases}
\end{displaymath} 

\begin{displaymath}
q_{2,\beta,x}(u,k)=
\begin{cases}
\widehat{K}^2 & \text{if } lh(u)\geqslant lh(\beta) \& \neg(u\preccurlyeq_2\bar{\beta}(3)),\\
q_{1,\beta, x}  & \text{if } u\preccurlyeq_2\bar{\beta}(3),\\
\text{undefined } & \text{otherwise;}
\end{cases}
\end{displaymath} 

\begin{center}
\ldots
\end{center}

\begin{displaymath}
q_{m-2,\beta,x}(u,k)=
\begin{cases}
\widehat{K}^{m-2} & \text{if } lh(u)\geqslant lh(\beta) \& \neg(u\preccurlyeq_{m-2}\bar{\beta}(m-1)),\\
q_{m-3,\beta, x}  & \text{if } u\preccurlyeq_{m-2}\bar{\beta}(m-1),\\
\text{undefined } & \text{otherwise.}
\end{cases}
\end{displaymath} 

Finally, we define function $f$:

\begin{displaymath}
f(u,x)=
\begin{cases}
\widehat{K}^{m-1} \qquad \text{ if } lh(u)\geqslant lh(\alpha) \& \neg(u\preccurlyeq_{m-1}\bar{\alpha}(m)),\\
q_{m-2,\beta, x} \text{ for the unique } \beta \text{ with } shortest(extension(u),\beta,x) \\ 
\qquad \qquad \text{ if } u\preccurlyeq_{m-1}\bar{\alpha}(m) \& \exists y\left[extension(u)\Vdash \psi(x,y)\right] ,\\
\text{undefined } \quad \text{ otherwise.}
\end{cases}
\end{displaymath} 

Then $f\in a_m$ and $\forall x[\alpha' \Vdash \psi(x,f(x)\underbrace{(0)\ldots(0)}_{m-1})] $.
\bigskip

\begin{center}
PROOF for $(C2_s)$
\end{center}

It involves similar steps as the proof for $(C1_s)$ but with new notations. 

Denote $\psi(x,G^n)$ the formula $\varphi(x,G^n)$, in which all parameters, except $x$ and $G$, are replaced by objects from corresponding domains. 
\medskip

Denote $\textit{shortest }(\alpha,\beta,x)\leftrightharpoons \\
\alpha\preccurlyeq \beta\& (\exists g\in a_n)\left[\beta\Vdash \psi(x,g)\right]  \& (\forall \gamma \succ\beta)\neg (\exists h\in a_n) \left[\gamma\Vdash \psi(x,h)\right].$

Clearly, 
\begin{equation}
(\exists g\in a_n) \left[\alpha\Vdash \psi(x,g)\right] \Rightarrow \exists! \beta \textit{shortest }(\alpha,\beta,x).
\label{eq:shortest2}
\end{equation}

Fix $\alpha\in M$. Suppose 
\[ \alpha\Vdash\forall x\exists! G^n\psi(x,G). \]

Denote $\alpha'=extension(\bar{\alpha}(m))$. So by (\ref{eq:contraction}),
\begin{equation}
 \alpha'\Vdash\forall x\exists! G^n\psi(x,G).  
\label{eq:choice2.1}
\end{equation}

By (\ref{eq:choice2.1}), for any $\beta$ and $x\in \omega$:

\begin{equation}
(\beta\preccurlyeq\alpha' \vee \alpha'\preccurlyeq \beta)\&(g\in a_n)\&\left[ \beta\Vdash \psi(x,g)\right] \Rightarrow \text{ this } g \text{ is unique}.
\label{eq:unique}
\end{equation}

CASE $m=n+2$. First we construct the proof for this case because it clearly demonstrates the main construction. Other cases will be considered later. 

Thus, $ sort(\varphi)\leqslant n+2$. 

By (\ref{eq:contraction}), it is sufficient to prove: 
\begin{equation}
\alpha'\Vdash \exists F^m\forall x\psi(x,F(x)(0)).
\label{eq:choice2.2}
\end{equation}

For fixed $x\in \omega$ and fixed $\beta$ with $(\beta\preccurlyeq\alpha' \vee \alpha'\preccurlyeq \beta)$ by (\ref{eq:unique}) there is a unique function $Q_{\beta,x}$ such that:

\begin{displaymath}
Q_{\beta,x}(v,k)=
\begin{cases}
\widehat{K}^n \quad \text{ if } lh(v)\geqslant lh(\beta) \& \neg(v\preccurlyeq_n\bar{\beta}(n+1)),\\
\text{the unique }g\in a_n\text{ with }\beta\Vdash \psi(x,g) \\
\qquad \text{if } v\preccurlyeq_n\bar{\beta}(n+1)\& (\exists h\in a_n) \left[\beta\Vdash \psi(x,h)\right] ,\\
\text{undefined } \qquad \text{ otherwise.}
\end{cases}
\end{displaymath} 

Next we construct a function $f$ that can be used in (\ref{eq:choice2.2}). 
\begin{displaymath}
f(u,x)=
\begin{cases}
\widehat{K}^{n+1} \qquad \text{ if } lh(u)\geqslant lh(\alpha) \& \neg(u\preccurlyeq_{n+1}\bar{\alpha}(n+2)),\\
Q_{\beta, x} \text{ for the unique } \beta \text{ with } shortest(extension(u),\beta,x) \\ 
\qquad \qquad \text{ if } u\preccurlyeq_{n+1}\bar{\alpha}(n+2)\&(\exists g\in a_n) \left[extension(u)\Vdash \psi(x,g)\right] ,\\
\text{undefined } \quad \text{ otherwise.}
\end{cases}
\end{displaymath} 

\begin{center}
Proof that $f\in a_{n+2}$.
\end{center}
 
1) First we prove that $f:(d_{n+1}\times\omega)\dashrightarrow a_{n+1}$. 
Suppose $f(u,x)\downarrow$. We need to prove that $f(u,x)\in a_{n+1}$. 

When $f(u,x)=\widehat{K}^{n+1}$, it is obvious. 

Assume $f(u,x)=Q_{\beta,x}$. Then $u\preccurlyeq_{n+1}\bar{\alpha}_{n+2}$,  $shortest(extension(u),\beta,x)$ and
\begin{equation}
(\exists h\in a_n)\left[\beta\Vdash \psi(x,h)\right]. 
\label{eq:choice2.3}
\end{equation}

Since $extension(u)\preccurlyeq \alpha'$ and $extension(u)\preccurlyeq \beta$, we have: 
\[\beta\preccurlyeq \alpha' \vee \alpha'\preccurlyeq \beta.\]

The proof that $Q_{\beta,x}\in a_{n+1}$ consists of three parts.
\smallskip

a) Clearly, $Q_{\beta,x}:(d_n\times\omega)\dashrightarrow a_n$.
\smallskip

b) Monotonicity of $Q_{\beta,x}$.

Suppose $Q_{\beta,x}(v,k)\downarrow$ and $w\preccurlyeq_nv$. Then $lh(v)\geqslant lh(\beta)$. There are two cases.
\smallskip

i) $\neg(v\preccurlyeq_n \bar{\beta}(n+1))$.

Then $\neg(w\preccurlyeq_n \bar{\beta}(n+1))$ and $Q_{\beta,x}(w,k)=\widehat{K}^n=Q_{\beta,x}(v,k)$.
\smallskip

ii) $v\preccurlyeq_n \bar{\beta}(n+1)$.

Then $w\preccurlyeq_n \bar{\beta}(n+1)$. By (\ref{eq:choice2.3}) and (\ref{eq:unique}) both $Q_{\beta,x}(w,k)$ and $Q_{\beta,x}(v,k)$ equal the unique $g\in a_n$, for which $\beta\Vdash \psi(x,g)$.

c) Completeness of $Q_{\beta,x}$.

Consider a path $S_3$ in $d_n$ and $k\in \omega$. Take $v\in S_3$ with length $lh(\beta)$. There are two cases.

i) $\bar{\beta}(n+1)\notin S_3$.

Then $\neg(v\preccurlyeq_n \bar{\beta}(n+1))$ and $Q_{\beta,x}(v,k)=\widehat{K}^n$.
\smallskip

ii) $\bar{\beta}(n+1)\in S_3$.

Then $v=\bar{\beta}(n+1) $ and by (\ref{eq:unique}), (\ref{eq:choice2.3}), $Q_{\beta,x}(v,k)$ equals the unique $g\in a_n$ with $\beta\Vdash \psi(x,g)$.

This completes the proof that $Q_{\beta,x}\in a_{n+1}$ and  $f:(d_{n+1}\times\omega)\dashrightarrow a_{n+1}$.

2) Monotonicity of $f$.

Suppose $f(u,x)\downarrow$ and $v\preccurlyeq_{n+1} u$. Then $lh(v)\geqslant lh(u)\geqslant lh(\alpha)$. There are two cases.

i) $\neg(u\preccurlyeq_{n+1}\bar{\alpha}({n+2}))$.

In this case $\neg(v\preccurlyeq_{n+1}\bar{\alpha}({n+2}))$, so $f(v,x)=\widehat{K}^{n+1}=f(u,x)$.
\smallskip

ii) $u\preccurlyeq_{n+1}\bar{\alpha}({n+2})$.
\smallskip

Then $v\preccurlyeq_{n+1} \bar{\alpha}({n+2})$; there is a unique $\beta$ with $shortest(extension(u),\beta,x)$ and $f(u,x)=Q_{\beta,x}$.
\smallskip

For some $g\in a_n$ we have:  $extension(u)\Vdash \psi(x,g)$. Since $extension(u)\preccurlyeq extension(v)$, we have: $shortest(extension(v),\beta,x)$ and by monotonicity of forcing, $extension(v)\Vdash \psi(x,g)$. So $f(v,x)=Q_{\beta x}$ for the same $\beta$.

3) Completeness of $f$.

Consider $x\in \omega$ and a path $S_4$ in $d_{n+1}$. There are two cases.

i) $\bar{\alpha}({n+2})\notin S_4$.

Take $u\in S_4$ with length $lh(\alpha)$. Then $\neg (u\preccurlyeq_{n+1}\bar{\alpha}({n+2}))$ and $f(u,x)=\widehat{K}^{n+1}$.

ii) $\bar{\alpha}({n+2})\in S_4$.

Denote $S'_4=\left\lbrace extension(v)\mid v\in S_4\right\rbrace $. Then $S'_4$ is a path in $M$ through $\alpha'$ and by (\ref{eq:choice2.1}), for some $\delta\in S'_4$, $(\exists g\in a_n)\left[\delta\Vdash \psi(x,g)\right] $. 
\smallskip

Take $\gamma=min\{\alpha',\delta\}$ and $u=\bar{\gamma}({n+2})$. Then $u\preccurlyeq_{n+1}\bar{\alpha}({n+2})$, $\gamma=extension(u)$, $u\in S_4$ and $(\exists g\in a_n)\left[\gamma\Vdash \psi(x,g)\right] $. By (\ref{eq:shortest2}), there is a unique $\beta$ with $shortest(\gamma,\beta,x)$. Therefore $f(u,x)=Q_{\beta, x}$. This completes the proof of $f\in a_{n+2}$.

\begin{center}
Proof of (\ref{eq:choice2.2}). 
\end{center}

It is sufficient to prove:
\[ \forall x\left[\alpha'\Vdash \psi(x,f(x)(0))\right].\]

Fix $x\in \omega$. By Lemma \ref{lemma:Beth}.2, it is sufficient to prove:
\[(\forall \text{ path }S_5\text{ through }\alpha')(\exists \gamma\in S_5)\left[\gamma\Vdash \psi(x,f(x)(0) \right] . \]

Consider a path $S_5$ through $\alpha'$. Denote $S'_5=\left\lbrace extension(\bar{\sigma}(n+2))\mid \sigma\in S_5\right\rbrace$. Then $S'_5$ is also a path in $M$ through $\alpha'$. 
\medskip

By (\ref{eq:choice2.1}), for some $\delta'\in S'_5$, $(\exists g\in a_n)\left[\delta' \Vdash \psi(x,g) \right] $. Denote $\delta=min\{\alpha', \delta'\}$ 
\smallskip
\\and $u=\bar{\delta}(n+2)$. 
Then $\delta\in S'_5$, $u\preccurlyeq_{n+1} \bar{\alpha}({n+2})$, $\delta=extension(u)$ and $(\exists g\in a_n)\left[\delta \Vdash \psi(x,g)\right]$. 
\medskip

By (\ref{eq:shortest2}), for some $\beta$ we have: $shortest(\delta,\beta,x),\delta\preccurlyeq\beta$ and $f(x)^{[\delta]}=f(u,x)=Q_{\beta, x}$. Since $\delta\preccurlyeq\alpha'$ and $\delta\preccurlyeq\beta$, we have: 
\[\beta\preccurlyeq\alpha' \vee \alpha'\preccurlyeq \beta.\]

Therefore $f(x)(0)^{[\delta]}=f(x)^{[\delta]}(\bar{\delta}(n+1),0)=Q_{\beta, x}(\bar{\delta}(n+1),0)=$ the unique $g\in a_n$, for which $\beta \Vdash \psi(x,g)$, according to (\ref{eq:unique}). Then $f(x)(0)^{[\delta]}=g$ and $\beta \Vdash \psi(x,g)$. 

So $\delta \Vdash \psi(x,f(x)(0)^{[\delta]})$ and by Lemma \ref{lemma:sub_in_forcing}, 
$\delta \Vdash \psi(x,f(x)(0))$. Since $\delta\in S'_5$, 
\smallskip
\\there is $\gamma\in S_5$ with $\bar{\gamma}(n+2)=\bar{\delta}(n+2)$ and by (\ref{eq:contraction}), $\gamma \Vdash \psi(x,f(x)(0))$. This completes the proof for case $m=n+2$.
\smallskip

CASE $m=n+1$ is similar with $f\in a_{n+1}$ defined by:
\begin{displaymath}
f(u,x)=
\begin{cases}
\widehat{K}^n \: \text{ if } lh(u)\geqslant lh(\alpha) \& \neg(u\preccurlyeq_{n}\bar{\alpha}({n+1})),\\
\text{the unique }g\in a_n\text{, for which }\beta\Vdash \psi(x,g)\\ 
\qquad \qquad\text{for the unique } \beta \text{ with }  shortest(extension(u),\beta,x)\\ 
\qquad \text{if } u\preccurlyeq_n\bar{\alpha}(n+1) \& (\exists h\in a_n)\left[extension(u)\Vdash \psi(x,h)\right],\\ 
\text{undefined } \: \text{ otherwise.}
\end{cases}
\end{displaymath} 
\medskip

CASE $m\geqslant n+3$. 

The following functions are defined for fixed $x\in \omega$ and fixed $\beta\in M$ with $(\beta \preccurlyeq \alpha'\vee \alpha' \preccurlyeq \beta)$. 

$Q_{n,\beta, x}=Q_{\beta, x}$ as defined in case $m=n+2$.

\begin{displaymath}
Q_{n+1,\beta,x}(u,k)=
\begin{cases}
\widehat{K}^{n+1} & \text{if } lh(u)\geqslant lh(\beta) \& \neg(u\preccurlyeq_{n+1}\bar{\beta}(n+2)),\\
Q_{n,\beta, x}  & \text{if } u\preccurlyeq_{n+1}\bar{\beta}(n+2),\\
\text{undefined } & \text{otherwise;}
\end{cases}
\end{displaymath} 

\begin{displaymath}
Q_{n+2,\beta,x}(u,k)=
\begin{cases}
\widehat{K}^{n+2} & \text{if } lh(u)\geqslant lh(\beta) \& \neg(u\preccurlyeq_{n+2}\bar{\beta}(n+3)),\\
Q_{n+1,\beta, x}  & \text{if } u\preccurlyeq_{n+2}\bar{\beta}(n+3),\\
\text{undefined } & \text{otherwise;}
\end{cases}
\end{displaymath} 

\begin{center}
\ldots
\end{center}

\begin{displaymath}
Q_{m-2,\beta,x}(u,k)=
\begin{cases}
\widehat{K}^{m-2} & \text{if } lh(u)\geqslant lh(\beta) \& \neg(u\preccurlyeq_{m-2}\bar{\beta}(m-1)),\\
Q_{m-3,\beta, x}  & \text{if } u\preccurlyeq_{m-2}\bar{\beta}(m-1),\\
\text{undefined } & \text{otherwise.}
\end{cases}
\end{displaymath} 

Finally, we define function $f$:

\begin{displaymath}
f(u,x)=
\begin{cases}
\widehat{K}^{m-1} \qquad \text{ if } lh(u)\geqslant lh(\alpha) \& \neg(u\preccurlyeq_{m-1}\bar{\alpha}(m)),\\
Q_{m-2,\beta, x} \text{ for the unique } \beta \text{ with } shortest(extension(u),\beta,x) \\ 
\qquad \qquad \text{ if } u\preccurlyeq_{m-1}\bar{\alpha}(m) \& (\exists g\in a_n)\left[extension(u)\Vdash \psi(x,g)\right] ,\\
\text{undefined } \quad \text{ otherwise.}
\end{cases}
\end{displaymath} 

Then $f\in a_m$ and $\forall x[\alpha' \Vdash \psi(x,f(x)\underbrace{(0)\ldots(0)}_{m-n-1})] $.
\end{proof}

\section{Kripke's schema in the model $\mathcal{B}_s$}
In $ LP $ we consider the following Kripke's schema:

$ (KS) $ \qquad   $\exists G^m \left[ \varphi \equiv \exists x\left( G(x)(0)^{m-1}\neq 0\right) \right] $,
\\where $ \varphi $ is a formula of $L$, $ G^m $ is not a parameter of $\varphi $ and $ m\geqslant max(sort(\varphi),1)$.
\begin{theorem}
\begin{enumerate}
\item  $ LP_s+(C_s)\vdash (KS_s). $ 
\item  $ LP+(C)\vdash (KS). $ 
\item $\mathcal{B}_s \Vdash (KS_s)$.
\end{enumerate}
\label{theorem:Kripke1}
\end{theorem}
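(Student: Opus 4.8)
The plan is to handle parts 1 and 2 by one uniform syntactic argument and then derive part 3 from it by soundness. For part 1, fix a formula $\varphi$ of $L$ and $m\geqslant\max(sort(\varphi),1)$, and work in $LP_s+(C_s)$. Introduce the auxiliary formula $\chi(x,y)\leftrightharpoons\bigl((y\neq 0)\equiv(\vdash_x\varphi)\bigr)$; since $x,y$ are of type $0$ we have $sort(\chi)=sort(\varphi)\leqslant m$. First I would derive $\forall x\exists y\,\chi(x,y)$: for a given $x$, axiom (CS1) yields $(\vdash_x\varphi)\vee\neg(\vdash_x\varphi)$, and in the first case $y:=1$ witnesses $\chi(x,y)$, while in the second $y:=0$ does. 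Applying the numerical choice axiom $(C1)$ to $\chi$ --- legitimate because $m\geqslant\max(sort(\chi),1)$ --- produces an $m$-functional $F^m$ with $\forall x\,\chi\bigl(x,F(x)(0)^{m-1}\bigr)$, i.e. $\forall x\,\bigl(F(x)(0)^{m-1}\neq 0\equiv(\vdash_x\varphi)\bigr)$. Hence $\exists x\,(F(x)(0)^{m-1}\neq 0)\equiv\exists x\,(\vdash_x\varphi)$, and by (CS3) the right-hand side is equivalent to $\varphi$. Taking $G^m$ to be the fresh variable $F^m$ (so $G^m$ is not a parameter of $\varphi$) we obtain $\exists G^m\bigl[\varphi\equiv\exists x\,(G(x)(0)^{m-1}\neq 0)\bigr]$, which is $(KS_s)$.

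This derivation is carried out entirely within $HPC$ from finitely many axioms of $LP_s$ together with $(C_s)$, so the same argument read in the full language proves $LP+(C)\vdash(KS)$, giving part 2. For part 3, each of the finitely many $LP_s$-axioms used is forced in $\mathcal{B}_s$ by the Soundness Theorem (Theorem~\ref{theorem:soundness}), the axiom $(C_s)$ is forced by Theorem~\ref{theorem:choice}, and the Soundness Theorem for Beth models (Lemma~\ref{lemma:Beth}.3) shows that forcing is preserved under $HPC$-derivability; therefore $\mathcal{B}_s\Vdash(KS_s)$.

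The argument contains no genuine obstacle: the whole content of the theorem is already packaged into the facts that the creating-subject axioms and the choice axioms hold (in $LP_s$ and in $\mathcal{B}_s$, respectively). The only points requiring care are the bookkeeping ones --- checking that the chosen instance of $(C1)$ meets the same sort bound $m\geqslant\max(sort(\varphi),1)$ demanded by the target instance of $(KS)$, and that $F(x)(0)^{m-1}$ is indeed a numerical term (an $m$-functional applied $m$ times), so that the equivalence defining $\chi$ is well formed.
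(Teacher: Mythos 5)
Your proposal is correct and follows essentially the same route as the paper: derive $\forall x\exists y\,[y\neq 0\equiv(\vdash_x\varphi)]$ from (CS1), apply $(C1)$ using $sort(\vdash_x\varphi)=sort(\varphi)$, and finish with (CS3); part 3 then follows from Theorem~\ref{theorem:choice} and soundness. Your version merely spells out the witnesses $y:=1$, $y:=0$ and the sort bookkeeping a little more explicitly than the paper does.
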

\begin{proof}
1. Axiom $(CS1_s)$ implies:
\[ \forall x\exists y \left[y\neq 0 \equiv (\vdash _x\varphi)\right]. \]
Since $sort(\vdash _x\varphi)=sort(\varphi)$, then by axiom $(C1_s)$ there exists $G^m$ such that:
\[\forall x \left[\left(G(x)(0)^{m-1} \neq 0\right) \equiv (\vdash _x\varphi) \right].  \]
Then 
\[\exists x \left[G(x)(0)^{m-1} \neq 0\right] \equiv \exists x(\vdash _x\varphi)\]
and by axiom $(CS3_s)$
\[\exists x \left[G(x)(0)^{m-1} \neq 0\right] \equiv \varphi.\]

2. Follows from part 1.

3. It follows from part 1, Theorem \ref{theorem:choice} and the soundness of the model $\mathcal{B}_s$ for $ LP_s $.
\end{proof}

\section{Other intuitionistic principles in the model $\mathcal{B}_s$}

\subsection{Weak continuity}
\[\text{(WC)    } \forall \mathcal{F}^n \exists x \varphi(\mathcal{F},x) \supset  \forall \mathcal{F}^n \exists x \exists y\forall\mathcal{G}^n [ \mathcal{\bar{G}}(y)=\mathcal{\bar{F}}(y) \supset\varphi(\mathcal{G},x)],\]
where $ \varphi $ is a formula of $ LP $ with $sort(\varphi) \leqslant n$ and $\varphi$ does not have non-lawlike parameters of type $n$ other than $\mathcal{F}^n$.
\smallskip

\begin{theorem}
\begin{enumerate}
\item $LP_s+(LL3_s)\vdash (WC_s)$.
\item $LP+(LL3)\vdash (WC)$.
\item $\mathcal{B}_s \Vdash (WC_s)$.
\end{enumerate}

\label{theorem:WC}
\end{theorem}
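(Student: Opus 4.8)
The plan is to establish part~1 first and then obtain parts~2 and~3 as immediate consequences, in the same spirit as the derivation of Kripke's schema in Theorem~\ref{theorem:Kripke1}. The underlying idea is classical: weak continuity follows from open data for lawless functionals by first ``freezing'' the existential number witness as a type-$0$ parameter and then invoking $(LL3_s)$.

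In detail, I would reason inside $LP_s+(LL3_s)$ as follows. Assume the antecedent $\forall\mathcal{F}^n\exists x\,\varphi(\mathcal{F},x)$ and fix $\mathcal{F}^n$; then $\exists x\,\varphi(\mathcal{F},x)$. To derive $\exists x\exists y\,\forall\mathcal{G}^n[\bar{\mathcal{G}}(y)=\bar{\mathcal{F}}(y)\supset\varphi(\mathcal{G},x)]$ it suffices, by intuitionistic $\exists$-elimination, to derive this formula from the hypothesis $\varphi(\mathcal{F},x_0)$ for a fresh number variable $x_0$. Now apply $(LL3_s)$ to the formula $\theta(\mathcal{H}^n):=\varphi(\mathcal{H}^n,x_0)$, in which $x_0$ occurs merely as a type-$0$ parameter: the side conditions are inherited from those imposed on $\varphi$ in $(WC)$, namely $sort(\theta)=sort(\varphi)\leqslant n$ and $\theta$ has no non-lawlike parameters of type $n$ besides $\mathcal{H}^n$. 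Open data yields $\varphi(\mathcal{F},x_0)\supset\exists y\,\forall\mathcal{G}^n[\bar{\mathcal{G}}(y)=\bar{\mathcal{F}}(y)\supset\varphi(\mathcal{G},x_0)]$; modus ponens with $\varphi(\mathcal{F},x_0)$ gives $\exists y\,\forall\mathcal{G}^n[\bar{\mathcal{G}}(y)=\bar{\mathcal{F}}(y)\supset\varphi(\mathcal{G},x_0)]$, and $\exists$-introduction with $x:=x_0$ gives the required formula. Discharging the hypothesis $\varphi(\mathcal{F},x_0)$ completes the $\exists$-elimination; universal generalization over $\mathcal{F}^n$ and $\supset$-introduction then deliver $(WC_s)$.

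Part~2 follows because this derivation transfers verbatim to $LP+(LL3)$ (equivalently: every instance of $(WC)$ already lies in some fragment $LP_s+(LL3_s)\subseteq LP+(LL3)$). Part~3 follows from part~1 together with Theorem~\ref{theorem:lawless} and the Soundness Theorem~\ref{theorem:soundness}: since $\mathcal{B}_s\Vdash(LL3_s)$ and $\mathcal{B}_s$ is sound for $LP_s$, everything provable in $LP_s+(LL3_s)$ --- in particular $(WC_s)$ --- is forced in $\mathcal{B}_s$.

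The only delicate point, and the step I would check most carefully, is the legitimacy of applying $(LL3_s)$ to $\varphi(\cdot,x_0)$ with $x_0$ a free number variable rather than a bound one. This is precisely what the $\exists$-elimination step licenses, and it is harmless because freezing the numerical witness introduces only a type-$0$ parameter, which does not raise $sort(\varphi)$ above $n$ nor add any non-lawlike type-$n$ parameter, so the restrictions governing the availability of open data remain in force. Beyond this bookkeeping there is no substantive obstacle.
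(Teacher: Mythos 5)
Your proposal is correct and follows essentially the same route as the paper: part 1 is obtained by applying $(LL3_s)$ to $\varphi(\mathcal{H}^n,x)$ with the numerical witness carried as an extra type-$0$ parameter (which leaves the sort and lawless-parameter restrictions intact), and parts 2 and 3 follow from part 1 together with Theorem \ref{theorem:lawless} and soundness. Your explicit $\exists$-elimination bookkeeping is just a more detailed rendering of the paper's one-line derivation.
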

\begin{proof}
1. Denote $\psi(x,y,\mathcal{F}^n,\mathcal{G}^n)$ the formula $ \mathcal{\bar{G}}(y)=\mathcal{\bar{F}}(y) \supset\varphi(\mathcal{G},x)$.

By $(LL3_s)$, 
\[\forall \mathcal{F}^n \forall x \left[\varphi(\mathcal{F},x) \supset \exists y \forall \mathcal{G}^n \psi(x,y,\mathcal{F},\mathcal{G}) \right]. \]
Then 
\[\forall \mathcal{F}^n \left[\exists x \varphi(\mathcal{F},x) \supset \exists x\exists y \forall \mathcal{G}^n \psi(x,y,\mathcal{F},\mathcal{G}) \right] \]
and
\[\forall \mathcal{F}^n \exists x \varphi(\mathcal{F},x) \supset \forall \mathcal{F}^n\exists x\exists y \forall \mathcal{G}^n \psi(x,y,\mathcal{F},\mathcal{G}).\]

2. Follows from part 1.

3. It follows from part 1, Theorem \ref{theorem:lawless} and the soundness of the model $\mathcal{B}_s$ for $ LP_s $.
\end{proof}

\subsection{Bar induction}
\begin{multline*}
\text{(BI)   } \forall F^1 \exists x \varphi(\bar{F}(x))\wedge 
\forall x\forall y [\varphi(y)\supset \varphi(y*\left\langle x\right\rangle ) ]
\\
\wedge 
\forall y[\forall x\psi(y*\left\langle x\right\rangle)\supset\psi(x)]\wedge
\forall y[\varphi(y)\supset\psi(y)]\supset\psi(0).
\end{multline*}
\begin{theorem}
$\mathcal{B}_s \Vdash (BI_{s}) $.
\label{theorem:BI}
\end{theorem}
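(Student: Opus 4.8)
The proof adapts van Dalen's verification of bar induction in the Beth model of $FIM$, but it must be run over the tree $M=d_{s-1}$ of $\mathcal B_s$ and combined with an auxiliary ``sequence tree'' so that the inductive clause for $\psi$ may branch over $\omega$ independently of the branching of $M$. By the definition of forcing of a closed universal formula it suffices to fix an evaluation of the parameters of $\varphi$ and $\psi$; we then regard $\varphi$ and $\psi$ as formulas in one numerical argument, read as a code of a finite sequence of numbers. Fix $\alpha\in M$ and suppose $\alpha$ forces the four premises of $(BI_s)$; put $m_0=lh(\alpha)$, and for $\gamma\preccurlyeq\alpha$ write $\gamma_0=\langle\gamma\rangle_0\in\omega^{*}$ for its numerical component. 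The goal is $\alpha\Vdash\psi(e)$, where $e$ codes the empty sequence (written $0$ in the axiom).

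First I would introduce the tree $\mathcal T$ whose nodes are the pairs $(\gamma,u)$ with $\gamma\preccurlyeq\alpha$, $u\in\omega^{*}$ and $lh(u)=lh(\gamma)-m_0$; its root is $(\alpha,\langle\rangle)$, and the children of $(\gamma,u)$ are all pairs $(\gamma',u\ast\langle n\rangle)$ with $\gamma'$ a child of $\gamma$ in $M$ and $n\in\omega$. Every branch of $\mathcal T$ determines a branch $S$ of $M$ through $\alpha$ together with a function $f\colon\omega\to\omega$, the node at level $k$ being $(\gamma_k,\bar f(k))$, where $\gamma_k$ is the node of $S$ of length $m_0+k$. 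Put $\mathrm{Sec}=\{(\gamma,u)\in\mathcal T\mid\gamma\Vdash\psi(u)\}$, using $u$ as the code of that sequence.

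The heart of the argument is then two claims. Claim 1: $\mathrm{Sec}$ bars the root of $\mathcal T$. Given a branch with data $S,f$, let $A_f\in b_1\subseteq a_1$ be the lawlike $1$-functional with $A_f(v,n)=f(n)$ for all $v\in d_0$ and $n\in\omega$, so that $A_f(n)^{[\gamma]}=f(n)$ and $\overline{A_f}(x)^{[\gamma]}$ is defined and codes $\bar f(x)$ at every $\gamma$. The bar premise yields $\alpha\Vdash\exists x\,\varphi(\overline{A_f}(x))$, so by the forcing clause for $\exists$ applied to $S$ there are $\beta\in S$ and $x\in\omega$ with $\beta\Vdash\varphi(\overline{A_f}(x))$; taking $k\geqslant x$ large enough that $\gamma_k\preccurlyeq\beta$, and then using monotonicity of forcing, Lemma \ref{lemma:sub_in_forcing} to pass to $\varphi$ at the code $\bar f(x)$, the $\varphi$-monotonicity premise iterated to extend $\bar f(x)$ to $\bar f(k)$, and the premise $\varphi\supset\psi$, one obtains $\gamma_k\Vdash\psi(\bar f(k))$, i.e. $(\gamma_k,\bar f(k))\in\mathrm{Sec}$, a node on the given branch. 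Claim 2: if every child of $(\gamma,u)$ lies in $\mathrm{Sec}$, then so does $(\gamma,u)$. By the $\psi$-inductive premise, forced at $\gamma$, it suffices to show $\gamma\Vdash\psi(u\ast\langle n\rangle)$ for each fixed $n\in\omega$; by Lemma \ref{lemma:Beth}.2 it suffices that every branch through $\gamma$ meets a node forcing this, and such a branch passes through some child $\gamma'$ of $\gamma$ in $M$, for which $(\gamma',u\ast\langle n\rangle)\in\mathrm{Sec}$ by hypothesis. Finally, bar induction for the tree $\mathcal T$ — valid in the classical metatheory by the usual argument using dependent choice, for set-sized trees of arbitrary branching — gives that the root $(\alpha,\langle\rangle)$ lies in $\mathrm{Sec}$, i.e. $\alpha\Vdash\psi(e)$, as required.

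I expect the main obstacle to be precisely what forces the introduction of $\mathcal T$: in $M$ a child of a node commits to one particular next value of the numerical component, whereas the inductive clause for $\psi$ ranges over all next values, so a bar induction carried out directly on $M$, with the sequence read off $\gamma_0$, would break down in Claim 2; it is essential that in $\mathcal T$ the sequence coordinate $u$ grows independently of $M$, so that at $(\gamma,u)$ the hypothesis of Claim 2 supplies membership of $\mathrm{Sec}$ at \emph{every} pair (child of $\gamma$, $n$). The remaining ingredients — the coding functionals $A_f$, the manipulations of the $\varphi$-monotonicity premise, and the passage from terms to their interpretations via Lemma \ref{lemma:sub_in_forcing} — are routine given the earlier lemmas and Definition \ref{def:my_beth_model}.
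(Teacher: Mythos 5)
Your proof is correct and is essentially the argument the paper intends: the paper's own proof of this theorem is just a reference to van Dalen's verification of bar induction, and your write-up supplies exactly the standard adaptation of that argument to the tree $M=d_{s-1}$, with Claims 1 and 2 and the classical bar induction (via dependent choice) in the metatheory all checking out. In particular, your observation that the auxiliary tree $\mathcal{T}$ is needed because the $\omega$-branching of the $\psi$-inductive clause must be decoupled from the branching of $M$ --- and, implicitly, that the bar premise only yields $\varphi$ at some node along each path rather than at $\alpha$ itself, so one cannot run the induction on sequences at the fixed node $\alpha$ --- correctly identifies why the naive alternatives fail.
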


\begin{proof}
The proof is the same as for Van Dalen's model \cite{vand78}.
\end{proof}

\subsection{Markov principle}
This principle is often considered in constructive mathematics.

Markov Principle:
\[\text{(MP)   } \forall x\left[\varphi(x)\vee \neg\varphi(x)\right] \wedge \neg\neg\exists x\varphi(x) \supset \exists x \varphi(x).\]

Markov Rule:

\begin{center}
(MR) If $\forall x\left[\varphi(x)\vee \neg\varphi(x)\right]$ and $\neg\neg\exists x\varphi(x)$, then $\exists x \varphi(x)$.
\end{center}

\begin{theorem}
\begin{enumerate}
\item The Markov Rule (MR) does not hold in $\mathcal{B}_s$.
\item The Markov Principle (MP) does not hold in $\mathcal{B}_s$.
\end{enumerate}
\label{theorem:MP}
\end{theorem}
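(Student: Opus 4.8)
The plan is to exhibit a single formula $\varphi(x)$ of the language $LP_s$ for which the hypotheses of (MR) are forced in $\mathcal{B}_s$ but the conclusion is not; since the Markov Rule failing immediately gives the Markov Principle failing (the principle, if forced, together with the forced hypotheses would force the conclusion by Soundness for $LP_s$), both parts follow from one construction. The natural candidate comes from a lawless $1$-functional: take $\varphi(x)$ to be $\mathcal{F}(x)\neq 0$, where $\mathcal{F}^1$ is a free variable of lawless type, or — to get a genuinely closed counterexample — instantiate $\mathcal{F}$ with a fixed element $f_0\in l_1$ of the model, e.g. the lawless functional from Example~2 of Section~4 whose values are read off the path.

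First I would verify $\varepsilon\Vdash\forall x(\varphi(x)\vee\neg\varphi(x))$. For each fixed $n\in\omega$ and each path $S$, by Lemma~\ref{lemma:lawless1} there is $\beta\in S$ with $lh(\beta)>n$, so $f_0(n)^{[\beta]}\downarrow$; at such $\beta$ either $f_0(n)^{[\beta]}=0$ or $f_0(n)^{[\beta]}\neq 0$ is decided by the value $\widehat{Val}$, and monotonicity propagates the decision downward, giving $\beta\Vdash\varphi(n)$ or $\beta\Vdash\neg\varphi(n)$; hence $\varepsilon\Vdash\varphi(n)\vee\neg\varphi(n)$ for every $n$, so the universal statement is forced. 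Next I would verify $\varepsilon\Vdash\neg\neg\exists x\,\varphi(x)$: suppose $\beta\Vdash\neg\exists x\,\varphi(x)$ for some $\beta$; along any path through $\beta$ one can extend $\beta$ to a node $\gamma$ that forces $f_0(n)\neq 0$ for some $n$ (the path-determined lawless functional is not eventually constantly zero on every extension — in fact there is always an extension with a nonzero entry), giving $\gamma\Vdash\exists x\,\varphi(x)$, contradicting monotonicity from $\beta$. Hence no node forces $\neg\exists x\,\varphi(x)$, i.e. $\varepsilon\Vdash\neg\neg\exists x\,\varphi(x)$.

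The crux is showing $\varepsilon\nVdash\exists x\,\varphi(x)$. By the forcing clause for $\exists$, this requires a single path $S$ through $\varepsilon$ such that no $\beta\in S$ and no $n\in\omega$ satisfies $\beta\Vdash\varphi(n)$, i.e. no $\beta\in S$ decides some value of $f_0$ to be nonzero; equivalently, we need a path along which $f_0$ "looks like" the zero functional at every finite stage. Taking $f_0$ to be the path-reading lawless functional of Example~2, this amounts to choosing the path $S$ whose $0$-component is the all-zeros sequence $\langle 0,0,0,\dots\rangle$ (padded appropriately in the higher components by, say, the interpretations of $\widehat{K}^k$): along that particular path every initial segment of $f_0$ is a string of zeros, so for every $\beta\in S$ and every $n<lh(\beta)$ we get $f_0(n)^{[\beta]}=0$, hence $\beta\nVdash f_0(n)\neq 0$; and for $n\geqslant lh(\beta)$, $f_0(n)^{[\beta]}$ is undefined so again $\beta\nVdash\varphi(n)$. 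Thus along $S$ no $\beta$ forces $\exists x\,\varphi(x)$, so $\varepsilon\nVdash\exists x\,\varphi(x)$. This establishes part~1; for part~2, if $\mathcal{B}_s\Vdash(\text{MP}_s)$ then, since the two conjuncts of the antecedent are forced at $\varepsilon$ and forcing respects modus ponens, we would get $\varepsilon\Vdash\exists x\,\varphi(x)$, a contradiction. The main obstacle — and the only subtle point — is arranging the counterexample formula so that it is legitimately a formula of $LP_s$ with the parameter restrictions of the theorem statement respected and so that the "bad" path genuinely exists in $M$; using a concrete $f_0\in l_1$ and the explicit all-zeros path circumvents the delicate issue of whether a free lawless variable can be pinned to such behaviour uniformly.
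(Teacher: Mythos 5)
Your proposal is correct and follows essentially the same route as the paper: both use the path-reading lawless functional $f\in l_1$ from Example~2 of Section~4 as the concrete parameter, verify the two premises by finding a node of sufficient length (respectively by appending a nonzero entry to the $0$-component), and refute the conclusion via the all-zeros path. The only cosmetic difference is your choice of $\varphi(x)$ as $f(x)\neq 0$ rather than the paper's bounded form $(\exists k<x)(f(k)>0)$, which does not affect the argument.
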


\begin{proof}
1. Denote $\psi(x,F^1)\leftrightharpoons (\exists k<x)(F(k)>0)$. For $u\in d_0, n\in \omega $, define:
\begin{displaymath}
f(u,n) =
\begin{cases}
\langle\left\langle u\right\rangle_0\rangle_n  & \text{ if } n<lh(u),\\
\text{undefined}& \text{otherwise}.
\end{cases}
\end{displaymath} 

Then $f\in a_1$. Denote $\varphi(x)\leftrightharpoons \psi(x,f)$. We need to check that the premises of $(MR)$ are forced in the model:
\begin{equation}
\mathcal{B}_s\Vdash \forall x\left[\varphi(x)\vee \neg \varphi(x) \right], 
\label{eq:MR1}
\end{equation}
\begin{equation}
\mathcal{B}_s\Vdash \neg\neg\exists x\varphi(x)
\label{eq:MR2}
\end{equation}
but the conclusion is not forced:
\begin{equation}
\neg\left( \mathcal{B}_s\Vdash \exists x\varphi(x)\right) . 
\label{eq:MR3}
\end{equation}

Proof of (\ref{eq:MR1}). We need to prove:
\[ (\forall x\in \omega)(\forall \textit{ path } S\textit{ in }M) (\exists\alpha \in S)\left[\alpha\Vdash\varphi(x) \textit{  or  }\alpha\Vdash\neg\varphi(x)\right].\]

Fix $x\in \omega$ and a path $S$ in $M$. Choose $\alpha\in S$ with $lh(\alpha)=x$. Then for any $n<x, f^{[\alpha]}(n)\downarrow$.

If $(\exists n<x)\left( f^{[\alpha]}(n)>0\right) $, then $\alpha\Vdash\varphi(x)$. Otherwise $(\forall n<x)\left(f^{[\alpha]}(n)=0\right)$, which implies $\alpha\Vdash \neg\varphi(x)$.

Proof of (\ref{eq:MR2}).
It is sufficient to prove:
\[\forall \alpha\exists x(\exists y<x)(\exists \beta\preccurlyeq \alpha)\left( \beta\Vdash f(y)>0\right) .\]

Fix $\alpha\in M$. Denote $n=lh(\alpha)$ and choose 
\[ x=n+1, y=n, \beta=\left\langle
\left\langle \alpha\right\rangle_0*\left\langle 1\right\rangle,
\left\langle \alpha\right\rangle_1*\left\langle \widehat{K}^1\right\rangle,\ldots,
\left\langle \alpha\right\rangle_{s-1}*\left\langle \widehat{K}^{s-1}\right\rangle\right\rangle.\]
Then $\beta\preceq\alpha, f^{[\beta]}(y)=\left\langle\left\langle\beta\right\rangle_0\right\rangle_n=1$ and $\beta\Vdash f(y)>0$.

Proof of (\ref{eq:MR3}).
For any $n\geqslant 0$ we define a function $g_n:\omega \rightarrow a_n$ as follows:
$g_0=\lambda m.0$ and for $n\geqslant 1, g_n=\lambda m.\widehat{K}^{n}$. We take 
\[S=\left\lbrace <\bar{g}_0(m),\ldots,\bar{g}_{s-1}(m)>|m\in \omega\right\rbrace.\]
Then $S$ is a path in $M$. For any $\alpha\in S$ and $x=lh(\alpha)$ we have: 
\[\textit{for any } y<x, f^{[\alpha]}(y)=f(\bar{\alpha}(1),y)=\langle \langle\alpha\rangle_0\rangle_y=g_0(y)=0.\]
This implies (\ref{eq:MR3}).
 
2. Follows from part 1. 
\end{proof}

\subsection{Formal Church thesis}

\[ \text{(CT)    } \forall x\exists y\varphi(x,y) \supset \exists e \forall x\exists y[\left\lbrace e\right\rbrace (x)=y \wedge \varphi(x,y)].\]

\begin{theorem}
$LP+(C1)+(CT)$ is inconsistent.
\label{theorem:CT}
\end{theorem}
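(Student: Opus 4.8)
The plan is to derive $\bot$ by using the ``creating subject'' axioms together with $(C1)$ to obtain a \emph{uniform} form of Kripke's schema, then invoking $(CT)$ to make the resulting functional recursive, and finally diagonalizing against the halting problem. Since $L$ contains terms for all primitive recursive functions, fix Kleene's $T$-predicate; write $Halt(m,n)$ for the $\Sigma^0_1$ formula expressing that $\{m\}$ halts on input $n$, and $Halt(n)$ for $Halt(n,n)$. Let $\varphi(n)$ be the arithmetic (hence $sort$-$0$) formula $\neg Halt(n)$.

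First I would produce, essentially by the argument in the proof of Theorem~\ref{theorem:Kripke1} but made uniform in the number parameter, a $1$-functional $F$ with
\[ \forall n\,\bigl[\,\neg Halt(n)\;\equiv\;\exists z\,(F(\langle n,z\rangle)\neq 0)\,\bigr]. \]
From $(CS1)$ one gets $\forall n\,\forall z\,\exists y\,[\,y\neq 0\equiv(\vdash_z\varphi(n))\,]$ (choose $y=1$ in the left disjunct, $y=0$ in the right), and, coding $(n,z)$ by a single number through a primitive recursive pairing function, $\forall w\,\exists y\,[\,y\neq 0\equiv(\vdash_{(w)_1}\varphi((w)_0))\,]$. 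Applying $(C1)$ to this formula (its $sort$ is $0$, so $m=1$ is admissible) yields a $1$-functional $F$ with $\forall w\,[\,F(w)\neq 0\equiv(\vdash_{(w)_1}\varphi((w)_0))\,]$, and $(CS3)$ rewrites $\exists z\,(\vdash_z\varphi(n))$ as $\varphi(n)$, giving the displayed equivalence.

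Next I would apply $(CT)$ and diagonalize. Since $F$ is a total $1$-functional, $\forall w\,\exists y\,(F(w)=y)$, so $(CT)$ furnishes an index $e$ with $\forall w\,(\{e\}(w)=F(w))$; hence $R(n):\equiv\exists z\,(F(\langle n,z\rangle)\neq 0)$ is provably $\Sigma^0_1$, uniformly in $n$ with $e$ as a parameter. By the $s$-$m$-$n$ theorem (formalizable in the arithmetic of $L$) there is $j$, depending primitive-recursively on $e$, such that the program $\{j\}$ on input $n$ searches for a $z$ with $\{e\}(\langle n,z\rangle)\neq 0$ and halts exactly when one is found; thus $Halt(j,n)\equiv R(n)$ for all $n$, and combining with the displayed equivalence, $Halt(j,n)\equiv\neg Halt(n)$ for all $n$. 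Instantiating $n:=j$ and recalling that $Halt(j)$ abbreviates $Halt(j,j)$ gives $Halt(j)\equiv\neg Halt(j)$. Writing $P$ for the sentence $Halt(j)$: from $P\supset\neg P$ we derive $\neg P$, and then from $\neg P\supset P$ we derive $P$, so $\bot$ is derivable; hence $LP+(C1)+(CT)$ is inconsistent.

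The step needing most care is the first one: one must ensure that the choice principle yields a \emph{single} functional $F$ uniform in $n$ — which is precisely why the pair $(n,z)$ is bundled into one argument before $(C1)$ is applied — and that the formula handed to $(C1)$ has $sort$ $0$, so that $F$ is a genuine number-valued function to which $(CT)$ then applies. The remaining ingredients — the $T$-predicate, the $s$-$m$-$n$ theorem and the diagonal substitution $n:=j$ — are routine recursion theory carried out inside the arithmetic contained in $L$, and the only logical subtlety, that $P\equiv\neg P$ entails $\bot$ in intuitionistic logic, is immediate.
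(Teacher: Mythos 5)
Your proof is correct, but it takes a genuinely different route from the paper's. The paper disposes of this theorem in one line by citing van Dalen's result that already $HA+(C1)+(CT)$ is inconsistent, so the creating-subject apparatus of $LP$ plays no role there. You instead work self-contained inside $LP$: your first step is essentially the paper's own derivation of Kripke's schema from $(CS1)$, $(C1)$ and $(CS3)$ (Theorem \ref{theorem:Kripke1}), made uniform in the number parameter by pairing $(n,z)$ into a single argument before applying $(C1)$ — a necessary precaution, and you handle it correctly, since $sort$ of the formula $y\neq 0\equiv(\vdash_{(w)_1}\varphi((w)_0))$ is $0$ and $m=1$ is admissible. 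You then run the classical refutation of Kripke's schema from Church's thesis: $(CT)$ applied to the trivially provable $\forall w\exists y\,(F(w)=y)$ makes $F$ recursive, so $\neg Halt(n)$ becomes uniformly $\Sigma^0_1$, and the $s$-$m$-$n$ theorem plus the diagonal substitution $n:=j$ yields $Halt(j)\equiv\neg Halt(j)$ and hence $\bot$; all the recursion-theoretic steps are formalizable in the intuitionistic arithmetic contained in $L$, and the final step $P\equiv\neg P\supset\bot$ is intuitionistically sound. What each approach buys: the paper's citation establishes the stronger fact that the inconsistency needs neither the creating subject nor lawless functionals, while your argument is verifiable entirely from the axioms actually displayed in the paper and makes transparent *why* the combination fails in $LP$ — it is the familiar incompatibility of $(KS)$ with $(CT)$. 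Note only that your argument, as it stands, does not prove van Dalen's sharper claim, since it uses $(CS1)$--$(CS3)$ essentially; but for the theorem as stated about $LP+(C1)+(CT)$ this is immaterial.
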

\begin{proof}
Van Dalen \cite{vand78} showed that $HA+(C1)+(CT)$ is inconsistent, which implies the theorem.
\end{proof}

\section{Intuitionistic theory $SLP$ and classical theory $TI$}
By using reverse mathematics approach we combine in one axiomatic theory all intuitionistic principles that were shown to hold in the model $\mathcal{B}_s$:
\[SLP=LP+(LL)+(C)+(BI).\]

In the name of the theory $SLP$, $S$ stands for strong, $L$ for lawless and $P$ for proof (relating to the "creating subject"). The proof-theoretical strength of $SLP$ will be investigated in section 14; in particular we will show that $SLP$ is stronger than the second order arithmetic. In this section we compare $SLP$ with a classical theory $TI$.

\subsection{Theories $TI$ and $TI_s$}
Theory $TI$ is a subsystem of typed set theory with arithmetic at the bottom level. The language of $TI$ has variables of type $n$: $x^n,y^n,z^n,\ldots$ for $n=0,1,2,\ldots$, a constant 0 of type 0, functional symbols $S, +, \cdot$, and predicate symbols $=_n$ and $\in_n (n\geqslant 0)$.

Terms are defined recursively as follows.
\begin{enumerate}
\item Any variable of type $n$ is a term of type $n$.
\item The constant 0 is a term of type 0.
\item If $t$ and $\tau$ are terms of type 0, then $St, t+\tau$, and $t\cdot\tau$ are terms of type 0. 
\end{enumerate}

Atomic formulas:
\begin{list}{}{}
\item $ t=_nr$, 
\item $ t\in_n\tau $, where $t$ and $r$ are terms of type $n$, and $\tau$ is a term of type $n+1$.
\end{list}

Formulas are constructed from atomic formulas using logical connectives and quantifiers.

For a formula $ \varphi $, its sort $srt(\varphi)$ is the maximal type of parameters in $ \varphi $. 

The theory $TI$ has classical predicate logic $CPC$ with equality axioms and the following non-logical axioms.

\begin{enumerate}
\item $ Sx^0 \neq 0$, \qquad $Sx^0=Sy^0\supset x=y. $
\smallskip
\item $x^0+0=x$, \qquad $x^0+Sy^0=S(x+y)$.
\smallskip
\item $x^0\cdot 0=0$, \qquad $x^0\cdot Sy^0=x\cdot y+x$.
\smallskip
\item Induction for natural numbers: $ \varphi(0) \wedge \forall x^0 \left(\varphi(x)\supset \varphi(Sx)\right) \supset \forall x^0 \varphi(x).$
\medskip
\item Comprehension axiom: 
$ \exists x^{n+1} \forall z^n \left(z\in x\equiv \varphi(z) \right), $
where $srt(\varphi)\leqslant n+1$ and $x^{n+1}$ is not a parameter of $\varphi$.
\medskip
\item Extensionality axiom:
$\forall z^n \left(z\in x^{n+1}\equiv z\in y^{n+1} \right)\supset x=y.$ 
\end{enumerate}

McNaughton {\cite{mcnt53}} considered two typed set theories $T$ and $I$, which differ from $TI$ only in the comprehension axiom: in $T$ this axiom is stated without restrictions on the formula $\varphi$ and in $I$ the formula $\varphi$ in the comprehension  axiom has no types greater than $n+1$.

Thus, all three theories $T, TI$ and $I$ have impredicative comprehension axioms; this axiom is the least predicative in $T$ and most predicative in $I$.

In \cite{mcnt53} the variables in $T$ and $I$ have types $s=1,2,3,\ldots$ with type 1 at the bottom level. Here we change them to $s=0,1,2,\ldots$ with type 0 at the bottom level to make the notations consistent with ours.

We denote $T_s$, $TI_s$ and $I_s$ the fragments of $T$, $TI$ and $I$, respectively, in which the types of variables are not greater than $s$ $(s=0,1,2,\ldots)$. 

\subsection{Interpretation of $TI$ in $TI$ without extensionality}

First we define a formula $x^n\approx_n y^n$ of the language $TI$ by induction on $n$. 
\medskip

$x^0\approx_0 y^0$ is $x=y$;
\medskip

$x^{n+1}\approx_{n+1} y^{n+1}$ is $(\forall z^n\in x)(\exists u^n \in y)(z\approx_n u)\wedge (\forall z^n\in y)(\exists u^n \in x)(z\approx_n u)$.
\smallskip

Clearly, $\approx_{n}$ is an equivalence relation.

For any formula $\varphi$ of $TI$ we define $\varphi^*$ by induction on the complexity of $\varphi$.
\medskip

$(t =_n\tau)^*$ is $t \approx_n\tau$;
\medskip

$(t\in_n\tau)^*$ is $(\exists z^n\in \tau)(z\approx_n t)$;
\medskip

$(\bot)^*$ is $\bot$;
\medskip

$(\psi\theta \chi)^*$ is $\psi^*\theta \chi^*$, where $\theta$ is a logical connective $\wedge,\vee$ or $\supset$;
\medskip

$(Q x^n\psi)^*$ is $Q x^n\psi^*$, where $Q$ is a quantifier $\forall$ or $\exists$.
\medskip

Denote $TI_s^*$ and $TI^*$ the theories $TI_s$ and $TI$, respectively, without the extensionality axiom.

\begin{lemma}
\[TI_s\vdash\varphi \Rightarrow TI_s^*\vdash\bar{\varphi}^*,\]
where $\bar{\varphi}$ is a closure of formula $\varphi$.
\label{lemma:extensionality}
\end{lemma}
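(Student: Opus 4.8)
The plan is to argue by induction on the length of a $CPC$-derivation of $\varphi$ from finitely many axioms of $TI_s$. The translation $(\cdot)^*$ is defined so as to commute with $\wedge,\vee,\supset$ and with $\forall,\exists$ (hence also with forming closures), so it sends every purely logical axiom of $CPC$ to an instance of the same logical axiom and preserves the inference rules. Consequently the logical skeleton of the derivation transfers to $TI_s^*$ unchanged, and it remains only to verify that the $*$-translation of each \emph{equality axiom} and each \emph{non-logical axiom} of $TI_s$ is provable in $TI_s^*$, and that this is witnessed by derivations whose formulas all have parameters of type $\le s$.

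The core is a \emph{congruence lemma}: for every formula $\psi(z_1,\dots,z_k)$ of $TI_s$ with the displayed free variables of types $n_1,\dots,n_k$,
\[ TI_s^* \vdash \bigwedge_{i} (z_i \approx_{n_i} z_i') \supset \left( \psi^*(z_1,\dots,z_k) \equiv \psi^*(z_1',\dots,z_k') \right). \]
This is proved by induction on the complexity of $\psi$, all the content being in the atomic cases. For atoms built from type-$0$ terms via $S,+,\cdot$ one uses that $\approx_0$ coincides with $=_0$ and the ordinary substitutivity of equality in terms. For $t\in_n\tau$ one unfolds $(t\in_n\tau)^*$ as $(\exists w^n\in\tau)(w\approx_n t)$ and argues from two facts provable in $TI_s^*$: first, that each $\approx_n$ is an equivalence relation (a preliminary induction on $n$); second, the defining clause of $\approx_{n+1}$, which lets one transport a witness $w\in\tau$ with $w\approx_n t$ to a witness $w'\in\tau'$ with $w'\approx_n t'$ whenever $t\approx_n t'$ and $\tau\approx_{n+1}\tau'$, and symmetrically. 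Granting the congruence lemma, the $*$-translations of the equality axioms of $TI_s$ are immediate: reflexivity $z\approx_n z$ is part of $\approx_n$ being an equivalence relation, and the translation $z\approx_n z'\supset(\psi^*\supset\psi^*[z'/z])$ of the Leibniz schema $z=_n z'\supset(\psi\supset\psi[z'/z])$ is a special case of the lemma.

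For the non-logical axioms: axioms 1--3 involve only type-$0$ variables, on which $(\cdot)^*$ is the identity and $\approx_0$ is $=_0$, so their translations are the axioms themselves; the translation of the induction schema (axiom 4) applied to $\varphi$ is just the induction schema of $TI_s^*$ applied to $\varphi^*$, which has the same type-$0$ free variable and the same parameters. For the comprehension schema (axiom 5) with $srt(\varphi)\le n+1$, observe that $srt(\varphi^*)=srt(\varphi)$, since $(\cdot)^*$ does not change free variables; hence $TI_s^*$ proves $\exists x^{n+1}\forall z^n(z\in x\equiv\varphi^*(z))$, and using the congruence lemma for $\varphi^*$ together with $z\approx_n z$ one checks that the same $x$ satisfies $\forall z^n\big((\exists w^n\in x)(w\approx_n z)\equiv\varphi^*(z)\big)$, which is precisely the $*$-translation of the comprehension instance. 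Finally, the $*$-translation of the extensionality axiom (axiom 6) is itself provable in $TI_s^*$: assuming $\forall z^n\big((\exists w\in x)(w\approx_n z)\equiv(\exists w\in y)(w\approx_n z)\big)$, for $z\in x$ take the witness $w=z$ on the left, read off a witness in $y$ on the right, and apply symmetry of $\approx_n$; the symmetric argument gives the other conjunct of $x\approx_{n+1}y$.

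The main obstacle is the congruence lemma --- especially its atomic clause for $\in_n$, where the definitions of $\approx_n$ and $\approx_{n+1}$ have to mesh --- together with the bookkeeping needed to confirm that $srt(\varphi^*)=srt(\varphi)$, so that the comprehension instances invoked in $TI_s^*$ genuinely belong to the fragment of type $\le s$. Everything else is a routine transcription of the $CPC$-derivation.
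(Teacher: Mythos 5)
Your proposal is correct and follows essentially the same route as the paper: induction on the derivation, preservation of the logical and equality axioms by $(\cdot)^*$ (resting on $\approx_n$ being an equivalence relation), and the comprehension instance as the only nontrivial non-logical case, handled via $srt(\varphi^*)=srt(\varphi)$ together with congruence of $\psi^*$ under $\approx_n$. The only difference is that the paper obtains that congruence not by a separate induction on the complexity of $\psi$ but by applying the already-established implication $CPC_s^+\vdash\varphi\Rightarrow CPC_s^+\vdash\varphi^*$ to the logically derivable substitutivity schema $z=_n u\wedge\psi(z)\supset\psi(u)$; this is a minor economy, not a different argument.
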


\begin{proof}
The proof is by induction on the length of the derivation of $\varphi$.

For logical axioms and equality axioms the proof is obvious from the definition of $\varphi^*$. Therefore
\begin{equation}
CPC_s^+\vdash\varphi \Rightarrow CPC_s^+\vdash\varphi^*,
\label{eq:extensionality}
\end{equation}
where $CPC_s^+$ is the classical predicate calculus with equality in the language of $TI_s$.

Of non-logical axioms we will consider the only non-trivial case of comprehension axiom:
\[\exists x^{n+1}\forall z^n(z\in x \equiv \psi(z)), \text{ where } srt(\psi)\leqslant n+1.\]

Since $CPC^+_s\vdash z^n=_nu^n \wedge \psi(z) \supset \psi(u)$, by (\ref{eq:extensionality}) we have:
\[CPC^+_s\vdash z^n\approx _n u^n \wedge \psi^*(u) \supset \psi^*(z).\]

Clearly, $srt(\psi^*)=srt(\psi)$, so by the comprehension axiom in $TI_s^*$, there exists $x^{n+1}$ such that 
\[\forall z^n\left[ z\in x \equiv \psi^*(z)\right] .\]
It remains to prove:
\[\forall z^n \left[ (z\in x)^* \equiv \psi^*(z)\right] .\]
If $(z\in x)^*$, then for some $u^n \in x, z\approx_n u$. So $\psi^*(u)$ and by (\ref{eq:extensionality}), $\psi^*(z)$. The part $\psi^*(z)\supset (z\in x)^*$ is obvious.
\end{proof}

\subsection{Interpretation of $TI^*$ in $SLP$}
First for each term $t$ of the language $TI$ we define a term $t'$ of the language $SLP$. 
If $t$ is a numerical term, then $t'$ is obtained from $t$ by replacing each variable $x_i^0$ by $x_i$. For $n\geqslant 1, (x_i^n)'=F_i^n$.

Next for each formula $\varphi$ of $TI$ we define a formula $\varphi'$ of $SLP$ by induction on the complexity of $\varphi$.
\smallskip

$(t=_n\tau)'$ is $t'=_n\tau'$;
\medskip

$(t\in_0\tau)'$ is $\exists y\left[\tau'(y)=S(t') \right]$;
\medskip

for $n\geqslant 1$, $(t\in_n\tau)'$ is $\exists y\left[\tau'(y)=N^n(t') \right]$;
\smallskip

$\bot'$ is $\bot$;
\smallskip

$(\psi\theta\chi)'$ is $\psi'\theta\chi'$, where $\theta$ is a logical connective $\wedge,\vee$ or $\supset$;
\medskip

$(Qx^n\psi)'$ is $Q(x^n)'\psi'$, where $Q$ is a quantifier $\forall$ or $\exists$.

Finally, for each formula $\varphi$ of $SLP$ we define a formula $\varphi^-$ of $SLP$ by induction on the complexity of $\varphi$.

If $\varphi$ is an atomic formula, then $\varphi^-$ is $\neg\neg\varphi$;

$\bot^-$ is $\bot$;
\smallskip

$(\psi\theta\chi)^-$ is $\psi^-\theta\chi^-$, where $\theta$ is a logical connective $\wedge$ or $\supset$;
\medskip

$(\psi\vee\chi)^-$ is $ \neg(\neg\psi^-\wedge\neg\chi^-)$;
\medskip

$(\forall X\psi)^-$ is $\forall X\psi^-$ and 
\medskip

$(\exists X\psi)^-$ is $\neg\forall X\neg\psi^-$, 
where $X$ is any variable of $SLP$.

\begin{lemma}
\[TI_s^*\vdash \varphi \Rightarrow SLP_s\vdash (\bar{\varphi}')^{-}.\] \label{lemma:int0}
\end{lemma}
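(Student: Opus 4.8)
The plan is to argue by induction on the length of the derivation of $\varphi$ in $TI_s^*$, showing at each step that $SLP_s$ proves $(\bar\psi')^-$ for the formula $\psi$ produced at that step. Two preliminary observations organize the whole argument. First, $(\cdot)'$ is a purely syntactic reinterpretation: it sends a $TI$-variable of type $n$ to a functional variable of the same type, translates $\in_n$ via $Ap^{n}$, fixes $0$ and $S$, and commutes with $\bot$, with all connectives and with both quantifiers; in particular $(\bar\psi')^-$ is $\forall\bar X\,(\psi')^-$, where $\bar X=(\bar Y)'$ lists the translated parameters $\bar Y$ of $\psi$. Second, $(\cdot)^-$ is the G\"odel--Gentzen negative translation, so the standard fact that a $CPC$-provable formula has an $HPC$-provable negative translation, together with the fact that $(\cdot)^-$ commutes with modus ponens and generalization, disposes of every purely logical step of the given derivation, since the logic of $SLP$ is $HPC$. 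It thus remains to treat the equality axioms and the non-logical axioms $1$--$6$ of $TI_s^*$.

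For the equality axioms: every $(\cdot)^-$-image is a negative formula and hence $\neg\neg$-stable over $HPC$, and the $^-$-translation of each equality axiom of $CPC$ (after the sort-renaming $(\cdot)'$) is derivable in $HPC$ from the equality axioms that $SLP$ carries with $HPC$. For axioms $1$--$3$: each is atomic or an implication between atomics, its $(\cdot)'$-image is literally an axiom of $L$ (using $0'=0$, $(Sx)'=Sx'$, etc.), and its $^-$-image reduces over $HPC$---via $\neg\neg\neg X\leftrightarrow\neg X$ and $(\text{atom})^-=\neg\neg(\text{atom})$---to that same $L$-axiom, which $SLP\supseteq L$ proves. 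For axiom $4$ (numerical induction): both $(\cdot)'$ and $(\cdot)^-$ commute with $\wedge,\supset,\forall$ and fix $0$ and $S$, so the $^-$-translation of the $'$-translation of the induction instance for a formula $\psi(z^0)$ is exactly the induction instance of $SLP$ for the formula $(\psi')^-$, which is a formula of $LP$; and $SLP\supseteq LP$ has induction for every formula of $LP$. The extensionality axiom $6$ is absent from $TI_s^*$, so it is not checked.

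The comprehension axiom $5$ is the crux. Its $(\cdot)'$-translation, for $\psi$ with $srt(\psi)\leqslant n+1$, is $\exists F^{n+1}\forall G^n\left(\exists y\,[\,F(y)=N^{n}(G)\,]\equiv\psi'(G)\right)$ (with $S$ in place of $N^{0}$ when $n=0$); crucially $\psi'$, and hence $(\psi')^-$, is a formula of $L$---the translation introduces no proof predicates. After applying $(\cdot)^-$ and simplifying over $HPC$ ($\neg\neg\neg X\leftrightarrow\neg X$ and $\neg\forall y\,\neg X\leftrightarrow\neg\neg\exists y\,X$), the goal becomes: for every choice of parameters, $SLP_s$ proves $\neg\forall F^{n+1}\,\neg\forall G^n\bigl[\,\neg\neg\exists y\,(F(y)=N^{n}(G))\leftrightarrow(\psi'(G))^-\,\bigr]$, and for this it suffices to exhibit one witnessing $F^{n+1}$. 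I would construct $F$ as a ``$\neg\neg$-enumeration'' of $\{G^n:(\psi'(G))^-\}$ using the strength of $SLP$: by $(CS1)$ the relation $\vdash_w\psi'(G)$ is decidable in $w$, by $(CS2)$ it is monotone in $w$, and by $(CS3)$ $\exists w(\vdash_w\psi'(G))$ is equivalent to $\psi'(G)$; running the arbitrary type-$n$ argument through the lawless-functional axioms $(LL)$ and collecting the outcomes with the choice axiom $(C)$ (and $(KS)$, available in $SLP$ by Theorem~\ref{theorem:Kripke1}) yields the required $F^{n+1}$, with the double negations introduced by $(\cdot)^-$ absorbed via $\neg\neg$-stability of the negative formula $(\psi'(G))^-$ and the fact that translated membership is the negative formula $\neg\neg\exists y\,(F(y)=N^{n}(G))$.

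The main obstacle is precisely this comprehension step: one must exhibit a single type-$(n+1)$ functional witnessing both directions of the translated equivalence simultaneously for every type-$n$ argument $G$, even though the class $\{G:\psi'(G)\}$ may be uncountable. This is feasible only because $SLP$ is as strong as it is---the lawless axioms $(LL)$ make ``being an element of the set coded by $F$'' flexible enough, the creating-subject axioms $(CS1)$--$(CS3)$ supply the decidability that keeps $F$ a genuine functional, and $(C)$ assembles the pieces; the remaining effort is the careful bookkeeping of which double negations may be pushed in or out using $\neg\neg$-stability of negative formulas. Everything else is routine.
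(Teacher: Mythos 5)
The paper itself dispatches this lemma in one line by citing \cite{kash89}, so there is no detailed in-paper argument to compare yours against; your skeleton --- the G\"odel--Gentzen translation absorbs the classical logic, the equality/arithmetic/induction axioms of $TI_s^*$ translate into instances of the corresponding $SLP_s$ axioms, extensionality is absent from $TI_s^*$, and comprehension is the only substantive case --- is certainly the intended route, and those routine cases are handled correctly in your write-up.

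The comprehension case, however, which you yourself flag as the crux, is not actually proved, and the recipe you sketch is not licensed by the axioms as they stand. You propose to ``run the arbitrary type-$n$ argument through $(LL)$ and collect the outcomes with $(C)$'', but the choice axioms $(C1)$ and $(C2)$ of this paper quantify only over a \emph{number} variable ($\forall x\exists y\,\varphi$, resp.\ $\forall x\exists!G^n\varphi$); there is no principle of the form $\forall G^n\exists H\,\chi(G,H)\supset\exists F\forall G\,\chi(G,\dots)$, so you cannot collect Kripke-schema witnesses over the (uncountable) range of $G^n$ in the way you describe. What must be exhibited is a single $F^{n+1}$ with $\forall G^n\bigl[\neg\neg\exists y\,(F(y)=N^n(G))\equiv(\psi'(G))^-\bigr]$. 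One direction, $\neg\neg\exists y\,(F(y)=N^n(G))\supset(\psi'(G))^-$, is manageable once $F$ emits only ``certified'' functionals, using $\neg\neg$-stability of the negative formula $(\psi')^-$; but the converse --- that \emph{every} $G$ with $(\psi'(G))^-$ is $\neg\neg$-in the range of $F$ --- needs an explicit definition of $F$ and a genuine argument combining $(CS1)$--$(CS3)$ with $(LL1)$ and the open-data axiom $(LL3)$ (roughly: below any moment $F$ can emit a lawless $G'$ whose certified initial segment matches that of $G$, and open data transfers the certification), none of which appears in your proposal. Until that construction and the verification of both directions of the translated equivalence are written out, the lemma is not established; everything else in your proposal is fine.
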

\begin{proof}
Proof is similar to the proof in \cite{kash89}.
\end{proof}

Denote $int(\varphi)=((\varphi^*)')^-$. 

\begin{theorem}
\begin{enumerate}
\item $TI_s\vdash \varphi \Rightarrow SLP_s\vdash int(\bar{\varphi}).$
\item $TI\vdash \varphi \Rightarrow SLP\vdash int(\bar{\varphi}).$
\end{enumerate}

\label{theorem:int}
\end{theorem}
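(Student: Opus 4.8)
The plan is to derive both parts purely by composing the three preceding lemmas, so that essentially no new work is needed here; all the genuine content already lives in Lemmas \ref{lemma:extensionality} and \ref{lemma:int0}. Recall that $int(\varphi)$ abbreviates $((\varphi^{*})')^{-}$, hence $int(\bar{\varphi})=((\bar{\varphi}^{*})')^{-}$.

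First I would treat part 1. Assume $TI_s\vdash\varphi$. By Lemma \ref{lemma:extensionality} we get $TI_s^{*}\vdash\bar{\varphi}^{*}$. The key observation is that $\bar{\varphi}$ is a closed formula and the operation $(\cdot)^{*}$ neither introduces nor removes parameters — it only rewrites the atomic formulas $t=_n\tau$ and $t\in_n\tau$ and commutes with all connectives and quantifiers — so $\bar{\varphi}^{*}$ is again a sentence. Writing $\theta=\bar{\varphi}^{*}$, we therefore have $\bar{\theta}=\theta$, and applying Lemma \ref{lemma:int0} to $\theta$ yields $SLP_s\vdash(\bar{\theta}{}')^{-}=(\theta')^{-}=((\bar{\varphi}^{*})')^{-}=int(\bar{\varphi})$, which is exactly the assertion of part 1.

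For part 2, suppose $TI\vdash\varphi$. A derivation in $TI$ is finite, so it invokes only finitely many non-logical axioms, each a formula of bounded type; choosing $s$ at least as large as the maximal type occurring in $\varphi$ and in this finite axiom list, we obtain $TI_s\vdash\varphi$. By part 1, $SLP_s\vdash int(\bar{\varphi})$. Now $int(\bar{\varphi})$ is a formula of the language of $SLP_s$, since $\varphi\mapsto\varphi'$ sends a variable of type $n$ to an $n$-functional variable and the negative translation $(\cdot)^{-}$ leaves types untouched; and every axiom of $SLP_s$ is (an instance of) an axiom of $SLP$, with $HPC$-derivations transferring verbatim. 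Hence $SLP\vdash int(\bar{\varphi})$.

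The only points demanding any care are the bookkeeping ones just used: verifying that $\bar{\varphi}^{*}$ is genuinely closed so that the closure appearing in Lemma \ref{lemma:int0} is vacuous, and tracking type bounds through $(\cdot)^{*}$, $(\cdot)'$ and $(\cdot)^{-}$ so that the descent from $TI$ to the fragment $TI_s$ and the subsequent ascent from $SLP_s$ to $SLP$ are both legitimate. Neither is a real obstacle; the substantive mathematics has already been carried out in the two cited lemmas.
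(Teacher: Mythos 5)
Your proposal is correct and follows the same route as the paper: part 1 is exactly the composition of Lemma \ref{lemma:extensionality} with Lemma \ref{lemma:int0} (the paper states only this, and your bookkeeping observation that $\bar{\varphi}^{*}$ is already closed, making the closure in Lemma \ref{lemma:int0} vacuous, is the right way to make the composition precise), and part 2 is the standard finiteness-of-derivations reduction to a fragment, which is also how the paper handles the analogous step elsewhere.
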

\begin{proof}
1. It follows from Lemmas \ref{lemma:extensionality} and \ref{lemma:int0}.

2. Follows from part 1.
\end{proof}

Formulas of the first order arithmetic can be considered the same in $TI$ and $SLP$ by identifying each variable $x_i^0$ of $TI$ with the variable $x_i$ of $SLP$.

\begin{lemma}
Suppose $\varphi$ is a closed formula of first order arithmetic of the form $\forall n\psi(n)$, where formula $\psi$ defines a primitive recursive predicate. Then 
$SLP_0\vdash int(\varphi)\equiv \varphi.$
\label{lemma:int}
\end{lemma}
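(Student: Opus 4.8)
The plan is to unwind $int(\varphi)=\bigl((\varphi^{*})'\bigr)^{-}$ and reduce everything to the negative translation $(\cdot)^{-}$. Since $\varphi$ is a formula of first order arithmetic it contains only variables of type $0$ and only atomic formulas of the shape $t=_0\tau$; hence $(\cdot)^{*}$ acts as the identity on $\varphi$ (it replaces $=_n$ by $\approx_n$, and $\approx_0$ is literally $=_0$), and $(\cdot)'$ also acts as the identity (it merely renames $x_i^{0}$ to $x_i$, under which identification we are already working). Therefore $int(\varphi)$ is just $\varphi^{-}$, and since $\varphi=\forall n\,\psi(n)$ and $(\forall X\chi)^{-}$ is $\forall X\,\chi^{-}$, we get $int(\varphi)=\forall n\,\psi(n)^{-}$. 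Thus it suffices to prove
\[SLP_0\vdash\forall n\bigl(\psi(n)^{-}\equiv\psi(n)\bigr).\]

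First I would establish the following auxiliary fact by induction on the complexity of a formula: if $\chi$ is a primitive recursive (i.e.\ quantifier-free, or $\Delta_0$, in the language of $L_0$ extended by symbols for all primitive recursive functions) formula of first order arithmetic, then $SLP_0\vdash\chi^{-}\equiv\chi$. In the base case $\chi$ is atomic and $\chi^{-}$ is $\neg\neg\chi$; here $SLP_0$ proves $\chi\vee\neg\chi$ (the arithmetical axioms of $L$ give decidability of equations and inequations between primitive recursive terms), and $A\vee\neg A$ yields $\neg\neg A\equiv A$ intuitionistically. For $\wedge$ and $\supset$ (and for bounded $\forall$, treated as $\forall x(x<t\supset\cdot)$) the translation commutes with the connective and the double negation attached to the atomic side-condition is absorbed by its decidability, so the claim is immediate from the induction hypothesis. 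The delicate cases are $\vee$ and bounded $\exists$: $(\chi_1\vee\chi_2)^{-}$ is $\neg(\neg\chi_1^{-}\wedge\neg\chi_2^{-})$, which by the induction hypothesis is equivalent to $\neg(\neg\chi_1\wedge\neg\chi_2)$, hence to $\neg\neg(\chi_1\vee\chi_2)$; and $(\exists x<t\,\chi_1)^{-}$ unfolds to $\neg\forall x\,\neg\bigl((x<t)^{-}\wedge\chi_1^{-}\bigr)$, which similarly reduces to $\neg\neg\exists x<t\,\chi_1$. Since $SLP_0$ contains the arithmetic of $L$ with full induction, it proves that every $\Delta_0$ formula is decidable; the double negations in front of $\chi_1\vee\chi_2$ and of $\exists x<t\,\chi_1$ can therefore be stripped, recovering the original formulas.

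Granting the auxiliary fact, the lemma follows at once: as $\psi$ defines a primitive recursive predicate we may take $\psi$ to be a primitive recursive formula (equivalently, $SLP_0\vdash\psi(n)\equiv e(n)=0$ for a suitable primitive recursive term $e$, reducing to the atomic base case), so $SLP_0\vdash\forall n\bigl(\psi(n)^{-}\equiv\psi(n)\bigr)$, and hence $SLP_0\vdash\forall n\,\psi(n)^{-}\equiv\forall n\,\psi(n)$, i.e.\ $SLP_0\vdash int(\varphi)\equiv\varphi$. The one step that really needs care is the inductive fact, and within it the $\vee$ and bounded-$\exists$ clauses: this is precisely where primitive-recursiveness is essential, because for an undecidable disjunction or an unbounded existential the negative translation delivers only a double-negated form and the equivalence with the original genuinely fails. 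Everything else — that $(\cdot)^{*}$ and $(\cdot)'$ are the identity on arithmetical formulas, and that $SLP_0$ proves $\Delta_0$ decidability — is routine.
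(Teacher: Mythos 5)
Your proposal is correct and follows essentially the same route as the paper: the paper's proof is just the three observations that $\psi^*=\psi$ and $\psi'=\psi$ for arithmetic $\psi$, that $\psi^-\equiv\psi$ because $\psi$ is primitive recursive, and that $(\cdot)^-$ commutes with $\forall n$. You have simply supplied the details the paper leaves implicit, in particular the induction showing that $SLP_0$ proves decidability of $\Delta_0$ formulas and hence that the double negations introduced by $(\cdot)^-$ can be stripped.
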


\begin{proof}
Since $\psi$ is an arithmetic formula, then $\psi^*=\psi$ and $\psi'=\psi$. Since $\psi$ defines a primitive recursive predicate, then $\psi^-\equiv\psi$. Therefore $int(\varphi)\equiv \forall n \;int(\psi)\equiv \forall n \psi \equiv \varphi$.
\end{proof}

\section{The relative strengths of theories $TI_s$ and $TI$}
In \cite{bern76} and \cite{bern78} Bernini intended to create a strong intuitionistic theory of functionals but the theory in \cite{bern76} happened to be inconsistent and the consistency of its modification in \cite{bern78} was not proved. We proved the consistency of our intuitionistic theory $SLP$ of functionals by constructing the model $\mathcal{B}_s$. Now we are interested in the strength of $SLP$. We will show that it is the same as the strength of the classical theory $TI$. We will also investigate the strengths of the fragments $SLP_s$ and $TI_s$.

\subsection{Constructing a truth predicate for $TI_s$ in $TI_{s+1}$}
Fix $s\geqslant 1$. We will construct in $TI_{s+1}$ a truth predicate $Tr$ for formulas of the theory $TI_s$. In this subsection details of the construction will be described and its applications to the relative strengths of our theories will be given in next sub-sections.

First we introduce some notations in the language $TI$. We will often use letters $i,j,k,l,m,n,p,q,r$ for natural numbers, in particular instead of variables of type 0 in the language $TI$.

For a natural number $k, \check{k}=\underbrace{S\ldots S}_k0$ is a term of the language $TI$. 

$\left\lbrace x^k\right\rbrace ^1=\left\lbrace x\right\rbrace$;
$\left\lbrace x^k\right\rbrace ^{n+1}=\left\lbrace \left\lbrace x\right\rbrace ^n\right\rbrace$.

For any $n=0,1,\ldots,s$ we define an ordered pair $\left[ x^n,y^n \right]$ by induction on $n$. An ordered pair $\left[x^0,y^0\right]$ of natural numbers is coded by a natural number; the code is given by a primitive recursive function as usual in the first order arithmetic. 
\[\left[x^{n+1},y^{n+1}\right]=\left\lbrace \left[\left\lbrace 0\right\rbrace ^n,z^n\right]\mid z\in x\right\rbrace \cup 
\left\lbrace \left[\left\lbrace 1\right\rbrace ^n,z^n\right]\mid z\in y\right\rbrace.\]

Thus, the ordered pair $\left[x^n,y^n\right]$ of objects of type $n$ also has type $n$.

Next we define a finite sequence $\langle x_1^n,\ldots,x_k^n\rangle$ by induction on $n$. 

$\langle x_1^0,\ldots,x_k^0\rangle$ is coded by a natural number as usual in the first order arithmetic.
\[\langle x_1^{n+1},\ldots,x_k^{n+1}\rangle=
\left\lbrace \left[ \left\lbrace 0\right\rbrace ^n,\left\lbrace \check{k}\right\rbrace ^n\right] \right\rbrace\cup 
\bigcup_{i=1}^k \left\lbrace \left[\left\lbrace \check{i}\right\rbrace ^n,z^n\right]\mid z\in x_i\right\rbrace . \]

For arbitrary types $n_1,\ldots,n_k$ and $m=max\left\lbrace n_1,\ldots,n_k \right\rbrace$ we define:
\[\langle x_1^{n_1},\ldots,x_k^{n_k}\rangle=\langle \left\lbrace x_1\right\rbrace ^{m-n_1},\ldots,\left\lbrace x_k\right\rbrace ^{m-n_k}\rangle.\]

We will fix G\"{o}del numbering of all expressions of the language $TI_s$. 

For any expression $g$ we denote $\llcorner g\lrcorner$ its G\"{o}del number in this numbering. $t_n$ denotes the arithmetic term of $TI_s$ with G\"{o}del number $n$ and $\varphi_n$ denotes the formula of $TI_s$ with G\"{o}del number $n$; $t_n$ or $\varphi_n$ is undefined if $n$ is not the G\"{o}del number of an arithmetic term or a formula,  respectively.

Next we introduce a predicate $Eval_k(f^k), k=0,1,\ldots,s$, with the meaning: "$f$ is an evaluation of all variables of type $k$", that is $f$ defines an infinite sequence of objects, where the $i$th element evaluates variable $x_i^k$ $(i=1, 2,\ldots)$. A functional symbol $Val_k(f,i)$ is defined next; it produces the $i$th element of $f$.

For $k=1, 2,\ldots,s$
\[Eval_k(f^k)\leftrightharpoons \left(\forall x^{k-1}\in f \right)\left[x=\emptyset^{k-1} \vee \exists i, z^{k-1}(x=\langle i,z\rangle)\right]; \]
\[Val_k(f^k,i)=\left\lbrace z^{k-1}\mid \langle i,z\rangle \in f\right\rbrace  .\]
In the special case $k=0$ the parameter of the formula $Eval_0$ has type 1:
\[Eval_0(f^1)\leftrightharpoons \left(\forall x^0\in f \right)\exists i, n (x=\langle i,n\rangle)\wedge \forall i\exists ! n(\langle i,n\rangle \in f).\]
Thus, $Eval_0(f^1)$ means $f:\omega\rightarrow\omega$. Clearly, 
\[TI_{s+1}\vdash \forall i\exists ! n\left[(Eval_0(f^1)\supset\langle i,n\rangle \in f)\wedge(\neg Eval_0(f^1)\supset n=0)\right] .\]
So we can introduce a functional symbol $Val_0(f^1,i)$ such that 
\[TI_{s+1}\vdash Eval_0(f^1)\supset \langle i,Val_0(f,i)\rangle \in f.\]
Denote $\bar{f}$ the list of variables $f_0^1,f_1^1,f_2^2,\ldots,f_s^s$ and 
\[Eval(\bar{f})\leftrightharpoons \bigwedge_{k=0}^s Eval_k(f_k).\] 
For $k=1, 2,\ldots,s$ denote
\[Subst_k(f^k,i,y^k)=\left\lbrace \langle j,z^{k-1}\rangle \in f \mid j\neq i \right\rbrace 
\cup \left\lbrace \langle i,z^{k-1}\rangle \mid z\in y \right\rbrace.\]
Then 
\begin{multline}
TI_{s+1}\vdash Eval_k(f^k)\wedge g^k=Subst_k(f,i,y^k)
\\
\supset Eval_k(g)\wedge Val_k(g,i)=y \wedge (\forall j\neq i)\left[Val_k(g,j)=Val_k(f,j) \right].
\label{eq:term0}
\end{multline}

Similarly, denote
\[Subst_0(f^1,i,y^0)=\left\lbrace \langle j,z^0\rangle \in f \mid j\neq i \right\rbrace 
\cup \left\lbrace \langle i,y\rangle\right\rbrace;\]
then it has a property similar to (\ref{eq:term0}).

Thus, $Subst_k(f,i,y)$ is the new evaluation obtained from evaluation $f$ by changing the value for variable $x_i^k$ to $y$.

For any $k=0,1,\ldots,s$, denote $Sub_k(\bar{f},i,y^k)$ the list of variables obtained from list $\bar{f}$ by replacing $f_k$ with $Subst_k(f_k,i,y^k)$. 
\begin{lemma}
$TI_{s+1}\vdash Eval(\bar{f})\supset Eval(Sub_k(\bar{f},i,k)), k=0,1,\ldots,s.$
\label{lemma:term1}
\end{lemma}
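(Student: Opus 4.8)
Fix $k\in\{0,1,\ldots,s\}$ and reason inside $TI_{s+1}$; assuming $Eval(\bar f)$, the goal is to derive $Eval(Sub_k(\bar f,i,y^k))$, the remaining free variables being universally quantified as usual. The plan is simply to unwind $Eval$ as the finite conjunction $\bigwedge_{j=0}^s Eval_j(f_j)$ and to note that the substitution touches only the $k$th coordinate: by definition $Sub_k(\bar f,i,y^k)$ agrees with $\bar f$ in every coordinate $j\neq k$ and replaces $f_k$ by $g:=Subst_k(f_k,i,y^k)$. Hence $Eval(Sub_k(\bar f,i,y^k))$ is logically equivalent to $\bigl(\bigwedge_{j\neq k}Eval_j(f_j)\bigr)\wedge Eval_k(g)$, and every conjunct $Eval_j(f_j)$ with $j\neq k$ is already a conjunct of the hypothesis $Eval(\bar f)$.

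So the only thing left to establish is $Eval_k(g)$ for $g=Subst_k(f_k,i,y^k)$. For $k\geqslant 1$ this is a direct instance of (\ref{eq:term0}): with $f:=f_k$ and $g$ as above, the conjunct $Eval_k(f_k)$ of $Eval(\bar f)$ together with the identity $g=Subst_k(f_k,i,y^k)$ yields $Eval_k(g)$ — in fact it yields the stronger statement about $Val_k(g,i)$ and $Val_k(g,j)$ for $j\neq i$, which is more than is needed here. For $k=0$ one argues identically, but invokes the stated analogue of (\ref{eq:term0}) for $Subst_0$: from $Eval_0(f_0)$ and $g=Subst_0(f_0,i,y^0)$ it gives $Eval_0(g)$. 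Putting the conjuncts back together yields $Eval(Sub_k(\bar f,i,y^k))$, and since $k$ was arbitrary the lemma follows.

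I do not expect any genuine obstacle here; the lemma is bookkeeping once (\ref{eq:term0}) is available. The only point that warrants a moment's care is isolating the case $k=0$: there $Eval_0$ speaks about a type-$1$ object and $Subst_0$ is defined by a clause different from that of $Subst_k$ for $k\geqslant 1$, so one must appeal to the $k=0$ version of (\ref{eq:term0}) rather than to (\ref{eq:term0}) itself — but with that analogue in hand the two cases are handled uniformly.
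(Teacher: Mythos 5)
Your proof is correct and follows essentially the same route as the paper, which simply observes that the claim follows from the definitions of $Eval$ and $Sub_k$; you have merely spelled out the bookkeeping (only the $k$th coordinate changes, and $Eval_k$ of the modified coordinate is given by the property (\ref{eq:term0}) and its stated $k=0$ analogue). Your care in separating the $k=0$ case is appropriate but introduces no new idea beyond the paper's one-line argument.
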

\begin{proof}
Follows from the definitions of $Eval$ and $Sub_k$.
\end{proof}

We denote $Term(n)$ the arithmetic formula stating that $n$ is the G\"{o}del number of an arithmetic term  of $TI_s$. We denote $Subterm(m,n)$ the arithmetic formula stating that $m$ is the G\"{o}del number of a sub-term of the term with G\"{o}del number $n$. 

In next few steps we define a functional symbol $Arval(n,f^1)$ that produces the value of the arithmetic term $t_n$ under evaluation $f^1$.

First we define a predicate $Arterm(n,f^1, a^1)$ which means that $a$ is the set of the values of all sub-terms of the term $t_n$ under evaluation $f$.
\begin{multline*}
Arterm(n,f^1, a^1)\leftrightharpoons 
Term(n)\wedge Eval_0(f)
\\
\wedge (\forall x^0\in a)\exists m,k\left[x=\langle m,k\rangle \wedge Subterm(m,n)\right] 
\\
\wedge
\forall m\left[Subterm(m,n)\supset B(m,a)\right], 
\end{multline*}
where $B(m,a)\leftrightharpoons B_1\vee B_2\vee B_3\vee B_4\vee B_5$ and the predicates $B_1 - B_5$ determine the values of sub-terms of $t_n$ as follows.
\[B_1\leftrightharpoons m=\llcorner 0\lrcorner \wedge
\forall k (\langle m,k\rangle \in a\equiv k=0).\]
\[B_2\leftrightharpoons m=\llcorner x_i^0\lrcorner \wedge
\forall k\left[\langle m,k\rangle \in a\equiv k=Val_0(f,i)\right] .\]
\[B_3\leftrightharpoons \exists j \left\lbrace m=\llcorner St_j\lrcorner 
\wedge\forall k\left[\langle m,k\rangle \in a\equiv 
\exists l(\langle j,l\rangle \in a \wedge k=Sl)
\right]\right\rbrace  .\]
Denote + as $\theta_4$ and $\cdot$ as $\theta_5$. For $p=4,5$,
\begin{multline*}
B_p\leftrightharpoons \exists i,j \left\lbrace m=\llcorner t_i \theta_p t_j\lrcorner 
\right.
\\ 
\left.
\wedge\forall k\left[\langle m,k\rangle \in a\equiv 
\exists r,l(\langle i,r\rangle \in a \wedge \langle j,l\rangle \in a \wedge k=r \theta_p l)
\right]\right\rbrace  .
\end{multline*}

Denote 
\[a_{\mid\mid m}=\lbrace\langle j,k\rangle\in a\mid Subterm(j,m)\rbrace.\]

\begin{lemma}
The following formulas are derived in $TI_{s+1}$.
\begin{enumerate}
\item $Subterm(m,n)\wedge Arterm(n,f^1,a^1)\supset Arterm(m,f,a_{\mid\mid m})$.
\medskip
\item $Arterm(n,f^1,a^1)\wedge Arterm(n,f,b^1)\supset a=b$.
\medskip
\item $Term(n)\wedge Eval_0(f^1)\supset\exists ! a^1 Arterm(n,f,a)$.
\end{enumerate}
\label{lemma:term2}
\end{lemma}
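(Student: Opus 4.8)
The plan is to prove the three parts of Lemma~\ref{lemma:term2} together, all three being routine consequences of the recursive structure of the predicate $Arterm$ and carried out inside $TI_{s+1}$ by arithmetic induction on (a suitable measure of) terms. Throughout, one works with a fixed $f^1$ satisfying $Eval_0(f^1)$; the comprehension axiom available in $TI_{s+1}$ is used freely to form the auxiliary type-$1$ sets.

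For part~1, I would argue directly from the definition of $Arterm$. Assume $Subterm(m,n)$ and $Arterm(n,f^1,a^1)$. One must check the three conjuncts of $Arterm(m,f,a_{\mid\mid m})$. The conjunct $Term(m)\wedge Eval_0(f)$ follows because a sub-term of an arithmetic term is an arithmetic term (an arithmetic fact about the G\"odel numbering, provable in first-order arithmetic and hence in $TI_{s+1}$). For the second conjunct, every $x\in a_{\mid\mid m}$ is of the form $\langle j,k\rangle$ with $Subterm(j,m)$ by the definition of $a_{\mid\mid m}$; and $Subterm(j,m)\wedge Subterm(m,n)\supset Subterm(j,n)$, so $\langle j,k\rangle\in a$, whence $Subterm(j,n)$ was already guaranteed --- here one only needs transitivity of $Subterm$, again an arithmetic fact. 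For the last conjunct, fix $j$ with $Subterm(j,m)$; then $Subterm(j,n)$, so $B(j,a)$ holds; one then observes that $B(j,a)$ only quantifies over pairs $\langle i,\cdot\rangle$, $\langle r,\cdot\rangle$, $\langle l,\cdot\rangle$ with $i,r,l$ sub-terms of $j$ (hence of $m$), and for such indices $\langle \cdot,\cdot\rangle\in a \Leftrightarrow \langle\cdot,\cdot\rangle\in a_{\mid\mid m}$ by the definition of $a_{\mid\mid m}$; therefore $B(j,a_{\mid\mid m})$ holds. This is a finite case split on which of $B_1,\dots,B_5$ applies, each case being immediate.

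For part~2, I would prove $\forall m\,[\,Subterm(m,n) \supset (\forall k)(\langle m,k\rangle\in a \equiv \langle m,k\rangle\in b)\,]$ by course-of-values induction on $m$ (equivalently, induction on the complexity of the sub-term $t_m$, formalised via an arithmetic measure). Given $Subterm(m,n)$, both $B(m,a)$ and $B(m,b)$ hold, and they fall into the same case $B_p$ since the case is determined by the syntactic shape of $t_m$, i.e.\ by $m$ alone. In case $B_1$ or $B_2$ the value $k$ with $\langle m,k\rangle\in a$ is pinned down outright (to $0$ or $Val_0(f,i)$), so $a$ and $b$ agree on $\langle m,\cdot\rangle$. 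In cases $B_3,B_4,B_5$ the values at $m$ are determined from the values at the immediate sub-terms $j$ (or $i,j$), which are strictly smaller and are themselves sub-terms of $n$; by the induction hypothesis $a$ and $b$ agree there, hence they agree at $m$. Taking $m=n$ gives $a=b$ (using extensionality, which is available in $TI_{s+1}$, applied to the type-$1$ sets $a,b$). Part~3 follows the same pattern: uniqueness is exactly part~2, and existence is proved by the same induction, using comprehension at type $1$ in $TI_{s+1}$ to assemble, for each sub-term $m$ of $n$ in order of increasing complexity, the finite extension of the partial set by the pair(s) $\langle m,k\rangle$ prescribed by the applicable clause $B_p$ (the required value $k$ existing and being unique by the induction hypothesis applied to the immediate sub-terms); formally one defines $a=\{\langle m,k\rangle \mid Subterm(m,n)\wedge \Phi(m,k)\}$ where $\Phi(m,k)$ is the natural arithmetic formula expressing ``$k$ is the value of $t_m$ under $f$'', and then checks $Arterm(n,f,a)$ clause by clause.

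The only point requiring a little care --- and the place I expect to do the real (though still light) work --- is the formalisation inside $TI_{s+1}$ of the ``induction on the complexity of $t_m$''. Since $Arterm$ constrains $a$ only on pairs indexed by sub-terms of $n$, the natural induction is course-of-values induction on the G\"odel number $m$ restricted to sub-terms of $n$, using that each clause $B_p$ refers only to sub-terms $j<m$ (which holds because in a sensible G\"odel numbering a sub-term has a smaller code than the term, or else one replaces ``$<$'' by a primitive-recursive complexity measure of $t_m$ and inducts on that). All of this is available already in $TI_0$ (first-order arithmetic), so no type machinery beyond what is used to form the sets $a,b,a_{\mid\mid m}$ is needed, and comprehension/extensionality at type $1$ suffice. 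Hence all three statements are derivable in $TI_{s+1}$ as claimed.
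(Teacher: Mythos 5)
Your proposal is correct and follows essentially the same route as the paper: part 1 by restricting $B(j,a)$ to pairs indexed by sub-terms of $m$, part 2 by course-of-values induction on the sub-term code with a case split on the applicable clause $B_p$, and part 3 by taking uniqueness from part 2 and building the witness by induction on the term structure (the paper assembles it as $c\cup d\cup\{\langle n,\cdot\rangle\}$ and uses parts 1 and 2 to reconcile $c$ and $d$ on shared sub-terms, which your appeal to the uniqueness clause covers). The only cosmetic slip is the phrase ``taking $m=n$ gives $a=b$''; what is actually needed is the full universally quantified agreement on all sub-terms of $n$ together with the second conjunct of $Arterm$ and extensionality, which your argument does establish.
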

\begin{proof}
1. Assume the premises. For any $i$ with $Subterm(i,m)$, we have:
\begin{equation}
B(i,a) \text{ and }\forall k\left(\langle i,k\rangle\in a\equiv\langle i,k\rangle\in a_{\mid\mid m} \right).
\label{eq:term1}
\end{equation}

To prove $Arterm(m,f,a_{\mid\mid m})$, it is sufficient to show for any $j$:
\begin{equation}
Subterm(j,m)\supset B(j,a_{\mid\mid m}).
\label{eq:term2}
\end{equation}

When $t_j$ is a constant or a variable, (\ref{eq:term2}) follows from $B(j,a)$. 

Suppose $j=\llcorner t_p+t_q\lrcorner$. Since $B(j,a)$, we have for any $k$:
\[\langle j,k\rangle\in a
\equiv\exists r,l(\langle p,r\rangle\in a\wedge\langle q,l\rangle\in a\wedge k=r+l)\]
and by (\ref{eq:term1}):
\[\langle j,k\rangle\in a_{\mid\mid m}
\equiv\exists r,l(\langle p,r\rangle\in a_{\mid\mid m}\wedge\langle q,l\rangle\in a_{\mid\mid m}\wedge k=r+l),\]
which implies $B(j,a\mid\mid_m)$.

For other forms of $t_j$ the proof is similar.

2. Assume $Arterm(n,f^1,a^1)\wedge Arterm(n,f,b^1)$. Then $Term(n)$ and $Eval_0(f)$. It is sufficient to prove:
\begin{equation}
\forall m\left[Subterm(m,n)\supset \forall k(\langle m,k\rangle\in a\equiv\langle m,k\rangle\in b)\right].
\label{eq:term3}
\end{equation}

We will prove it by induction on $m$. The cases of constant 0 and variables are obvious.

Assume (\ref{eq:term3}) holds for numbers less than $m$. We will consider only the case $m=\llcorner t_i+t_j\lrcorner$, other cases are similar. By the inductive assumption, 
\[\forall k(\langle i,k\rangle\in a\equiv\langle i,k\rangle\in b)\text{ and }
\forall k(\langle j,k\rangle\in a\equiv\langle j,k\rangle\in b).\]
Therefore for any $k$,
\begin{multline*}
\langle m,k\rangle\in a\equiv
\exists r,l(\langle i,r\rangle\in a\wedge \langle j,l\rangle\in a\wedge k=r+l)
\\
\equiv
\exists r,l(\langle i,r\rangle\in b\wedge \langle j,l\rangle\in b\wedge k=r+l)\equiv
\langle m,k\rangle\in b.
\end{multline*}

3. The uniqueness was proven in part 2. We prove the existence by induction on $n$.

1) If $n=\llcorner 0\lrcorner$, we take $a^1=\lbrace \langle n,0\rangle\rbrace$. 
\smallskip

2) If $n=\llcorner x_i^0\lrcorner$, we take $a^1=\lbrace \langle n,Val_0(f,i)\rangle\rbrace$.
\smallskip

In each case $Arterm(n,f,a)$ is obvious.

Assume the existence for numbers less than $n$. 

3) $n=\llcorner t_i+t_j\lrcorner$. 

By the inductive assumption, there exist $c^1$ and $d^1$ such that $Arterm(i,f,c)$ and $Arterm(j,f,d)$. We take 
\[a^1=c\cup d\cup\lbrace \langle n,r+l\rangle \mid  \langle i,r\rangle\in c\wedge \langle j,l\rangle\in d\rbrace\] 
and prove $Arterm(n,f,a)$.

First we prove:
\begin{equation}
\forall p\left[Subterm(p,i)\supset\forall k\left(
\langle p,k\rangle\in a\equiv\langle p,k\rangle\in c \right) \right].
\label{eq:term4}
\end{equation}

Assume $Subterm(p,i)$. If $\neg Subterm(p,j)$, then the conclusion is obvious. If $Subterm(p,j)$, then  $Arterm(p,f,c_{\mid\mid p})$ and $Arterm(p,f,d_{\mid\mid p})$ by part 1, $c_{\mid\mid p}=d_{\mid\mid p}$ by part 2 and for any $k$:
\[\langle p,k\rangle\in a\equiv\langle p,k\rangle\in c_{\mid\mid p}\cup d_{\mid\mid p}\equiv\langle p,k\rangle\in c.\]

(\ref{eq:term4}) is proven. Similarly,
\begin{equation}
\forall p\left[Subterm(p,j)\supset\forall k\left(
\langle p,k\rangle\in a\equiv\langle p,k\rangle\in d \right) \right].
\label{eq:term5}
\end{equation}

To prove $Arterm(n,f,a)$ , it is sufficient to show:
\begin{equation}
\forall m \left[Subterm(m,n)\supset B(m,a)\right].
\label{eq:term6}
\end{equation}

Assume $Subterm(m,n)$.

i) $m=n$.

Applying (\ref{eq:term4}) to $p=i$ and (\ref{eq:term5}) to $p=j$, we get for any $k$:
\begin{multline*}
\langle n,k\rangle\in a
\equiv
\exists r,l
\left(\langle i,r\rangle\in c\wedge 
\langle j,l\rangle\in d
\wedge
k=r+l\right)
\\
\equiv
\exists r,l
\left(\langle i,r\rangle\in a\wedge 
\langle j,l\rangle\in a
\wedge
k=r+l\right),
\end{multline*}
which implies $B(n,a)$.

ii) $m<n$.

Then $Subterm(m,i)\vee Subterm(m,j)$. We can assume $Subterm(m,i)$ without loss of generality. Applying (\ref{eq:term4}) to each possible form of $t_m$, we get $B(m,a)$ due to $Arterm(i,f,c)$. 

For example, if $t_m=S(t_q)$, by (\ref{eq:term4}) we have for any $k$:
\begin{multline*}
\langle m,k\rangle\in a\equiv\langle m,k\rangle\in c\equiv\exists l\left(\langle q,l\rangle\in c\wedge  k=Sl\right)
\equiv\exists l\left(\langle q,l\rangle\in a\wedge k=Sl\right),
\end{multline*}
which implies $B(m,a)$.

This completes the proof for case 3). The cases of other functional symbols are similar.
\end{proof}

\begin{lemma}
It is derived in $TI_{s+1}$ that the formula 
\[Subterm(m,n)\wedge Arterm(n,f^1,a^1)\]
implies each of the following formulas:
\begin{enumerate}
\item $m=\llcorner 0\lrcorner\supset\forall k\left(\langle m,k\rangle\in a \equiv k=0\right)$.
\medskip
\item
$m=\llcorner x_i^0\lrcorner \supset\forall k\left[\langle m,k\rangle\in a \equiv k= Val_0(f,i)\right]$.
\medskip
\item
$m=\llcorner S(t_j)\lrcorner
\supset\forall k\left[\langle m,k\rangle\in a \equiv 
\exists l\left(\langle j,l\rangle\in a\wedge k=Sl  \right)\right]$.
\medskip
\item
$m=\llcorner t_i\theta t_j  \lrcorner 
\supset\forall k\left[\langle m,k\rangle\in a 
\equiv \exists r,l\left(\langle i,r\rangle\in a\wedge \langle j,l\rangle\in a\wedge k=r\theta l\right)\right]$,
\\
where $\theta$ denotes $\cdot$ or $+$.
\end{enumerate}
\label{lemma:term3}
\end{lemma}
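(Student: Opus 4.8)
The plan is to unfold the definition of $Arterm$ and, for each of the four syntactic shapes of $t_m$, isolate the corresponding disjunct among $B_1,\dots,B_5$.

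First I would fix $m,n,f,a$ and assume $Subterm(m,n)\wedge Arterm(n,f^1,a^1)$. By the defining clause $\forall m'\,[Subterm(m',n)\supset B(m',a)]$ of $Arterm$, instantiated at $m'=m$, we obtain $B(m,a)$, that is
\[B_1(m,a)\vee B_2(m,a)\vee B_3(m,a)\vee B_4(m,a)\vee B_5(m,a).\]
Each $B_p$ begins with a ``head condition'' on $m$: respectively $m=\llcorner 0\lrcorner$; $\exists i\,(m=\llcorner x_i^0\lrcorner)$; $\exists j\,(m=\llcorner St_j\lrcorner)$; $\exists i,j\,(m=\llcorner t_i+t_j\lrcorner)$; and $\exists i,j\,(m=\llcorner t_i\cdot t_j\lrcorner)$. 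The key observation is that these five head conditions are provably pairwise incompatible in $TI_{s+1}$: a natural number is the G\"odel number of at most one expression, and an arithmetic term has a unique outermost form (constant, variable, successor, sum, or product) with uniquely determined immediate subterms. These are routine facts about the fixed primitive recursive G\"odel numbering of $TI_s$, available since $TI_{s+1}$ contains first order arithmetic, and I would simply invoke them from the setup.

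Given this, each part follows by a case elimination. For part 1, assume in addition $m=\llcorner 0\lrcorner$; then the head conditions of $B_2,B_3,B_4,B_5$ all fail, so the disjunction forces $B_1(m,a)$, whose second conjunct is exactly $\forall k(\langle m,k\rangle\in a\equiv k=0)$. Part 2 is identical with $B_2$ in the role of $B_1$ (using that $\llcorner x_i^0\lrcorner$ determines $i$, so $Val_0(f,i)$ is well defined from $m$), and part 3 is identical with $B_3$, reading off $\forall k[\langle m,k\rangle\in a\equiv\exists l(\langle j,l\rangle\in a\wedge k=Sl)]$ from the surviving disjunct. For part 4 with $\theta$ being $+$ one lands in $B_4$ and with $\theta$ being $\cdot$ in $B_5$; in either case the surviving disjunct yields $\forall k[\langle m,k\rangle\in a\equiv\exists r,l(\langle i,r\rangle\in a\wedge\langle j,l\rangle\in a\wedge k=r\theta l)]$, as required.

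There is no genuine obstacle here beyond the bookkeeping; the only point needing care is the provable mutual exclusivity of the five syntactic-shape predicates (and the uniqueness of the immediate subterm indices $i,j$), which is the standard arithmetization of a context-free syntax and is therefore derivable in $TI_{s+1}$.
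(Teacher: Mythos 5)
Your proposal is correct and matches the paper's approach: the paper's entire proof is ``Follows from the definition of $Arterm$,'' and your argument is exactly the honest unpacking of that one-liner, with the only substantive point (provable mutual exclusivity of the five syntactic head conditions under the fixed G\"odel numbering) correctly identified and correctly dispatched as routine arithmetization.
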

\begin{proof}
Follows from the definition of $Arterm$.
\end{proof}

Next we define a functional symbol 
\[Art^1(f^1)=\left\lbrace \langle n,k\rangle \mid \exists a^1(Arterm(n,f,a)\wedge \langle n,k\rangle\in a)\right\rbrace.\]

\begin{lemma}
\begin{multline*}
TI_{s+1}\vdash Subterm(m,n)\wedge Arterm(n,f^1,a^1)
\\
\supset\forall k\left[ \langle m,k\rangle\in a
\equiv\langle m,k\rangle\in Art(f)\right].
\end{multline*}
\label{lemma:term4}
\end{lemma}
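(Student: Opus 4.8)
The plan is to obtain Lemma~\ref{lemma:term4} as an essentially immediate corollary of Lemma~\ref{lemma:term2}. I would work inside $TI_{s+1}$, assume the antecedent $Subterm(m,n)\wedge Arterm(n,f^1,a^1)$, fix $k$, and prove the two implications of the biconditional $\langle m,k\rangle\in a\equiv\langle m,k\rangle\in Art(f)$ separately. The preliminary observations I would record first are: from $Arterm(n,f,a)$ we get $Term(n)$ and $Eval_0(f)$, and since a sub-term of a term is a term and the sub-term relation is reflexive, also $Term(m)$ and $Subterm(m,m)$; recalling $a_{\mid\mid m}=\lbrace\langle j,k\rangle\in a\mid Subterm(j,m)\rbrace$, this gives $a_{\mid\mid m}\subseteq a$ and, using $Subterm(m,m)$, the equivalence $\langle m,k\rangle\in a\Leftrightarrow\langle m,k\rangle\in a_{\mid\mid m}$ for every $k$; and finally, by Lemma~\ref{lemma:term2}.1 applied to the antecedent, $Arterm(m,f,a_{\mid\mid m})$ holds.

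For the direction $\langle m,k\rangle\in a\Rightarrow\langle m,k\rangle\in Art(f)$ I would argue that $\langle m,k\rangle\in a$ yields $\langle m,k\rangle\in a_{\mid\mid m}$, and then the pair $(a_{\mid\mid m})$ together with $Arterm(m,f,a_{\mid\mid m})$ witnesses the existential quantifier in the definition $Art^1(f^1)=\lbrace\langle n,k\rangle\mid\exists a^1(Arterm(n,f,a)\wedge\langle n,k\rangle\in a)\rbrace$, so $\langle m,k\rangle\in Art(f)$. For the converse $\langle m,k\rangle\in Art(f)\Rightarrow\langle m,k\rangle\in a$ I would unfold the definition of $Art$ to get some $b^1$ with $Arterm(m,f,b)\wedge\langle m,k\rangle\in b$, then invoke the uniqueness clause Lemma~\ref{lemma:term2}.2 with $m$ in place of $n$ on $Arterm(m,f,b)$ and $Arterm(m,f,a_{\mid\mid m})$ to conclude $b=a_{\mid\mid m}$, whence $\langle m,k\rangle\in a_{\mid\mid m}\subseteq a$.

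I do not expect a genuine obstacle here: the statement is a bookkeeping consequence of the existence/uniqueness result Lemma~\ref{lemma:term2} once the harmless facts that $Subterm$ is reflexive and that $a_{\mid\mid m}\subseteq a$ are noted. The only things worth stating explicitly in the write-up are those two facts and the observation that $Term(m)$ and $Eval_0(f)$ transfer from the antecedent, so that Lemma~\ref{lemma:term2}.1 and Lemma~\ref{lemma:term2}.2 are genuinely applicable at the sub-term $m$.
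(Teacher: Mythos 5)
Your proposal is correct and follows essentially the same route as the paper: apply Lemma \ref{lemma:term2}.1 to obtain $Arterm(m,f,a_{\mid\mid m})$, use $a_{\mid\mid m}$ as the witness for the existential in the definition of $Art$ in one direction, and invoke the uniqueness clause Lemma \ref{lemma:term2}.2 at $m$ in the other. The extra bookkeeping you record (reflexivity of $Subterm$, $a_{\mid\mid m}\subseteq a$) is implicit in the paper's proof and harmless.
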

\begin{proof}
1. Assume the premises. By Lemma \ref{lemma:term2}.1, $Arterm(m,f,a_{\mid\mid m})$. Fix $k$.

$\Rightarrow$ If $\langle m,k\rangle\in a$, then $\langle m,k\rangle\in a_{\mid\mid m}$ and $\langle m,k\rangle\in Art(f)$ by the definition of $Art$.

$\Leftarrow$ Suppose $\langle m,k\rangle\in Art(f)$. Then there exists $b^1$ such that $Arterm(m,f,b)$ and $\langle m,k\rangle\in b$. By Lemma \ref{lemma:term2}.2, $b=a_{\mid\mid m}$. So $\langle m,k\rangle\in a_{\mid\mid m}$ and $\langle m,k\rangle\in a$.
\end{proof}

\begin{lemma}
It is derived in $TI_{s+1}$ that the formula 
\[Eval_0(f^1)\]
implies each of the following formulas.
\begin{enumerate}
\item $\langle\llcorner 0\lrcorner,k\rangle\in Art(f)\equiv k=0$.
\medskip
\item
$\langle \llcorner x_i^0\lrcorner,k\rangle\in Art(f) \equiv k=Val_0(f,i)$.
\medskip
\item
$\langle \llcorner S(t_j)\lrcorner,k\rangle\in Art(f)\equiv 
\exists l\left(\langle j,l\rangle\in Art(f)\wedge k=Sl \right)$.
\medskip
\item
$\langle \llcorner t_i\theta t_j  \lrcorner,k\rangle\in Art(f) 
\equiv \exists r,l\left(\langle i,r\rangle\in Art(f)
\wedge \langle j,l\rangle\in Art(f)
\wedge k=r\theta l \right)$,
\\
where $\theta$ denotes $\cdot$ or $+$.
\end{enumerate}
\label{lemma:term5}
\end{lemma}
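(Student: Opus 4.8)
The plan is to reduce each of the four equivalences to the corresponding statement of Lemma~\ref{lemma:term3} by replacing the local evaluation set $a$ with the global set $Art(f)$, using Lemma~\ref{lemma:term4}. No new induction is needed; the inductive work was already carried out in Lemmas~\ref{lemma:term2}--\ref{lemma:term4}.

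Fix $f^1$ and assume $Eval_0(f^1)$. In each of the four cases let $m$ denote the G\"odel number appearing on the left-hand side, namely $m=\llcorner 0\lrcorner$, $m=\llcorner x_i^0\lrcorner$, $m=\llcorner S(t_j)\lrcorner$, and $m=\llcorner t_i\theta t_j\lrcorner$, respectively. In cases~3 and~4 the notation presupposes that $t_j$ (resp.\ $t_i$ and $t_j$) is an arithmetic term of $TI_s$, so $Term(j)$ (resp.\ $Term(i)\wedge Term(j)$) holds; in all four cases $Term(m)$ and $Subterm(m,m)$ hold, in case~3 also $Subterm(j,m)$, and in case~4 both $Subterm(i,m)$ and $Subterm(j,m)$.

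First I would invoke Lemma~\ref{lemma:term2}.3: since $Term(m)$ and $Eval_0(f)$, there is a unique $a^1$ with $Arterm(m,f,a)$, and I fix such an $a$. From $Subterm(m,m)\wedge Arterm(m,f,a)$, Lemma~\ref{lemma:term4} gives
\[\forall k\bigl(\langle m,k\rangle\in a\equiv\langle m,k\rangle\in Art(f)\bigr),\]
and, in cases~3 and~4, applying Lemma~\ref{lemma:term4} once more to the subterms yields $\forall l\bigl(\langle j,l\rangle\in a\equiv\langle j,l\rangle\in Art(f)\bigr)$, and in case~4 also $\forall r\bigl(\langle i,r\rangle\in a\equiv\langle i,r\rangle\in Art(f)\bigr)$.

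Next I would apply Lemma~\ref{lemma:term3} with $n:=m$, using $Subterm(m,m)\wedge Arterm(m,f,a)$, to obtain the desired equivalence but with every occurrence of $Art(f)$ replaced by $a$: for case~1, $\langle m,k\rangle\in a\equiv k=0$; for case~2, $\langle m,k\rangle\in a\equiv k=Val_0(f,i)$; for case~3, $\langle m,k\rangle\in a\equiv\exists l(\langle j,l\rangle\in a\wedge k=Sl)$; for case~4, the analogous statement with $i,j$. Finally, rewriting each of these membership-in-$a$ assertions as the corresponding membership-in-$Art(f)$ assertion via the equivalences just established produces exactly the four claimed formulas. The only point that needs care is the bookkeeping of the $Term$ and $Subterm$ conditions that make Lemmas~\ref{lemma:term2}.3, \ref{lemma:term3}, \ref{lemma:term4} applicable; once the $\llcorner\cdot\lrcorner$ notation is unpacked these are immediate, so there is no genuine obstacle here.
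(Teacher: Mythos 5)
Your proposal is correct and follows essentially the same route as the paper's proof: fix the relevant G\"odel number, obtain the unique $a$ with $Arterm(\cdot,f,a)$ via Lemma~\ref{lemma:term2}.3, transfer membership between $a$ and $Art(f)$ for the term and its subterms via Lemma~\ref{lemma:term4}, and then apply Lemma~\ref{lemma:term3} to read off each equivalence. No substantive difference from the paper's argument.
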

\begin{proof}
Assume $Eval_0(f^1)$. By Lemma \ref{lemma:term2}.3, for any $n$ with $Term(n)$, there is a set $a^1$ such that $Arterm(n,f,a)$, and by Lemma \ref{lemma:term4}:
\begin{equation}
\forall m\left[Subterm(m,n)\supset
\forall k\left( \langle m,k\rangle\in a
\equiv\langle m,k\rangle\in Art(f)\right)\right].
\label{eq:term7}
\end{equation}

1. Denote $n=\llcorner 0\lrcorner$. By (\ref{eq:term7}), we have for any $k$:
\[\langle n,k\rangle\in Art(f)\equiv\langle n,k\rangle\in a\equiv k=0\]
by Lemma \ref{lemma:term3}.1.

2. Denote $n=\llcorner x_i^0\lrcorner$. By (\ref{eq:term7}), we have for any $k$:
\[\langle n,k\rangle\in Art(f)\equiv\langle n,k\rangle\in a\equiv k=Val_0(f,i)\]
by Lemma \ref{lemma:term3}.2.

Parts 3 is similar to part 4, which we will consider next.

4. Denote $n=\llcorner t_i+t_j\lrcorner$. By (\ref{eq:term7}):
\begin{eqnarray*}
\forall k\left( \langle n,k\rangle\in Art(f)\equiv \langle n,k\rangle\in a\right);
\\
\forall k\left( \langle i,k\rangle\in Art(f)\equiv \langle i,k\rangle\in a\right);
\\
\forall k\left( \langle j,k\rangle\in Art(f)\equiv \langle j,k\rangle\in a\right).
\end{eqnarray*}

By Lemma \ref{lemma:term3}.4, for any $k$:
\begin{multline*}
\langle n,k\rangle\in Art(f)\equiv\langle n,k\rangle\in a
\equiv \exists r,l
\left(\langle i,r\rangle\in a
\wedge \langle j,l\rangle\in a\wedge
k=r+l\right)
\\
\equiv \exists r,l
\left(\langle i,r\rangle\in Art(f)
\wedge \langle j,l\rangle\in Art(f)\wedge
k=r+l\right).
\end{multline*}

The case $n=\llcorner t_i\cdot t_j\lrcorner$ is similar.
\end{proof}

\begin{lemma}
The following formulas are derived in $TI_{s+1}$.

1. $\langle n,k\rangle\in Art(f)\wedge\langle n,q\rangle\in Art(f)\supset k=q$.
\begin{multline*}
2. \forall n,f^1\exists !k\left\lbrace \left[ Term(n)\wedge Eval_0(f)\supset \langle n,k\rangle\in Art(f)\right] 
\right.
\\
\left.
\wedge\left[\neg (Term(n)\wedge Eval_0(f))\supset k=0\right] \right\rbrace.
\end{multline*}
\label{lemma:term6}
\end{lemma}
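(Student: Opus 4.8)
The plan is to prove the two parts separately, relying mainly on Lemma \ref{lemma:term5} (which characterizes membership in $Art(f)$ for each syntactic form of an arithmetic term) and on Lemma \ref{lemma:term2}.3 (existence of the evaluation set $a$).

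For part 1, suppose $\langle n,k\rangle\in Art(f)$ and $\langle n,q\rangle\in Art(f)$. By the definition of $Art$ there is a set $a^1$ with $Arterm(n,f,a)$, and unfolding $Arterm$ gives $Term(n)$ and $Eval_0(f)$. I would then argue $k=q$ by induction on $n$. Since $Term(n)$ holds, $t_n$ is defined and has one of the five forms $0$, $x_i^0$, $S(t_j)$, $t_i+t_j$, $t_i\cdot t_j$, and in the compound cases the G\"{o}del numbers of the immediate subterms are smaller than $n$ (the standard property of the fixed numbering, already used in the proof of Lemma \ref{lemma:term2}.3). In each case the relevant clause of Lemma \ref{lemma:term5} pins the value down: for $0$ it forces $k=0=q$; for $x_i^0$ it forces $k=Val_0(f,i)=q$; for $S(t_j)$ we get $k=Sl$ and $q=Sl'$ with $\langle j,l\rangle,\langle j,l'\rangle\in Art(f)$, so $l=l'$ by the inductive hypothesis and hence $k=q$; the cases of $+$ and $\cdot$ are analogous, applying the inductive hypothesis to both immediate subterms.

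For part 2, fix $n$ and $f^1$ and split on whether $Term(n)\wedge Eval_0(f)$ holds. If it fails, the first conjunct inside the braces is vacuously true for every $k$ and the second reduces to $k=0$, so $k=0$ is the unique witness. If it holds, the second conjunct is vacuous and the first reduces to $\langle n,k\rangle\in Art(f)$; existence of such a $k$ follows by induction on $n$ exactly as in Lemma \ref{lemma:term2}.3, taking in each syntactic case the witness supplied by Lemma \ref{lemma:term5} ($k=0$ for $0$; $k=Val_0(f,i)$ for $x_i^0$; $k=Sl$ for $S(t_j)$, where $l$ is the witness obtained for the subterm by the inductive hypothesis; $k=r\,\theta\,l$ for $t_i\,\theta\,t_j$, where $\theta$ is $+$ or $\cdot$), while uniqueness of $k$ is exactly part 1. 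Combining the two cases yields the required $\exists!k$.

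The argument is essentially bookkeeping; the only point that needs attention is that both inductions on $n$ use the fact that $Term(n)$ restricts $t_n$ to the finitely many syntactic shapes covered by Lemma \ref{lemma:term5}, and that a proper subterm of $t_n$ receives a smaller G\"{o}del code, so that the inductive hypothesis applies to the subterm numbers. No genuine obstacle is expected.
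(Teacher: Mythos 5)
Your proposal is correct and follows essentially the same route as the paper: both parts are proved by induction on the G\"{o}del number $n$ with a case split over the syntactic shapes of $t_n$, using the clauses of Lemma \ref{lemma:term5} to pin down (part 1) or supply (part 2) the value, and reducing uniqueness in part 2 to part 1. Your explicit unfolding of $Art(f)$ to obtain $Term(n)\wedge Eval_0(f)$ before invoking Lemma \ref{lemma:term5} is a small point of care that the paper leaves implicit, but it is not a different argument.
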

\begin{proof}
1. Proof is by induction on $n$. The cases of constants and variables follow from Lemma \ref{lemma:term5}.1, 2.

Assume the formula holds for numbers less than $n$. We will consider only the case $n=\llcorner t_i+ t_j\lrcorner$, other cases are similar.

Suppose $\langle n,k\rangle\in Art(f)$ and $\langle n,q\rangle\in Art(f)$. By Lemma \ref{lemma:term5}.4, there exist $r_1, l_1$ and $r_2, l_2$ such that 
\[\langle i,r_1\rangle\in Art(f)\wedge
\langle j,l_1\rangle\in Art(f)\wedge
k=r_1+l_1\]
and
\[\langle i,r_2\rangle\in Art(f)\wedge
\langle j,l_2\rangle\in Art(f)\wedge
q=r_2+l_2.\]

By the inductive assumption, $r_1=r_2$ and $l_1=l_2$, so $k=q$.

2. When $\neg (Term(n)\wedge Eval_0(f))$, it is obvious. 

Assume $Term(n)\wedge Eval_0(f)$. The uniqueness was proven in part 1. We prove the existence by induction on $n$.

If $n=\llcorner 0\lrcorner$, then $k=0$.

If $n=\llcorner x_i^0\lrcorner$, then $k=Val_0(f,i)$.

Assume the existence for numbers less than $n$. We will consider only the case $n=\llcorner t_i+t_j\lrcorner$, other cases are similar. By the inductive assumption, there exist $r$ and $l$ such that $\langle i,r\rangle\in Art(f)$ and $\langle j,l\rangle\in Art(f)$. Then for $k=r+l$ we have $\langle n,k\rangle\in Art(f)$ by Lemma \ref{lemma:term5}.4.
\end{proof}

By the previous lemma we can introduce a functional symbol $Arval^0(n,f^1)$ such that 
\[TI_{s+1}\vdash Term(n)\wedge Eval_0(f^1)\supset \langle n,Arval(n,f)\rangle\in Art(f).\]

\begin{lemma}
It is derived in $TI_{s+1}$ that the formula 
\[Eval_0(f^1)\]
implies each of the following formulas.
\begin{enumerate}
\item
$Arval(\llcorner 0\lrcorner,f)=0$.
\medskip
\item $Arval(\llcorner x_i^0\lrcorner, f)=Val_0(f,i)$.
\medskip
\item
$Arval(\llcorner S(t_j)\lrcorner,f)=S(Arval(j,f))$.
\medskip
\item
$Arval(\llcorner t_i\theta t_j\lrcorner,f)=Arval(i,f)\theta Arval(j,f)$,
where $\theta$ denotes $\cdot$ or $+$.
\medskip
\item
$Arval(\llcorner t\lrcorner,f)=\tilde{t}$,
where $t$ is any arithmetic term and $\tilde{t}$ is obtained from $t$ by replacing each parameter $x_i^0$ by $Val_0(f,i)$.
\end{enumerate}
\label{lemma:term7}
\end{lemma}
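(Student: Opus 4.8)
The plan is to read parts 1--4 off Lemma~\ref{lemma:term5} together with the uniqueness statement Lemma~\ref{lemma:term6}.1, and then to obtain the schema in part 5 by a meta-level induction on the construction of the term $t$, using parts 1--4 in the inductive step. Everything is argued inside $TI_{s+1}$ under the standing hypothesis $Eval_0(f^1)$. I would also use that for any term $q$ the arithmetical sentence $Term(\llcorner q\lrcorner)$, and $Term$ of the G\"{o}del number of every subterm of $q$, is provable in $TI_{s+1}$, so that the defining property $\langle \llcorner q\lrcorner, Arval(\llcorner q\lrcorner,f)\rangle \in Art(f)$ of $Arval$ applies whenever it is invoked.

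For part 1: the defining property of $Arval$ gives $\langle \llcorner 0\lrcorner, Arval(\llcorner 0\lrcorner,f)\rangle \in Art(f)$, and Lemma~\ref{lemma:term5}.1 says $\langle \llcorner 0\lrcorner, k\rangle \in Art(f) \equiv k=0$, so $Arval(\llcorner 0\lrcorner,f)=0$. Part 2 is the same with Lemma~\ref{lemma:term5}.2. For part 3: from $\langle \llcorner S(t_j)\lrcorner, Arval(\llcorner S(t_j)\lrcorner,f)\rangle \in Art(f)$ and Lemma~\ref{lemma:term5}.3 we get some $l$ with $\langle j,l\rangle \in Art(f)$ and $Arval(\llcorner S(t_j)\lrcorner,f)=Sl$; since also $\langle j, Arval(j,f)\rangle \in Art(f)$, Lemma~\ref{lemma:term6}.1 gives $l = Arval(j,f)$, hence $Arval(\llcorner S(t_j)\lrcorner,f)=S(Arval(j,f))$. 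Part 4 is identical, applying Lemma~\ref{lemma:term5}.4 and then Lemma~\ref{lemma:term6}.1 to both component G\"{o}del numbers $i$ and $j$.

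For part 5, which is a schema over all arithmetic terms $t$, I would induct on how $t$ is built, where $\tilde t$ denotes the result of substituting $Val_0(f,i)$ for each $x_i^0$ in $t$. If $t$ is $0$ or $x_i^0$, then $\tilde t$ is $0$ or $Val_0(f,i)$, which are exactly parts 1 and 2. If $t$ is $S(t')$, then $\tilde t$ is $S(\tilde{t'})$, and part 3 together with the induction hypothesis gives $Arval(\llcorner t\lrcorner,f) = S(Arval(\llcorner t'\lrcorner,f)) = S(\tilde{t'}) = \tilde t$. If $t$ is $t_1 \theta t_2$ with $\theta$ one of $+,\cdot$, then $\tilde t$ is $\tilde{t_1}\,\theta\,\tilde{t_2}$, and part 4 with the induction hypothesis gives $Arval(\llcorner t\lrcorner,f) = Arval(\llcorner t_1\lrcorner,f)\,\theta\,Arval(\llcorner t_2\lrcorner,f) = \tilde{t_1}\,\theta\,\tilde{t_2} = \tilde t$.

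The only place calling for a bit of care is the step, in parts 3 and 4, of inferring $Term(j)$ (and $Term(i)$) from the fact that the compound term $S(t_j)$ (resp.\ $t_i\theta t_j$) is a term, so that the defining property of $Arval$ is applicable to the subterm codes and Lemma~\ref{lemma:term6}.1 can be used; this is a routine property of the fixed G\"{o}del numbering. Apart from that, the whole argument is bookkeeping.
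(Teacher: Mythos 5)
Your proof is correct and follows essentially the same route as the paper: parts 1--4 are read off Lemma~\ref{lemma:term5} together with the defining property of $Arval$ and the uniqueness in Lemma~\ref{lemma:term6}.1, and part 5 is the same induction on the structure of $t$ using parts 1--4. The only difference is cosmetic (you read the equivalences of Lemma~\ref{lemma:term5} left-to-right and then invoke uniqueness, whereas the paper reads them right-to-left starting from $\langle j, Arval(j,f)\rangle\in Art(f)$), and your remark about deriving $Term(j)$ from $Term(\llcorner S(t_j)\lrcorner)$ is indeed a routine property of the numbering that the paper leaves implicit.
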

\begin{proof}
1. Follows from $\langle \llcorner 0\lrcorner,0\rangle\in Art(f)$, which is in Lemma \ref{lemma:term5}.1.

2. Follows from $\langle \llcorner x_i^0\lrcorner,Val_0(f,i)\rangle\in Art(f)$, which is in Lemma \ref{lemma:term5}.2.

3. Denote $l=Arval(j,f)$. Then $\langle j,l\rangle\in Art(f)$ and by Lemma \ref{lemma:term5}.3, $\langle \llcorner S(t_j)\lrcorner,Sl\rangle\in Art(f)$, which implies part 3.

4. We prove it for +, for $\cdot$ the proof is similar. Denote $r=Arval(i,f)$ and $l=Arval(j,f)$. Then $\langle i,r\rangle\in Art(f)$, $\langle j,l\rangle\in Art(f)$ and by Lemma \ref{lemma:term5}.4, $\langle\llcorner t_i+ t_j\lrcorner,r+l\rangle\in Art(f)$, which implies part 4.

5. Proof is by induction on the complexity of $t$. The cases of constant 0 and variables follow from parts 1 and 2, respectively. 

Assume the formula holds for all proper sub-terms of $t$. We will consider only the case $t=\tau+ u$, other cases are similar. By the inductive assumption:
\[Arval(\llcorner \tau\lrcorner,f)=\tilde{\tau}\text{ and }Arval(\llcorner u\lrcorner,f)=\tilde{u}.\]
By part 4, $Arval(\llcorner t\lrcorner,f)=\tilde{\tau}+\tilde{u}=\tilde{t}$.
\end{proof}

We denote $Form(n)$ the arithmetic formula stating that $n$ is the G\"{o}del number of a formula of $TI_s$. We denote $Subform(m,n)$ the arithmetic formula stating that $m$ is the G\"{o}del number of a sub-formula of the formula with G\"{o}del number $n$. 

Next we introduce a predicate $Trset(n,b^{s+1})$, which  means that $b$ is the set of the truth values of all evaluated sub-formulas of $\varphi_n$.
\begin{multline*}
Trset(n,b^{s+1})\leftrightharpoons 
Form(n)\wedge
\\
(\forall x^s\in b)\exists m,\bar{f},\delta^0
\left[x=\langle m,\bar{f},\delta\rangle \wedge Subform(m,n)\wedge Eval(\bar{f})\wedge 
(\delta = 0 \vee \delta = 1)\right] 
\\  
\wedge\forall m\left\lbrace Subform(m,n)\supset \forall \bar{f}\left[ Eval(\bar{f})\supset 
(\langle m,\bar{f},0\rangle \in b \vee \langle m,\bar{f},1\rangle \in b)
\right.
\right.
\\
\left.
\left.
\wedge\neg (\langle m,\bar{f},0\rangle \in b \wedge \langle m,\bar{f},1\rangle \in b)
\wedge A(m,\bar{f},b)\right]\right\rbrace ,
\end{multline*}
\[\text{where }A(m,\bar{f},b)=\bigvee_{r=1}^{10}A_r\text{ and }A_1 - A_{10}\text{ are defined as follows.}\]
\[A_1\leftrightharpoons m=\llcorner\bot\lrcorner \wedge \langle m,\bar{f},0\rangle \in b.\]
\[A_2\leftrightharpoons \bigvee_{k=1}^s \exists i,j \left[m=\llcorner x_i^k=_k x_j^k\lrcorner \wedge 
\left(\langle m,\bar{f},1\rangle \in b \equiv Val_k(f_k,i)=_k Val_k(f_k,j)\right) \right]. \]
\begin{multline*}
A_3\leftrightharpoons 
\bigvee_{k=1}^{s-1} \exists i,j \left[m=\llcorner x_i^k\in_k x_j^{k+1}\lrcorner 
\right.
\\
\wedge 
\left.
\left(\langle m,\bar{f},1\rangle \in b \equiv Val_k(f_k,i)\in_k Val_{k+1}(f_{k+1},j)\right) \right]. 
\end{multline*}
\[A_4\leftrightharpoons \exists i,j \left[m=\llcorner t_i=_0t_j\lrcorner \wedge 
\left(\langle m,\bar{f},1\rangle \in b \equiv Arval(i,f_0)=_0Arval(j,f_0)\right) \right]. \]
\[A_5\leftrightharpoons 
\exists i,j \left[m=\llcorner t_i\in_0 x_j^1\lrcorner
\wedge 
\left(\langle m,\bar{f},1\rangle \in b \equiv Arval(i,f_0)\in_0 Val_1(f_1,j)\right) \right].\] 
Denote the logical connective $\wedge$ as $\theta_6$, $\vee$ as $\theta_7$ and $\supset$ as $\theta_8$. For $p=6, 7, 8$,
\[A_p\leftrightharpoons \exists i,j\left[m=\llcorner \varphi_i \theta_p \varphi_j\lrcorner \wedge \left(\langle m,\bar{f},1\rangle \in b \equiv \left(\langle i,\bar{f},1\rangle \in b \;\theta_p \langle j,\bar{f},1\rangle \in b \right)  \right) 
\right]. \]

Denote $Q_9$ the quantifier $\forall$ and $Q_{10}$ the quantifier $\exists$. For $p=9,10$,
\begin{multline*}
A_p\leftrightharpoons \bigvee_{k=0}^s\exists i,j\left[m=\llcorner Q_p x_i^k \varphi_j\lrcorner 
\right.
\\
\left.
\wedge \left(\langle m,\bar{f},1\rangle \in b \equiv 
Q_py^k(\langle j,Sub_k(\bar{f},i,y),1\rangle \in b )\right)\right].
\end{multline*}

Denote
\[b_{\mid m}=\lbrace\langle j,\bar{f},\delta\rangle\in b\mid Subform(j,m)\rbrace.\]
\begin{lemma}
The following formulas are derived in $TI_{s+1}$.
\begin{enumerate}
\item $Subform(m,n)\wedge Trset(n, b^{s+1})\supset Trset(m, b_{\mid m})$.
\medskip
\item $Trset(n, b^{s+1})\wedge Trset(n, c^{s+1})\supset b=c$.
\medskip
\item $Form(n)\supset\exists !b^{s+1} Trset(n,b)$.
\end{enumerate}
\label{lemma:trset8}
\end{lemma}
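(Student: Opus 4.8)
The plan is to mirror, case for case, the three-part proof of the analogous Lemma \ref{lemma:term2} for $Arterm$, replacing induction on the structure of a term by course-of-values induction on the G\"odel number $n$ of a formula, and accounting for the longer clause list $A_1,\ldots,A_{10}$ defining $A(m,\bar f,b)$. Two structural facts will be used throughout: $Subform(m,n)$ together with $Form(n)$ yields $Form(m)$ (an arithmetic fact), and by Lemma \ref{lemma:term1} every modified evaluation $Sub_k(\bar f,i,y)$ is again an evaluation, so the quantifier clauses $A_9,A_{10}$ are meaningful. A key point to keep in view is that every set produced below is obtained by the comprehension axiom of $TI_{s+1}$: its defining formula quantifies only over objects of type $\leqslant s$ (the components of $\bar f$ have type $\leqslant s$, the witnesses $y^k$ have type $\leqslant s$, the triples $\langle m,\bar f,\delta\rangle$ have type $\leqslant s$) and has at most the ambient parameters $c,d$ of type $s+1$, so its sort is $\leqslant s+1$ and the comprehension instance over $z^s$ is licensed.

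For part~1, assume $Subform(m,n)$ and $Trset(n,b^{s+1})$. For every $j$ with $Subform(j,m)$ and every $\bar f$ with $Eval(\bar f)$ we have $Subform(j,n)$ and $\langle j,\bar f,\delta\rangle\in b\Leftrightarrow\langle j,\bar f,\delta\rangle\in b_{\mid m}$ for $\delta\in\{0,1\}$. Hence the ``exactly one of $0,1$'' conjunct is inherited, and $A(j,\bar f,b)\Leftrightarrow A(j,\bar f,b_{\mid m})$ because in each clause $A_1,\ldots,A_{10}$ the set $b$ enters only through membership statements whose first component is either $j$ itself (clauses $A_1$--$A_5$) or an immediate subformula of $j$ (the connective clauses $A_6$--$A_8$), or the pair $(j',\,Sub_k(\bar f,i,y))$ with $j'$ a subformula of $j$ (the quantifier clauses $A_9,A_{10}$); in every case that first component is a subformula of $m$. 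This gives $Trset(m,b_{\mid m})$ straight from the definition. Part~2 is an induction on $m$ with $Subform(m,n)$ proving $\forall\bar f\,\forall\delta\,(\langle m,\bar f,\delta\rangle\in b\equiv\langle m,\bar f,\delta\rangle\in c)$: the atomic clauses $A_1$--$A_5$ fix the truth value outright (using single-valuedness of $Arval$ from Lemma \ref{lemma:term6} for $A_4,A_5$), while the connective and quantifier clauses reduce the truth value at $m$ to truth values at strictly smaller G\"odel numbers, where the inductive hypothesis applies; with the ``exactly one of $0,1$'' conjunct this forces $b=c$.

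Part~3 (existence) is the substantial step, proved by course-of-values induction on $n$ under the assumption $Form(n)$, building $b$ from the truth-value sets of the immediate subformulas exactly as in Lemma \ref{lemma:term2}.3. For atomic $n$ one takes $b=\{\langle n,\bar f,\delta\rangle\mid Eval(\bar f)\wedge \delta \text{ is the value dictated by the matching clause } A_1\text{--}A_5\}$, well defined because $Val_k$ and $Arval$ are total functions (Lemma \ref{lemma:term6}). For $n=\llcorner\varphi_i\,\theta_p\,\varphi_j\lrcorner$ with $p\in\{6,7,8\}$, apply the hypothesis to get $c^{s+1},d^{s+1}$ with $Trset(i,c)$, $Trset(j,d)$ and set $b=c\cup d\cup\{\langle n,\bar f,\delta\rangle\mid Eval(\bar f)\wedge \delta \text{ agrees with } A_p \text{ applied to } \langle i,\bar f,1\rangle\in c,\ \langle j,\bar f,1\rangle\in d\}$; one first checks that on subformulas of $\varphi_i$ (resp.\ $\varphi_j$) the set $b$ agrees with $c$ (resp.\ $d$), using parts~1 and~2 on the overlap just as $c_{\mid\mid p}=d_{\mid\mid p}$ was used in Lemma \ref{lemma:term2}.3, and then verifies $A(m,\bar f,b)$ for every $Subform(m,n)$. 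For $n=\llcorner Q_p x_i^k\varphi_j\lrcorner$ with $p\in\{9,10\}$, take $d^{s+1}$ with $Trset(j,d)$ and set $b=d\cup\{\langle n,\bar f,\delta\rangle\mid Eval(\bar f)\wedge \delta \text{ records whether } Q_p y^k(\langle j,Sub_k(\bar f,i,y),1\rangle\in d)\}$; Lemma \ref{lemma:term1} guarantees each $Sub_k(\bar f,i,y)$ is an evaluation, so by parts~1--2 the set $d$ already decides each such triple and the inner condition is bona fide.

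The main obstacle I expect is precisely the quantifier clauses $A_9,A_{10}$: unlike the propositional clauses, the truth value at $n$ depends on the $\varphi_j$-values at the infinitely many modified evaluations $Sub_k(\bar f,i,y)$, so one must argue that (i) each modification is again an evaluation (Lemma \ref{lemma:term1}), (ii) $d$ decides each of them uniquely (parts~1--2 above), so the comprehension formula defining $b$ is legitimate and of sort $\leqslant s+1$, and (iii) the resulting $b$ meets the ``exactly one of $0,1$'' conjunct of $Trset$, which holds since for fixed $\bar f$ exactly one choice of $\delta\in\{0,1\}$ makes the $A_p$ bi-implication true. A secondary bookkeeping point, handled exactly as the $c_{\mid\mid p}=d_{\mid\mid p}$ step of Lemma \ref{lemma:term2}.3 but now with $b_{\mid m}$, is to check that the unions $c\cup d$ and $d\cup\{\ldots\}$ introduce no spurious triples and behave consistently on common subformulas.
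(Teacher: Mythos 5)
Your proposal is correct and follows essentially the same route as the paper's proof: part 1 by observing that every clause of $A(j,\bar f,\cdot)$ refers only to triples whose formula component is a subformula of $j$, part 2 by induction on $m$ combined with the ``exactly one of $0,1$'' conjunct, and part 3 by building $b$ from the truth sets $c,d$ of the immediate subformulas and using parts 1 and 2 to reconcile them on the overlap (the paper's $c_{\mid p}=d_{\mid p}$ step). Your added attention to the legitimacy of the comprehension instances and to Lemma \ref{lemma:term1} for the quantifier clauses is consistent with, and slightly more explicit than, the paper's treatment.
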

\begin{proof}
1. Assume the premises. For any $j$ with $Subform(j,m)$ we have:
\begin{equation}
\forall\bar{f}\left[Eval(\bar{f})\supset A(j,\bar{f},b) \right] 
\label{eq:trset8}
\end{equation}
and 
\begin{equation}
\forall\bar{f},\delta^0\left[\langle j,\bar{f},\delta\rangle\in b\equiv\langle j,\bar{f},\delta\rangle\in b_{\mid m} \right]. 
\label{eq:trset9}
\end{equation}

To prove $Trset(m,b_{\mid m})$, it is sufficient to show for any $j$:
\begin{equation}
Subform(j,m)\supset\forall\bar{f}\left[Eval(\bar{f})\supset A(j,\bar{f},b_{\mid m})\right]. 
\label{eq:trset10}
\end{equation}

When $\varphi_j$ is an atomic formula or $\bot$, (\ref{eq:trset10}) follows from (\ref{eq:trset8}). 

Suppose $j=\llcorner\exists x_i^k  \varphi_p\lrcorner$. Consider $\bar{f}$ with $Eval(\bar{f})$. By (\ref{eq:trset8}),
\[\langle j,\bar{f},1\rangle \in b\equiv
\exists y^k\left(\langle p,Sub_k(\bar{f},i,y),1\rangle \in b\right) \]
and by (\ref{eq:trset9}):
\[\langle j,\bar{f},1\rangle \in b_{\mid m}\equiv
\exists y^k\left(\langle p,Sub_k(\bar{f},i,y),1\rangle \in b_{\mid m}\right),\]
which implies $A(j,\bar{f},b_{\mid m})$. 

In the cases of other quantifiers and logical connectives the proof is similar.

2. Assume the premises. It is sufficient to prove:
\begin{equation}
\forall m\left[Subform(m,n)\supset(\langle m,\bar{f},1\rangle\in b\equiv\langle m,\bar{f},1\rangle\in c)\right]. 
\label{eq:trset11}
\end{equation}

If (\ref{eq:trset11}) is proven, then for any $m,\bar{f}$ we have:
\[\langle m,\bar{f},0\rangle\in b\equiv\langle m,\bar{f},1\rangle\notin b\equiv\langle m,\bar{f},1\rangle\notin c\equiv\langle m,\bar{f},0\rangle\in c,\]
and this implies $b=c$.

We prove (\ref{eq:trset11}) by induction on $m$.

Case 1: $m=\llcorner\bot\lrcorner$. Then $\langle m,\bar{f},1\rangle\notin b$ and $\langle m,\bar{f},1\rangle\notin c$.

Case 2: $m=\llcorner t_i\in_0 x_j^1\lrcorner$. Proof follows from the definition of $A_5$. The cases of other atomic formulas are similar.

Assume (\ref{eq:trset11}) holds for numbers less than $m$.

Case 3: $m=\llcorner\exists x_i^k  \varphi_j\lrcorner$.
By the inductive assumption: 
\[\forall \bar{f}(\langle j,\bar{f},1\rangle\in b\equiv\langle j,\bar{f},1\rangle\in c).\]

So for any $ \bar{f}$:
\begin{multline*}
\langle m,\bar{f},1\rangle\in b
\equiv
\exists y^k
(\langle j,Sub_k(\bar{f},i,y),1\rangle\in b)
\\
\equiv
\exists y^k
(\langle j,Sub_k(\bar{f},i,y),1\rangle\in c)
\equiv\langle m,\bar{f},1\rangle\in c.
\end{multline*}

The cases of other quantifiers and logical connectives are similar.

3. The uniqueness was proven in part 2. We will prove the existence by induction on $n$.

Case 1: $n=\llcorner\bot\lrcorner$. We take $b^{s+1}=\lbrace\langle n,\bar{f},0\rangle\mid Eval(\bar{f})\rbrace.$

Case 2: $n=\llcorner t_i\in_0 x_j^1\lrcorner$. We take 
\begin{multline*}
b^{s+1}=\left\lbrace \langle n,\bar{f},1\rangle\mid Eval(\bar{f})\wedge Arval(i,f_0)\in_0 Val_1(f_1,j)\right\rbrace 
\\
\cup
\left\lbrace \langle n,\bar{f},0\rangle\mid Eval(\bar{f})\wedge Arval(i,f_0)\notin_0 Val_1(f_1,j)\right\rbrace .
\end{multline*}

Clearly $Trset(n,b)$ holds.

Assume the existence for numbers less than $n$.

Case 3: $n=\llcorner\varphi_i\vee \varphi_j\lrcorner$. By the inductive assumption there exist sets $c^1$ and $d^1$ such that $Trset(i,c)$ and $Trset(j,d)$. Denote 
\[\varphi( i,j,\bar{f})\leftrightharpoons \langle i,\bar{f},1\rangle\in c\vee \langle j,\bar{f},1\rangle\in d.\]

We take 
\begin{multline*}
b^{s+1}=c\cup d\cup\left\lbrace \langle n,\bar{f},1\rangle\mid Eval(\bar{f})\wedge\varphi( i,j,\bar{f})\right\rbrace 
\\
\cup
\left\lbrace \langle n,\bar{f},0\rangle\mid Eval(\bar{f})\wedge\neg\varphi( i,j,\bar{f})\right\rbrace .
\end{multline*}

First we prove:
\begin{equation}
\forall p\left[Subform(p,i)\supset \forall \bar{f},\delta^0\left(\langle p,\bar{f},\delta\rangle\in b\equiv\langle  p,\bar{f},\delta\rangle\in c \right) \right].
\label{eq:trset12}
\end{equation}

Assume the premises of (\ref{eq:trset12}). If $\neg Subform(p,j)$, then the conclusion is obvious. If $Subform(p,j)$, then $Trset(p,c_{\mid p})$ and $Trset(p,d_{\mid p})$ by part 1, $c_{\mid p}=d_{\mid p}$ by part 2 and for any $\bar{f},\delta^0$:
\[\langle p,\bar{f},\delta\rangle\in b\equiv
\langle p,\bar{f},\delta\rangle\in c_{\mid p}\cup d_{\mid p} \equiv\langle p,\bar{f},\delta\rangle\in c_{\mid p}\equiv\langle p,\bar{f},\delta\rangle\in c.\]

(\ref{eq:trset12}) is proven. Similarly,
\begin{equation}
\forall p\left[Subform(p,j)\supset \forall \bar{f},\delta^0\left(\langle p,\bar{f},\delta\rangle\in b\equiv\langle  p,\bar{f},\delta\rangle\in d \right) \right].
\label{eq:trset13}
\end{equation}

To prove $Trset(n,b)$, it is sufficient to show:
\begin{multline}
\forall m\left\lbrace Subform(m,n)\supset\forall\bar{f}\left[Eval(\bar{f})
\right.
\right.
\\
\left.
\left.
\supset
\neg(\langle m,\bar{f},0\rangle\in b\wedge\langle m,\bar{f},1\rangle\in b)\wedge A(m,\bar{f},b)\right]\right\rbrace .
\label{eq:trset14}
\end{multline}

Assume $Subform(m,n)$.

i) $m=n$.

Suppose $Eval(\bar{f})$. Then $\neg(\langle n,\bar{f},0\rangle\in b\wedge\langle n,\bar{f},1\rangle\in b)$ follows from the definition of $b$.

Applying (\ref{eq:trset12}) to $p=i$ and (\ref{eq:trset13}) to $p=j$, we get:
\[\langle n,\bar{f},1\rangle\in b\equiv\varphi(i,j,\bar{f})
\equiv\langle i,\bar{f},1\rangle\in c\vee \langle j,\bar{f},1\rangle\in d 
\equiv \langle i,\bar{f},1\rangle\in b\vee \langle j,\bar{f},1\rangle\in b,\]
which implies $A(n,\bar{f},b)$.

ii) $m<n$.

Then $Subform(m,i)\vee Subform(m,j)$. We can assume $Subform(m,i)$ without loss of generality. Due to $Trset(i,c)$ we have $\neg(\langle m,\bar{f},0\rangle\in c\wedge\langle m,\bar{f},1\rangle\in c)$ and by (\ref{eq:trset12}), 
\[\neg(\langle m,\bar{f},0\rangle\in b\wedge\langle m,\bar{f},1\rangle\in b).\] 
 
$A( m,\bar{f},b)$ is also also derived from $Trset (i,c)$ by applying (\ref{eq:trset12}) to each possible form of the formula $\varphi_m$.

This completes the proof for case 3. The cases of other logical connectives and quantifiers are similar.
\end{proof}

\begin{lemma}
It is derived in $TI_{s+1}$ that the formula 
\[ Subform(m,n)\wedge Eval(\bar{f})\wedge Trset(n,b^{s+1})\]
implies each of the following formulas.
\begin{enumerate}
\item $\langle m,\bar{f},1 \rangle\in b \vee \langle  m,\bar{f},0 \rangle\in b$.
\medskip
\item $\neg (\langle m,\bar{f},1 \rangle\in b \wedge \langle m,\bar{f},0 \rangle \in b)$.
\medskip
\item $\langle \llcorner \bot \lrcorner,\bar{f},0 \rangle \in b$.
\medskip
\item $ m=\llcorner x_i^k=_k x_j^k\lrcorner 
\supset 
\left[\langle m,\bar{f},1\rangle\in b \equiv 
 Val_k(f_k,i)=_k Val_k(f_k,j)\right], 
\\ 
 k=1,2,\ldots,s.$
 \medskip
\item
$m=\llcorner x_i^k\in_k x_j^{k+1}\lrcorner 
\supset
\left[\langle m,\bar{f},1\rangle \in b \equiv Val_k(f_k,i)\in_k Val_{k+1}(f_{k+1},j)\right], 
\\
k=1,2,\ldots,s-1.$ 
\medskip
\item
$m=\llcorner t_i=_0 t_j\lrcorner \supset
\left[\langle m,\bar{f},1\rangle \in b \equiv Arval(i,f_0)=_0 Arval(j,f_0)\right]. $ 
\medskip
\item
$m=\llcorner t_i\in_0 x_j^1\lrcorner
\supset
\left[\langle m,\bar{f},1\rangle \in b \equiv Arval(i,f_0)\in_0 Val_1(f_1,j)\right].$ 
\medskip
\item
$m=\llcorner \varphi_i \theta \varphi_j\lrcorner \supset \left[\langle m,\bar{f},1\rangle \in b \equiv \left(\langle i,\bar{f},1\rangle \in b \;\theta \langle j,\bar{f},1\rangle \in b \right)  \right]$,  where $\theta$ is a logical connective $\wedge,\vee$ or $\supset$.
\medskip
\item
$m=\llcorner Q x_i^k \varphi_j\lrcorner \supset \left[\langle m,\bar{f},1\rangle \in b \equiv 
Qy^k(\langle j,Sub_k(\bar{f},i,y),1\rangle \in b )\right],
\\k=0,1,\ldots,s$, where $Q$ is a quantifier $\forall$ or $\exists$.
\end{enumerate}
\label{lemma:trset9}
\end{lemma}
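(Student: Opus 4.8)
The plan is to read each item directly off the definition of $Trset$. Work throughout under the hypothesis $Subform(m,n)\wedge Eval(\bar{f})\wedge Trset(n,b^{s+1})$, so in particular $Form(n)$ holds. Instantiating the last conjunct in the definition of $Trset(n,b)$ at this $m$ and this $\bar{f}$ yields at once
\[(\langle m,\bar{f},0\rangle\in b\vee\langle m,\bar{f},1\rangle\in b)\wedge\neg(\langle m,\bar{f},0\rangle\in b\wedge\langle m,\bar{f},1\rangle\in b)\wedge A(m,\bar{f},b),\]
which is exactly parts 1 and 2, together with the extra information $A(m,\bar{f},b)$, i.e.\ $\bigvee_{r=1}^{10}A_r$. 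Every remaining item will be extracted from this disjunction.

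The key observation is that the ten disjuncts are governed by pairwise incompatible syntactic conditions on $m$: each $A_r$ opens with an assertion that $m$ is the G\"odel number of a formula of one particular shape ($\bot$; an atomic formula of one of the five kinds $A_2$--$A_5$; a formula whose principal connective is $\wedge$, $\vee$ or $\supset$; a formula whose principal quantifier is $\forall$ or $\exists$). Injectivity of the G\"odel numbering together with unique readability of formulas --- both provable already in first-order arithmetic, hence in $TI_{s+1}$ --- imply that if $m$ has one of these shapes then precisely the corresponding disjunct can be the one that holds: for any other $r$, the syntactic conjunct opening $A_r$ is false, so $A_r$ itself is false. Moreover the indices $i,j,k$ quantified existentially inside the surviving disjunct are uniquely determined and coincide with the ones named in the statement.

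Thus each of parts 3--9 is obtained by identifying the forced disjunct and discharging its (true) syntactic conjunct. For part 3, $m=\llcorner\bot\lrcorner$ rules out $A_2,\ldots,A_{10}$, so $A_1$ holds, i.e.\ $\langle\llcorner\bot\lrcorner,\bar{f},0\rangle\in b$. For part 4, $m=\llcorner x_i^k=_kx_j^k\lrcorner$ forces $A_2$ with exactly these $i,j,k$, which is the stated equivalence; parts 5, 6, 7 are the same with $A_3$, $A_4$, $A_5$ respectively; part 8 forces one of $A_6,A_7,A_8$ according to whether $\theta$ is $\wedge$, $\vee$ or $\supset$; part 9 forces one of $A_9,A_{10}$ according to whether $Q$ is $\forall$ or $\exists$. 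In each case the selected disjunct, once the existentials over the now-determined indices are removed, is verbatim the asserted equivalence.

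The only place requiring a little care --- and what I would flag as the main (mild) obstacle --- is the bookkeeping in parts 8 and 9, where the right-hand side of the equivalence refers back to $b$ at proper subformulas (for $A_6$--$A_8$) and, for $A_9$, $A_{10}$, at all substitution instances $Sub_k(\bar{f},i,y)$. Here one notes that those subformula indices are themselves subformulas of $n$ by transitivity of $Subform$, so the membership assertions about $b$ are meaningful and are the intended ones, and that each $Sub_k(\bar{f},i,y)$ is again an evaluation by Lemma~\ref{lemma:term1}, so the clauses $A_9$, $A_{10}$ say what they should. With these remarks the proof is a direct appeal to the definition of $Trset$ plus injectivity of the G\"odel numbering.
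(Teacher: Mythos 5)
Your proposal is correct and matches the paper's approach: the paper's entire proof of this lemma is ``Follows from the definition of $Trset$,'' and your argument is precisely the unpacking of that — instantiating the last conjunct of the definition at $m$ and $\bar{f}$ to get parts 1, 2 and $A(m,\bar{f},b)$, then using unique readability and injectivity of the G\"{o}del numbering to isolate the one disjunct $A_r$ compatible with the given shape of $m$. The extra care you flag for parts 8 and 9 is sensible but not a departure from the intended argument.
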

\begin{proof}
Follows from the definition of $Trset$.
\end{proof}

We define
\[Truth^{s+1}=\left\lbrace x^s\mid \exists n,b^{s+1}(Trset(n,b)\wedge x\in b)\right\rbrace. \]
Thus, $Truth$ is the set of the truth values of all evaluated formulas of $TI_s$.

\begin{lemma}
$TI_{s+1}\vdash Subform(m,n)\wedge Trset(n,b^{s+1})
\medskip
\\
\supset\left(\langle m,\bar{f},\delta^0\rangle\in b\equiv \langle m,\bar{f},\delta\rangle\in Truth\right).$
\label{lemma:trset10}
\end{lemma}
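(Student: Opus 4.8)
The plan is to transcribe, at the level of formulas, the argument already used for terms in Lemma \ref{lemma:term4}, with $Trset$ in place of $Arterm$ and $b_{\mid m}$ in place of $a_{\mid\mid m}$. Reasoning inside $TI_{s+1}$, I would assume $Subform(m,n)\wedge Trset(n,b^{s+1})$, fix $\bar{f}$ and $\delta^0$, and prove the two directions of the equivalence separately.

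The implication from left to right is immediate: if $\langle m,\bar{f},\delta\rangle\in b$, then, since $Trset(n,b)$ holds, the pair $n,b$ is exactly a witness for the defining property of $Truth^{s+1}$, so $\langle m,\bar{f},\delta\rangle\in Truth$.

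For the implication from right to left, suppose $\langle m,\bar{f},\delta\rangle\in Truth$. By the definition of $Truth^{s+1}$ there are $n'$ and $c^{s+1}$ with $Trset(n',c)$ and $\langle m,\bar{f},\delta\rangle\in c$. Inspecting the first conjunct of the definition of $Trset$ (every element of $c$ has the form $\langle m',\bar{f}',\delta'\rangle$ with $Subform(m',n')$) and using injectivity of the coding $\langle\cdot,\cdot,\cdot\rangle$, membership of $\langle m,\bar{f},\delta\rangle$ in $c$ yields $Subform(m,n')$. Now apply Lemma \ref{lemma:trset8}.1 twice: from $Subform(m,n)\wedge Trset(n,b)$ we obtain $Trset(m,b_{\mid m})$, and from $Subform(m,n')\wedge Trset(n',c)$ we obtain $Trset(m,c_{\mid m})$. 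By the uniqueness statement, Lemma \ref{lemma:trset8}.2, $b_{\mid m}=c_{\mid m}$. Since $Subform(m,m)$ holds, the element $\langle m,\bar{f},\delta\rangle$ of $c$ belongs to $c_{\mid m}$, hence to $b_{\mid m}$, hence to $b$. This completes the equivalence.

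I expect the only slightly delicate point in the write-up — everything else being a routine copy of the proof of Lemma \ref{lemma:term4} — to be the extraction of $Subform(m,n')$ from $\langle m,\bar{f},\delta\rangle\in c$, which relies on reading off the shape-of-elements clause in the definition of $Trset$ together with injectivity of the triple pairing; the rest is bookkeeping with $b_{\mid m}$ and the already-established properties of $Trset$.
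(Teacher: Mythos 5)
Your proof is correct and follows essentially the same route as the paper: both directions are handled exactly as in Lemma \ref{lemma:term4}, using Lemma \ref{lemma:trset8}.1 to pass to $b_{\mid m}$ and Lemma \ref{lemma:trset8}.2 for uniqueness. In fact you are slightly more careful than the paper in the right-to-left direction, where the paper tacitly assumes the witness index equals $m$ while you correctly extract $Subform(m,n')$ from the shape-of-elements clause of $Trset$ before restricting to $c_{\mid m}$.
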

\begin{proof}
Assume the premises. Then $Trset(m,b_{\mid m})$.
\medskip

$\Rightarrow$ If $\langle m,\bar{f},\delta\rangle\in b$, then $\langle  m,\bar{f},\delta\rangle\in b_{\mid m}$ and $\langle m,\bar{f},\delta\rangle\in Truth$ by the definition of the set $Truth$.

$\Leftarrow$ Suppose $\langle m,\bar{f},\delta\rangle\in Truth$. Then there is $c^{s+1}$ with 
$Trset(m,c)$ and $\langle m,\bar{f},\delta\rangle\in c$. By Lemma \ref{lemma:trset8}.2, $c=b\mid_m$, so $\langle m,\bar{f},\delta\rangle\in b_{\mid m}$ and $\langle m,\bar{f},\delta\rangle\in b$.
\end{proof}

Finally we introduce the truth predicate $Tr$:
\[Tr(n,\bar{f})\leftrightharpoons \langle n,\bar{f}, 1\rangle \in Truth.\]

Clearly, $Tr$ depends on $s$ but we will omit $s$ for brevity.

\begin{lemma}
It is derived in $TI_{s+1}$ that the formula 
\[Eval(\bar{f})\] 
implies each of the following formulas.
\begin{enumerate}
\item $Form(n)\supset 
\left( \neg Tr(n,\bar{f})\equiv \langle n,\bar{f},0 \rangle \in Truth\right) $.
\item $Tr(\llcorner \perp \lrcorner,\bar{f})\equiv \perp$.
\medskip
\item 
$Tr(\llcorner x_i^k=_k x_j^k \lrcorner,\bar{f}) \equiv Val_k(f_k,i)=_k Val_k(f_k,j),k=1,2,\ldots,s$.
\medskip
\item $Tr(\llcorner x_i^k\in_k x_j^{k+1} \lrcorner, \bar{f})\equiv Val_k(f_k,i)\in_k Val_{k+1}(f_{k+1},j),k=1,2,\ldots,s-1$.
\medskip
\item $Tr(\llcorner t_i=_0 t_j \lrcorner,\bar{f}) \equiv Arval(i,f_0)=_0 Arval(j,f_0)$.
\medskip
\item $Tr(\llcorner t_i\in_0 x_j^1 \lrcorner, \bar{f})\equiv Arval(i,f_0)\in_0 Val_1(f_1,j)$.
\medskip
\item $Tr(\llcorner \varphi_i \theta \varphi_j \lrcorner,\bar{f}) \equiv Tr(i,\bar{f}) \theta Tr(j,\bar{f})$, where $\theta$ is a logical connective $\wedge,\vee$ or $\supset$.
\medskip
\item $Tr(\llcorner Qx^k_i \varphi_j \lrcorner,\bar{f}) \equiv Qy^kTr(j,Sub_k(\bar{f},i,y)),k=0,1,\ldots,s$, where $Q$ is a quantifier $\forall$ or $\exists$.
\medskip
\item $Tr(\llcorner \varphi\lrcorner, \bar{f})\equiv \tilde{\varphi}$, where $\varphi$ is any formula of $TI_s$ and $\tilde{\varphi}$ is obtained from $\varphi$ by replacing each parameter $x_i^k$ by $Val_k(f_k,i)$.
\medskip
\item $Tr(\llcorner \varphi\lrcorner, \bar{f})\equiv \varphi$ for any closed formula $\varphi$ of $TI_s$.
\end{enumerate}
\label{lemma:trset11}
\end{lemma}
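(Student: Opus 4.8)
The plan is to derive parts 1--8 by unwinding the definitions of $Tr$, $Truth$ and $Trset$ together with Lemmas \ref{lemma:trset8}--\ref{lemma:trset10}, and then to obtain parts 9 and 10 by induction on the complexity of $\varphi$.

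For each of parts 1--8, fix the G\"odel number $n$ of the formula whose shape is described there and argue inside $TI_{s+1}$ under the hypothesis $Eval(\bar{f})$. Since $Form(n)$ holds, Lemma \ref{lemma:trset8}.3 supplies the unique $b^{s+1}$ with $Trset(n,b)$, and since $Subform(n,n)$ holds the clause of Lemma \ref{lemma:trset9} matching the shape of $\varphi_n$ characterises membership of $\langle n,\bar{f},\cdot\rangle$ in $b$ (and, for parts 7 and 8, the corresponding clause characterises the immediate-subformula nodes $\langle i,\ldots\rangle$, $\langle j,\ldots\rangle$ via $Subform(i,n)$, $Subform(j,n)$). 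Lemma \ref{lemma:trset10}, applied at $m=n$ and at the subformula nodes, replaces every occurrence of ``$\in b$'' by ``$\in Truth$'', i.e.\ by $Tr$, which is exactly what is asserted in each part. Part 1 in addition uses parts 1 and 2 of Lemma \ref{lemma:trset9}, which together say that exactly one of $\langle n,\bar{f},0\rangle$, $\langle n,\bar{f},1\rangle$ lies in $b$, to rewrite $\neg Tr(n,\bar{f})$ as $\langle n,\bar{f},0\rangle\in Truth$.

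Part 9 is proved by induction on the complexity of $\varphi$, where I take the inductive statement in the uniform form $\forall\bar{f}\,(Eval(\bar{f})\supset(Tr(\llcorner\varphi\lrcorner,\bar{f})\equiv\tilde{\varphi}))$ so that it can be reapplied to altered evaluations. The base cases are $\bot$ and the atomic formulas: the cases $\bot$, $x_i^k=_k x_j^k$ and $x_i^k\in_k x_j^{k+1}$ are parts 2--4 of this lemma applied directly, and $t_i=_0 t_j$ and $t_i\in_0 x_j^1$ follow from parts 5--6 once $Arval(\llcorner t\lrcorner,f_0)$ is identified with $\tilde{t}$ by Lemma \ref{lemma:term7}.5. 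For a connective $\varphi=\psi\,\theta\,\chi$, part 7 gives $Tr(\llcorner\varphi\lrcorner,\bar{f})\equiv Tr(\llcorner\psi\lrcorner,\bar{f})\,\theta\,Tr(\llcorner\chi\lrcorner,\bar{f})$, and the induction hypothesis for $\psi$ and $\chi$ at the same $\bar{f}$ rewrites the right-hand side as $\tilde{\psi}\,\theta\,\tilde{\chi}$, which is $\widetilde{\psi\,\theta\,\chi}$. For a quantifier $\varphi=Qx_i^k\psi$, part 8 gives $Tr(\llcorner\varphi\lrcorner,\bar{f})\equiv Qy^k\,Tr(\llcorner\psi\lrcorner,Sub_k(\bar{f},i,y))$; for every $y^k$ one has $Eval(Sub_k(\bar{f},i,y))$ by Lemma \ref{lemma:term1}, so the induction hypothesis applies to $\psi$ at that evaluation, and property (\ref{eq:term0}) together with its $k=0$ analogue shows that $Sub_k(\bar{f},i,y)$ assigns the value $y$ to $x_i^k$ and agrees with $\bar{f}$ on every other variable, whence $Tr(\llcorner\psi\lrcorner,Sub_k(\bar{f},i,y))$ is equivalent to $\tilde{\psi}$ with $x_i^k$ read as $y$, and prefixing $Qy^k$ produces $\widetilde{Qx_i^k\psi}$. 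Part 10 is then the special case of part 9 in which $\varphi$ has no parameters, so that $\tilde{\varphi}$ coincides with $\varphi$.

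Almost all of this is routine definitional bookkeeping; the one genuinely delicate point is the quantifier step of part 9, where the substitution performed inside the evaluation (via $Subst_k$ and $Val_k$) has to be matched, up to provable equality in $TI_{s+1}$, with the substitution of the bound variable that is hidden in the notation $\tilde{\psi}$, and where one must have arranged the induction uniformly in $\bar{f}$ so that the hypothesis is available for the modified evaluation $Sub_k(\bar{f},i,y)$.
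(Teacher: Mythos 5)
Your proposal is correct and follows essentially the same route as the paper: parts 1--8 by combining Lemma \ref{lemma:trset8}.3, the matching clauses of Lemma \ref{lemma:trset9}, and Lemma \ref{lemma:trset10} to pass from membership in $b$ to the predicate $Tr$; part 9 by induction on the complexity of $\varphi$ using parts 2--8 and Lemma \ref{lemma:term7}.5; and part 10 as the closed-formula special case. Your explicit remarks that the induction for part 9 must be stated uniformly in $\bar{f}$ and that $Eval(Sub_k(\bar{f},i,y))$ is needed in the quantifier step make precise what the paper's proof leaves implicit.
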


\begin{proof}
Assume $Eval(\bar{f})$. By Lemma \ref{lemma:trset8}.3, for any $n$ with $Form(n)$ there is a set $b^{s+1}$ such that $Trset(n,b)$ and by Lemma \ref{lemma:trset10}:
\begin{equation}
\forall m \left[Subform(m,n)\supset\forall \bar{f}(Tr( m,\bar{f})\equiv\langle m,\bar{f},1\rangle\in b) \right].
\label{eq:trset15}
\end{equation}

1. By (\ref{eq:trset15}) for any $n$ with $Form(n)$:
\[\neg Tr( n,\bar{f})\equiv\langle n,\bar{f},1\rangle\notin b\equiv\langle n,\bar{f},0\rangle\in b
\equiv\langle n,\bar{f},0\rangle\in Truth\]
by Lemma \ref{lemma:trset9}.2, 1 and Lemma \ref{lemma:trset10}.

2. Denote $n=\llcorner\bot\lrcorner$. 

By  (\ref{eq:trset15}) and Lemma \ref{lemma:trset9}.3, 2, $Tr( n,\bar{f})\equiv\langle n,\bar{f},1\rangle\in b\equiv\bot$.

Parts 3-5 are similar to part 6, which we consider next.

6. Denote $n=\llcorner t_i\in_0 x_j^1 \lrcorner$. By (\ref{eq:trset15}) and Lemma \ref{lemma:trset9}.7, 
\[Tr( n,\bar{f})\equiv\langle n,\bar{f},1\rangle\in b\equiv Arval(i,f_0)\in_0 Val_1(f_1,j). \] 

7. Denote $n=\llcorner \varphi_i\vee \varphi_j \lrcorner$. By (\ref{eq:trset15}):
\begin{eqnarray*}
\forall \bar{f} \left[Tr( n,\bar{f})\equiv\langle  n,\bar{f},1\rangle\in b\right] ,
\\
\forall \bar{f} \left[Tr( i,\bar{f})\equiv\langle  i,\bar{f},1\rangle\in b\right],
\\
\forall \bar{f} \left[Tr( j,\bar{f})\equiv\langle  j,\bar{f},1\rangle\in b\right].
\end{eqnarray*}

By Lemma \ref{lemma:trset9}.8,
\[Tr(n,\bar{f})
\equiv \langle n,\bar{f},1\rangle \in b\equiv
\langle i,\bar{f},1\rangle\in b \vee\langle j,\bar{f},1\rangle\in b 
\equiv Tr(i,\bar{f})\vee Tr(j,\bar{f}). \]

The cases of other logical connectives are similar.

8. Denote $n=\llcorner \forall x_i^k\varphi_j \lrcorner$. By (\ref{eq:trset15}):
\begin{eqnarray*}
\forall \bar{f} \left[Tr( n,\bar{f})\equiv\langle  n,\bar{f},1\rangle\in b\right] ,
\\
\forall \bar{f} \left[Tr( j,\bar{f})\equiv\langle  j,\bar{f},1\rangle\in b\right].
\end{eqnarray*}

By Lemma \ref{lemma:trset9}.9,
\begin{multline*}
Tr( n,\bar{f})
\equiv \langle n,\bar{f},1\rangle \in b\equiv
\forall y^k\left(\langle j,Sub_k(\bar{f},i,y),1\rangle \in b \right) 
\\
\equiv \forall y^k Tr(j,Sub_k(\bar{f},i,y)).
\end{multline*} 

The case of the quantifier $\exists$ is similar.

9. Proof is by induction on the complexity of $\varphi$.
 
1) If $\varphi$ is $\bot$, then the formula follows from part 2.

2) Suppose $\varphi$ is $t\in_0 x_j^1$. By part 6,
\[Tr(\llcorner\varphi\lrcorner,\bar{f})\equiv Arval(\llcorner t\lrcorner,f_0)\in_0 Val_1(f_1,j)\equiv \tilde{t}\in_0 Val_1(f_1,j)\equiv\tilde{\varphi}\]
by Lemma \ref{lemma:term7}.5.

Assume part 9 holds for all proper sub-formulas of $\varphi$. 

3) Suppose $\varphi$ is $\psi\supset\chi$. By the inductive assumption:
\[Tr(\llcorner\psi\lrcorner,\bar{f})\equiv \tilde{\psi} \text{ and }Tr(\llcorner\chi\lrcorner,\bar{f})\equiv \tilde{\chi}.\]

By part 7:
\[Tr(\llcorner\varphi\lrcorner,\bar{f})\equiv \left[Tr(\llcorner\psi\lrcorner,\bar{f})\supset Tr(\llcorner\chi\lrcorner,\bar{f}) \right] \equiv (\tilde{\psi}\supset \tilde{\chi})\equiv\tilde{\varphi}.\]

The cases of other logical connectives are similar.

4) Suppose $\varphi$ is $\exists x_j^r\psi(x_j)$. By part 8:
\[Tr(\llcorner\varphi\lrcorner,\bar{f})\equiv\exists y^r Tr(\llcorner\psi\lrcorner,Sub_r(\bar{f},j,y)).\] 

By the inductive assumption,
\[Tr(\llcorner\psi\lrcorner,Sub_r(\bar{f},j,y))\equiv \tilde{\psi}(y),\]
where $\tilde{\psi}(y)$ is obtained from $\psi(x_j)$ by replacing $x_j$ by $y$ and each other parameter $x_i^k$ by $Val_k(f_k,i)$. So
\[Tr(\llcorner\varphi\lrcorner,\bar{f})\equiv\exists y^r \tilde{\psi}(y)\equiv\tilde{\varphi}.\] 

The case of quantifier $\forall$ is similar.

10. Follows from part 9, since for a closed formula $\varphi$, $\tilde{\varphi}=\varphi$.
\end{proof}

\begin{theorem}
$TI_{s+1}\vdash Prf_{TI_s}(m)\wedge Eval(\bar{f})\supset Tr(m,\bar{f})$.
\label{theorem:proof}
\end{theorem}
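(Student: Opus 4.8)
The plan is to argue inside $TI_{s+1}$ by formalised induction on the length of a $TI_s$-derivation. One unwinds $Prf_{TI_s}(m)$ as ``there is a finite sequence $\pi$ of formulas, each a $TI_s$-axiom or obtained from earlier entries by a rule of $CPC$, whose last entry has G\"odel number $m$''; since $TI_{s+1}$ contains full first-order induction, one may induct on the position in $\pi$ and prove the stronger statement that for every G\"odel number $k$ of an entry of $\pi$ and every $\bar f$ with $Eval(\bar f)$ one has $Tr(k,\bar f)$. By Lemma~\ref{lemma:trset11} (parts 2--8) the predicate $\lambda k.\,\forall\bar f\,(Eval(\bar f)\supset Tr(k,\bar f))$ commutes with the propositional connectives and with $\forall,\exists$ over the substitution operator $Sub_k$, exactly as a Tarski-style truth definition should; this is the engine driving both the axiom cases and the rule cases.

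For the rules: \emph{modus ponens} is handled by the $\supset$-clause (Lemma~\ref{lemma:trset11}.7), and the \emph{generalisation rule} $A/\forall x^k A$ by the $\forall$-clause (Lemma~\ref{lemma:trset11}.8), since the induction hypothesis already records validity under \emph{all} evaluations. For the logical and equality axioms of $CPC$ the required $Tr$-statements reduce, via the same clauses, to the corresponding classical tautologies and congruence facts, hence are provable in $TI_{s+1}$. The arithmetic axioms 1--3 of $TI_s$ follow from the computation rules for $Arval$ (Lemma~\ref{lemma:term7}); the induction axiom 4 follows from Lemma~\ref{lemma:trset11}.8 together with $TI_{s+1}$'s own numerical induction applied to the formula $Tr(j,Sub_0(\bar f,i,n))$; the extensionality axiom 6 is immediate from the $\in$- and $=$-clauses.

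The real work is the \textbf{comprehension schema}. Suppose $n=\llcorner\exists x^{k+1}\forall z^k(z\in x\equiv\psi(z))\lrcorner$ is a $TI_s$-instance, so $srt(\psi)\leqslant k+1$ and $k+1\leqslant s$, and fix $\bar f$ with $Eval(\bar f)$. Peeling off the leading quantifiers, the conjunction and the implications using the clauses of Lemma~\ref{lemma:trset11}, and using that $Tr(\llcorner\psi\lrcorner,\cdot)$ does not depend on the evaluation-slot of the comprehended variable $x^{k+1}$ (a free-variable-occurrence fact for $Tr$, provable by induction on sub-formula structure inside $TI_{s+1}$), the goal $Tr(n,\bar f)$ is reduced to
\[
\exists w^{k+1}\,\forall u^k\bigl(u\in w\equiv Tr(\llcorner\psi\lrcorner,Sub_k(\bar f,i_0,u))\bigr),
\]
where $i_0$ is the index of the bound variable $z^k$. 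This last formula is to be produced from the comprehension axiom of $TI_{s+1}$ at type $k+1$, and here lies the main obstacle: although $\psi$ may quantify over types as high as $s$, the hypothesis $srt(\psi)\leqslant k+1$ guarantees that all \emph{parameters} of the relevant defining formula have type $\leqslant k+1$, which is precisely what $TI_{s+1}$'s comprehension at type $k+1$ demands, since that axiom restricts parameter types, not bound-variable types. Making this precise --- in particular arranging the defining formula so that the type-$(s{+}1)$ set $Truth$ does not enter it as a parameter (e.g.\ by working with the truth set localised to the sub-language of sort $\leqslant k+1$, or by invoking Lemma~\ref{lemma:trset11}.9 to replace the truth-predicate clause by $\widetilde{\psi}$ before applying comprehension) --- is the step that requires the most care, and it is exactly where the restriction $srt(\varphi)\leqslant n+1$ in the comprehension axiom of $TI$ (as opposed to the unrestricted schema of $T$) is used essentially.
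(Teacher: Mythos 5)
Your proposal is correct and follows essentially the same route as the paper: induction on the $TI_s$-derivation, with the logical axioms and rules dispatched by the commutation clauses of Lemma \ref{lemma:trset11}, and the comprehension case resolved exactly as you suggest in your second alternative --- invoking Lemma \ref{lemma:trset11}.9 to replace $Tr(\llcorner\psi\lrcorner,Sub_n(\bar f,i,z))$ by $\tilde\psi$, whose only parameters are the $Val_k(f_k,r)$ of type $\leqslant n+1$, so that $srt(\tilde\psi)=\max(srt(\psi),1)\leqslant n+1$ and $TI_{s+1}$'s restricted comprehension applies. You correctly isolate the one point where the restriction on $srt(\varphi)$ is used essentially, which is precisely the only case the paper writes out in detail.
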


\begin{proof}
We will describe derivation in $TI_{s+1}$ informally by induction on the derivation of $\varphi_m$ in $TI_s$.

Suppose $m$ is the G\"{o}del number of the comprehension axiom:
\[\exists x_j^{n+1} \forall x_i^n \left(x_i\in x_j\equiv \varphi \right),\]
where $srt(\varphi)\leqslant n+1$ and $x_j^{n+1}$ is not a parameter of $\varphi$.

By Lemma \ref{lemma:trset11}.10, 
\[Tr(\llcorner \varphi\lrcorner,Sub_n(\bar{f},i,z))\equiv \tilde{\varphi},\] 
where $\tilde{\varphi}$ is obtained from $\varphi$ by replacing parameter $x_i^n$ by $z$ and each other parameter $x_r^k$ by $Val_k(f_k,r)$. 

If $k\geqslant 1$, then $Val_k(f_k,r)$ has only a parameter of type $k$ and $Val_0(f_1,r)$ has only a parameter of type 1. So $srt(\tilde{\varphi})= max(srt(\varphi),1) \leqslant n+1$. 
By the comprehension axiom in $TI_{s+1}$, there exists a set $y^{n+1}=\left\lbrace z^n \mid \tilde{\varphi}\right\rbrace$. Then 
\[\forall z^n \left[z\in y \equiv Tr(\llcorner  \varphi\lrcorner, Sub_n(\bar{f},i,z))\right],\]
which implies $Tr(m,\bar{f})$.

For all other axioms and derivation rules the theorem statement follows from Lemma \ref{lemma:trset11}.
\end{proof}

\subsection{The strengths of theories $TI_s$ and $TI$}
For an axiomatic theory $A$ and a fixed G\"{o}del numbering of its expressions denote $Proof_A(n,m)$ the arithmetic formula stating that a formula with G\"{o}del number $m$ has a proof with G\"{o}del number $n$ in the theory $A$. 
Denote $Prf_A(m)\leftrightharpoons \exists n Proof_A(n,m)$ and $Con(A)\leftrightharpoons\neg  Prf_A(\llcorner \perp \lrcorner)$. Thus, $Con(A)$ is the arithmetic formula that states the consistency of $A$. Detailed constructions of these formulas can be found, for example, in \cite{mend09}. 

For next theorems we fix a trivial evaluation $\bar{f}$: $f^k_k = \left\lbrace \varnothing^{k-1}\right\rbrace , k=1,\ldots,s$, and $f^1_0=\left\lbrace \langle i,0 \rangle \mid i=i\right\rbrace $. Then $Eval(\bar{f})$.

\begin{theorem}
$TI_{s+1}\vdash Con(TI_s), s=0,1,2,\ldots$.
\label{theorem:con1}
\end{theorem}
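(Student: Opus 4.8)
The plan is to derive $Con(TI_s)$ inside $TI_{s+1}$ directly from the soundness of the truth predicate $Tr$, namely Theorem \ref{theorem:proof}. I would argue by contradiction, working throughout in $TI_{s+1}$. Assume $Prf_{TI_s}(\llcorner \bot \lrcorner)$. Take the trivial evaluation $\bar f$ fixed just above the theorem, for which $Eval(\bar f)$ holds. Then Theorem \ref{theorem:proof}, applied with $m=\llcorner \bot \lrcorner$, gives $Tr(\llcorner \bot \lrcorner,\bar f)$. But by Lemma \ref{lemma:trset11}.2 we have $Tr(\llcorner \bot \lrcorner,\bar f)\equiv \bot$, so $\bot$ follows. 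Hence $TI_{s+1}\vdash \neg Prf_{TI_s}(\llcorner \bot \lrcorner)$, which is precisely $TI_{s+1}\vdash Con(TI_s)$ by the definition $Con(A)\leftrightharpoons \neg Prf_A(\llcorner \bot \lrcorner)$.

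This covers all $s\geqslant 1$, since the entire construction of $Art$, $Arval$, $Trset$, $Truth$ and $Tr$, together with Lemma \ref{lemma:trset11} and Theorem \ref{theorem:proof}, was carried out under the standing hypothesis $s\geqslant 1$. For the remaining instance $s=0$, i.e.\ $TI_1\vdash Con(TI_0)$, the same scheme works with the simpler ingredients available in $TI_1$: arithmetic terms of $TI_0$ are evaluated by $Arval(n,f^1)$ exactly as in the general construction, and truth for formulas of $TI_0$ — which contain only type-$0$ quantifiers — is captured by a single set variable of type $1$, using just the arithmetical clauses of $Trset$; the soundness lemma for this restricted truth predicate and the clause $Tr(\llcorner \bot \lrcorner,\bar f)\equiv \bot$ then yield $Con(TI_0)$ in $TI_1$ in the same one-step manner. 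I would either state this degenerate case explicitly or remark that it is subsumed by the general argument.

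The routine bookkeeping — that the chosen $\bar f$ satisfies $Eval(\bar f)$, and the standard Gödel-numbering formalisation of $Prf_{TI_s}$ and $Con$ in the language of $TI_{s+1}$ — is already set up and presents no difficulty. There is in fact no real obstacle left in this theorem: it is an immediate corollary of the soundness of $Tr$. The only step one must be careful about is that Theorem \ref{theorem:proof} genuinely delivers its conclusion at $m=\llcorner \bot \lrcorner$, i.e.\ that the induction on $TI_s$-derivations underlying that theorem handles the logical axioms and the comprehension scheme of $TI_s$ — precisely where the restriction $srt(\varphi)\leqslant n+1$ is used so that the comprehension instance needed to reflect $\varphi$ lies within $TI_{s+1}$ — but this has been discharged in the proof of Theorem \ref{theorem:proof}.
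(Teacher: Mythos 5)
Your proposal is correct and follows essentially the same route as the paper: assume $Prf_{TI_s}(\llcorner\bot\lrcorner)$, apply Theorem \ref{theorem:proof} with the fixed trivial evaluation to obtain $Tr(\llcorner\bot\lrcorner,\bar{f})$, and contradict Lemma \ref{lemma:trset11}.2, with the case $s=0$ handled by the same scheme using the simpler truth predicate without the $\in_k$ clauses.
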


\begin{proof}
Suppose $s\geqslant 1$. If $TI_{s+1}$ is inconsistent, then any formula can be derived in it. We assume that it is consistent. We will describe derivation in $TI_{s+1}$ informally. 

Assume $Prf_{TI_s}(\llcorner \perp \lrcorner)$. By Theorem \ref{theorem:proof}, $Tr(\llcorner \perp \lrcorner,\bar{f}) $ that contradicts Lemma \ref{lemma:trset11}.2. Therefore $\neg Prf_{TI_s}(\llcorner \perp \lrcorner)$.

If $s=0$, then the proof has similar steps, just the definition of the truth predicate $Tr_0 $  does not contain clauses for $\in_k$.
\end{proof}

The proofs of the remaining lemmas and theorems in this subsection are similar to the corresponding proofs in \cite{mcnt53}.

\begin{lemma}
$TI_2\vdash Con(I_n)\supset Con(I_{n+1})$.
\label{lemma:con}
\end{lemma}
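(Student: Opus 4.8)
The plan is to formalize in $TI_2$ McNaughton's classical reduction of $I_{n+1}$ to $I_n$: the highest type of $I_{n+1}$ is \emph{reinterpreted} as a definable layer built from lower types, so that any derivation of $\bot$ in $I_{n+1}$ is transformed into one in $I_n$. Working inside $TI_2$ I would argue by contraposition. Suppose a proof $\pi$ — a natural number, hence an object $TI_2$ can quantify over — witnesses $I_{n+1}\vdash\bot$; I aim to convert $\pi$ into a proof of $\bot$ in $I_n$, contradicting $Con(I_n)$. Since $Con(I_n)\supset Con(I_{n+1})$ is an arithmetic statement, $TI_2$ is a comfortable base for carrying this out uniformly in $n$.

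First I would introduce, inside $TI_2$, a notion of \emph{code} for a type-$(n+1)$ object: a code is a pair $\langle e,\vec p\,\rangle$, where $e$ is the G\"odel number of an $I_{n+1}$-formula with a distinguished free variable of type $n$ and $\vec p$ is its list of parameters, with the parameters (of types $\leqslant n+1$) themselves packed, recursively, into a single object via the pairing and sequence operators introduced earlier — so a code lives at a type already present in $I_n$ and no genuinely new type is needed. Then I would define a translation $(\cdot)^{\dagger}$ on the formulas occurring in $\pi$: it fixes all atoms and quantifiers of types $\leqslant n$, replaces each type-$(n+1)$ quantifier by a quantifier over codes, interprets $=_{n+1}$ as coextensiveness of the coded predicates (so extensionality becomes automatic), and replaces each atom $t^{n}\in w$ by ``$w$ is a code $\langle e,\vec p\,\rangle$ and the formula coded by $e$, translated and evaluated at $t$ with parameters $\vec p$, holds''. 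The crucial point — and the reason the argument works for $I$ but not for $T$ — is that comprehension formulas of $I_{n+1}$ mention no type above $n+1$, so after coding the top type this ``evaluation'' clause only ever refers to the \emph{finitely many} formulas actually appearing in $\pi$; no uniform (Tarski-forbidden) truth predicate for $I_n$ is needed, only finitely many explicit instances, and these are ordinary $I_n$-formulas. I would then verify, as the bulk of the bookkeeping, that the $(\cdot)^{\dagger}$-image of every logical axiom and rule and of every non-logical axiom used in $\pi$ is an $I_n$-theorem; the only nontrivial case is type-$(n+1)$ comprehension, where for each instance $\exists x^{n+1}\forall z^{n}(z\in x\equiv\varphi(z))$ used in $\pi$ one exhibits the code $\langle\ulcorner\varphi\urcorner,\vec p\,\rangle$ (with $\vec p$ the parameters of $\varphi$), whose existence is guaranteed because it is coded at a type $I_n$ already has. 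Since $\bot^{\dagger}$ is $\bot$, this yields an $I_n$-derivation of $\bot$.

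The main obstacle is the impredicativity of type-$(n+1)$ comprehension in $I_{n+1}$: a comprehension formula may refer to type $n+1$ both through bound variables and through parameters, so codes are defined by a recursion in which a code's parameter list may again contain codes, and under $(\cdot)^{\dagger}$ a comprehension formula quantifies over \emph{all} codes. One must therefore ensure that the collection of codes ``seen'' by $\pi$ is closed under what $\pi$ does and that the reinterpreted comprehension axioms hold for it. This is controlled by the finiteness of $\pi$: only finitely many comprehension instances and finitely many type-$(n+1)$ instantiation terms occur, so only codes of bounded rank are ever needed, and $TI_2$ can form, by induction on rank, the (type $1$ or type $2$) set of all codes of rank $\leqslant r$ and verify the required closure and comprehension properties on it — which is precisely why the lemma is stated over $TI_2$. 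Everything else is a routine syntactic verification, entirely parallel to \cite{mcnt53}.
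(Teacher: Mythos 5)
The paper's own proof of this lemma is a one-line citation of Theorem 4.3 of \cite{mcnt53}, so the question is whether your reconstruction is sound, and it is not. The comprehension axiom of $I_{n+1}$ at type $n+1$ is \emph{impredicative}: the defining formula $\varphi$ may contain bound variables \emph{and} parameters of type $n+1$ itself (the paper is explicit that all three of $T$, $TI$, $I$ have impredicative comprehension). This breaks your construction at the point you yourself flag. A translated quantifier $\forall y^{n+1}$ must range over the whole code domain, so the evaluation clause for $t\in w$ does not refer only to the finitely many formulas occurring in $\pi$ but to every formula that can occur as the first component of a code; that is a genuine truth predicate, not finitely many Tarski biconditionals. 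Your repair --- cutting the domain down to codes of rank $\leqslant r$ --- destroys comprehension: for an instance whose parameter list $\vec p$ contains type-$(n+1)$ parameters, the canonical witness $\langle\llcorner\varphi\lrcorner,\vec p\,\rangle$ has rank one higher than $\max(\mathrm{rank}(\vec p\,))$, and since the axiom is universally quantified over $\vec p$ ranging over the whole domain, no bounded-rank domain is closed under witnessing. Taking all codes of unbounded rank instead makes membership/satisfaction an $\omega$-iterated inductive definition that is not expressible by any $I_n$-formula, so the translated formulas no longer live in $I_n$.

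There is also a conclusive meta-level obstruction. As described, your argument is a primitive recursive proof transformation whose correctness would be verified by arithmetic induction on the length of $\pi$; it would therefore yield $PA\vdash Con(I_n)\supset Con(I_{n+1})$. But $I_1$ contains full second-order comprehension over $I_0=PA$, hence $I_1\vdash Con(I_0)$, and $PA\vdash Con(I_0)\supset Con(I_1)$ would then give $I_1\vdash Con(I_1)$, contradicting G\"odel's second incompleteness theorem. So the strength of $TI_2$ must enter essentially, and invoking $TI_2$ merely to ``form the set of codes of rank $\leqslant r$'' cannot supply it: those sets would have to be definable inside $I_n$ for your translated derivation to be an $I_n$-derivation. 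The argument that does work (and is what McNaughton's Theorem 4.3 rests on) is semantic: inside $TI_2$, use $Con(I_n)$ and the arithmetized completeness theorem to obtain a countable coded model of $I_n$, adjoin as the type-$(n+1)$ layer the full metatheoretic power set of its type-$n$ domain, and use the impredicative comprehension of $TI_2$ to define satisfaction over the expansion and to produce the new comprehension witnesses. This also explains why the lemma holds for $I$ but not for $T$ or $TI$: the lower-type comprehension axioms of $I_{n+1}$ are literally those of $I_n$, so the old model already satisfies them, whereas in $T_{n+1}$ and $TI_{n+1}$ they change.
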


\begin{proof}

In Theorem 4.3 in \cite{mcnt53} it was proven that $T_2\vdash Con(I_n)\supset Con(I_{n+1})$. The same proof applies to $TI_2$.
\end{proof}

\begin{theorem}
$TI_2\vdash Con(I_n)$ for $n=0,1,2,\ldots$.
\label{theorem:con2}
\end{theorem}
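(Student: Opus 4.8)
The plan is to prove this as a schema, by an induction on $n$ carried out in the metatheory, with the base case supplied by Theorem~\ref{theorem:con1} and the inductive step by Lemma~\ref{lemma:con}. It should be stressed at the outset that the statement asserts, for each fixed $n$ separately, that the arithmetic sentence $Con(I_n)$ is provable in $TI_2$; it does \emph{not} assert that $TI_2$ proves the single formula $\forall n\, Con(I_n)$, which would be impossible once $I_n$ surpasses $TI_2$ in consistency strength. So what I would produce is, uniformly in $n$, a $TI_2$-derivation of $Con(I_n)$, with the recursion on $n$ living in the metatheory.

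For the base case $n=0$ I would first note that in the fragment with no variables of type greater than $0$ there is no comprehension axiom at all (comprehension always introduces an object of type $\geq 1$), so $I_0$, $TI_0$ and $T_0$ have literally the same axioms — the bottom-level arithmetic together with induction for type-$0$ formulas — and hence, for a fixed G\"{o}del numbering, $Con(I_0)$ is the same sentence as $Con(TI_0)$. Theorem~\ref{theorem:con1} with $s=0$ gives $TI_1\vdash Con(TI_0)$, so $TI_1\vdash Con(I_0)$; and since the language and non-logical axioms of $TI_1$ are contained in those of $TI_2$, every theorem of $TI_1$ is a theorem of $TI_2$, whence $TI_2\vdash Con(I_0)$.

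The inductive step is then immediate: assuming $TI_2\vdash Con(I_n)$, Lemma~\ref{lemma:con} gives $TI_2\vdash Con(I_n)\supset Con(I_{n+1})$, and modus ponens inside $TI_2$ yields $TI_2\vdash Con(I_{n+1})$. This closes the induction and establishes $TI_2\vdash Con(I_n)$ for every $n$.

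There is no genuine obstacle left at this point, since the two substantive ingredients — the truth-predicate construction of Section~13 underlying Theorem~\ref{theorem:con1}, and McNaughton's argument \cite{mcnt53} underlying Lemma~\ref{lemma:con} — are already in hand. The only points needing care are bookkeeping ones: that Lemma~\ref{lemma:con} is stated and proved uniformly enough that each of its numerical instances is genuinely available as a $TI_2$-theorem, and that the identifications $I_0 = TI_0$ and $TI_1\subseteq TI_2$ are exact at the level of the chosen G\"{o}del numberings so that the formal consistency statements literally match. Granting these, the proof is a two-line external induction.
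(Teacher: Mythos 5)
Your proof is correct and follows exactly the paper's own argument: an external induction on $n$ with the base case $I_0=TI_0=PA$ handled by Theorem~\ref{theorem:con1} (plus the inclusion $TI_1\subseteq TI_2$) and the inductive step supplied by Lemma~\ref{lemma:con} and modus ponens. Your explicit remarks that the induction lives in the metatheory and that the statement is a schema rather than $TI_2\vdash\forall n\,Con(I_n)$ are a welcome clarification of what the paper leaves implicit, but the substance is the same.
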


\begin{proof}
The proof is by induction on $n$. 

$n=0$: $TI_0=I_0=PA$, where $PA$ is the first order arithmetic (Peano arithmetic).
By Theorem \ref{theorem:con1}, $TI_1\vdash Con(I_0)$ and therefore $TI_2\vdash Con(I_0)$. 

Assume $TI_2\vdash Con(I_n)$. Then by Lemma \ref{lemma:con}, $TI_2\vdash Con(I_{n+1})$.
\end{proof}

\begin{theorem}
\begin{enumerate}
\item $TI_3\vdash Con(I)$.
\item $TI\vdash Con(I)$.
\end{enumerate}
\label{theorem:con3}
\end{theorem}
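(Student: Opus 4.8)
The plan is to obtain $Con(I)$ from the consistency of all the finite fragments $I_n$, internalised as a single arithmetic statement, together with the elementary fact that any derivation in $I$ already takes place in some $I_n$. Since every axiom of $TI_3$ is an axiom of $TI$, part 2 will follow at once from part 1, so I concentrate on part 1 and argue throughout inside $TI_3$.

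The first step is to internalise Theorem~\ref{theorem:con2}. Theorem~\ref{theorem:con1} with $s=0$ gives $TI_1\vdash Con(TI_0)$, and $TI_0=I_0=PA$, so $TI_3\vdash Con(I_0)$. The model-extension construction underlying Lemma~\ref{lemma:con} is uniform in $n$, so Lemma~\ref{lemma:con} may be read with $n$ as a free variable, yielding $TI_3\vdash\forall n\,(Con(I_n)\supset Con(I_{n+1}))$; it is in passing from the schema of Theorem~\ref{theorem:con2} to this single sentence that the extra type over $TI_2$ is convenient. Since $Con(I_n)$ is an arithmetic formula in the free variable $n$ (the axiom set of $I_n$, namely the axioms of $I$ all of whose type indices are $\le n$, is primitive recursive uniformly in $n$), arithmetic induction inside $TI_3\supseteq PA$ gives $TI_3\vdash\forall n\,Con(I_n)$.

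The second step is the finitary reduction of $I$ to its fragments, carried out already in $PA$ and hence in $TI_3$. A G\"odel code $p$ of an $I$-derivation is a finite sequence of formulas, so the largest type index $N(p)$ occurring in $p$ is a primitive recursive function of $p$; every formula in such a $p$ is a formula of $I_{N(p)}$, and every non-logical axiom of $I$ occurring in $p$ has all its type indices $\le N(p)$ and is therefore an axiom of $I_{N(p)}$ (this is the only place one must inspect the axioms: arithmetic, the induction scheme, the restricted comprehension instances, and extensionality). Hence $p$ is an $I_{N(p)}$-derivation, and so $PA\vdash Prf_I(\llcorner\perp\lrcorner)\supset\exists n\,Prf_{I_n}(\llcorner\perp\lrcorner)$. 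Combining the two steps inside $TI_3$: if $Prf_I(\llcorner\perp\lrcorner)$ held, then $\neg Con(I_n)$ for some $n$, contradicting $\forall n\,Con(I_n)$; therefore $TI_3\vdash\neg Prf_I(\llcorner\perp\lrcorner)$, i.e. $TI_3\vdash Con(I)$. Part 2 follows because $TI_3$ is a subtheory of $TI$.

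The main obstacle is exactly the internalisation in the first step: Theorem~\ref{theorem:con2} only supplies the metatheoretic schema $\{\,TI_2\vdash Con(I_n)\,\}_{n}$, whereas the argument needs the single sentence $\forall n\,Con(I_n)$, and for this one must be sure that McNaughton's construction behind Lemma~\ref{lemma:con} is genuinely uniform in $n$ and formalisable with $n$ a variable. The remaining ingredients --- the arithmetical verification that a finite $I$-proof is an $I_n$-proof, and the inclusion $TI_3\subseteq TI$ --- are routine.
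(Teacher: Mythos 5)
Your second step (a derivation in $I$ mentions only finitely many types, so it is a derivation in $I_{N(p)}$ for $N(p)$ the largest type index occurring in it, whence $Prf_I(\llcorner\bot\lrcorner)\supset\exists n\,Prf_{I_n}(\llcorner\bot\lrcorner)$ already in $PA$) agrees with the paper, and so does the final combination. The gap is in your first step. You correctly identify that what is needed is the single sentence $\forall n\,Con(I_n)$ rather than the schema of Theorem~\ref{theorem:con2}, but the way you obtain it --- reading Lemma~\ref{lemma:con} with $n$ free, so that $TI_3\vdash\forall n\,\bigl(Con(I_n)\supset Con(I_{n+1})\bigr)$, and then applying arithmetic induction --- rests on an unproved uniformity claim. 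Neither the paper nor the cited source establishes that universally quantified sentence; Lemma~\ref{lemma:con} is stated and used only as a schema, one theorem per numeral $n$, and you yourself flag this as ``the main obstacle'' before simply asserting it away. Note also that if your uniform reading were available, your whole argument would go through inside $TI_2$ (the induction formula $Con(I_n)$ is arithmetic, so the third type plays no role in your proof), giving $TI_2\vdash Con(I)$ --- strictly stronger than the theorem being proved, which is a warning sign that this step is being asked to do more than it can.

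The paper closes exactly this gap with the truth predicate of section 13, and that is why the statement is about $TI_3$ rather than $TI_2$. Theorem~\ref{theorem:con2} is formalized only at the level of provability: the induction on $n$ in its proof yields a primitive recursive sequence of $TI_2$-proofs, so $TI_3\vdash \forall n\, Prf_{TI_2}(\llcorner Con(I_n)\lrcorner)$. This is genuinely weaker than your uniform implication --- a p.r.\ enumeration of proofs of the instances never by itself yields the universal sentence (compare $PA$ and the consistency statements of its finite fragments). The passage from provability to truth is then made by Theorem~\ref{theorem:proof} with $s=2$ (whatever $TI_2$ proves is $Tr$-true) together with Lemma~\ref{lemma:trset11}.10 ($Tr(\llcorner\varphi\lrcorner,\bar f)\equiv\varphi$), so that, for the $n$ witnessing $Prf_{I_n}(\llcorner\bot\lrcorner)$, one obtains $Con(I_n)$ inside $TI_3$ and hence a contradiction. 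If you want to keep your overall structure, replace the appeal to a uniform Lemma~\ref{lemma:con} by this reflection step.
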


\begin{proof}
1. We will describe derivation in $TI_3$ informally. Assume  $Prf_I(\llcorner \perp \lrcorner)$. Since every proof in $I$ is finite, for some $n$ we have: 
\begin{equation}
Prf_{I_n}(\llcorner \perp \lrcorner).
\label{eq:con}
\end{equation}
By formalizing Theorem \ref{theorem:con2}, we get $Prf_{TI_2}(\llcorner Con(I_n) \lrcorner)$. By Theorem \ref{theorem:proof} for $s=2$, $Tr(\llcorner Con(I_n) \lrcorner,\bar{f})$ and by Lemma \ref{lemma:trset11}.10, $Con(I_n)$, since it is a closed formula. This contradicts (\ref{eq:con}).

2. Follows from part 1.
\end{proof}

\section{Comparison of theories $SLP$ and $TI$}
\subsection{Interpretation of $SLP_s$ in $TI_s$}
Here we formalize our proofs about the model $\mathcal{B}_s$ in the theory $TI_s$ for a fixed $s\geqslant 1$. First we introduce some notations in $TI_s$.

By induction on $n$ we define $\left(y^n\right)_i$. 

$\left(y^0\right)_i$ is the $i$th element of $y$ when $y$ is regarded as the numerical code of a finite sequence of natural numbers, it is given by a primitive recursive function.
\[\left(y^{n+1}\right)_i=\left\lbrace z^n \mid \left[\left\lbrace i+1 \right\rbrace ^n, z \right]\in y\right\rbrace.\]

Clearly, $TI_s \vdash \left(\langle x_1^n,\ldots,x_k^n\rangle \right)_{i-1}=x_i$, where $i=1,2,\ldots,k$.

All the aforementioned sets can be constructed in $TI_s$ using its limited comprehension axiom.

For types $n_1,\ldots,n_k$ and $m=max\left\lbrace n_1,\ldots,n_k \right\rbrace$ we define:
\[y_1^{n_1+1}\times \ldots \times y_k^{n_k+1}=
\left\lbrace \langle x_1^{n_1},\ldots,x_k^{n_k}\rangle \mid \bigwedge_{i=1}^k(x_i\in y_i) \right\rbrace. \]

Next we introduce a formula $Seq(x^{n+1},k)$ of $TI_s$, which means that $x$ is a sequence of length $k$.
$Seq(x^0,k)$ is an arithmetic formula stating that the natural number $x$ is a code for a sequence of natural numbers of length $k$.
\begin{multline*}
Seq(x^{n+1},k)\leftrightharpoons
 \\
\forall u^n\left(u\in x\equiv  
u=\left[\left\lbrace 0\right \rbrace^{n},\left\lbrace k\right \rbrace^{n} \right] \vee \exists i, z^n 
\left( 1\leqslant i\leqslant k \wedge u=\left[\left\lbrace i\right \rbrace^{n},z \right] \right)\right).
\end{multline*}

Clearly,
\[TI_s\vdash \langle x_1^{n_1},\ldots,x_k^{n_k}\rangle= \langle y_1^{n_1},\ldots,y_k^{n_k}\rangle \supset \bigwedge_{i=1}^k(x_i=y_i);\]
\[TI_s\vdash y^m=\langle x_1^{n_1},\ldots,x_k^{n_k}\rangle \supset Seq(y,\tilde{k}),\]
where $m=max\left\lbrace n_1,\ldots,n_k\right\rbrace$. 

The relation $\leqslant$ on finite sequences is defined by:
\[x^n\leqslant y^n\leftrightharpoons \exists k,l \left[Seq(x,k) \wedge Seq(y,l) \wedge l\leqslant k \wedge(\forall i<l)\left((x)_i=(y)_i \right)\right].\]

For fixed $i$ and $n\leqslant m$ the following formula states: "if $y^m$ is a sequence, then $x^n$ is its $i$th element, and $\emptyset$ otherwise".
\begin{multline*}
elt_{m,n}(x^n,y^m,i) \leftrightharpoons \exists k \left[Seq(y,k)\wedge i< k\wedge (y)_i=\left\lbrace x \right\rbrace ^{m-n}\right]
\\
\vee \left[\neg \exists k,z^n \left(Seq(y,k)\wedge i< k\wedge (y)_i=\left\lbrace z \right\rbrace ^{m-n}\right)\wedge x=\emptyset\right].
\end{multline*}

Clearly, $TI_s \vdash \forall i, y^m\exists ! x^n elt_{m,n}(x,y,i)$.

So we can introduce a functional symbol $\langle y^m\rangle_{n,i}$ in $TI_s$ for any $n\leqslant m\leqslant s$. It is easy to prove that:
\[TI_s \vdash y^m=\langle x_1^{n_1},\ldots,x_k^{n_k}\rangle \supset \langle y\rangle_{n_i,i-1}=x_i.\]

Similarly we introduce a functional symbol $lth(x)$ such that 
\[TI_s \vdash Seq(x^n,k) \supset lth(x)=k.\]

Functions and partial functions are defined as sets of ordered pairs as usual. The predicates "$S$ \textit{is a path in} $d$", "$f$ \textit{is monotonic on} $d$", "$f$ \textit{is complete on} $d$" are easily expressed in the language $TI_s$. Thus, for a fixed partial order $\leqslant$ on $d$,
\begin{multline*}
\text{"}S\textit{ is a path in }d\text{"} 
\\
\leftrightharpoons (\forall x,y\in S)(x\leqslant y \vee y\leqslant x) 
\wedge (\forall x\in d)(\forall y\in S)(x \leqslant y \vee y \leqslant x \supset x\in S);
\end{multline*}
\begin{multline*}
\text{"}f\textit{ is monotonic on }d\text{"} 
\\
\leftrightharpoons (\forall x,y\in d)(x\leqslant y 
\supset\forall n,z\left(\langle\langle y,n\rangle,z\rangle \in f \supset \langle\langle x,n\rangle,z\rangle \in f \right);
\end{multline*}
\begin{multline*}
\text{"}f\textit{ is complete on }d\text{"} 
\\
\leftrightharpoons\forall S \left[\text{"}S\textit{ is a path in }d\text{"} \supset 
\forall n (\exists x\in S)\exists y \left(\langle\langle x,n\rangle,y\rangle \in f \right)\right]. 
\end{multline*}

Here the variables have appropriate types depending on the type of $d$. 

Next we formalize the Definition \ref{def:my_beth_model} of the Beth model $\mathcal{B}_s$ in $TI_s$. Most of it is straightforward. We will elaborate only on non-trivial parts. The definition of $\nu_k(\xi)$ is formalized as follows:
\[\nu_k(\xi)=\left\lbrace \langle\langle x,n\rangle, y\rangle \mid n<lh(x) \wedge \langle\langle n,\left( \langle x \rangle_{k-1,k-1}\right)_n\rangle, y \rangle \in \xi\right\rbrace. \]

For each $k=0,1,\ldots,s-1$, the sets $a_k,d_k,b_k,c_k$ and $l_k$ have type $k+1$. The set $M=d_{s-1}$ has type $s$. In theory $TI_s$ we use predicates (not sets) for 
$a_s,b_s,c_s$ and $l_s$. For example, for $a_s$ the corresponding predicate is 
\begin{multline*}
A_s(f^s)\leftrightharpoons f:\left(d_{s-1}\times \omega \right)\dashrightarrow a_{s-1}
\\
\wedge "f \textit{ is monotonic on }d_{s-1}"\wedge "f \textit{ is complete on }d_{s-1}"
\end{multline*}
and it can be expressed by a formula of $TI_s$. For universality we will use the notation $f^s\in a_s$ for the predicate $A_s(f^s)$, as well as the notations $f^s\in b_s$, $f^s\in c_s$ and $f^s\in l_s$  standing for corresponding predicates. 

For $x,y\in d_k$ we define: 
\[x\preceq_k y \leftrightharpoons \bigwedge_{i=0}^k \left(\langle x\rangle_{i,i}\leqslant \langle y\rangle_{i,i} \right). \]

It is easy to see that for any $y^k\in d_k$ with $lh(y)=m$ $(k=0,1,\ldots,s-1)$, for any $n\leqslant k$ and $j<m$:
\[\langle y\rangle_{n,n}\in a_n^{(m)} \text{ and }\left( \langle y\rangle_{n,n}\right)_j \in a_n.\]

For any $y^k \in d_k$ and $n=1,2,\ldots,k+1$ we denote 
\[segment_n(y)=\langle\langle y\rangle_{0,0},\langle y\rangle_{1,1},\ldots,\langle y\rangle_{n-1,n-1},\rangle.\]
Clearly the type of $segment_n(y)$ is $n-1$. As in the informal definition, we will use the notation $\bar{y}(n)$ for $segment_n(y)$; it denotes the initial segment of $y$ of length $n$. Here $\bar{y}(n)$ is introduced only for $y\in d_k$ and not for arbitrary $y$. 

Suppose $\varphi$ is an evaluated formula of $SLP_s$. Denote $int_s(\varphi)$ the formula of $TI_s$ that expresses the relation $\mathcal{B}_s\Vdash \varphi$. For any $\varphi$, $int_s(\varphi)$ can be formally defined following the Definitions \ref{def:forcing} and \ref{def:my_beth_model}. 

\begin{theorem}
$SLP_s\vdash \varphi \Rightarrow TI_s\vdash int_s(\bar{\varphi}).$
\label{theorem:ints}
\end{theorem}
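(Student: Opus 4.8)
The plan is to argue by induction on the length of a derivation of $\varphi$ in $SLP_s$, reproducing inside $TI_s$ the whole external verification that the Beth model $\mathcal{B}_s$ validates $SLP_s$. Before the induction one fixes the formal meaning of $int_s$: following Definitions \ref{def:forcing} and \ref{def:my_beth_model}, and the formalizations of $a_k$, $b_k$, $c_k$, $l_k$, $d_k$, $M$, $Val$, $\widehat{Ap}^k$, $\widehat{K}^k$, $S^k$ given above, $int_s(\alpha\Vdash\psi)$ is defined in $TI_s$ by recursion on the complexity of $\psi$, with the clauses for atomic formulas, $\bot$, $\wedge$, $\vee$, $\supset$, $\forall$, $\exists$ and the predicate $\vdash_t$ copied verbatim from the forcing definition; for a variable of type $s$ the quantifier over the domain is read as a quantifier restricted by the predicate $A_s$. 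Then one reproves, now as derivations of $TI_s$, the auxiliary facts used externally: the formalized versions of Lemmas \ref{lemma:ap_predicate}, \ref{lemma:term_int}, \ref{lemma:sub_in_forcing}, \ref{lemma:proof_pred}, \ref{lemma:equiv_nodes}, \ref{lemma:lawless1} and \ref{lemma:lawless2}, together with monotonicity of forcing and the path characterization of forcing (the formalized Lemma \ref{lemma:Beth}.1, \ref{lemma:Beth}.2). Each of these has an external proof by induction on the complexity of a term or of a formula, and such inductions on syntactic objects are available in $TI_s$ via its arithmetic at the bottom level.

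For the logical part of a derivation — the axioms and inference rules of $HPC$ and the equality axioms — one formalizes the Soundness Theorem for Beth models (Lemma \ref{lemma:Beth}.3): for each axiom schema and each application of a rule one checks in $TI_s$ that the forced version holds, exactly as in the standard treatment of Beth semantics. For the non-logical axioms, recall $SLP_s=LP_s+(LL_s)+(C_s)+(BI_s)$; one re-runs inside $TI_s$ the soundness proofs already given: the arithmetical axioms and axioms $5$--$7$ of $L_s$ via the argument of Theorem \ref{theorem:soundness_l}, the induction axiom directly from the forcing clause for implication, the creating-subject axioms $(CS1_s)$--$(CS3_s)$ as in Theorem \ref{theorem:soundness}, the lawless axioms $(LL1_s)$--$(LL3_s)$ as in Theorem \ref{theorem:lawless}, the choice axioms $(C1_s)$, $(C2_s)$ as in Theorem \ref{theorem:choice}, and bar induction $(BI_s)$ as in Theorem \ref{theorem:BI}. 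In every case the external argument is elementary; the only real question is whether the comprehension and induction resources of $TI_s$ suffice to carry it out.

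This last question is the main obstacle. The soundness proofs for $(C1_s)$, $(C2_s)$ and for $(LL1_s)$ construct by cases specific functionals $f\in a_m$ (and the lawless functional $h=\nu_n(\xi)$), and their defining clauses mention the forcing predicate, which quantifies over nodes of $M$, a set of type $s$; but a set of type $m+1\leqslant s$ may be introduced in $TI_s$ only by a formula with no parameters of type $>m+1$. The resolution is the formalized contraction principle, i.e. the $TI_s$-version of Lemma \ref{lemma:lawless2}.1 and of equation (\ref{eq:contraction}): when $sort(\chi)\leqslant m$, the relation $\alpha\Vdash\chi$ depends only on $\bar{\alpha}(m)$, so in the defining formula of $f$ every quantifier over $M$ occurring inside the forcing of a subformula of $\psi$ of sort $\leqslant m$ may be replaced by a quantifier over $d_{m-1}$, lowering all bound types accordingly; since in addition the auxiliary predicate $shortest(\alpha,\beta,x)$ and the finite recursions defining $q_{\beta,x}$, $q_{i,\beta,x}$, $Q_{\beta,x}$, $Q_{i,\beta,x}$ range only over the finitely many nodes lying above a fixed node, they are definable by bounded quantification once forcing is, so no comprehension instance used exceeds the bound permitted by $TI_s$. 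One must similarly observe that the well-founded induction underlying the soundness proof of $(BI_s)$ can be coded, using comprehension of type $\leqslant s$, as an ordinary arithmetical induction on ranks in the bar-induction tree. With these verifications in place the induction on derivations goes through and yields $TI_s\vdash int_s(\bar{\varphi})$.
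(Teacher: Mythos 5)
Your overall strategy coincides with the paper's: the paper's entire proof of Theorem \ref{theorem:ints} is the single remark that the soundness arguments of Sections 4--9 are formalized in $TI_s$ using the notations of this subsection, so your induction on derivations, the clause-by-clause definition of $int_s$ (with the type-$s$ domain handled by the predicate $A_s$), and the re-running of Theorems \ref{theorem:soundness_l}, \ref{theorem:soundness}, \ref{theorem:lawless}, \ref{theorem:choice} and \ref{theorem:BI} inside $TI_s$ is exactly what is intended, only spelled out. You are also right that the only non-routine point is whether the restricted comprehension of $TI_s$ supports the construction of the choice functionals and the lawless witness.

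However, the resolution you give for that point does not work as stated. You propose to legitimize the comprehension instance defining $f$ by ``replacing every quantifier over $M$ by a quantifier over $d_{m-1}$, lowering all bound types accordingly,'' so that ``no comprehension instance used exceeds the bound permitted by $TI_s$.'' This presupposes that the schema bounds the types of \emph{all} variables of the defining formula; in fact $srt(\varphi)$ is the maximal type of the \emph{parameters} (free variables) only, and bound variables of any type $\leqslant s$ are allowed --- this is precisely what distinguishes $TI$ from McNaughton's more predicative $I$. The lowering you describe is in general impossible: $sort(\psi)\leqslant m$ is compatible with $\psi$ containing a bound functional variable of type $k>m$, and the forcing clause for that quantifier unavoidably quantifies over $a_k$ and over paths in $M$ (objects of type $s$), which cannot be pushed below type $m$. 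Fortunately the lowering is also unnecessary: what must be checked is only that the free variables of the formula defining $f$ (a set of type $m$) have types $\leqslant m$, and this holds because the node $\alpha$ enters the definition of $f$ only through $\bar{\alpha}(m)$ (of type $m-1$) together with the parameter-free operation $extension$, while the objects substituted for the parameters of $\psi$ have types $\leqslant m$ since $sort(\varphi)\leqslant m$; this is exactly the role of Lemma \ref{lemma:lawless2}.1 and equation (\ref{eq:contraction}). With the comprehension restriction read correctly your argument closes, but as written the justification of the key comprehension instances rests on a condition that is both false and not needed.
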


\begin{proof}
The proofs in Sections 4-9 are easily formalized using the aforementioned notations. That implies the theorem.
\end{proof}

\begin{theorem}
\begin{enumerate}
\item The theories $SLP_s$ and $TI_s$ are equiconsistent.
\item The theories $SLP$ and $TI$ are equiconsistent.
\end{enumerate}
\label{theorem:con}
\end{theorem}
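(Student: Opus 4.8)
The plan is to combine the two interpretations already established: the interpretation of $TI_s$ in $SLP_s$ given by the translation $int$ (Theorem \ref{theorem:int}) and the interpretation of $SLP_s$ in $TI_s$ given by the formalized forcing translation $int_s$ (Theorem \ref{theorem:ints}). Everything reduces to checking how these translations treat falsity. Unwinding the three successive translations $\varphi\mapsto\varphi^{*}$, $\varphi\mapsto\varphi'$, $\varphi\mapsto\varphi^{-}$, we see that each of them fixes $\bot$, so $int(\bot)=((\bot^{*})')^{-}=\bot$. On the other side, $int_s(\bot)$ is by construction the formalization in $TI_s$ of the assertion $\varepsilon\Vdash\bot$, and since the forcing clause for $\bot$ gives $\alpha\nVdash\bot$ for every $\alpha$, we have $TI_s\vdash\neg\,int_s(\bot)$; in fact a faithful formalization makes $int_s(\bot)$ literally $\bot$.

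For part 1 I prove both implications between $Con(SLP_s)$ and $Con(TI_s)$. If $SLP_s\vdash\bot$, then by Theorem \ref{theorem:ints} $TI_s\vdash int_s(\bot)$, which together with $TI_s\vdash\neg\,int_s(\bot)$ gives $TI_s\vdash\bot$; hence consistency of $TI_s$ forces consistency of $SLP_s$. Conversely, if $TI_s\vdash\bot$, then by Theorem \ref{theorem:int}.1 $SLP_s\vdash int(\bot)=\bot$; hence consistency of $SLP_s$ forces consistency of $TI_s$. Together these yield $Con(SLP_s)\Leftrightarrow Con(TI_s)$.

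For part 2 I pass to fragments using the finiteness of derivations. If $SLP$ were inconsistent, a derivation of $\bot$ would use only finitely many axioms and hence lie in some $SLP_s$; by part 1 $TI_s$ would be inconsistent, and therefore so would $TI$. If $TI$ were inconsistent, then by Theorem \ref{theorem:int}.2 $SLP\vdash int(\bot)=\bot$, so $SLP$ would be inconsistent (alternatively, a derivation of $\bot$ in $TI$ lies in some $TI_s$ and one applies part 1 together with $SLP_s\subseteq SLP$). Hence $Con(SLP)\Leftrightarrow Con(TI)$.

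The argument is essentially bookkeeping once Theorems \ref{theorem:int} and \ref{theorem:ints} are available, so there is no genuine obstacle; the only point that must be handled carefully is the behaviour of the two translations on $\bot$ — for $int$ this is immediate from the definitions of $*$, $'$ and $-$, and for $int_s$ it is exactly the clause "falsity is never forced" built into the Beth model. If one wishes the equivalences $Con(SLP_s)\equiv Con(TI_s)$ and $Con(SLP)\equiv Con(TI)$ to be provable in a weak base theory rather than merely true in the metatheory, it suffices to note in addition that the translations $int$ and $int_s$ are primitive recursive and that Theorems \ref{theorem:int} and \ref{theorem:ints} are proved by effective induction on derivations; this refinement is not needed for the statement as given.
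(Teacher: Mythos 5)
Your proposal is correct and follows essentially the same route as the paper: the paper's proof simply cites Theorem \ref{theorem:int}.1 together with Theorem \ref{theorem:ints} for part 1, and Theorem \ref{theorem:int}.2 plus finiteness of derivations for part 2. Your explicit verification that $int(\bot)=\bot$ and that $int_s(\bot)$ is refutable in $TI_s$ is exactly the bookkeeping the paper leaves implicit.
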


\begin{proof}
Part 1 follows from Theorem \ref{theorem:int}.1 and Theorem \ref{theorem:ints}. Part 2 follows from Theorem \ref{theorem:int}.2 and  part 1, since any derivation in $SLP$ is finite, therefore it is a derivation in $SLP_s$ for some $s$.
\end{proof}

\subsection{Constructing a forcing predicate for $SLP_s$ in $TI_{s+1}$}

Fix $s\geqslant 1$. In $TI_{s+1}$ we will construct a formula $Fr(\alpha,n,\bar{f})$ expressing the forcing predicate $\alpha\Vdash\varphi_n(\bar{f})$ in the Beth model $\mathcal{B}_s$. Thus, $Fr(\alpha,n,\bar{f})$ means that the formula of $SLP_s$ with G\"{o}del number $n$ under evaluation $\bar{f}$ is forced at node $\alpha$. The construction follows similar steps as the construction of the truth predicate $Tr$ in the previous section, but with a few extra details. In this subsection details of the construction will be described and its applications to the relative strengths of our theories will be given in next sub-section.

We will use the following precise notations for variables of $SLP_s$. For $n=1,2,\ldots,s$; $i=1,2, \ldots$:
\begin{list}{•}{•}
\item $x_i$ are variables over natural numbers,
\smallskip
\item $F_i^n$ are variables over $n$-functionals,
\smallskip
\item $A_i^n$ are variables over lawlike $n$-functionals,
\smallskip
\item $\mathcal{F}_i^n$ are variables over lawless $n$-functionals.
\end{list}

We will map variables of $SLP_s$ to variables of $TI_s$ as follows:
\begin{list}{•}{•}
\item $x_i \rightarrow x_i^0$;
\medskip
\item $F_i^n \rightarrow x_{3i}^n$;
\medskip
\item $A_i^n \rightarrow x_{3i+1}^n$;
\medskip
\item $\mathcal{F}_i^n \rightarrow x_{3i+2}^n$.
\end{list}

The definitions of the domains for $k$-functionals, lawlike $k$-functionals, lawless $k$-functionals, and the sets $d_k$ and $c_k$ are easily formalised in $TI_{s+1}$ as in the previous sub-section. All the sets $a_k$, $b_k$, $l_k$, $d_k$, and $c_k$ have type $k+1$, including the case $k=s$ (unlike the previous sub-section, where $a_s$, $b_s$ and $l_s$ were predicates). The domain $M=d_{s-1}$ has type $s$.

We will fix G\"{o}del numbering of all expressions of the language $SLP_s$. For any expression $g$ we denote $\llcorner g\lrcorner$ its G\"{o}del number in this numbering.

The functional symbol $Val_k(f,i)$ that produces the $i$th element of $f$ was introduced in section 13.  Next we introduce a predicate $Evl_k(f^k), k=1,\ldots,s$, with the meaning: "$f$ is an evaluation of all variables for $k$-functionals".
\begin{multline*}
Evl_k(f^k)\leftrightharpoons \left(\forall x^{k-1}\in f \right)\exists i, z^{k-1}(x=\langle i,z\rangle)
\\
\wedge \forall i\left[ Val_k(f,3i)\in a_k \wedge Val_k(f,3i+1)\in b_k \wedge Val_k(f,3i+2)\in l_k \right]. 
\end{multline*}

For type 0 the predicate $Evl_0(f^1)$ is the same as $Eval_0(f^1)$ defined in section 13.

As before we denote $\bar{f}$ the list of variables $f_0^1,f_1^1,f_2^2,\ldots,f_s^s$ and 
\[Evl(\bar{f})\leftrightharpoons \bigwedge_{k=0}^s Evl_k(f_k).\] 

The functional symbol $Subst_k(f^k,i,y^k)$ and notation $Sub_k(\bar{f},i,y^k)$ were defined in section 13 $(k=0,1,\ldots,s)$.

\begin{lemma}
Suppose $k=1,2,\ldots,s$. It is derived in $TI_{s+1}$ that the formula $Evl(\bar{f})$ implies each of the following formulas:
\begin{enumerate}
\item $Evl(Sub_0(\bar{f},i,u^0))$;
\smallskip
\item $u^k\in a_k \supset Evl(Sub_k(\bar{f},3i,u))$;
\smallskip
\item $u^k\in b_k \supset Evl(Sub_k(\bar{f},3i+1,u))$;
\smallskip
\item $u^k\in l_k \supset Evl(Sub_k(\bar{f},3i+2,u))$.
\end{enumerate}
\label{lemma:force0}
\end{lemma}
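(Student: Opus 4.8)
The plan is to note that this lemma is the $Evl$-analog of Lemma \ref{lemma:term1}, strengthened by the extra domain-membership clauses in the definition of $Evl_k$, and that each of the four substitution operations changes only a single component of the list $\bar f$. Consequently, establishing $Evl$ of the new list reduces to establishing $Evl_k$ of the one altered function, since every other conjunct $Evl_j(f_j)$ is left untouched. For the arithmetic component (part 1) this is immediate: $Sub_0(\bar f,i,u^0)$ replaces $f_0$ by $Subst_0(f_0,i,u^0)$, and the $Subst_0$-analog of \eqref{eq:term0} yields $Eval_0(Subst_0(f_0,i,u^0))$, which is exactly $Evl_0(Subst_0(f_0,i,u^0))$ since $Evl_0$ and $Eval_0$ coincide in type $0$. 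Hence part 1 follows, and for parts 2--4 I would fix the level $k\in\{1,\dots,s\}$ and apply \eqref{eq:term0} to $g:=Subst_k(f_k,m,u)$ with $m$ being $3i$, $3i+1$ or $3i+2$ respectively; this gives $Val_k(g,m)=u$ and $Val_k(g,j)=Val_k(f_k,j)$ for all $j\neq m$.

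It then remains to verify the two conjuncts of $Evl_k(g)$. The first conjunct, $(\forall x^{k-1}\in g)\,\exists i',z^{k-1}(x=\langle i',z\rangle)$, holds because by construction every element of $Subst_k(f_k,m,u)$ is a pair of the form $\langle j,z\rangle$. For the second conjunct one checks the three membership conditions at each index $i'$. Since $3i$, $3i+1$, $3i+2$ are pairwise distinct, substituting at a single one of them leaves $Val_k(g,\cdot)$ equal to $Val_k(f_k,\cdot)$ at the other two residue classes and at all indices $i'\neq i$ within the same residue class, and those values satisfy the required conditions by $Evl_k(f_k)$. The one changed value, $Val_k(g,m)=u$, is handled by the hypothesis on $u$: for part 2 ($m=3i$) the relevant condition is $Val_k(g,3i)\in a_k$, supplied by $u^k\in a_k$; for part 3 ($m=3i+1$) it is $\in b_k$, supplied by $u^k\in b_k$; for part 4 ($m=3i+2$) it is $\in l_k$, supplied by $u^k\in l_k$. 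This establishes $Evl_k(g)$, and together with the unchanged conjuncts $Evl_j(f_j)$ for $j\neq k$, it gives $Evl$ of the whole substituted list.

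I expect no real obstacle here: the argument is bookkeeping, and the only point requiring a moment's attention is that the three ``slots'' $3i$, $3i+1$, $3i+2$ attached to each index $i$ are assigned to the domains $a_k$, $b_k$, $l_k$ respectively, so a substitution into one slot cannot disturb the conditions on the other two, while \eqref{eq:term0} (and its $Subst_0$ version) is precisely the tool transporting the known behaviour of $Val_k(f_k,\cdot)$ to $Val_k(g,\cdot)$. Parts 2, 3 and 4 differ only in which residue class is altered and which of $u^k\in a_k$, $u^k\in b_k$, $u^k\in l_k$ is consumed, so I would write out part 2 in full and remark that the remaining two are identical.
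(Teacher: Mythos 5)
Your proposal is correct and is exactly the argument the paper intends: its proof of this lemma is the one-line "Follows from the definitions of $Evl$ and $Sub_k$", and your write-up simply spells out that bookkeeping (one component of $\bar{f}$ changes, (\ref{eq:term0}) transports the unchanged values, and the residue classes $3i$, $3i+1$, $3i+2$ keep the three domain conditions independent). No discrepancy to report.
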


\begin{proof}
Follows from the definitions of $Evl$ and $Sub_k$.
\end{proof}

We denote $Fnl_k(n)$ the arithmetic formula stating that $n$ is the G\"{o}del number of a $k$-functional ($k=1,2,\ldots,s$). We denote $Fnl_0(n)$ the arithmetic formula stating that $n$ is the G\"{o}del number of a term. We denote $Z_n$ the term or $k$-functional of $SLP_s$ with G\"{o}del number $n$ and 
\[Subfnl_k(m,n)\leftrightharpoons Fnl_k(m)\wedge "Z_m\textit{ is a part of }Z_n".\]
Thus, for $k=1,2,\ldots,s$, $Subfnl_k(m,n)$ means that the $k$-functional $Z_m$ is a sub-functional of $Z_n$, and $Subfnl_0(m,n)$ means that the term $Z_m$ is a sub-term of $Z_n$.

Next we define a predicate $Fnterm(\alpha,\bar{f},n,a^{s+1})$, which means that $a$ is the set of the values at node $\alpha$ of all sub-terms and sub-functionals of $Z_n$ under evaluation $\bar{f}$. The predicate uses the interpretations $\hat{K}^k$ and $\hat{N}^k$ from Definition \ref{def:my_beth_model}, which are easily formalised.
\begin{multline*}
Fnterm(\alpha^{s-1},n,\bar{f},a^{s+1})\leftrightharpoons \alpha\in M\wedge \left( \bigvee_{k=0}^s Fnl_k(n)\right) \wedge Evl(\bar{f})
\\
\wedge (\forall x^s\in a)\exists m \left[\bigvee_{k=0}^s \left(Subfnl_k(m,n)\wedge \exists u^k(x=\langle m,u\rangle)\right)\right] 
\\
\wedge\forall m \bigwedge_{k=0}^s\left[Subfnl_k(m,n)\supset C_k(m,a)\right],\text{ where } 
\end{multline*}
\[C_s(m,a)\leftrightharpoons \bigvee_{r=1}^{10}C_{s,r};
\text{ for }  k=0,1,\ldots,s-1, 
C_k(m,a)\leftrightharpoons \bigvee_{r=1}^{11}C_{k,r}
\]
and $C_{k,1} - C_{k,11}$ determine the value of $Z_m$ as follows.
\[C_{k,1}\leftrightharpoons m=\llcorner 0\lrcorner\wedge\forall y^s\left(\langle m,y\rangle\in a \equiv y=\left\lbrace 0 \right\rbrace^s \right).\]
\[C_{k,2}\leftrightharpoons m=\llcorner K^k\lrcorner\wedge\forall y^s\left(\langle m,y\rangle\in a \equiv y=\left\lbrace \hat{K}^k \right\rbrace^{s-k} \right).\] 
\[C_{k,3}\leftrightharpoons \exists i\left[m=\llcorner x_i\lrcorner \wedge\forall y^s\left(\langle m,y\rangle\in a \equiv y=\left\lbrace Val_0(f_0,i) \right\rbrace^s \right) \right].\]
\[C_{k,4}\leftrightharpoons \exists i\left[m=\llcorner F^k_i\lrcorner \wedge\forall y^s\left(\langle m,y\rangle\in a \equiv y=\left\lbrace Val_k(f_k,3i) \right\rbrace^{s-k} \right) \right].\]
\[C_{k,5}\leftrightharpoons \exists i\left[m=\llcorner A^k_i\lrcorner \wedge\forall y^s\left(\langle m,y\rangle\in a \equiv y=\left\lbrace Val_k(f_k,3i+1) \right\rbrace^{s-k} \right) \right].\]
\[C_{k,6}\leftrightharpoons \exists i\left[m=\llcorner \mathcal{F}^k_i\lrcorner \wedge\forall y^s\left(\langle m,y\rangle\in a \equiv y=\left\lbrace Val_k(f_k,3i+2) \right\rbrace^{s-k} \right) \right].\]
\[C_{k,7}\leftrightharpoons 
\exists j\left[m=\llcorner S(Z_j)\lrcorner
\wedge\forall y^s\left(\langle m,y\rangle\in a \equiv 
\exists u^0\left(\langle j,u\rangle\in a\wedge y=\left\lbrace S(u) \right\rbrace^s \right)\right)\right].\]
Denote $+$ as $\theta_8$ and $\cdot$ as $\theta_9$. For $p=8,9$,
\begin{multline*}
C_{k,p}\leftrightharpoons 
\exists j,i\left[m=\llcorner Z_j\theta_p Z_i  \lrcorner \right.
\\
\left.
\wedge\forall y^s\left(\langle m,y\rangle\in a \equiv \exists u^0,v^0\left(\langle j,u\rangle\in a\wedge \langle i,v\rangle\in a\wedge y=\left\lbrace u\theta_p v \right\rbrace^s \right)\right)\right].
\end{multline*}
\begin{multline*}
C_{k,10}\leftrightharpoons \exists j\left\lbrace m=\llcorner N^k(Z_j)\lrcorner 
\right.
\\
\left.
\wedge\forall y^s\left[\langle m,y\rangle\in a \equiv \exists u^k
\left(\langle j,u\rangle\in a\wedge y=\left\lbrace \hat{N}^k(u) \right\rbrace^{s-k} \right)
 \right] \right\rbrace.
\end{multline*}
\begin{multline*}
C_{k,11}\leftrightharpoons \exists j,i\left\lbrace m=\llcorner Ap^{k+1}(Z_j,Z_i)\lrcorner\wedge\forall y^s\left[\langle m,y\rangle\in a \right.
\right.
\\
\left.
\left.
\equiv \exists w^{k+1}, v^0,u^{k}
\left(\langle j,w\rangle\in a\wedge 
\langle i,v\rangle\in a
\right.
\right.
\right.
\\
\left.
\left.
\left.
\wedge
\langle\langle\bar{\alpha}(k+1),v\rangle,u\rangle\in w
\wedge y=\lbrace u\rbrace^{s-k}\right)\right]\right\rbrace .
\end{multline*}

Denote 
\[a\mid\mid_m=\left\lbrace \langle j,y^s\rangle\in a\mid \bigvee_{p=0}^s Subfnl_p(j,m)\right\rbrace .\]

\begin{lemma}
The following formulas are derived in $TI_{s+1}$.
\begin{enumerate}
\item $Subfnl_k(m,n)\wedge Fnterm(\alpha,n,\bar{f},a^{s+1})\supset Fnterm(\alpha,m,\bar{f},a\mid\mid_m), 
\\
k=0,1,\ldots,s$.
\medskip
\item $Fnterm(\alpha,n,\bar{f},a^{s+1})\wedge Fnterm(\alpha,n,\bar{f},b^{s+1})\supset a=b$.
\medskip
\item $Fnl_k(n)\wedge\alpha\in M\wedge Evl(\bar{f})\supset\exists ! a^{s+1} Fnterm(\alpha,n,\bar{f},a)$, $k=0,1,\ldots,s$.
\end{enumerate}
\label{lemma:frc2}
\end{lemma}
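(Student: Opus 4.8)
The plan is to follow the proof of Lemma~\ref{lemma:term2}, the analogous statement for $Arterm$, carrying all three parts through by the same inductive arguments. The only genuinely new feature is the application clause $C_{k,11}$ (and, to a lesser extent, the higher-type successor clause $C_{k,10}$): these are the only clauses that mention the node $\alpha$ and that read a value out of a higher-type object, so they need a little extra care, while every other clause is handled exactly as its arithmetic counterpart $B_1,\ldots,B_5$. Note also that for $k=s$ there is no application clause, since $L_s$ has no $(s+1)$-functionals, so no case analysis is needed there beyond $C_{s,1},\ldots,C_{s,10}$.

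For part~1 I would assume $Subfnl_k(m,n)$ and $Fnterm(\alpha,n,\bar{f},a)$, and note that for every $j$ with $\bigvee_{p=0}^{s}Subfnl_p(j,m)$ one has, by transitivity of the sub-functional relation, both $C_p(j,a)$ (from the last conjunct of $Fnterm$) and $\forall y^s\,(\langle j,y\rangle\in a\equiv\langle j,y\rangle\in a\mid\mid_m)$. If $Z_j$ is a constant or a variable then $C_p(j,a\mid\mid_m)$ is immediate from $C_p(j,a)$. If $Z_j$ has proper sub-expressions — the cases $S(Z_q)$, $Z_q\theta_p Z_r$, $N^k(Z_q)$, $Ap^{k+1}(Z_q,Z_r)$ — the defining equivalence for $\langle j,y\rangle\in a$ refers only to entries $\langle q,\cdot\rangle,\langle r,\cdot\rangle$ of $a$ (in the application case together with the $(k+1)$-functional $w$ and $\bar{\alpha}(k+1)$), all of which are preserved when $a$ is replaced by $a\mid\mid_m$; hence $C_p(j,a\mid\mid_m)$ holds and $Fnterm(\alpha,m,\bar{f},a\mid\mid_m)$ follows, its remaining conjuncts being trivial.

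For part~2 I would prove by course-of-values induction on $m$ that $Subfnl_p(m,n)\supset\forall y^s\,(\langle m,y\rangle\in a\equiv\langle m,y\rangle\in b)$ for every $p$; the base cases are the atomic clauses $C_{p,1},\ldots,C_{p,6}$, and in each recursive case $C_p(m,\cdot)$ determines the $m$-entries uniquely from strictly smaller entries, which agree by the induction hypothesis — in the $Ap^{k+1}$ case using additionally that the value $w$ of $Z_q$ at $\alpha$ is a genuine partial function, so that $u$ with $\langle\langle\bar{\alpha}(k+1),v\rangle,u\rangle\in w$ is unique. For part~3 I would induct on $n$: for constants and variables $a$ is built explicitly from $0$, $\widehat{K}^k$ or the relevant $Val_k(f_k,\cdot)$; for a compound $Z_n$ take, by the induction hypothesis, the unique sets $c$ (and $d$) with $Fnterm(\alpha,q,\bar{f},c)$ (and $Fnterm(\alpha,r,\bar{f},d)$) for the immediate sub-expressions, set $a=c\cup d\cup\lbrace\langle n,y\rangle:\ldots\rbrace$ with the new $n$-entries dictated by the relevant clause, and verify $Fnterm(\alpha,n,\bar{f},a)$ as in the proof of Lemma~\ref{lemma:term2}.3 — first showing $Subfnl_p(p',q)\supset\forall y^s\,(\langle p',y\rangle\in a\equiv\langle p',y\rangle\in c)$ via parts~1 and~2, then checking $C_k(m,a)$ for every $m$ with $Subfnl(m,n)$. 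Crucially, in the application case there need be no $n$-entry at all: if no $w,v,u$ as in $C_{k,11}$ exist (i.e.\ the sub-functional $Z_q$ is undefined at $\alpha$), then $a$ contains no pair $\langle n,\cdot\rangle$, which is precisely what $C_{k,11}$ then demands — so existence needs no totality assumption on the functionals.

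The main obstacle is the bookkeeping around $C_{k,11}$: since it refers to $\alpha$ through $\bar{\alpha}(k+1)$ and extracts a value from the higher-type object $w$, one must use that $w\in a_{k+1}$ is a genuine monotonic partial function to get uniqueness in part~2, and one must keep the "missing value" case — a sub-functional undefined at the current node — handled consistently in all three parts. Everything else is a routine transcription of the $Arterm$ argument, now with six atomic and five recursive clauses in place of two and three.
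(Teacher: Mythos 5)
Your proposal is correct and follows essentially the same route as the paper: part 1 by restricting $a$ to $a\mid\mid_m$ and re-checking each clause $C_p(j,\cdot)$, part 2 by course-of-values induction on $m$ with the $Ap^{k+1}$ clause as the only non-trivial case, and part 3 by induction on $n$ with the explicit construction $a=c\cup d\cup\{\langle n,\cdot\rangle\mid\ldots\}$ verified exactly as in Lemma \ref{lemma:term2}.3. Your extra observations (no application clause at level $s$, the "undefined at $\alpha$" case yielding no $n$-entry) are consistent with, and implicit in, the paper's argument.
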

\begin{proof}
1. Assume the premises. For any $i$ with $Subfnl_p(i,m),p=0,1,\ldots,s$, we have:
\begin{equation}
C_p(i,a) \text{ and }\forall y^s\left(\langle i,y\rangle\in a\equiv\langle i,y\rangle\in a\mid\mid_m \right).
\label{eq:force00}
\end{equation}

To prove $Fnterm(\alpha,m,\bar{f},a\mid\mid_m)$, it is sufficient to show for any $j$:
\begin{equation}
Subfnl_p(j,m)\supset C_p(j,a\mid\mid_m).
\label{eq:force01}
\end{equation}

When $Z_j$ is a constant or a variable, (\ref{eq:force01}) follows from $C_p(j,a)$. 

Suppose $j=\llcorner Ap^{p+1}(Z_q,Z_r)\lrcorner$. Since $C_p(j,a)$, we have for any $y^s$:
\begin{multline*}
\langle j,y\rangle\in a
\\
\equiv\exists w^{p+1}, v^0,u^p\left[ \langle q,w\rangle\in a\wedge\langle r,v\rangle\in a\wedge\langle\langle \bar{\alpha}(p+1),v\rangle,u\rangle\in w\wedge y=\lbrace u\rbrace^{s-p}\right] 
\end{multline*}
and by (\ref{eq:force00}):
\begin{multline*}
\langle j,y\rangle\in a\mid\mid_m
\equiv
\exists w^{p+1}, v^0,u^p\left[ \langle q,w\rangle\in a\mid\mid_m
\right.
\\
\left.
\wedge\langle r,v\rangle\in a\mid\mid_m\wedge\langle\langle \bar{\alpha}(p+1),v\rangle,u\rangle\in w\wedge y=\lbrace u\rbrace^{s-p}\right], 
\end{multline*}
which implies $C_p(j,a\mid\mid_m)$.

For other forms of $Z_j$ the proof is similar.

2. Assume $Fnterm(\alpha,n,\bar{f},a^{s+1})$ and $ Fnterm(\alpha,n,\bar{f},b^{s+1})$. Then $\alpha\in M$, $Evl(\bar{f})$ and $Fnl_q(n)$ for some $q=0,1,\dots,s$. It is sufficient to prove:
\begin{equation}
\forall m\left[\bigvee_{k=0}^s Subfnl_k(m,n)\supset \forall y^s(\langle m,y\rangle\in a\equiv\langle m,y\rangle\in b)\right].
\label{eq:force2}
\end{equation}

We will prove it by induction on $m$. The cases of constants and variables are obvious.

Assume (\ref{eq:force2}) holds for numbers less than $m$. We will consider the only non-trivial case $m=\llcorner Ap^{k+1}(Z_j, Z_i)\lrcorner$. By the inductive assumption, 
\[\forall y^s(\langle j,y\rangle\in a\equiv\langle j,y\rangle\in b)\text{ and }
\forall y^s(\langle i,y\rangle\in a\equiv\langle i,y\rangle\in b).\]
Therefore for any $y^s$,
\begin{multline*}
\langle m,y\rangle\in a
\\
\equiv
\exists w^{k+1}, v^0,u^{k}\left(\langle j,w\rangle\in a\wedge \langle i,v\rangle\in a
\wedge\langle\langle\bar{\alpha}(k+1),v\rangle,u\rangle\in w\wedge y=\lbrace u\rbrace^{s-k}\right)
\\
\equiv
\exists w^{k+1}, v^0,u^{k}\left(\langle j,w\rangle\in b\wedge \langle i,v\rangle\in b
\wedge\langle\langle\bar{\alpha}(k+1),v\rangle,u\rangle\in w\wedge y=\lbrace u\rbrace^{s-k}\right)
\\
\equiv\langle m,y\rangle\in b.
\end{multline*}

3. The uniqueness was proven in part 2. We prove the existence by induction on $n$.

1) If $n=\llcorner 0\lrcorner$, we take $a^{s+1}=\lbrace \langle n,0\rangle\rbrace$. 
\smallskip

2) If $n=\llcorner K^k\lrcorner$, we take $a^{s+1}=\lbrace \langle n,\hat{K}^k\rangle\rbrace$. 
\smallskip

3) If $n=\llcorner x_i\lrcorner$, we take $a^{s+1}=\lbrace \langle n,Val_0(f_0,i)\rangle\rbrace$.
\smallskip

4) If $n=\llcorner F_i^k\lrcorner$, we take $a^{s+1}=\lbrace \langle n,Val_k(f_k,3i)\rangle\rbrace$.  
\smallskip

5) If $n=\llcorner A_i^k\lrcorner$, we take $a^{s+1}=\lbrace \langle n,Val_k(f_k,3i+1)\rangle\rbrace$.  
\smallskip

6) If $n=\llcorner \mathcal{F}_i^k\lrcorner$, we take $a^{s+1}=\lbrace \langle n,Val_k(f_k,3i+2)\rangle\rbrace$.  
\smallskip

In each case $Fnterm(\alpha,n,\bar{f},a)$ is obvious.

Assume the existence for numbers less than $n$. 

7) $n=\llcorner Ap^{k+1}(Z_j, Z_i)\lrcorner$. 

By the inductive assumption, there exist sets $c^{s+1}$ and $d^{s+1}$ such that 
\[Fnterm(\alpha,j,\bar{f},c)\wedge Fnterm(\alpha,i,\bar{f},d).\] 

We take $a^{s+1}=$
\[=c\cup d\cup\lbrace \langle n,u^{k}\rangle \mid\exists w^{k+1},v^0
\left(\langle j,w\rangle\in c\wedge 
\langle i,v\rangle\in d\wedge
\langle\langle\bar{\alpha}(k+1),v\rangle,u\rangle\in w
\right)\rbrace\] 
and prove $Fnterm(\alpha,n,\bar{f},a)$.

First we prove:
\begin{equation}
\forall p\left[Subfnl_q(p,j)\supset\forall u^q\left(
\langle p,u\rangle\in a\equiv\langle p,u\rangle\in c \right) \right],q=0,1,\ldots,s.
\label{eq:force3a}
\end{equation}

Assume $Subfnl_q(p,j)$. If $\neg Subfnl_q(p,i)$, then the conclusion is obvious. If $Subfnl_q(p,i)$, then  $Fnterm(\alpha,p,\bar{f},c\mid\mid_p)$ and $Fnterm(\alpha,p,\bar{f},d\mid\mid_p)$ by part 1, $c\mid\mid_p=d\mid\mid_p$ by part 2 and for any $u^q$:
\[\langle p,u\rangle\in a\equiv\langle p,u\rangle\in c\mid\mid_p\cup d\mid\mid_p\equiv\langle p,u\rangle\in c.\]

(\ref{eq:force3a}) is proven. Similarly,
\begin{equation}
\forall p\left[Subfnl_q(p,i)\supset\forall u^q \left(
\langle p,u\rangle\in a\equiv\langle p,u\rangle\in d \right) \right],q=0,1,\ldots,s.
\label{eq:force3b}
\end{equation}

To prove $Fnterm(\alpha,n,\bar{f},a)$ , it is sufficient to show:
\begin{equation}
\forall m \bigwedge_{q=0}^s\left[Subfnl_q(m,n)\supset C_q(m,a)\right].
\label{eq:force0}
\end{equation}

Assume $Subfnl_q(m,n)$.

i) $m=n$.

Then $q=k$. Applying (\ref{eq:force3a}) to $p=j$ and (\ref{eq:force3b}) to $p=i$, we get for any $y^s$:
\begin{multline*}
\langle n,y\rangle\in a
\\
\equiv
\exists w^{k+1}, v^0,u^{k}
\left(\langle j,w\rangle\in c\wedge 
\langle i,v\rangle\in d
\wedge
\langle\langle\bar{\alpha}(k+1),v\rangle,u\rangle\in w
\wedge y=\lbrace u\rbrace^{s-k}\right)
\\
\equiv
\exists w^{k+1}, v^0,u^{k}
\left(\langle j,w\rangle\in a\wedge 
\langle i,v\rangle\in a
\wedge
\langle\langle\bar{\alpha}(k+1),v\rangle,u\rangle\in w
\wedge y=\lbrace u\rbrace^{s-k}\right)
\end{multline*}
which implies $C_k(n,a)$.

ii) $m<n$.

Then $Subfnl_q(m,j)\vee Subfnl_q(m,i)$. We can assume $Subfnl_q(m,j)$ without loss of generality. Applying (\ref{eq:force3a}) to each possible form of $Z_m$, we get $C_q(m,a)$ due to $Fnterm(\alpha,j,\bar{f},c)$. 

For example, if $Z_m=N^q(Z_r)$, by (\ref{eq:force3a}) we have for any $y^s$:
\begin{multline*}
\langle m,y\rangle\in a\equiv\langle m,y\rangle\in c\equiv\exists u^q\left(\langle r,u\rangle\in c\wedge y=\lbrace \hat{N}^q(u)\rbrace^{s-q}\right)
\\
\equiv\exists u^q\left(\langle r,u\rangle\in a\wedge y=\lbrace \hat{N}^q(u)\rbrace^{s-q}\right),
\end{multline*}
which implies $C_q(m,a)$.

This completes the proof for case 7). The cases of other functional symbols are similar.
\end{proof}

\begin{lemma}
Suppose $k=0,1,\ldots,s$. It is derived in $TI_{s+1}$ that the formula 
\[Subfnl_k(m,n)\wedge Fnterm(\alpha,n,\bar{f},a^{s+1})\]
implies each of the following formulas:
\begin{enumerate}
\item $m=\llcorner 0\lrcorner\supset\forall y^s\left(\langle m,y\rangle\in a \equiv y=\left\lbrace 0 \right\rbrace^s \right)$.
\medskip
\item $m=\llcorner K^k\lrcorner\supset\forall y^s\left(\langle m,y\rangle\in a \equiv y=\left\lbrace \hat{K}^k \right\rbrace^{s-k} \right)$.
\medskip
\item
$m=\llcorner x_i\lrcorner \supset\forall y^s\left(\langle m,y\rangle\in a \equiv y=\left\lbrace Val_0(f_0,i) \right\rbrace^s \right)$.
\medskip
\item
$m=\llcorner F^k_i\lrcorner\supset\forall y^s\left(\langle m,y\rangle\in a \equiv y=\left\lbrace Val_k(f_k,3i) \right\rbrace^{s-k} \right)$.
\medskip
\item
$m=\llcorner A^k_i\lrcorner \wedge\forall y^s\left(\langle m,y\rangle\in a \equiv y=\left\lbrace Val_k(f_k,3i+1) \right\rbrace^{s-k} \right)$.
\medskip
\item
$m=\llcorner \mathcal{F}^k_i\lrcorner\supset\forall y^s\left(\langle m,y\rangle\in a \equiv y=\left\lbrace Val_k(f_k,3i+2) \right\rbrace^{s-k} \right)$.
\medskip
\item
$m=\llcorner S(Z_j)\lrcorner
\supset\forall y^s\left(\langle m,y\rangle\in a \equiv 
\exists u^0\left(\langle j,u\rangle\in a\wedge y=\left\lbrace S(u) \right\rbrace^s \right)\right)$.
\medskip
\item
$m=\llcorner Z_j\theta Z_i  \lrcorner 
\supset\forall y^s\left(\langle m,y\rangle\in a 
\right.
\medskip
\\
\left.
\equiv \exists u^0,v^0\left(\langle j,u\rangle\in a\wedge \langle i,v\rangle\in a\wedge y=\left\lbrace u\theta v \right\rbrace^s \right)\right)$,
where $\theta$ denotes $\cdot$ or $+$.
\medskip
\item
$m=\llcorner N^k(Z_j)\lrcorner
\\ 
\supset\forall y^s\left[\langle m,y\rangle\in a \equiv \exists u^k
\left(\langle j,u\rangle\in a\wedge y=\left\lbrace \hat{N}^k(u) \right\rbrace^{s-k} \right)
 \right]$.
\medskip
\item
$m=\llcorner Ap^{k+1}(Z_j,Z_i)\lrcorner\supset\forall y^s\left[\langle m,y\rangle\in a 
\equiv \exists w^{k+1}, v^0,u^{k}
\left(\langle j,w\rangle\in a
\right.
\right.
\smallskip
\\
\left.
\left.
\wedge 
\langle i,v\rangle\in a
\wedge
\langle\langle\bar{\alpha}(k+1),v\rangle,u\rangle\in w
\wedge y=\lbrace u\rbrace^{s-k}\right)\right]$.
\end{enumerate}
\label{lemma:frc3}
\end{lemma}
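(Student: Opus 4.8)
The plan is to treat this exactly as the analogues Lemma~\ref{lemma:term3} and Lemma~\ref{lemma:trset9} were treated: all ten implications are read off directly from the definition of the predicate $Fnterm$, so essentially no work remains beyond unwinding that definition inside $TI_{s+1}$. First I would assume the common hypothesis $Subfnl_k(m,n)\wedge Fnterm(\alpha,n,\bar f,a^{s+1})$ and peel off the last conjunct in the definition of $Fnterm$, namely $\forall m\,\bigwedge_{p=0}^{s}[\,Subfnl_p(m,n)\supset C_p(m,a)\,]$. Instantiating the quantified $m$ with the given $m$ and the type index with the given $k$ yields $C_k(m,a)$, that is, the disjunction $\bigvee_r C_{k,r}(m,a)$ (running to $10$ when $k=s$ and to $11$ when $k<s$).

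Next I would exploit the fact that each disjunct $C_{k,r}$ begins by pinning down the syntactic shape of $Z_m$ --- $m=\llcorner 0\lrcorner$, $m=\llcorner K^k\lrcorner$, $m=\llcorner x_i\lrcorner$, $m=\llcorner S(Z_j)\lrcorner$, $m=\llcorner Ap^{k+1}(Z_j,Z_i)\lrcorner$, and so on --- and that these shapes are pairwise incompatible, which is a purely arithmetic (unique-readability) fact provable already in $PA$ and hence in $TI_{s+1}$, once the G\"odel numbering of $SLP_s$ is fixed. So under the extra hypothesis of any single part, say $m=\llcorner N^k(Z_j)\lrcorner$ in part~9, all but the matching disjunct $C_{k,9}$ are refuted, leaving $C_{k,9}(m,a)$, which is verbatim the conclusion of part~9. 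The same bookkeeping discharges the other nine parts; parts $2,4,5,6,9$ are vacuous unless $k\geqslant 1$, and part~10 is vacuous unless $k\leqslant s-1$ (otherwise $Ap^{k+1}$ is not a functional symbol of $L_s$), so the omission of an application clause in $C_s(m,a)\,{\leftrightharpoons}\,\bigvee_{r=1}^{10}C_{s,r}$ causes no mismatch.

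I do not expect a real obstacle here: the only point needing even token care is the appeal to unique readability of codes --- that $TI_{s+1}$ proves no number is simultaneously $\llcorner S(Z_j)\lrcorner$ and $\llcorner N^k(Z_j)\lrcorner$, etc. --- and that is entirely routine and was already invoked implicitly for the terms of $TI_s$ in Section~13. Consequently the proof can be recorded in a single line, in the same style as its predecessors: ``Follows from the definition of $Fnterm$.''
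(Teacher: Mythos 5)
Your proposal is correct and matches the paper exactly: the paper's proof of this lemma is the one-line ``Follows from the definition of $Fnterm$,'' and your unwinding of that definition (instantiating the universal clause to obtain $C_k(m,a)$ and using unique readability of the G\"odel codes to isolate the matching disjunct) is precisely the routine bookkeeping that one-liner elides.
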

\begin{proof}
Follows from the definition of $Fnterm$.
\end{proof}

Next we define a functional symbol $Fnt(\alpha,\bar{f})$:
\[Fnt^{s+1}(\alpha,\bar{f})=\left\lbrace \langle n,y^s\rangle \mid \exists a^{s+1}(Fnterm(\alpha,n,\bar{f},a)\wedge\langle n,y\rangle\in a)\right\rbrace.\]

\begin{lemma}
The following formulas are derived in $TI_{s+1}$.
\begin{multline*}
1. Subfnl_k(m,n)\wedge Fnterm(\alpha,n,\bar{f},a^{s+1})
\\
\supset\forall y^s\left[ \langle m,y\rangle\in a
\equiv\langle m,y\rangle\in Fnt(\alpha,\bar{f})\right],
k=0,1,\dots,s.
\end{multline*}
\[2. \langle n,y^s\rangle\in Fnt(\alpha,\bar{f})\supset \bigvee_{k=0}^s\left[Fnl_k(n)\wedge \exists u^k\left(y=\lbrace u\rbrace^{s-k}\right) \right].\]
\label{lemma:frc4}
\end{lemma}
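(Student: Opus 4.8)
The plan is to derive both parts inside $TI_{s+1}$ by exactly the method used for Lemma~\ref{lemma:term4}, taking the restriction, uniqueness and existence facts of Lemma~\ref{lemma:frc2} as the only inputs; no fresh induction is needed.

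For part~1, I would fix $k\leqslant s$, assume $Subfnl_k(m,n)\wedge Fnterm(\alpha,n,\bar{f},a^{s+1})$, and first invoke Lemma~\ref{lemma:frc2}.1 to obtain $Fnterm(\alpha,m,\bar{f},a\mid\mid_m)$. The key elementary observation is that $Subfnl_k(m,m)$ holds, since a term or $k$-functional is a part of itself; together with $a\mid\mid_m\subseteq a$ this gives, for every $y^s$, the equivalence $\langle m,y\rangle\in a\Leftrightarrow\langle m,y\rangle\in a\mid\mid_m$. Then for fixed $y^s$: if $\langle m,y\rangle\in a$ then $\langle m,y\rangle\in a\mid\mid_m$, and since $Fnterm(\alpha,m,\bar{f},a\mid\mid_m)$ the definition of $Fnt$ yields $\langle m,y\rangle\in Fnt(\alpha,\bar{f})$; conversely, if $\langle m,y\rangle\in Fnt(\alpha,\bar{f})$ then some $b^{s+1}$ satisfies $Fnterm(\alpha,m,\bar{f},b)$ with $\langle m,y\rangle\in b$, whence $b=a\mid\mid_m$ by the uniqueness in Lemma~\ref{lemma:frc2}.2, so $\langle m,y\rangle\in a\mid\mid_m\subseteq a$. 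This establishes the stated equivalence.

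For part~2, I would unwind the definition of $Fnt$: $\langle n,y^s\rangle\in Fnt(\alpha,\bar{f})$ gives a set $a^{s+1}$ with $Fnterm(\alpha,n,\bar{f},a)$ and $\langle n,y\rangle\in a$. Applying the clause in the definition of $Fnterm$ that constrains the members of $a$ to the element $x=\langle n,y\rangle$ produces some $m$ and some $k\leqslant s$ with $Subfnl_k(m,n)$ and $\langle n,y\rangle=\langle m,\lbrace u\rbrace^{s-k}\rangle$ for some $u^k$; injectivity of the coding forces $m=n$ and $y=\lbrace u\rbrace^{s-k}$. Since $Subfnl_k(n,n)$ is just $Fnl_k(n)$, the disjunction $\bigvee_{k=0}^s[\,Fnl_k(n)\wedge\exists u^k(y=\lbrace u\rbrace^{s-k})\,]$ follows, which is the desired conclusion.

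The content here is minimal; the only thing to watch is the bookkeeping with the heterogeneous pair and sequence codes and the type superscripts --- keeping $y$ of type $s$ aligned with the type-$k$ object $u$ it encodes via $\lbrace u\rbrace^{s-k}$, as fixed by the clauses $C_{k,r}$ in the definition of $Fnterm$. This is a routine verification rather than a genuine obstacle, and both parts reduce, as in Lemma~\ref{lemma:term4}, to a single application of Lemma~\ref{lemma:frc2}.
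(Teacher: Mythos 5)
Your proposal is correct and follows essentially the same route as the paper: part 1 is Lemma \ref{lemma:frc2}.1 plus the definition of $Fnt$ in one direction and Lemma \ref{lemma:frc2}.2 (uniqueness) in the other, with the observation $\langle m,y\rangle\in a\equiv\langle m,y\rangle\in a\mid\mid_m$ left implicit in the paper; part 2 is exactly the paper's one-line unwinding of the definition of $Fnterm$, which you merely spell out in more detail.
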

\begin{proof}
1. Assume the premises. By Lemma \ref{lemma:frc2}.1, $Fnterm(\alpha,m,\bar{f},a\mid\mid_m)$. Fix $y^s$.

$\Rightarrow$ If $\langle m,y\rangle\in a$, then $\langle m,y\rangle\in a\mid\mid_m$ and $\langle m,y\rangle\in Fnt(\alpha,\bar{f})$ by the definition of $Fnt$.

$\Leftarrow$ Suppose $\langle m,y\rangle\in Fnt(\alpha,\bar{f})$. Then there is $b^{s+1}$ with $Fnterm(\alpha,m,\bar{f},b)$ and $\langle m,y\rangle\in b$. By Lemma \ref{lemma:frc2}.2, $b=a\mid\mid_m$. So $\langle m,y\rangle\in a\mid\mid_m$ and $\langle m,y\rangle\in a$.

2. Suppose $\langle n,y^s\rangle\in Fnt(\alpha,\bar{f})$. Then for some $a^{s+1}$, $Fnterm(\alpha,n,\bar{f},a)$ and $\langle n,y\rangle\in a$. The conclusion follows from the definition of $Fnterm$.
\end{proof}

\begin{lemma}
It is derived in $TI_{s+1}$ that the formula 
\[\alpha\in M\wedge Evl(\bar{f})\]
implies each of the following formulas.
\begin{enumerate}
\item $\langle\llcorner 0\lrcorner,y^s\rangle\in Fnt(\alpha,\bar{f})\equiv y=\left\lbrace 0 \right\rbrace^s$.
\medskip
\item $\langle\llcorner K^k\lrcorner,y^s\rangle\in Fnt(\alpha,\bar{f}) \equiv y=\left\lbrace \hat{K}^k \right\rbrace^{s-k}$, $k=1,2,\ldots,s$.
\medskip
\item
$\langle \llcorner x_i\lrcorner ,y^s\rangle\in Fnt(\alpha,\bar{f}) \equiv y=\left\lbrace Val_0(f_0,i) \right\rbrace^s$.
\medskip
\item
$\langle \llcorner F^k_i\lrcorner,y^s\rangle\in Fnt(\alpha,\bar{f}) \equiv y=\left\lbrace Val_k(f_k,3i) \right\rbrace^{s-k}$, $k=1,2,\ldots,s$.
\medskip
\item
$\langle \llcorner A^k_i\lrcorner,y^s\rangle\in Fnt(\alpha,\bar{f}) \equiv y=\left\lbrace Val_k(f_k,3i+1) \right\rbrace^{s-k}$, $k=1,2,\ldots,s$.
\medskip
\item
$\langle \llcorner \mathcal{F}^k_i\lrcorner,y^s\rangle\in Fnt(\alpha,\bar{f}) \equiv y=\left\lbrace Val_k(f_k,3i+2) \right\rbrace^{s-k}$, $k=1,2,\ldots,s$.
\medskip
\item
$\langle \llcorner S(Z_j)\lrcorner,y^s\rangle\in Fnt(\alpha,\bar{f}) \equiv 
\exists u^0\left(\langle j,u\rangle\in Fnt(\alpha,\bar{f})\wedge y=\left\lbrace S(u) \right\rbrace^s \right)$.
\medskip
\item
$\langle \llcorner Z_j\theta Z_i  \lrcorner,y^s\rangle\in Fnt(\alpha,\bar{f}) 
\equiv \exists u^0,v^0\left(\langle j,u\rangle\in Fnt(\alpha,\bar{f})
\right.
\smallskip
\\
\left.
\wedge \langle i,v\rangle\in Fnt(\alpha,\bar{f})
\wedge y=\left\lbrace u\theta v \right\rbrace^s \right)$,
where $\theta$ denotes $\cdot$ or $+$.
\medskip
\item
$\langle \llcorner N^k(Z_j)\lrcorner,y^s\rangle\in Fnt(\alpha,\bar{f}) 
\\
\equiv \exists u^k
\left(\langle j,u\rangle\in Fnt(\alpha,\bar{f})\wedge y=\left\lbrace \hat{N}^k(u) \right\rbrace^{s-k} \right)$, $k=1,2,\ldots,s$.
\medskip
\item
$\langle \llcorner Ap^{k+1}(Z_j,Z_i)\lrcorner,y^s\rangle\in Fnt(\alpha,\bar{f}) 
\equiv \exists w^{k+1}, v^0,u^{k}
\left(\langle j,w\rangle\in Fnt(\alpha,\bar{f})
\right.
\smallskip
\\
\left.
\wedge 
\langle i,v\rangle\in Fnt(\alpha,\bar{f})
\wedge
\langle\langle\bar{\alpha}(k+1),v\rangle,u\rangle\in w
\wedge y=\lbrace u\rbrace^{s-k}\right), 
\\
k=0,1,\ldots,s-1$.
\end{enumerate}
\label{lemma:frc5}
\end{lemma}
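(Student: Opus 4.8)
The plan is to follow the same pattern as the proof of Lemma \ref{lemma:term5}, with the set $Art(f)$ there replaced by the functional $Fnt(\alpha,\bar{f})$ and with extra clauses for $k$-functionals. First I would fix, inside $TI_{s+1}$, an arbitrary G\"odel number $n$ with $\bigvee_{k=0}^{s}Fnl_k(n)$ — in each of the ten cases $Z_n$ has precisely the form displayed — and assume $\alpha\in M\wedge Evl(\bar{f})$. By Lemma \ref{lemma:frc2}.3 there is a unique $a^{s+1}$ with $Fnterm(\alpha,n,\bar{f},a)$, and by Lemma \ref{lemma:frc4}.1 one gets
\[\forall m\,\forall y^s\left[\bigvee_{k=0}^{s}Subfnl_k(m,n)\supset\left(\langle m,y\rangle\in a\equiv\langle m,y\rangle\in Fnt(\alpha,\bar{f})\right)\right].\]
The key step is then to read off the clauses $C_{k,1}$–$C_{k,11}$ from the definition of $Fnterm$ — that is, to invoke Lemma \ref{lemma:frc3} — and to rewrite each occurrence of $\langle m,y\rangle\in a$ as $\langle m,y\rangle\in Fnt(\alpha,\bar{f})$ via the displayed equivalence, which applies because $Subfnl_k(n,n)$ holds for the appropriate $k$ and every immediate sub-term or sub-functional of $Z_n$ is again a sub-term or sub-functional of $Z_n$.

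Carrying this out case by case: for parts 1–6 the conclusion would be immediate from Lemma \ref{lemma:frc3}.1–6 applied at $m=n$. For parts 7–9 I would apply the displayed equivalence at $m=n$ and at the index of each immediate sub-term/sub-functional of $Z_n$, so that the right-hand sides of Lemma \ref{lemma:frc3}.7–9 (which mention $\langle j,u\rangle\in a$, resp. $\langle i,v\rangle\in a$) become the required assertions about $Fnt(\alpha,\bar{f})$; here nothing further is needed, since $Val_0,Val_k,\hat{K}^k,\hat{N}^k$ are already functional symbols of $TI_{s+1}$. Part 10 would be treated the same way, applying the equivalence at $m=n$, at the index $j$ of the functional argument, and at the index $i$ of the numerical argument of $Ap^{k+1}(Z_j,Z_i)$, so that the existential right-hand side of Lemma \ref{lemma:frc3}.10 — asserting the existence of $w^{k+1},v^0,u^k$ with $\langle j,w\rangle\in a$, $\langle i,v\rangle\in a$, $\langle\langle\bar{\alpha}(k+1),v\rangle,u\rangle\in w$ and $y=\lbrace u\rbrace^{s-k}$ — is converted into the statement of part 10 with $a$ replaced by $Fnt(\alpha,\bar{f})$.

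The main obstacle, such as it is, lies in part 10 (and, to a lesser extent, in part 8): I must make sure that the indices $j$ and $i$ are genuinely indices of a sub-functional and a sub-term of $Z_n$, so that Lemma \ref{lemma:frc4}.1 can be invoked for all three of $m=n$, $m=j$ and $m=i$ simultaneously. This is built into the definition of $Subfnl_k$, since $Z_j$ and $Z_i$ occur in $Z_n$, so the obstacle dissolves once one is careful about it, and no genuinely new idea beyond the proof of Lemma \ref{lemma:term5} is required.
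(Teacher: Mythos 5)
Your proposal is correct and follows essentially the same route as the paper's proof: fix $a$ via Lemma \ref{lemma:frc2}.3, transfer membership between $a$ and $Fnt(\alpha,\bar{f})$ via Lemma \ref{lemma:frc4}.1 applied at $n$ and at the indices of the immediate sub-terms and sub-functionals, and then read off each clause from Lemma \ref{lemma:frc3}. The point you flag about $j$ and $i$ being genuine sub-functional/sub-term indices is handled in the paper exactly as you resolve it, so there is no gap.
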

\begin{proof}
Assume $\alpha\in M\wedge Evl(\bar{f})$. By Lemma \ref{lemma:frc2}.3, for any $n$ with $Fnl_k(n)$, $k=0,1,\ldots,s$, there is a set $a^{s+1}$ such that $Fnterm(\alpha,n,\bar{f},a)$, and by Lemma \ref{lemma:frc4}.1:
\begin{equation}
\forall m\left[Subfnl_q(m,n)\supset
\forall y^s\left( \langle m,y\rangle\in a
\equiv\langle m,y\rangle\in Fnt(\alpha,\bar{f})\right)\right], k=0,1,\ldots,s.
\label{eq:frc1}
\end{equation}

1. Denote $n=\llcorner 0\lrcorner$. By (\ref{eq:frc1}), we have for any $y^s$:
\[\langle n,y\rangle\in Fnt(\alpha,\bar{f})\equiv\langle n,y\rangle\in a\equiv y=\lbrace 0\rbrace^s
\]
by Lemma \ref{lemma:frc3}.1.

Part 2 is proven similarly. Parts 3, 5 and 6 are similar to part 4, which we will consider next.

4. Denote $n=\llcorner F_i^k\lrcorner$. By (\ref{eq:frc1}), we have for any $y^s$:
\[\langle n,y\rangle\in Fnt(\alpha,\bar{f})\equiv\langle n,y\rangle\in a\equiv y=\left\lbrace Val_k(f_k,3i) \right\rbrace^{s-k}\]
by Lemma \ref{lemma:frc3}.4.

Parts 7-9 are similar to part 10, which we will consider next.

10. Denote $n=\llcorner Ap^{k+1}(Z_j,Z_i)\lrcorner$. By (\ref{eq:frc1}):
\begin{eqnarray*}
\forall y^s\left( \langle n,y\rangle\in Fnt(\alpha,\bar{f})\equiv \langle n,y\rangle\in a\right);
\\
\forall y^s\left( \langle j,y\rangle\in Fnt(\alpha,\bar{f})\equiv \langle j,y\rangle\in a\right);
\\
\forall y^s\left( \langle i,y\rangle\in Fnt(\alpha,\bar{f})\equiv \langle i,y\rangle\in a\right).
\end{eqnarray*}

By Lemma \ref{lemma:frc3}.10, for any $y^s$:
\begin{multline*}
\langle n,y\rangle\in Fnt(\alpha,\bar{f})\equiv\langle n,y\rangle\in a
\\
\equiv \exists w^{k+1}, v^0,u^{k}
\left(\langle j,w\rangle\in a
\wedge \langle i,v\rangle\in a\wedge
\langle\langle\bar{\alpha}(k+1),v\rangle,u\rangle\in w
\wedge y=\lbrace u\rbrace^{s-k}\right)
\\
\equiv \exists w^{k+1}, v^0,u^{k}
\left(\langle j,w\rangle\in Fnt(\alpha,\bar{f})
\wedge 
\langle i,v\rangle\in Fnt(\alpha,\bar{f})
\right.
\smallskip
\\
\left.
\wedge
\langle\langle\bar{\alpha}(k+1),v\rangle,u\rangle\in w
\wedge y=\lbrace u\rbrace^{s-k}\right).
\end{multline*}
\end{proof}

\begin{lemma}
The following formulas are derived in $TI_{s+1}$.
\begin{enumerate}
\item $Fnl_k(n)\wedge\langle n,u^k\rangle\in Fnt(\alpha,\bar{f})\supset u\in a_k.$
\medskip
\item $\langle n,u^k\rangle\in Fnt(\alpha,\bar{f})\wedge\langle n,v^k\rangle\in Fnt(\alpha,\bar{f})\supset u=v.$
\medskip
\item $\alpha,\beta\in M\wedge\beta\preceq\alpha\supset Fnt(\alpha,\bar{f})\subseteq Fnt(\beta,\bar{f})$.
\medskip
\item $\alpha\in M\wedge Fnl_k(n)\wedge Evl(\bar{f})\wedge$ "$S^{s+1}$ is a path in $M$ through $\alpha$" $
\smallskip
\\
\supset(\exists\beta\in S)\exists u^k[\langle n,u\rangle\in Fnt(\beta,\bar{f})].$
\end{enumerate}
\label{lemma:frc6}
\end{lemma}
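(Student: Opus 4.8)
The plan is to prove all four parts by course-of-values induction on the G\"odel number $n$ (so that the proper subterms and subfunctionals of $Z_n$, having smaller G\"odel numbers, are covered by the induction hypothesis), relying on the recursive characterisation of membership in $Fnt(\alpha,\bar f)$ supplied by Lemma \ref{lemma:frc5}. This is exactly the formalisation in $TI_{s+1}$ of the informal arguments behind Lemma \ref{lemma:ap_predicate} and Lemma \ref{lemma:term_int}.1, 2, 4, together with the monotonicity of the interpretations of terms and functionals. Note first that $\langle n,u^k\rangle\in Fnt(\alpha,\bar f)$ already entails $\alpha\in M$, $Evl(\bar f)$ and $Fnl_k(n)$ by unfolding the definition of $Fnterm$, so the clauses of Lemma \ref{lemma:frc5} are available in every case considered below.

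For part 1 I would argue by induction on $n$. In the base cases ($Z_n$ is $0$, $K^k$, or one of the variables $x_i$, $F_i^k$, $A_i^k$, $\mathcal F_i^k$) the stored value is $0$, $\widehat K^k$, $Val_0(f_0,i)$, $Val_k(f_k,3i)$, $Val_k(f_k,3i+1)$ or $Val_k(f_k,3i+2)$, and membership in $a_k$ follows from $Evl(\bar f)$ for the functional variables and from the definition of $\widehat K^k$. In the inductive cases the value of $Z_n$ is obtained from values of subterms or subfunctionals of smaller G\"odel number: via $S$, $+$, $\cdot$ it is a natural number, hence an element of $a_0$; via $N^k$ it is $\widehat N^k(u)=S^k(u)$ with $u\in a_k$ by the induction hypothesis, and $S^k$ maps $a_k$ into $a_k$; via $Ap^{k+1}(Z_j,Z_i)$ the value $u$ satisfies $\langle\langle\bar\alpha(k+1),v\rangle,u\rangle\in w$ with $w\in a_{k+1}$ by the induction hypothesis, so $u\in a_k$ because $w$ is by definition a partial function from $d_k\times\omega$ to $a_k$ — this is the formalised Lemma \ref{lemma:ap_predicate}.1.

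Parts 2 and 3 are likewise by induction on $n$. For part 2 one observes that in every clause of Lemma \ref{lemma:frc5} the stored value $y$ is uniquely determined by the values of the proper subterms and subfunctionals, which are unique by the induction hypothesis; in the $Ap^{k+1}$ clause one uses in addition that $w$ is a partial function, so $u$ is pinned down by $w$ and $\langle\bar\alpha(k+1),v\rangle$. For part 3 I would show $\langle m,y\rangle\in Fnt(\alpha,\bar f)\Rightarrow\langle m,y\rangle\in Fnt(\beta,\bar f)$ whenever $\beta\preceq\alpha$; every clause except $Ap^{k+1}$ is immediate from the induction hypothesis, and for $Ap^{k+1}$ one notes that $\beta\preceq\alpha$ gives $\bar\beta(k+1)\preceq_{k+1}\bar\alpha(k+1)$, whence monotonicity of $w$ (which is in $a_{k+1}$ by part 1) yields $\langle\langle\bar\beta(k+1),v\rangle,u\rangle\in w$ — the formalised Lemma \ref{lemma:ap_predicate}.2.

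Part 4 is the formalisation of Lemma \ref{lemma:term_int}.4 and is where I expect the real work to be. Proceeding by induction on $n$, the base cases are immediate from Lemma \ref{lemma:frc5}. In the case $Z_n=Ap^{k+1}(Z_j,Z_i)$ the induction hypothesis provides $\beta_1,\beta_2\in S$ with $\langle j,w\rangle\in Fnt(\beta_1,\bar f)$ and $\langle i,v\rangle\in Fnt(\beta_2,\bar f)$; part 1 gives $w\in a_{k+1}$ and $v\in\omega$. I would then form $S'=\{\bar\delta(k+1)\mid\delta\in S\}$, check that it is a path in $d_k$, invoke completeness of $w$ on $d_k$ to obtain $x\in S'$ with $w(x,v)\downarrow$, write $x=\bar\delta(k+1)$ for some $\delta\in S$, let $\beta$ be the $\preceq$-least of $\beta_1,\beta_2,\delta$, and use parts 2 and 3 together with monotonicity of $w$ to conclude $\langle n,u\rangle\in Fnt(\beta,\bar f)$ for $u=w(x,v)$. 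The points that need care inside $TI_{s+1}$ are: that $S'$ really is a path in $d_k$ (the projection $\bar{\cdot}(k+1)$ is order-preserving and its image on a path is cofinal), that the universal quantifier over paths in the definition of ``complete on $d_k$'' may be instantiated at the set $S'$, and that the $\preceq$-least of finitely many pairwise comparable elements of $M$ exists and lies on $S$. All of these are expressible by formulas of $TI_{s+1}$ and provable there, since the sets involved have type at most $s+1$ and fall within its comprehension.
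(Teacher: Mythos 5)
Your proposal is correct and follows essentially the same route as the paper's proof: induction on the G\"odel number $n$ using the recursive clauses of Lemma \ref{lemma:frc5}, with the $Ap^{k+1}$ case handled via membership in $a_{k+1}$ (part 1), functionality (part 2), monotonicity under $\beta\preceq\alpha$ (part 3), and completeness along the projected path $\{\bar\delta(k+1)\mid\delta\in S\}$ together with taking the $\preceq$-least of $\beta_1,\beta_2,\delta$ (part 4). The points you flag as needing care in the formalisation are exactly the ones the paper relies on implicitly, so there is nothing to add.
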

\begin{proof}
1. Proof is by induction on $n$. The cases of constants follow from Lemma \ref{lemma:frc5}.1, 2. The cases of variables follow from $Evl(\bar{f})$ and Lemma \ref{lemma:frc5}.3-6.

Assume the formula holds for numbers less than $n$. We will consider the only non-trivial case $n=\llcorner Ap^{k+1}(Z_j,Z_i)\lrcorner$. Suppose $\langle n,u^k\rangle\in Fnt(\alpha,\bar{f})$. Then by Lemma \ref{lemma:frc5}.10. there exist $w^{k+1}, v^0$ such that
\[\langle j,w\rangle\in Fnt(\alpha,\bar{f})
\wedge 
\langle i,v\rangle\in Fnt(\alpha,\bar{f})
\wedge
\langle\langle\bar{\alpha}(k+1),v\rangle,u\rangle\in w.\]

By the inductive assumption, $w\in a_{k+1}$ and by the definition of $a_{k+1}$, $u\in a_k$.

2. Proof is by induction on $n$. The cases of constants and variables follow from Lemma \ref{lemma:frc5}.1-6. 

Assume the formula holds for numbers less than $n$. We will consider the only non-trivial case $n=\llcorner Ap^{k+1}(Z_j,Z_i)\lrcorner$. 

\smallskip
Suppose $\langle n,u^k\rangle\in Fnt(\alpha,\bar{f})$ and $\langle n,v^k\rangle\in Fnt(\alpha,\bar{f})$. By Lemma \ref{lemma:frc5}.10, there exist $w_1^{k+1},r_1^0$ and $w_2^{k+1},r_2^0$ such that 
\begin{multline*}
\langle j,w_1\rangle\in Fnt(\alpha,\bar{f})\wedge\langle i,r_1\rangle\in Fnt(\alpha,\bar{f})\wedge\langle\langle \bar{\alpha}(k+1),r_1\rangle,u\rangle\in w_1
\\
\wedge
\langle j,w_2\rangle\in Fnt(\alpha,\bar{f})\wedge\langle i,r_2\rangle\in Fnt(\alpha,\bar{f})\wedge\langle\langle \bar{\alpha}(k+1),r_2\rangle,v\rangle\in w_2.
\end{multline*}
By the inductive assumption, $w_1=w_2$ and $r_1=r_2$, so $\langle \bar{\alpha}(k+1),r_1\rangle,v\rangle\in w_1$. Since $w_1$ is a function, we get $u=v$. 

3. Assume the premises. It is sufficient to prove that for any $n$ with $Fnl_k(n)$:
\begin{equation}
\langle n,u^k\rangle\in Fnt(\alpha,\bar{f})\supset\langle n,u\rangle\in  Fnt(\beta,\bar{f}), k=0,1,\ldots,s.
\label{eq:frc2}
\end{equation}

We prove (\ref{eq:frc2}) by induction on $n$. The cases of constants and variables follow from Lemma \ref{lemma:frc5}.1-6.  

Assume (\ref{eq:frc2}) holds for numbers less than $n$. We consider the only non-trivial case $n=\llcorner Ap^{k+1}(Z_j,Z_i)\lrcorner$. Suppose $\langle n,u^k\rangle\in Fnt(\alpha,\bar{f})$. By Lemma \ref{lemma:frc5}.10, there exist $w^{k+1}$ and $v^0$ such that 
\[\langle j,w\rangle\in Fnt(\alpha,\bar{f})\wedge\langle i,v\rangle\in Fnt(\alpha,\bar{f})\wedge\langle\langle\bar{\alpha}(k+1),v\rangle,u\rangle\in w.\]

By the inductive assumption, $\langle j,w\rangle\in Fnt(\beta,\bar{f})$ and $\langle i,v\rangle\in Fnt(\beta,\bar{f})$. By part 1, $w\in a_{k+1}$ and is monotonic. Since $\beta\preceq\alpha$, we have:
\[\bar{\beta}(k+1)\preceq_k\bar{\alpha}(k+1)\text{ and }\langle\langle\bar{\beta}(k+1),v\rangle,u\rangle\in w,\] 
which implies $\langle n,u\rangle\in Fnt(\beta,\bar{f})$.

4. Proof is by induction on $n$. The cases of constants and variables follow from Lemma \ref{lemma:frc5}.1-6 by taking $\beta=\alpha$. 

Assume the formula holds for numbers less than $n$. We will consider the only non-trivial case $n=\llcorner Ap^{k+1}(Z_j,Z_i)\lrcorner$. 
\smallskip

Suppose "$S^{s+1}$ is a path in $M$ through $\alpha$". By the inductive assumption there exist $\beta_1,\beta_2\in S$ and $w^{k+1},v^0$ such that $\langle j,w\rangle\in Fnt(\beta_1,\bar{f})$ and $\langle i,v\rangle\in Fnt(\beta_2,\bar{f})$. By part 1, $w\in a_{k+1}$. 

Denote $S_1=\lbrace\bar{\gamma}(k+1)\mid\gamma\in S\rbrace$. Then $S_1$ is a path in $d_k$ and by completeness of $w$ there exist $c\in S_1$ and $u^k$ such that $\langle\langle c,v\rangle,u\rangle\in w$. There is $\gamma\in S$ with $\bar{\gamma}(k+1)=c$.

Denote $\beta=min\lbrace\gamma,\beta_1,\beta_2\rbrace$. Then
\[\beta\preceq\gamma\text{ and }\bar{\beta}(k+1)\preceq_k\bar{\gamma}(k+1),\] 
and by monotonicity of $w$,
$\langle\langle\bar{\beta}(k+1),v\rangle,u\rangle\in w$. 

By part 3, $\langle j,w\rangle\in Fnt(\beta,\bar{f})$ and $\langle i,v\rangle\in Fnt(\beta,\bar{f})$. By Lemma \ref{lemma:frc5}.10, this implies $\langle n,u\rangle\in Fnt(\beta,\bar{f})$.
\end{proof}

We denote $Frm(n)$ the arithmetic formula stating that $n$ is the G\"{o}del number of a formula of $SLP_s$. We denote $Subfrm(m,n)$ the arithmetic formula stating that $m$ is the G\"{o}del number of a sub-formula of the formula with G\"{o}del number $n$. Next we introduce a predicate $Forset(n,b^{s+1})$, which means that $b$ is the set of forcing values of all evaluated sub-formulas of $\varphi_n$ at all nodes in $M$.
\begin{multline*}
Forset(n,b^{s+1})\leftrightharpoons Frm(n)\wedge(\forall x^s\in b)(\exists\alpha\in M)\exists m,\bar{f},\delta\left[x=\langle\alpha,m,\bar{f},\delta\rangle
\right.
\\
\left.
\wedge Subfrm(m,n)\wedge Evl(\bar{f})\wedge(\delta=0\vee\delta=1)\right]
\wedge\forall m\left\lbrace Subfrm(m,n)
\right.
\\
\left.
\supset(\forall\alpha\in M)\forall\bar{f}\left[ Evl(\bar{f}) 
\supset(\langle\alpha,m,\bar{f},0\rangle\in b\vee \langle\alpha,m,\bar{f},1\rangle\in b)
\right.
\right.
\\
\left.
\left.
\wedge \neg(\langle\alpha,m,\bar{f},0\rangle\in b\wedge \langle\alpha,m,\bar{f},1\rangle\in b)\wedge 
D(\alpha,m,\bar{f},b)\right] \right\rbrace ,
\end{multline*}
\[\text{ where } D(\alpha,m,\bar{f},b)\leftrightharpoons \bigvee_{r=1}^{13}D_r, 
\text{ and }D_1 - D_{13}\text{ are defined as follows.}\]
\[D_1\leftrightharpoons m=\llcorner\bot\lrcorner \wedge \langle \alpha,m,\bar{f},0\rangle \in b.\]
\begin{multline*}
D_2\leftrightharpoons \bigvee_{k=0}^s \exists i,j \left\lbrace m=\llcorner Z_i=_k Z_j\lrcorner \wedge 
\left[\langle\alpha, m,\bar{f},1\rangle \in b 
\right.
\right.
\\
\left.
\left.
\equiv 
\forall S^{s+1}\left["S\textit{ is a path in }M\textit{ through }\alpha" 
\right.
\right.
\right.
\\
\left.
\left.
\left.
\supset(\exists\beta\in S)\exists u^k(\langle i,u\rangle\in Fnt(\beta,\bar{f})\wedge\langle j,u\rangle\in Fnt(\beta,\bar{f}))\right] 
\right]\right\rbrace. 
\end{multline*}
\begin{multline*}
D_3\leftrightharpoons \exists i,j\left\lbrace m=\llcorner \varphi_i \wedge \varphi_j\lrcorner 
\right.
\\
\left.
\wedge \left[\langle\alpha, m,\bar{f},1\rangle \in b \equiv \left(\langle \alpha,i,\bar{f},1\rangle \in b \wedge \langle\alpha, j,\bar{f},1\rangle \in b \right)  \right] 
\right\rbrace. 
\end{multline*}
\begin{multline*}
D_4\leftrightharpoons \exists i,j\left\lbrace m=\llcorner \varphi_i \supset \varphi_j\lrcorner 
\right.
\\
\left.
\wedge \left[\langle\alpha, m,\bar{f},1\rangle \in b \equiv 
(\forall\beta\in M)(\beta\preceq\alpha\wedge\langle\beta, i,\bar{f},1\rangle \in b\supset\langle\beta, j,\bar{f},1\rangle \in b)\right] 
\right\rbrace. 
\end{multline*}
\begin{multline*}
D_5\leftrightharpoons \exists i,j\left\lbrace m=\llcorner \varphi_i \vee \varphi_j\lrcorner 
\wedge \left[\langle\alpha, m,\bar{f},1\rangle \in b  
\right.
\right.
\\
\left.
\left.
\equiv 
\forall S^{s+1}\left("S\textit{ is a path in }M\textit{ through }\alpha" 
\right.
\right.
\right.
\\
\left.
\left.
\left.
\supset(\exists\beta\in S)\left(\langle \beta,i,\bar{f},1\rangle \in b\vee\langle \beta,j,\bar{f},1\rangle \in b\right) \right)
\right]\right\rbrace. 
\end{multline*}
\begin{multline*}
D_6\leftrightharpoons \exists i,j\left\lbrace m=\llcorner\forall x_i \varphi_j\lrcorner 
\right.
\\
\left.
\wedge \left[\langle\alpha, m,\bar{f},1\rangle \in b \equiv 
\forall u^0
\left(\langle \alpha,j,
Sub_0(\bar{f},i,u),1\rangle \in b \right)  \right] 
\right\rbrace. 
\end{multline*}
\begin{multline*}
D_7\leftrightharpoons \bigvee_{k=1}^s
\exists i,j\left\lbrace m=\llcorner\forall F_i^k \varphi_j\lrcorner 
\right.
\\
\left.
\wedge \left[\langle\alpha, m,\bar{f},1\rangle \in b \equiv 
(\forall u^k\in a_k)
\left(\langle \alpha,j,
Sub_k(\bar{f},3i,u),1\rangle \in b \right)  \right] 
\right\rbrace. 
\end{multline*}
\begin{multline*}
D_8\leftrightharpoons \bigvee_{k=1}^s
\exists i,j\left\lbrace m=\llcorner\forall A_i^k \varphi_j\lrcorner 
\right.
\\
\left.
\wedge \left[\langle\alpha, m,\bar{f},1\rangle \in b \equiv 
(\forall u^k\in b_k)
\left(\langle \alpha,j,
Sub_k(\bar{f},3i+1,u),1\rangle \in b \right)  \right] 
\right\rbrace. 
\end{multline*}
\begin{multline*}
D_9\leftrightharpoons \bigvee_{k=1}^s
\exists i,j\left\lbrace m=\llcorner\forall \mathcal{F}_i^k \varphi_j\lrcorner 
\right.
\\
\left.
\wedge \left[\langle\alpha, m,\bar{f},1\rangle \in b \equiv 
(\forall u^k\in l_k)
\left(\langle \alpha,j,
Sub_k(\bar{f},3i+2,u),1\rangle \in b \right)  \right] 
\right\rbrace. 
\end{multline*}
\begin{multline*}
D_{10}\leftrightharpoons \exists i,j\left\lbrace m=\llcorner\exists x_i \varphi_j\lrcorner 
\right.
\\
\left.
\wedge \left[\langle\alpha, m,\bar{f},1\rangle \in b \equiv 
\forall S^{s+1}\left("S\textit{ is a path in }M\textit{ through }\alpha" 
\right.
\right.
\right.
\\
\left.
\left.
\left.
\supset(\exists\beta\in S)\exists u^0\left(\langle \beta,j,Sub_0(\bar{f},i,u),1\rangle \in b \right)
\right) 
\right]\right\rbrace. 
\end{multline*}
\begin{multline*}
D_{11}\leftrightharpoons \bigvee_{k=1}^s\exists i,j\left\lbrace m=\llcorner\exists F_i^k \varphi_j\lrcorner 
\right.
\\
\left.
\wedge \left[\langle\alpha, m,\bar{f},1\rangle \in b \equiv 
\forall S^{s+1}\left("S\textit{ is a path in }M\textit{ through }\alpha" 
\right.
\right.
\right.
\\
\left.
\left.
\left.
\supset(\exists\beta\in S)(\exists u^k\in a_k)\left(\langle \beta,j,Sub_k(\bar{f},3i,u),1\rangle \in b \right)
\right) 
\right]\right\rbrace. 
\end{multline*}
\begin{multline*}
D_{12}\leftrightharpoons \bigvee_{k=1}^s\exists i,j\left\lbrace m=\llcorner\exists A_i^k \varphi_j\lrcorner 
\right.
\\
\left.
\wedge \left[\langle\alpha, m,\bar{f},1\rangle \in b \equiv 
\forall S^{s+1}\left("S\textit{ is a path in }M\textit{ through }\alpha" 
\right.
\right.
\right.
\\
\left.
\left.
\left.
\supset(\exists\beta\in S)(\exists u^k\in b_k)\left(\langle \beta,j,Sub_k(\bar{f},3i+1,u),1\rangle \in b \right)
\right) 
\right]\right\rbrace. 
\end{multline*}\begin{multline*}
D_{13}\leftrightharpoons \bigvee_{k=1}^s\exists i,j\left\lbrace m=\llcorner\exists \mathcal{F}_i^k \varphi_j\lrcorner 
\right.
\\
\left.
\wedge \left[\langle\alpha, m,\bar{f},1\rangle \in b \equiv 
\forall S^{s+1}\left("S\textit{ is a path in }M\textit{ through }\alpha" 
\right.
\right.
\right.
\\
\left.
\left.
\left.
\supset(\exists\beta\in S)(\exists u^k\in l_k)\left(\langle\beta,j,Sub_k(\bar{f},3i+2,u),1\rangle \in b \right)
\right) 
\right]\right\rbrace. 
\end{multline*}

Denote
\[b\mid_m=\lbrace\langle\alpha,j,\bar{f},\delta\rangle\in b\mid Subfrm(j,m)\rbrace.\]
\begin{lemma}
The following formulas are derived in $TI_{s+1}$.
\begin{enumerate}
\item $Subfrm(m,n)\wedge Forset(n, b^{s+1})\supset Forset(m, b\mid_m)$.
\medskip
\item $Forset(n, b^{s+1})\wedge Forset(n, c^{s+1})\supset b=c$.
\medskip
\item $Frm(n)\supset\exists !b^{s+1} Forset(n,b)$.
\end{enumerate}
\label{lemma:force5}
\end{lemma}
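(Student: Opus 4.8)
The plan is to imitate, almost line for line, the proof of Lemma~\ref{lemma:trset8} for the truth-set predicate $Trset$, the only genuinely new features being the extra node parameter $\alpha\in M$, the path quantifiers $\forall S^{s+1}$ occurring in the clauses $D_2,D_5,D_{10}$--$D_{13}$, and the use of the functional symbol $Fnt$ (whose value does not depend on $b$) in the atomic clause $D_2$. Throughout, the key observation is that every clause $D_r$ refers to $b$ only at proper sub-formulas of $\varphi_m$ (at various nodes $\beta$, along paths, or under modified evaluations $Sub_k(\bar f,\ldots)$), and that in the Gödel numbering those sub-formulas have strictly smaller Gödel numbers; this licenses inductions on $m$ and on $n$ exactly as for $Trset$.

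For Part~1 I would argue as in Lemma~\ref{lemma:trset8}.1: assuming $Subfrm(m,n)$ and $Forset(n,b)$, for each $j$ with $Subfrm(j,m)$ one has $Subfrm(j,n)$, so $D(\alpha,j,\bar f,b)$ holds for all $\alpha\in M$ and all $\bar f$ with $Evl(\bar f)$, and $\langle\alpha,j,\bar f,\delta\rangle\in b$ iff $\langle\alpha,j,\bar f,\delta\rangle\in b\mid_m$. Then I would verify $D(\alpha,j,\bar f,b\mid_m)$ by cases on the shape of $\varphi_j$: for $\bot$ and the atomic clause $D_2$ nothing is affected, since the right-hand side of $D_2$ mentions only $Fnt$; for the connectives and quantifiers the right-hand side of $D_r$ refers to $b$ at sub-formulas of $\varphi_j$, still sub-formulas of $m$, so the equivalence carries over to $b\mid_m$ while the path and node quantifiers are left unchanged. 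Part~2 follows the same scheme: given $Forset(n,b)$ and $Forset(n,c)$, prove by induction on $m$ that $Subfrm(m,n)$ implies $\langle\alpha,m,\bar f,1\rangle\in b\equiv\langle\alpha,m,\bar f,1\rangle\in c$ for all $\alpha\in M$ and $\bar f$ with $Evl(\bar f)$; the atomic and $\bot$ cases are immediate because the relevant membership value is fixed by $Fnt$ alone, and the connective/quantifier cases follow from the inductive hypothesis applied to the proper sub-formulas occurring in $D_r$. Once this holds for value $1$ it holds for value $0$ by the exclusiveness/exhaustiveness conjuncts in $Forset$, and hence $b=c$.

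Part~3, the existence of $b$ with $Forset(n,b)$, is proved by induction on $n$ and is where the real work lies. For $n=\llcorner\bot\lrcorner$ take $b=\{\langle\alpha,n,\bar f,0\rangle\mid\alpha\in M\wedge Evl(\bar f)\}$. For an atomic $n$, say $n=\llcorner Z_i=_kZ_j\lrcorner$, let $\Phi(\alpha,\bar f)$ be the right-hand side of the corresponding disjunct of $D_2$ and take
\[b=\{\langle\alpha,n,\bar f,1\rangle\mid\alpha\in M\wedge Evl(\bar f)\wedge\Phi(\alpha,\bar f)\}\cup\{\langle\alpha,n,\bar f,0\rangle\mid\alpha\in M\wedge Evl(\bar f)\wedge\neg\Phi(\alpha,\bar f)\}.\]
This set exists by the comprehension axiom of $TI_{s+1}$, since the only variables of type $s+1$ in its defining formula are the path variable $S^{s+1}$ and the value $Fnt(\beta,\bar f)$, so that formula has sort $\leqslant s+1$. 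For the inductive step one proceeds as in Lemma~\ref{lemma:trset8}.3: if $n=\llcorner\varphi_i\vee\varphi_j\lrcorner$, take $c,d$ with $Forset(i,c)$ and $Forset(j,d)$ from the inductive hypothesis, let $\Psi(\alpha,\bar f)$ be the analogue of $D_5$ written with $c$ and $d$ in place of $b$, and put
\[b=c\cup d\cup\{\langle\alpha,n,\bar f,1\rangle\mid\alpha\in M\wedge Evl(\bar f)\wedge\Psi(\alpha,\bar f)\}\cup\{\langle\alpha,n,\bar f,0\rangle\mid\alpha\in M\wedge Evl(\bar f)\wedge\neg\Psi(\alpha,\bar f)\}.\]
Using Part~1 and Part~2 one shows that $b$ agrees with $c$ on all sub-formulas of $i$ and with $d$ on all sub-formulas of $j$ (reconciling the overlap of $c$ and $d$ on shared sub-formulas), whence the exclusiveness/exhaustiveness conjuncts and each clause $D_r$ hold at every sub-formula of $n$; hence $Forset(n,b)$. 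The other atomic forms, the remaining connectives, and all six quantifier cases go through the same way, using $Evl(Sub_k(\bar f,\ldots))$ from Lemma~\ref{lemma:force0} in the quantifier cases.

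The main obstacle is keeping each set-builder in Part~3 inside the predicative comprehension available in $TI_{s+1}$: the path quantifiers $\forall S^{s+1}$ and the occurrences of the term $Fnt(\beta,\bar f)$ both involve type $s+1$, so the defining formulas sit exactly at the sort bound $s+1$ permitted by the comprehension axiom, and one must confirm that no further type-$(s+2)$ quantification is smuggled in through $Fnt$ — which is guaranteed because $Fnt$ is a functional symbol (made legitimate in $TI_{s+1}$ via Lemma~\ref{lemma:frc4} and the preceding lemmas), not a set quantified over. The remaining, purely clerical, point is the reconciliation of $c$ and $d$ on common sub-formulas in the inductive step, for which Parts~1 and~2 are the tools, exactly as in the corresponding step of Lemma~\ref{lemma:trset8}.
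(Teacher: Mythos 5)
Your proposal is correct and follows essentially the same route as the paper's proof: Part 1 by cases on the shape of $\varphi_j$ using the restriction $b\mid_m$, Part 2 by induction on $m$ for the value $1$ and then exclusiveness/exhaustiveness for the value $0$, and Part 3 by induction on $n$ with the same set-builder constructions and the same use of Parts 1 and 2 to reconcile $c$ and $d$ on shared sub-formulas. Your added remark on keeping the comprehension instances within sort $\leqslant s+1$ is a point the paper leaves implicit, but it does not change the argument.
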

\begin{proof}
1. Assume the premises. For any $j$ with $Subfrm(j,m)$ we have:
\begin{equation}
(\forall\alpha\in M)\forall\bar{f}\left[Evl(\bar{f}\supset D(\alpha,j,\bar{f},b)) \right] 
\label{eq:force02}
\end{equation}
and 
\begin{equation}
\forall\beta,\bar{f},\delta\left[\langle\beta,j,\bar{f},\delta\rangle\in b\equiv\langle\beta,j,\bar{f},\delta\rangle\in b\mid_m \right]. 
\label{eq:force03}
\end{equation}

To prove $Forset(m,b\mid_m )$, it is sufficient to show for any $j$:
\begin{equation}
Subfrm(j,m)\supset(\forall\alpha\in M)\forall\bar{f}\left[Evl(\bar{f})\supset D(\alpha,j,\bar{f},b\mid_m)\right]. 
\label{eq:force04}
\end{equation}

When $\varphi_j$ is an atomic formula or $\bot$, (\ref{eq:force04}) follows from (\ref{eq:force02}). 

Suppose $j=\llcorner\exists\mathcal{F}_r^k  \varphi_p\lrcorner$. Consider $\alpha\in M$ and $\bar{f}$ with $Evl(\bar{f})$. By (\ref{eq:force02}),
\begin{multline*}
\langle \alpha,j,\bar{f},1\rangle \in b\equiv
\forall S^{s+1}\left["S\textit{ is a path in }M\textit{ through }\alpha" 
\right.
\\
\left.
\supset(\exists\beta\in S)(\exists u^k\in l_k)\left(\langle \beta,p,Sub_k(\bar{f},3r+2,u),1\rangle \in b\right) \right]
\end{multline*}
and by (\ref{eq:force03}):
\begin{multline*}
\langle \alpha,j,\bar{f},1\rangle \in b\mid_m\equiv
\forall S^{s+1}\left["S\textit{ is a path in }M\textit{ through }\alpha" 
\right.
\\
\left.
\supset(\exists\beta\in S)(\exists u^k\in l_k)\left(\langle \beta,p,Sub_k(\bar{f},3r+2,u),1\rangle \in b\mid_m\right)\right],
\end{multline*}
which implies $D(\alpha,j,\bar{f},b\mid_m)$. 

In the cases of other quantifiers and logical connectives the proof is similar.

2. Assume the premises. It is sufficient to prove:
\begin{equation}
\forall m\left[Subfrm(m,n)\supset(\langle\alpha,m,\bar{f},1\rangle\in b\equiv\langle\alpha,m,\bar{f},1\rangle\in c)\right]. 
\label{eq:force6}
\end{equation}

If (\ref{eq:force6}) is proven, then for any $\alpha,m,\bar{f}$ we have:
\[\langle\alpha,m,\bar{f},0\rangle\in b\equiv\langle\alpha,m,\bar{f},1\rangle\notin b\equiv\langle\alpha,m,\bar{f},1\rangle\notin c\equiv\langle\alpha,m,\bar{f},0\rangle\in c,\]
and this implies $b=c$.

We prove (\ref{eq:force6}) by induction on $m$.

Case 1: $m=\llcorner\bot\lrcorner$. Then $\langle\alpha,m,\bar{f},1\rangle\notin b$ and $\langle\alpha,m,\bar{f},1\rangle\notin c$.

Case 2: $m=\llcorner Z_i=_k Z_j\lrcorner$. Proof follows from the definition of $D_2$.

Assume (\ref{eq:force6}) holds for numbers less than $m$.

Case 3: $m=\llcorner\exists\mathcal{F}_i^k  \varphi_j\lrcorner$.
By the inductive assumption: 
\[\forall\beta,\bar{f}(\langle\beta,j,\bar{f},1\rangle\in b\equiv\langle\beta,j,\bar{f},1\rangle\in c).\]

So for any $\alpha,\bar{f}$:
\begin{multline*}
\langle\alpha,m,\bar{f},1\rangle\in b
\equiv
\forall S^{s+1}\left["S\textit{ is a path in }M
\textit{through }\alpha" 
\right.
\\
\left.
\supset(\exists\beta\in S)(\exists u^k\in l_k)
(\langle\beta, j,Sub_k(\bar{f},3i+2,u),1\rangle\in b)
\right]
\\
\equiv
\forall S^{s+1}\left["S\textit{ is a path in }M
\textit{through }\alpha" 
\right.
\\
\left.
\supset(\exists\beta\in S)(\exists u^k\in l_k)
(\langle\beta, j,Sub_k(\bar{f},3i+2,u),1\rangle\in c)
\right]
\equiv\langle\alpha,m,\bar{f},1\rangle\in c.
\end{multline*}

The cases of other quantifiers and logical connectives are similar.

3. The uniqueness was proven in part 2. We will prove the existence by induction on $n$.

Case 1: $n=\llcorner\bot\lrcorner$. We take $b^{s+1}=\lbrace\langle\alpha, n,\bar{f},0\rangle\mid\alpha\in M\wedge Evl(\bar{f})\rbrace.$

Case 2: $n=\llcorner Z_i=_kZ_j\lrcorner$. Denote 
\begin{multline*}
\varphi(\alpha, i,j,\bar{f})\leftrightharpoons\forall S^{s+1}\left["S\textit{ is a path in }M
\textit{through }\alpha" 
\right.
\\
\left.
\supset(\exists\beta\in S)\exists u^k(\langle i,u\rangle\in Fnt(\beta,\bar{f})\wedge\langle j,u\rangle\in Fnt(\beta,\bar{f}))\right]. 
\end{multline*}

We take 
\begin{multline*}
b^{s+1}=\lbrace\langle\alpha, n,\bar{f},1\rangle\mid\alpha\in M\wedge Evl(\bar{f})\wedge\varphi(\alpha, i,j,\bar{f})\rbrace
\\
\cup
\lbrace\langle\alpha, n,\bar{f},0\rangle\mid\alpha\in M\wedge Evl(\bar{f})\wedge\neg\varphi(\alpha, i,j,\bar{f})\rbrace.
\end{multline*}

Clearly $Forset(n,b)$ holds.

Assume the existence for numbers less than $n$.

Case 3: $n=\llcorner\varphi_i\vee \varphi_j\lrcorner$. By the inductive assumption there exist sets $c^{s+1}$ and $d^{s+1}$ such that $Forset(i,c)$ and $Forset(j,d)$. Denote 
\begin{multline*}
\psi(\alpha, i,j,\bar{f})\leftrightharpoons\forall S^{s+1}\left["S\textit{ is a path in }M
\textit{through }\alpha" 
\right.
\\
\left.
\supset(\exists\beta\in S)(\langle \beta,i,\bar{f},1\rangle\in c\vee\langle \beta,j,\bar{f},1\rangle\in d)\right]. 
\end{multline*}

We take 
\begin{multline*}
b^{s+1}=c\cup d\cup\lbrace\langle\alpha, n,\bar{f},1\rangle\mid\alpha\in M\wedge Evl(\bar{f})\wedge\psi(\alpha, i,j,\bar{f})\rbrace
\\
\cup
\lbrace\langle\alpha, n,\bar{f},0\rangle\mid\alpha\in M\wedge Evl(\bar{f})\wedge\neg\psi(\alpha, i,j,\bar{f})\rbrace.
\end{multline*}

First we prove:
\begin{equation}
\forall p\left[Subfrm(p,i)\supset \forall\beta,\bar{f},\delta\left(\langle\beta, p,\bar{f},\delta\rangle\in b\equiv\langle\beta, p,\bar{f},\delta\rangle\in c \right) \right].
\label{eq:forcef}
\end{equation}

Assume the premises of (\ref{eq:forcef}). If $\neg Subfrm(p,j)$, then the conclusion is obvious. If $Subfrm(p,j)$, then $Forset(p,c\mid_p)$ and $Forset(p,d\mid_p)$ by part 1, $c\mid_p=d\mid_p$ by part 2 and for any $\beta,\bar{f},\delta$:
\[\langle\beta, p,\bar{f},\delta\rangle\in b\equiv
\langle\beta, p,\bar{f},\delta\rangle\in c\mid_p\cup d\mid_p\equiv\langle\beta, p,\bar{f},\delta\rangle\in c.\]

(\ref{eq:forcef}) is proven. Similarly,
\begin{equation}
\forall p\left[Subfrm(p,j)\supset \forall\beta,\bar{f},\delta\left(\langle\beta, p,\bar{f},\delta\rangle\in b\equiv\langle\beta, p,\bar{f},\delta\rangle\in d \right) \right].
\label{eq:forceg}
\end{equation}

To prove $Forset(n,b)$, it is sufficient to show:
\begin{multline}
\forall m\left\lbrace Subfrm(m,n)\supset(\forall\alpha\in M)\forall\bar{f}\left[Evl(\bar{f})
\right.
\right.
\\
\left.
\left.
\supset
\neg(\langle\alpha, m,\bar{f},0\rangle\in b\wedge\langle\alpha, m,\bar{f},1\rangle\in b)\wedge D(\alpha,m,\bar{f},b)\right]\right\rbrace .
\label{eq:forcea}
\end{multline}

Assume $Subfrm(m,n)$.

i) $m=n$.

Suppose $\alpha\in M$ and $Evl(\bar{f})$. Then $\neg(\langle\alpha, n,\bar{f},0\rangle\in b\wedge\langle\alpha, n,\bar{f},1\rangle\in b)$ follows from the definition of $b$.

Applying (\ref{eq:forcef}) to $p=i$ and (\ref{eq:forceg}) to $p=j$, we get:
\begin{multline*}
\langle\alpha,n,\bar{f},1\rangle\in b\equiv\psi(\alpha, i,j,\bar{f})
\equiv\forall S^{s+1}\left["S\textit{ is a path in }M
\textit{through }\alpha" 
\right.
\\
\left.
\supset(\exists\beta\in S)(\langle \beta,i,\bar{f},1\rangle\in c\vee\langle \beta,j,\bar{f},1\rangle\in d)\right]
\\
\equiv\forall S^{s+1}\left["S\textit{ is a path in }M
\textit{through }\alpha" 
\right.
\\
\left.
\supset(\exists\beta\in S)(\langle \beta,i,\bar{f},1\rangle\in b\vee\langle \beta,j,\bar{f},1\rangle\in b)\right],
\end{multline*}
which implies $D(\alpha,n,\bar{f},b)$.

ii) $m<n$.

Then $Subfrm(m,i)\vee Subfrm(m,j)$. We can assume $Subfrm(m,i)$ without loss of generality. Due to $Forset(i,c)$ we have $\neg(\langle\alpha, m,\bar{f},0\rangle\in c\wedge\langle\alpha, m,\bar{f},1\rangle\in c)$ and by (\ref{eq:forcef}), 
\[\neg(\langle\alpha, m,\bar{f},0\rangle\in b\wedge\langle\alpha, m,\bar{f},1\rangle\in b).\] 
 
$D(\alpha,m,\bar{f},b)$ is also also derived from $Forset (i,c)$ by applying (\ref{eq:forcef}) to each possible form of the formula $\varphi_m$.

This completes the proof for case 3. The cases of other logical connectives and quantifiers are similar.
\end{proof}
\begin{lemma}
It is derived in $TI_{s+1}$ that the formula 
\[\alpha\in M\wedge Subfrm(m,n)\wedge Evl(\bar{f})\wedge Forset(n,b^{s+1})\]
implies each of the following formulas.
\begin{enumerate}

\item $\langle\alpha, m,\bar{f},1 \rangle\in b \vee \langle \alpha,m,\bar{f},0 \rangle\in b$.
\medskip

\item $\neg (\langle\alpha, m,\bar{f},1 \rangle\in b \wedge \langle \alpha,m,\bar{f},0 \rangle \in b)$.
\medskip

\item $\langle\alpha, \llcorner \bot \lrcorner,\bar{f},0 \rangle \in b$.
\medskip

\item $ m=\llcorner Z_i=_k Z_j\lrcorner 
\supset 
\left\lbrace\langle\alpha,m,\bar{f},1\rangle\in b \equiv 
\forall S^{s+1}\left["S\textit{ is a path in }M
\right.
\right.
\medskip
\\
\left.
\left.
\textit{through }\alpha" 
\supset(\exists\beta\in S)\exists u^k
(\langle i,u\rangle\in Fnt(\beta,\bar{f})\wedge\langle j,u\rangle\in Fnt(\beta,\bar{f}))
\right]\right\rbrace,
\\k=0,1,\ldots,s  $.
\medskip

\item $m=\llcorner \varphi_i \wedge \varphi_j\lrcorner 
\\
\supset \left[\langle\alpha, m,\bar{f},1\rangle \in b \equiv \left(\langle \alpha,i,\bar{f},1\rangle \in b \wedge \langle\alpha, j,\bar{f},1\rangle \in b \right)\right]$.
\medskip

\item $m=\llcorner \varphi_i \supset \varphi_j\lrcorner 
\\
\supset \left[\langle\alpha, m,\bar{f},1\rangle \in b \equiv 
(\forall\beta\in M)(\beta\preceq\alpha\wedge\langle\beta, i,\bar{f},1\rangle \in b\supset\langle\beta, j,\bar{f},1\rangle \in b)\right] $.
\medskip

\item $m=\llcorner \varphi_i \vee \varphi_j\lrcorner 
\supset \left[\langle\alpha, m,\bar{f},1\rangle \in b  
\equiv 
\forall S^{s+1}\left("S\textit{ is a path in }M\textit{ through }\alpha" 
\right.
\right.
\medskip
\\
\left.
\left.
\supset(\exists\beta\in S)\left(\langle \beta,i,\bar{f},1\rangle \in b\vee\langle \beta,j,\bar{f},1\rangle \in b\right) \right)
\right]$.
\medskip

\item $m=\llcorner\forall x_i \varphi_j\lrcorner 
\supset \left[\langle\alpha, m,\bar{f},1\rangle \in b \equiv \forall u^0\left(\langle \alpha,j,
Sub_0(\bar{f},i,u),1\rangle \in b \right)\right]$.
\medskip

\item $m=\llcorner\forall F_i^k \varphi_j\lrcorner 
\supset \left[\langle\alpha, m,\bar{f},1\rangle \in b 
\right.
\smallskip
\\
\left.
\equiv(\forall u^k\in a_k)\left(\langle \alpha,j,
Sub_k(\bar{f},3i,u),1\rangle \in b \right)\right],
k=1,2,\ldots,s$.
\medskip

\item $m=\llcorner\forall A_i^k \varphi_j\lrcorner 
\supset \left[\langle\alpha, m,\bar{f},1\rangle \in b 
\right.
\smallskip
\\
\left.
\equiv(\forall u^k\in b_k)\left(\langle \alpha,j,
Sub_k(\bar{f},3i+1,u),1\rangle \in b \right)\right],
k=1,2,\ldots,s$.
\medskip

\item $m=\llcorner\forall \mathcal{F}_i^k \varphi_j\lrcorner 
\supset \left[\langle\alpha, m,\bar{f},1\rangle \in b 
\right.
\smallskip
\\
\left.
\equiv(\forall u^k\in l_k)\left(\langle \alpha,j,
Sub_k(\bar{f},3i+2,u),1\rangle \in b \right)\right],
k=1,2,\ldots,s$.
\medskip

\item $m=\llcorner\exists x_i \varphi_j\lrcorner 
\supset \left\lbrace \langle\alpha, m,\bar{f},1\rangle \in b \equiv 
\forall S^{s+1}\left["S\textit{ is a path in }M\textit{ through }\alpha" 
\right.
\right.
\smallskip
\\
\left.
\left.
\supset(\exists\beta\in S)\exists u^0\left(\langle \beta,j,Sub_0(\bar{f},i,u),1\rangle \in b \right)
\right] \right\rbrace$.
\medskip

\item $m=\llcorner\exists F_i^k \varphi_j\lrcorner 
\supset \left\lbrace \langle\alpha, m,\bar{f},1\rangle \in b \equiv 
\forall S^{s+1}\left["S\textit{ is a path in }M\textit{ through }\alpha" 
\right.
\right.
\smallskip
\\
\left.
\left.
\supset(\exists\beta\in S)(\exists u^k\in a_k)\left(\langle\beta,j,Sub_k(\bar{f},3i,u),1\rangle \in b \right)\right] \right\rbrace,k=1,2,\ldots,s$.
\medskip

\item $m=\llcorner\exists A_i^k \varphi_j\lrcorner 
\supset \left\lbrace \langle\alpha, m,\bar{f},1\rangle \in b \equiv 
\forall S^{s+1}\left["S\textit{ is a path in }M\textit{ through }\alpha" 
\right.
\right.
\smallskip
\\
\left.
\left.
\supset(\exists\beta\in S)(\exists u^k\in b_k)\left(\langle\beta,j,Sub_k(\bar{f},3i+1,u),1\rangle \in b \right)\right] \right\rbrace,k=1,2,\ldots,s$.
\medskip

\item $m=\llcorner\exists \mathcal{F}_i^k \varphi_j\lrcorner 
\supset \left\lbrace \langle\alpha, m,\bar{f},1\rangle \in b \equiv 
\forall S^{s+1}\left["S\textit{ is a path in }M\textit{ through }\alpha" 
\right.
\right.
\smallskip
\\
\left.
\left.
\supset(\exists\beta\in S)(\exists u^k\in l_k)\left(\langle\beta,j,Sub_k(\bar{f},3i+2,u),1\rangle \in b \right)\right] \right\rbrace,k=1,2,\ldots,s$.
\end{enumerate}
\label{lemma:force6}
\end{lemma}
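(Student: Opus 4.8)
The plan is to obtain all fifteen statements directly from the definition of $Forset$, in the same routine fashion in which Lemma \ref{lemma:trset9} was obtained from the definition of $Trset$. First I would assume the premises $\alpha\in M$, $Subfrm(m,n)$, $Evl(\bar{f})$ and $Forset(n,b^{s+1})$. Specialising the last conjunct in the definition of $Forset$ to this particular triple $(\alpha,m,\bar{f})$ yields three facts simultaneously: the disjunction $\langle\alpha,m,\bar{f},0\rangle\in b\vee\langle\alpha,m,\bar{f},1\rangle\in b$; its exclusive form $\neg(\langle\alpha,m,\bar{f},0\rangle\in b\wedge\langle\alpha,m,\bar{f},1\rangle\in b)$; and $D(\alpha,m,\bar{f},b)$, that is $\bigvee_{r=1}^{13}D_r$. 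Parts 1 and 2 are exactly the first two of these, so nothing further is needed for them.

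For parts 3--15 the only point to verify is that the syntactic hypothesis on $m$ in each part selects a unique disjunct of $D(\alpha,m,\bar{f},b)$. Each $D_r$ carries a conjunct of the form $m=\llcorner g\lrcorner$, where $g$ ranges over the expressions of one fixed syntactic shape ($\bot$ for $D_1$, an equation $Z_i=_k Z_j$ for $D_2$, a conjunction for $D_3$, and so on), and by unique readability of the terms, functionals and formulas of $SLP_s$ under the fixed G\"{o}del numbering --- a routine arithmetical fact available already in the arithmetical fragment of $TI_{s+1}$ --- no natural number is simultaneously the code of expressions of two of these shapes. Hence, under $m=\llcorner\bot\lrcorner$ only $D_1$ can hold, which gives $\langle\alpha,m,\bar{f},0\rangle\in b$ (part 3); under $m=\llcorner Z_i=_k Z_j\lrcorner$ only the corresponding disjunct of $D_2$ can hold, whose content is precisely the biconditional of part 4; and parts 5--15 follow the same way from $D_3,\ldots,D_{13}$, supplying the biconditionals for $\wedge$, $\supset$, $\vee$, $\forall x$, $\forall F^k$, $\forall A^k$, $\forall\mathcal{F}^k$, $\exists x$, $\exists F^k$, $\exists A^k$ and $\exists\mathcal{F}^k$ respectively.

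The main --- and essentially the only --- obstacle is the bookkeeping just described: formalising inside $TI_{s+1}$ that the thirteen syntactic shapes appearing in $D_1,\ldots,D_{13}$ are pairwise incompatible as descriptions of a single G\"{o}del number, and that for the shapes carrying a type index $k$ this index is uniquely determined by $m$. This is the standard unique-readability property underlying any argument about a G\"{o}del numbering, and it was already used implicitly in the analogous Lemma \ref{lemma:trset9}. With it in hand, each of parts 3--15 is just the relevant disjunct of $D(\alpha,m,\bar{f},b)$ read off, so the whole lemma reduces to a direct unfolding of the definition of $Forset$.
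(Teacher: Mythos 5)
Your proposal is correct and matches the paper's proof, which simply states that the lemma follows from the definition of $Forset$; you have merely spelled out the routine unfolding (parts 1 and 2 from the exclusivity conjuncts, part 3 from $D_1$, and parts 4--15 from $D_2$--$D_{13}$ via unique readability of the G\"{o}del numbering). No further comment is needed.
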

\begin{proof}
Follows from the definition of $Forset$.
\end{proof}

We define
\[Force^{s+1}=\left\lbrace x^s\mid \exists n,b^{s+1}(Forset(n,b)\wedge x\in b)\right\rbrace. \]
Thus, $Force$ is the set of the forcing values of all evaluated formulas of $SLP_s$ at all nodes of the Beth model $\mathcal{B}_s$.

\begin{lemma}
\begin{multline*}
TI_{s+1}\vdash Forset(n,b^{s+1})\wedge Subfrm(m,n)
\\
\supset\left(\langle\alpha, m,\bar{f},\delta\rangle\in b\equiv \langle \alpha,m,\bar{f},\delta\rangle\in Force\right).
\end{multline*}
\label{lemma:force7}
\end{lemma}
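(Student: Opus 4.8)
The plan is to mirror the argument used for the truth predicate in Lemma~\ref{lemma:trset10}, with $Trset$, $Truth$ and $b_{\mid m}$ replaced by $Forset$, $Force$ and $b\mid_m$, and with Lemma~\ref{lemma:force5} playing the role of Lemma~\ref{lemma:trset8}. First I would assume the premises $Forset(n,b^{s+1})$ and $Subfrm(m,n)$ and apply Lemma~\ref{lemma:force5}.1 to obtain $Forset(m,b\mid_m)$. I would then fix a node $\alpha\in M$, an evaluation $\bar f$ and a digit $\delta$, and establish the two directions of the equivalence separately.

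For the forward direction, suppose $\langle\alpha,m,\bar f,\delta\rangle\in b$. Since $Subfrm(m,m)$ holds, this element lies in $b\mid_m$; because $Forset(m,b\mid_m)$ witnesses the existential quantifier in the definition of $Force$ (taking the parameters there to be $m$ and $b\mid_m$), we conclude $\langle\alpha,m,\bar f,\delta\rangle\in Force$.

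For the converse, suppose $\langle\alpha,m,\bar f,\delta\rangle\in Force$. By the definition of $Force$ there are $n'$ and $c^{s+1}$ with $Forset(n',c)$ and $\langle\alpha,m,\bar f,\delta\rangle\in c$. Applying the membership clause in the definition of $Forset(n',c)$ to this element (and using that the pairing $\langle\,\cdot\,\rangle$ is injective, so the formula index extracted there is exactly $m$) yields $Subfrm(m,n')$; hence Lemma~\ref{lemma:force5}.1 gives $Forset(m,c\mid_m)$. Now Lemma~\ref{lemma:force5}.2, the uniqueness of the $Forset$-set, forces $c\mid_m=b\mid_m$. Since $Subfrm(m,m)$, the element $\langle\alpha,m,\bar f,\delta\rangle$ belongs to $c\mid_m=b\mid_m\subseteq b$, which is what was required.

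The argument is entirely routine once Lemma~\ref{lemma:force5} is available, and all the sets involved ($b\mid_m$, $c\mid_m$, $Force$) are definable in $TI_{s+1}$ by the restricted comprehension used throughout this section, so the derivation is formalizable there. There is no genuine obstacle beyond bookkeeping; the one point I would be careful to spell out in the written proof is the extraction of $Subfrm(m,n')$ from membership in $c$ in the converse direction, since without it Lemma~\ref{lemma:force5}.2 could not be invoked — exactly the subtlety that appears in the proof of Lemma~\ref{lemma:trset10}.
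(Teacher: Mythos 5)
Your proof is correct and follows essentially the same route as the paper: apply Lemma~\ref{lemma:force5}.1 to get $Forset(m,b\mid_m)$, use that set as the witness in the definition of $Force$ for the forward direction, and use the uniqueness clause Lemma~\ref{lemma:force5}.2 for the converse. The only difference is that you spell out the extraction of $Subfrm(m,n')$ from the membership clause of $Forset(n',c)$ before restricting to $c\mid_m$, a step the paper's proof leaves implicit by passing directly to a witness $c$ with $Forset(m,c)$.
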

\begin{proof}
Assume the premises. Then $Forset(m,b\mid_m)$ by Lemma \ref{lemma:force5}.1.

$\Rightarrow$ If $\langle\alpha, m,\bar{f},\delta\rangle\in b$, then $\langle\alpha, m,\bar{f},\delta\rangle\in b\mid_m$ and $\langle\alpha, m,\bar{f},\delta\rangle\in Force$ by the definition of the set $Force$.

$\Leftarrow$ Suppose $\langle\alpha, m,\bar{f},\delta\rangle\in Force$. Then there is $c^{s+1}$ with 
$Forset(m,c)$ and $\langle\alpha, m,\bar{f},\delta\rangle\in c$. By Lemma \ref{lemma:force5}.2, $c=b\mid_m$, so $\langle\alpha, m,\bar{f},\delta\rangle\in b\mid_m\subseteq b$.
\end{proof}

Finally we introduce the forcing predicate $Fr$: \[Fr(\alpha,n,\bar{f})\leftrightharpoons \langle\alpha,n,\bar{f},1\rangle\in Force.\]
Clearly, $Fr$ depends on $s$ but we will omit $s$ for brevity.

\begin{lemma}
It is derived in $TI_{s+1}$ that the formula
\[\alpha\in M\wedge Evl(\bar{f})\]
implies each of the following formulas.
\begin{enumerate}
\item $Frm(n)\supset 
\left( \neg Fr(\alpha,n,\bar{f})\equiv \langle\alpha, n,\bar{f},0 \rangle \in Force\right) $.
\smallskip
\item $Fr(\alpha,\llcorner \perp \lrcorner,\bar{f})\equiv \perp$.
\medskip

\item 
$Fr(\alpha,\llcorner Z_i=_k Z_j \lrcorner,\bar{f}) \equiv 
\forall S^{s+1}\left["S\textit{ is a path in }M\textit{ through }\alpha" 
\right.
\medskip
\\
\left.
\supset(\exists\beta\in S)\exists u^k(\langle i,u\rangle\in Fnt(\beta,\bar{f})\wedge\langle j,u\rangle\in Fnt(\beta,\bar{f}))\right],
k=0,1,\ldots,s$.
\medskip

\item $Fr(\alpha,\llcorner \varphi_i \wedge \varphi_j \lrcorner,\bar{f}) \equiv Fr(\alpha,i,\bar{f})\wedge Fr(\alpha,j,\bar{f})$.
\medskip

\item $Fr(\alpha,\llcorner \varphi_i \supset \varphi_j \lrcorner,\bar{f}) \equiv 
(\forall\beta\in M)\left[\beta\preceq\alpha\wedge Fr(\alpha,i,\bar{f})\supset Fr(\alpha,j,\bar{f})\right]$.
\medskip

\item $Fr(\alpha,\llcorner \varphi_i \vee \varphi_j \lrcorner,\bar{f}) \equiv 
\forall S^{s+1}\left["S\textit{ is a path in }M\textit{ through }\alpha" 
\right.
\medskip
\\
\left.
\supset(\exists\beta\in S)\left(Fr(\beta,i,\bar{f})\vee Fr(\beta,j,\bar{f})\right) \right]$.
\medskip

\item $Fr(\alpha,\llcorner \forall x_i \varphi_j \lrcorner,\bar{f}) \equiv \forall u^0 Fr(\alpha,j,Sub_0(\bar{f},i,u))$.
\medskip

\item $Fr(\alpha,\llcorner \forall F_i^k \varphi_j \lrcorner,\bar{f}) \equiv (\forall u^k\in a_k) Fr(\alpha,j,Sub_k(\bar{f},3i,u)),k=1,2,\ldots,s$.
\medskip

\item $Fr(\alpha,\llcorner \forall A_i^k \varphi_j \lrcorner,\bar{f}) \equiv (\forall u^k\in b_k) Fr(\alpha,j,Sub_k(\bar{f},3i+1,u)),k=1,2,\ldots,s$.
\medskip

\item $Fr(\alpha,\llcorner \forall \mathcal{F}_i^k \varphi_j \lrcorner,\bar{f}) \equiv (\forall u^k\in l_k) Fr(\alpha,j,Sub_k(\bar{f},3i+2,u)),k=1,2,\ldots,s$.
\medskip

\item $Fr(\alpha,\llcorner \exists x_i \varphi_j \lrcorner,\bar{f}) \equiv 
\forall S^{s+1}\left["S\textit{ is a path in }M\textit{ through }\alpha" 
\right.
\medskip
\\
\left.
\supset(\exists\beta\in S)\exists u^0 Fr(\beta,j,Sub_0(\bar{f},i,u))\right] $.
\medskip

\item $Fr(\alpha,\llcorner \exists F_i^k \varphi_j \lrcorner,\bar{f}) \equiv 
\forall S^{s+1}\left["S\textit{ is a path in }M\textit{ through }\alpha" 
\right.
\medskip
\\
\left.
\supset(\exists\beta\in S)(\exists u^k\in a_k) Fr(\beta,j,Sub_k(\bar{f},3i,u))\right],
k=1,2,\ldots,s $.
\medskip

\item $Fr(\alpha,\llcorner \exists A_i^k \varphi_j \lrcorner,\bar{f}) \equiv 
\forall S^{s+1}\left["S\textit{ is a path in }M\textit{ through }\alpha" 
\right.
\medskip
\\
\left.
\supset(\exists\beta\in S)(\exists u^k\in b_k) Fr(\beta,j,Sub_k(\bar{f},3i+1,u))\right],
k=1,2,\ldots,s $.
\medskip

\item $Fr(\alpha,\llcorner \exists \mathcal{F}_i^k \varphi_j \lrcorner,\bar{f}) \equiv 
\forall S^{s+1}\left["S\textit{ is a path in }M\textit{ through }\alpha" 
\right.
\medskip
\\
\left.
\supset(\exists\beta\in S)(\exists u^k\in l_k) Fr(\beta,j,Sub_k(\bar{f},3i+2,u))\right],
k=1,2,\ldots,s $.
\end{enumerate}
\label{lemma:force8}
\end{lemma}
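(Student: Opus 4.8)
The plan is to follow the proof of Lemma~\ref{lemma:trset11} essentially verbatim, replacing the truth machinery $Trset$, $Truth$, $Tr$ by the forcing machinery $Forset$, $Force$, $Fr$, and appealing to Lemmas~\ref{lemma:force5}, \ref{lemma:force6}, \ref{lemma:force7} in place of Lemmas~\ref{lemma:trset8}, \ref{lemma:trset9}, \ref{lemma:trset10}. I would describe the derivation in $TI_{s+1}$ informally, as elsewhere in this section. Assume $\alpha\in M$ and $Evl(\bar f)$. For every $n$ with $Frm(n)$, Lemma~\ref{lemma:force5}.3 gives a (unique) set $b^{s+1}$ with $Forset(n,b)$, and Lemma~\ref{lemma:force7} yields, for every sub-formula index $m$ of $n$ and all nodes $\beta$, evaluations $\bar g$ with $Evl(\bar g)$, and $\delta\in\{0,1\}$,
\[
\langle\beta,m,\bar g,\delta\rangle\in b\;\equiv\;\langle\beta,m,\bar g,\delta\rangle\in Force,
\]
hence in particular $Fr(\beta,m,\bar g)\equiv\langle\beta,m,\bar g,1\rangle\in b$. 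From this identification every listed equivalence will read off, one clause at a time, from the corresponding clause of Lemma~\ref{lemma:force6}.

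Concretely: part~1 follows since $\neg Fr(\alpha,n,\bar f)\equiv\langle\alpha,n,\bar f,1\rangle\notin b\equiv\langle\alpha,n,\bar f,0\rangle\in b$ (Lemma~\ref{lemma:force6}.1--2) $\equiv\langle\alpha,n,\bar f,0\rangle\in Force$ (Lemma~\ref{lemma:force7}). Part~2 is $n=\llcorner\bot\lrcorner$ together with Lemma~\ref{lemma:force6}.3. Part~3 (the atomic equality case $n=\llcorner Z_i=_k Z_j\lrcorner$) is just Lemma~\ref{lemma:force6}.4 composed with the identification. Parts~4--7 use Lemma~\ref{lemma:force6}.5--7 applied to the two immediate sub-formulas $\varphi_i,\varphi_j$ of $\varphi_n$, which are sub-formulas of $\varphi_n$ so that the identification applies to them as well. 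Parts~8--11 (the three kinds of universal quantifier plus the numerical one) use Lemma~\ref{lemma:force6}.8--11 together with Lemma~\ref{lemma:force0}, which guarantees that $Sub_0(\bar f,i,u)$, $Sub_k(\bar f,3i,u)$, $Sub_k(\bar f,3i+1,u)$, $Sub_k(\bar f,3i+2,u)$ are again admissible evaluations, so that $Fr(\alpha,j,Sub_k(\bar f,\cdot,u))\equiv\langle\alpha,j,Sub_k(\bar f,\cdot,u),1\rangle\in b$ for $u$ in the appropriate domain. Parts~12--15 are the existential and disjunctive cases: here the right-hand sides contain the path quantifier $\forall S^{s+1}(\,\text{``}S\text{ is a path in }M\text{ through }\alpha\text{''}\supset\ldots)$, which passes through the identification because $S\subseteq M$ and $Force$ already records the forcing value at every node $\beta\in S$; then Lemma~\ref{lemma:force6}.12--15 and~7 finish the argument.

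The only point requiring a little care --- and it is pure bookkeeping, not a genuine induction --- is to check, in each case $m=n$, that every index occurring on the right-hand side of the relevant clause of Lemma~\ref{lemma:force6} (namely $i$, $j$, or $j$ under a substituted evaluation) is an index of a sub-formula of $\varphi_n$, so that Lemma~\ref{lemma:force7} is available for it. Since each such index is the Gödel number of an immediate sub-formula of $\varphi_n$, this holds, and the stated equivalences follow directly; unlike Lemma~\ref{lemma:force5}.3 there is no recursion to run here. I would also note, as a forward-looking remark, that the present statement has no analogue of Lemma~\ref{lemma:trset11}.9--10 (the disquotational clauses $Tr(\llcorner\varphi\lrcorner,\bar f)\equiv\tilde\varphi$), which is expected: forcing is not literally disquotational, and the role played by those clauses will instead be taken over by the forthcoming soundness statement $SLP_s\vdash\varphi\Rightarrow TI_{s+1}\vdash int_s(\bar\varphi)$ expressed through $Fr$, proven by induction on derivations in $SLP_s$ using exactly the equivalences established in this lemma.
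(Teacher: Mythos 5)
Your proposal matches the paper's proof essentially verbatim: fix $b$ with $Forset(n,b)$ via Lemma \ref{lemma:force5}.3, use Lemma \ref{lemma:force7} to identify $Fr(\beta,m,\bar{g})$ with $\langle\beta,m,\bar{g},1\rangle\in b$ for all sub-formulas, and then read each clause off the corresponding clause of Lemma \ref{lemma:force6}, exactly as the paper does (treating one representative case per group and declaring the rest similar). The only blemish is a harmless off-by-one in your grouping of the later clauses (the statement has 14 parts while Lemma \ref{lemma:force6} has 15, so e.g.\ part 7 of the statement pairs with clause 8 of Lemma \ref{lemma:force6}, and there is no ``part 15''); the mathematics is unaffected.
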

\begin{proof}
Assume $\alpha\in M\wedge Evl(\bar{f})$. By Lemma \ref{lemma:force5}.3, for any $n$ with $Frm(n)$ there is a set $b^{s+1}$ such that $Forset(n,b)$ and by Lemma \ref{lemma:force7}:
\begin{equation}
\forall m \left[Subfrm(m,n)\supset\forall\beta, \bar{f}(Fr(\beta,m,\bar{f})\equiv\langle\beta,m,\bar{f},1\rangle\in b) \right].
\label{eq:forced}
\end{equation}

1. By (\ref{eq:forced}) for any $n$ with $Frm(n)$:
\begin{multline*}
\neg Fr(\alpha,n,\bar{f})\equiv\langle\alpha,n,\bar{f},1\rangle\notin b\equiv\langle\alpha,n,\bar{f},0\rangle\in b
\equiv\langle\alpha,n,\bar{f},0\rangle\in Force
\end{multline*}
by Lemma \ref{lemma:force6}.2, 1 and Lemma \ref{lemma:force7}.

2. Denote $n=\llcorner\bot\lrcorner$. 

By  (\ref{eq:forced}) and Lemma \ref{lemma:force6}.3, 2, $Fr(\alpha,n,\bar{f})\equiv\langle\alpha,n,\bar{f},1\rangle\in b\equiv\bot$.

3. Denote $n=\llcorner Z_i=_k Z_j \lrcorner$. By (\ref{eq:forced}) and Lemma \ref{lemma:force6}.4, 
\begin{multline*}
Fr(\alpha,n,\bar{f})\equiv\langle\alpha,n,\bar{f},1\rangle\in b\equiv
\forall S^{s+1}\left["S\textit{ is a path in }M
\textit{through }\alpha" 
\right.
\\
\left.
\supset(\exists\beta\in S)\exists u^k
(\langle i,u\rangle\in Fnt(\beta,\bar{f})\wedge\langle j,u\rangle\in Fnt(\beta,\bar{f}))\right]. 
\end{multline*} 

Parts 4 and 5 are similar to part 6, which we consider next.

6. Denote $n=\llcorner \varphi_i\vee \varphi_j \lrcorner$. By (\ref{eq:forced}):
\begin{eqnarray*}
\forall\beta,\bar{f} \left[Fr(\beta,n,\bar{f})\equiv\langle \beta,n,\bar{f},1\rangle\in b\right] ,
\\
\forall\beta,\bar{f} \left[Fr(\beta,i,\bar{f})\equiv\langle \beta,i,\bar{f},1\rangle\in b\right],
\\
\forall\beta,\bar{f} \left[Fr(\beta,j,\bar{f})\equiv\langle \beta,j,\bar{f},1\rangle\in b\right].
\end{eqnarray*}

By Lemma \ref{lemma:force6}.7,
\begin{multline*}
Fr(\alpha,n,\bar{f})
\equiv \langle \alpha,n,\bar{f},1\rangle \in b\equiv
\forall S^{s+1}\left["S\textit{ is a path in }M\textit{ through }\alpha" 
\right.
\\
\left.
\supset(\exists\beta\in S)\left(\langle \beta,i,\bar{f},1\rangle \in b\vee\langle \beta,j,\bar{f},1\rangle \in b \right]
\right]
\\
\equiv
\forall S^{s+1}\left["S\textit{ is a path in }M\textit{ through }\alpha" 
\right.
\\
\left.
\supset(\exists\beta\in S)\left(Fr(\beta,i,\bar{f})\vee Fr(\beta,j,\bar{f})\right)  \right].
\end{multline*} 

Parts 7-13 are similar to part 14, which we consider next.

14. Denote $n=\llcorner \exists \mathcal{F}_i^k\varphi_j \lrcorner$. By (\ref{eq:forced}):
\begin{eqnarray*}
\forall\beta,\bar{f} \left[Fr(\beta,n,\bar{f})\equiv\langle \beta,n,\bar{f},1\rangle\in b\right] ,
\\
\forall\beta,\bar{f} \left[Fr(\beta,j,\bar{f})\equiv\langle \beta,j,\bar{f},1\rangle\in b\right].
\end{eqnarray*}

By Lemma \ref{lemma:force6}.15,
\begin{multline*}
Fr(\alpha,n,\bar{f})
\equiv \langle \alpha,n,\bar{f},1\rangle \in b\equiv
\forall S^{s+1}\left["S\textit{ is a path in }M\textit{ through }\alpha" 
\right.
\\
\left.
\supset(\exists\beta\in S)(\exists u^k\in l_k)
\left(\langle \beta,j,Sub_k(\bar{f},3i+2,u),1\rangle \in b \right]\right]
\\
\equiv
\forall S^{s+1}\left["S\textit{ is a path in }M\textit{ through }\alpha" 
\right.
\\
\left.
\supset(\exists\beta\in S)(\exists u^k\in l_k)
Fr(\beta,j,Sub_k(\bar{f},3i+2,u))\right].
\end{multline*} 
\end{proof}

\begin{theorem}
$TI_{s+1}\vdash Prf_{SLP_s}(m)\wedge Evl(\bar{f})\supset Fr(\varepsilon,m,\bar{f})$, where $\varepsilon$ was defined in Definition \ref{def:my_beth_model}.
\label{theorem:force1}
\end{theorem}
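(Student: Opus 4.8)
The plan is to reason informally inside $TI_{s+1}$, fix a list $\bar f$ with $Evl(\bar f)$, and carry out an induction on the length of the derivation of $\varphi_m$ in $SLP_s$; since this is an arithmetic induction on a G\"{o}del number, it is available in $TI_{s+1}$. At each step the argument reproduces, uniformly in $m$, the corresponding meta-theoretic soundness result: the Soundness Theorem for Beth models (Lemma~\ref{lemma:Beth}.3) for the logical axioms and rules of $HPC$; Theorem~\ref{theorem:soundness} for the non-logical axioms of $LP_s$ (the arithmetic axioms, the level-$n$ analogues of zero and successor, the induction axiom, primitive recursive completeness, and the creating-subject axioms); Theorem~\ref{theorem:lawless} for $(LL1_s)$--$(LL3_s)$; Theorem~\ref{theorem:choice} for $(C1_s)$, $(C2_s)$; and Theorem~\ref{theorem:BI} for $(BI_s)$. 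The genuinely new point, compared with the schematic formalization recorded in Theorem~\ref{theorem:ints}, is that each clause must be handled for $m$ ranging over a whole schema at once; the tools for this are the forcing clauses for $Fr$ collected in Lemma~\ref{lemma:force8}, together with the auxiliary Lemmas~\ref{lemma:frc2}--\ref{lemma:frc6} on $Fnt$ and Lemma~\ref{lemma:force0} on $Evl$ and $Sub_k$.

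First I would derive in $TI_{s+1}$ the two structural facts on which the $HPC$ cases rest: the formalized monotonicity of $Fr$, namely $Fr(\beta,n,\bar f)\wedge\gamma\preccurlyeq\beta\supset Fr(\gamma,n,\bar f)$, proved by arithmetic induction on $n$ from the clauses of Lemma~\ref{lemma:force8}; and the formalized path characterization corresponding to Lemma~\ref{lemma:Beth}.2. With these available, the logical axioms and the rules of $HPC$ (modus ponens, generalization) are treated by transcribing the usual soundness proof, using the $\supset$- and $\forall$-clauses of Lemma~\ref{lemma:force8}. The equality axioms follow from Lemma~\ref{lemma:force8}.3 and Lemma~\ref{lemma:frc6}.2; the arithmetic axioms and the level-$n$ zero/successor axioms of $L_s$ follow from Lemma~\ref{lemma:frc5} applied to the interpretations $\hat K^k$, $\hat N^k$ built into $Fnterm$; the induction axiom, primitive recursive completeness, and the creating-subject axioms are the formalizations of the corresponding cases in the proof of Theorem~\ref{theorem:soundness}, read off the forcing clauses and, for the atoms $\vdash_t\varphi$, the clause mirroring the valuation extension of Section~6 (Lemma~\ref{lemma:proof_pred}). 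For $(LL1_s)$--$(LL3_s)$ I would first formalize in $TI_{s+1}$ the permutation apparatus of Section~7 --- the isomorphisms $\tilde\xi_k$, the operators $\Lambda_k$, Lemma~\ref{lemma:equiv_nodes} (now a single $TI_{s+1}$ statement proved by arithmetic induction on G\"{o}del numbers) and Lemma~\ref{lemma:lawless2} --- and then transcribe the proof of Theorem~\ref{theorem:lawless}; $(C1_s)$, $(C2_s)$ and $(BI_s)$ are handled by formalizing the proofs of Theorem~\ref{theorem:choice} and Theorem~\ref{theorem:BI}.

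The main obstacle will be the choice axioms. The soundness proof for $(C1_s)$, $(C2_s)$ builds, by a nested recursion, functions $f\in a_m$ out of the pieces $q_{i,\beta,x}$ (respectively $Q_{i,\beta,x}$), uses the operation sending a node to the unique \emph{shortest} node above it at which $\psi(x,y)$ is forced, and invokes paths through $M$ and the contraction equivalence (\ref{eq:contraction}). Carrying this out in $TI_{s+1}$ requires checking that every set so defined falls within the comprehension available there --- comprehension in $TI_{s+1}$ yields a set of type $k+1\leqslant s+1$ only from a formula of sort $\leqslant k+1$, and it is precisely because $Force^{s+1}$, hence $Fr$, is a type-$(s+1)$ object that the witnessing functions, defined in terms of $Fr$, can be produced --- and that the existence-and-uniqueness statements (\ref{eq:shortest1}) and (\ref{eq:shortest2}) are provable there; the latter is unproblematic, since the minimization over the finitely many nodes above a given one is an arithmetic operation on their codes. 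A lighter version of the same bookkeeping arises when formalizing Section~7 for the lawless axioms. Once these points are settled, the remaining cases are routine rewritings of the arguments of Sections~4--10 in the notation of Sections~13--14, and the theorem follows.
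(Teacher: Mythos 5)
Your proposal follows essentially the same route as the paper: the paper's entire proof of this theorem is the single sentence that the informal arguments of Sections 4, 6, 7, 8 and 9 are formalised in $TI_{s+1}$ using Lemmas \ref{lemma:force8}, \ref{lemma:frc5} and \ref{lemma:frc6}, and your text is a (far more explicit) expansion of exactly that, correctly locating the one delicate point in the construction of the witnessing functions for $(C_s)$ and also covering $(BI_s)$, which the paper's citation of sections omits. One clarification on your parenthetical justification: what makes the witnessing-function construction pass the restricted comprehension of $TI_{s+1}$ is \emph{not} that $Force^{s+1}$ is available as a type-$(s+1)$ object to be used as a parameter --- a free parameter of type $s+1$ would violate the requirement $srt(\varphi)\leqslant m$ when forming a set of type $m\leqslant s$ --- but that the comprehension schema of $TI$ restricts only the types of \emph{parameters} and not of bound variables, so $Force$, the nodes, the paths and the components of the evaluation of type greater than $m$ can all be quantified away, leaving free only $\bar{\alpha}(m)$ and the evaluations of the parameters of $\varphi$, which have types $\leqslant m$ because $sort(\varphi)\leqslant m$.
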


\begin{proof}
Informal proofs in sections 4, 6, 7, 8 and 9 are easily formalised in $TI_{s+1}$, due to Lemmas \ref{lemma:force8}, \ref{lemma:frc5} and \ref{lemma:frc6}.
\end{proof}

\subsection{The strengths of theories $SLP_s$ and $SLP$}
\begin{theorem}
$TI_{s+1}\vdash Con(SLP_s), s=0,1,2,\ldots$.
\label{theorem:force2}
\end{theorem}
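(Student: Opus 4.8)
The plan is to derive $Con(SLP_s)$ inside $TI_{s+1}$ by combining Theorem \ref{theorem:force1} with Lemma \ref{lemma:force8}.2, arguing informally. Assume first $s\geqslant 1$. If $TI_{s+1}$ is inconsistent there is nothing to prove, so we may describe a derivation in it under the assumption of consistency.

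First I would fix, in $TI_{s+1}$, a concrete evaluation $\bar{f}=f_0^1,f_1^1,\ldots,f_s^s$ of the variables of $SLP_s$ satisfying $Evl(\bar{f})$. It suffices to assign $0$ to all numerical variables, and for each $k=1,\ldots,s$ to assign $\widehat{K}^k$ to every variable $F_i^k$ and every variable $A_i^k$, and $\nu_k(\xi)$, where $\xi\in c_k$ is the identity $k$-permutation $\xi(n,y)=y$ (the lawless functional of example~2 following Lemma \ref{lemma:lawless1}), to every variable $\mathcal{F}_i^k$. Since $\widehat{K}^k\in b_k\subseteq a_k$ and $\nu_k(\xi)\in l_k$, the resulting $f_k$ satisfy $Evl_k(f_k)$, hence $Evl(\bar{f})$ holds; all these objects, and the sets $a_k,b_k,l_k,c_k,d_k$ themselves, are produced by the restricted comprehension of $TI_{s+1}$ exactly as in the formalisation of Definition \ref{def:my_beth_model}. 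One also checks in $TI_{s+1}$ that $\varepsilon\in M$: taking the index $m=0$ in $d_{s-1}=\bigcup_m\bigl(a_0^{(m)}\times\cdots\times a_{s-1}^{(m)}\bigr)$ yields $\langle\langle\,\rangle,\ldots,\langle\,\rangle\rangle\in d_{s-1}=M$.

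Now suppose $Prf_{SLP_s}(\llcorner\perp\lrcorner)$. By Theorem \ref{theorem:force1}, $Fr(\varepsilon,\llcorner\perp\lrcorner,\bar{f})$. Since $\varepsilon\in M$ and $Evl(\bar{f})$, Lemma \ref{lemma:force8}.2 gives $Fr(\varepsilon,\llcorner\perp\lrcorner,\bar{f})\equiv\perp$, whence $\perp$. Thus $TI_{s+1}\vdash Prf_{SLP_s}(\llcorner\perp\lrcorner)\supset\perp$, i.e.\ $TI_{s+1}\vdash\neg Prf_{SLP_s}(\llcorner\perp\lrcorner)$, which is precisely $Con(SLP_s)$. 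For $s=0$ the argument is identical but simpler: the forcing predicate $Fr$ for $SLP_0$ is built as above with the clauses for the functional sorts omitted, and the analogues of Theorem \ref{theorem:force1} and Lemma \ref{lemma:force8}.2 hold, giving $TI_1\vdash Con(SLP_0)$. The heavy lifting has already been done in constructing $Fr$ and proving Theorem \ref{theorem:force1} and Lemma \ref{lemma:force8}, so no substantial obstacle remains; the only points requiring a little care are exhibiting an explicit $\bar{f}$ with $Evl(\bar{f})$ and verifying $\varepsilon\in M$ within $TI_{s+1}$, plus the routine adaptation to the base case $s=0$.
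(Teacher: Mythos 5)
Your proof for $s\geqslant 1$ is essentially the paper's own: fix the same trivial evaluation (the constant functional $\widehat{K}^k$ for the $F^k_i$ and $A^k_i$ sorts, $\nu_k$ of the identity permutation for the lawless sort), then combine Theorem \ref{theorem:force1} with Lemma \ref{lemma:force8}.2 at the root $\varepsilon$ to refute $Prf_{SLP_s}(\llcorner\perp\lrcorner)$. The one divergence is the base case $s=0$: the forcing apparatus is built only for $s\geqslant 1$ (the domain $M=d_{s-1}$ and the predicate $Fr$ make no sense when $s=0$), so your claim that ``the argument is identical but simpler'' is not right as stated; the paper instead notes that $SLP_0$ is $HA$ and $TI_1$ is classical second-order arithmetic $T_1$, and invokes the known fact $T_1\vdash Con(HA)$. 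Your route for $s=0$ is repairable --- e.g., via the truth predicate of Section 13 together with the trivial observation that $Con(PA)\supset Con(HA)$ --- but as written it leans on machinery that does not exist at that level.
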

\begin{proof}
Suppose $s\geqslant 1$. If $TI_{s+1}$ is inconsistent, then any formula can be derived in it. We assume that it is consistent. We will describe derivation in $TI_{s+1}$ informally. 

For any $k=1,2,\ldots,s$ we fix trivial elements in the domains $a_k,b_k$ and $l_k$:
\begin{list}{•}{•}
\item $g_{0k}=g_{1k}=\hat{K}^k$;
\smallskip
\item $g_{2k}=\nu_k(\xi)$, where $\xi=\left\lbrace\langle\langle n,y^k
\rangle,y^k\rangle \mid y\in a_{k-1}\right\rbrace$. 
\end{list}

Thus, $g_{0k}\in a_k,g_{1k}\in b_k$ and $g_{2k}\in l_k$. Next we fix a trivial evaluation $\bar{f}$:
\smallskip

$f_0^1=\left\lbrace \langle i,0\rangle\mid i=i
\right\rbrace$; for $k=1,2,\ldots,s$:
\begin{multline*}
f_k^k=\left\lbrace \langle 3i,z^{k-1}\rangle\mid i\in\omega\wedge z\in g_{0k}\right\rbrace\cup
\left\lbrace \langle 3i+1,z^{k-1}\rangle\mid i\in\omega \wedge z\in g_{1k}\right\rbrace
\\
\cup
\left\lbrace \langle 3i+2,z^{k-1}\rangle\mid i\in\omega\wedge z\in g_{2k}\right\rbrace.
\end{multline*}

Then $Evl(\bar{f})$.

Assume $Prf_{SLP_s}(\llcorner \perp \lrcorner)$. By Theorem \ref{theorem:force1}, $Fr(\varepsilon,\llcorner \perp \lrcorner,\bar{f}) $ that contradicts Lemma \ref{lemma:force8}.2. Therefore $\neg Prf_{SLP_s}(\llcorner \perp \lrcorner)$.

Now suppose $s=0$. Clearly $SLP_0$ is the intuitionistic first-order arithmetic $HA$ and $TI_1$ is the classical second-order arithmetic, the same as $T_1$. The theorem follows from the well-known fact that $T_1\vdash Con(HA)$.
\end{proof}

\begin{theorem}
For $s=0,1,2,\ldots$,
\begin{enumerate}
\item $SLP_{s+1}\vdash Con(TI_s)$; 
\item $SLP_{s+1}\vdash Con(SLP_s)$ .
\end{enumerate}
\label{theorem:con4}
\end{theorem}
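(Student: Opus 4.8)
The plan is to transfer the formalized consistency statements $Con(TI_s)$ and $Con(SLP_s)$ — which are already derivable inside $TI_{s+1}$ by Theorems \ref{theorem:con1} and \ref{theorem:force2} — across the interpretation $int$ of Theorem \ref{theorem:int} into $SLP_{s+1}$, exploiting the fact that consistency statements are $\Pi_1$ arithmetic sentences that $int$ leaves unchanged up to provable equivalence, by Lemma \ref{lemma:int}.

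First I would isolate the relevant normal form. For an axiomatic theory $A$ with its fixed G\"odel numbering, put $\varphi_A \leftrightharpoons \forall n\,\neg Proof_A(n,\llcorner \perp \lrcorner)$. This is a closed formula of first-order arithmetic of the shape $\forall n\,\psi(n)$ whose matrix $\psi$ defines a primitive recursive predicate, since $Proof_A$ is primitive recursive and the negation of a primitive recursive relation is primitive recursive. Moreover $\varphi_A$ is provably equivalent to $Con(A)$ already in $PA$ (hence in every $TI_s$) and in $HA=SLP_0$ (hence in every $SLP_s$, since each fragment $SLP_s$ extends $SLP_0$). Consequently $\varphi_A$ falls exactly within the hypothesis of Lemma \ref{lemma:int}, so $SLP_0\vdash int(\varphi_A)\equiv\varphi_A$.

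For part 1, fix $s\geqslant 0$ and take $A=TI_s$. By Theorem \ref{theorem:con1}, $TI_{s+1}\vdash Con(TI_s)$, hence $TI_{s+1}\vdash\varphi_{TI_s}$. Applying Theorem \ref{theorem:int}.1 with the index $s+1$ in place of $s$ (legitimate, as that statement is asserted for every value of the parameter) and noting that $\varphi_{TI_s}$ is already closed, we get $SLP_{s+1}\vdash int(\varphi_{TI_s})$. Since $SLP_{s+1}$ extends $SLP_0$, Lemma \ref{lemma:int} gives $SLP_{s+1}\vdash int(\varphi_{TI_s})\equiv\varphi_{TI_s}$, whence $SLP_{s+1}\vdash\varphi_{TI_s}$, i.e.\ $SLP_{s+1}\vdash Con(TI_s)$. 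Part 2 is obtained by the identical argument with $A=SLP_s$, using Theorem \ref{theorem:force2} in place of Theorem \ref{theorem:con1}.

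The only delicate point is pure bookkeeping: one must check that $Con(TI_s)$ and $Con(SLP_s)$ — in particular the latter, whose theory carries extra predicate symbols — are genuine arithmetic sentences lying in the restricted syntactic class required by Lemma \ref{lemma:int}, and that $int$ really fixes them up to provable equivalence in $SLP_{s+1}$. Once the normal form $\varphi_A$ is exhibited and verified to be $\forall n\,\psi(n)$ with $\psi$ primitive recursive, everything else is immediate from the cited results; no new combinatorics about the Beth model $\mathcal{B}_s$ or about the truth and forcing predicates is needed at this stage.
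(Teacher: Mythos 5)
Your proposal is correct and follows essentially the same route as the paper: derive $Con(TI_s)$ (resp. $Con(SLP_s)$) in $TI_{s+1}$ via Theorem \ref{theorem:con1} (resp. Theorem \ref{theorem:force2}), push it through the interpretation of Theorem \ref{theorem:int}.1 into $SLP_{s+1}$, and use Lemma \ref{lemma:int} to strip off $int$ since the consistency statement is a closed $\forall n\,\psi(n)$ with $\psi$ primitive recursive. Your extra bookkeeping about the normal form $\forall n\,\neg Proof_A(n,\llcorner\perp\lrcorner)$ is exactly the observation the paper makes in passing.
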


\begin{proof}
1. By Theorem \ref{theorem:con1}, we have $TI_{s+1}\vdash Con(TI_s)$. By Theorem \ref{theorem:int}.1, we have $SLP_{s+1}\vdash int(Con(TI_s))$. By Lemma \ref{lemma:int}, $SLP_{s+1}\vdash Con(TI_s)$, since $Con(TI_s)\equiv \forall n\neg Proof_{TI_s}(n,\llcorner \bot\lrcorner)$ and $Proof_{TI_s}(n,\llcorner \bot\lrcorner)$ defines a primitive recursive predicate.

2. By Theorem \ref{theorem:force2}, we have $TI_{s+1}\vdash Con(SLP_s)$. By Theorem \ref{theorem:int}.1, $SLP_{s+1}\vdash int(Con(SLP_s))$ and by Lemma \ref{lemma:int}, $SLP_{s+1}\vdash Con(SLP_s)$.
\end{proof}

\begin{theorem}
\begin{enumerate}
\item $SLP_2\vdash Con(I_n)$ for $n=0,1,2,\ldots$.
\item $SLP_3\vdash Con(I)$.
\item $SLP\vdash Con(I)$.
\end{enumerate}
\label{theorem:con5}
\end{theorem}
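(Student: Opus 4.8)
The plan is to push the classical consistency results of Section~13 through the interpretation of $TI$ in $SLP$ constructed in Section~12, exploiting the fact that each $Con(\cdot)$ is a $\Pi^0_1$ arithmetical sentence on which the interpretation $int$ acts trivially; this is exactly the pattern already used in the proof of Theorem~\ref{theorem:con4}.1, applied now with the theories $I_n$ and $I$ in place of $TI_s$.

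For part~1, I would begin with Theorem~\ref{theorem:con2}, which gives $TI_2\vdash Con(I_n)$ for every $n$. Applying Theorem~\ref{theorem:int}.1 with $s=2$ yields $SLP_2\vdash int(Con(I_n))$ (no closure is needed, since $Con(I_n)$ is closed). Now $Con(I_n)$ is by definition the closed arithmetical sentence $\forall k\,\neg Proof_{I_n}(k,\llcorner\bot\lrcorner)$, and $Proof_{I_n}(k,\llcorner\bot\lrcorner)$ defines a primitive recursive predicate, so Lemma~\ref{lemma:int} gives $SLP_0\vdash int(Con(I_n))\equiv Con(I_n)$, hence $SLP_2\vdash int(Con(I_n))\equiv Con(I_n)$ because $SLP_0$ is a sub-theory of $SLP_2$. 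Combining the two derivations gives $SLP_2\vdash Con(I_n)$, uniformly in $n$.

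For part~2 the argument is identical but one level higher: start from Theorem~\ref{theorem:con3}.1, namely $TI_3\vdash Con(I)$, apply Theorem~\ref{theorem:int}.1 with $s=3$ to get $SLP_3\vdash int(Con(I))$, and then invoke Lemma~\ref{lemma:int} again, since $Con(I)$ has the required form $\forall k\,\neg Proof_{I}(k,\llcorner\bot\lrcorner)$ with primitive recursive matrix, to conclude $SLP_3\vdash Con(I)$. Part~3 then follows immediately, either because $SLP_3$ is a sub-theory of $SLP$, or directly by the same route from Theorem~\ref{theorem:con3}.2 together with Theorem~\ref{theorem:int}.2 and Lemma~\ref{lemma:int}.

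There is essentially no hard step remaining here: all the genuine work was carried out in Theorems~\ref{theorem:con1}--\ref{theorem:con3}, Theorem~\ref{theorem:int} and Lemma~\ref{lemma:int}. The only point demanding a moment's care is verifying that $Con(I_n)$ and $Con(I)$ really fall under the hypotheses of Lemma~\ref{lemma:int} --- that they are closed and of the shape $\forall n\,\psi(n)$ with $\psi$ defining a primitive recursive predicate --- which is immediate from the standard arithmetisation of provability recalled before Theorem~\ref{theorem:con1}.
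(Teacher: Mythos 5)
Your proposal is correct and follows exactly the paper's own route: Theorem \ref{theorem:con2} (resp.\ Theorem \ref{theorem:con3}.1) pushed through Theorem \ref{theorem:int}.1 and then collapsed back to $Con(I_n)$ (resp.\ $Con(I)$) via Lemma \ref{lemma:int}, with part 3 following from part 2. The extra care you take in checking that $Con(I_n)$ and $Con(I)$ fall under the hypotheses of Lemma \ref{lemma:int} is the same observation the paper makes in the proof of Theorem \ref{theorem:con4}.1.
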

\begin{proof}
1. By Theorem \ref{theorem:con2}, we have $TI_2\vdash Con(I_n)$. By Theorem \ref{theorem:int}.1, $SLP_2\vdash int(Con(I_n))$ and by Lemma \ref{lemma:int}, $SLP_2\vdash Con(I_n)$.

2. By Theorem \ref{theorem:con3}.1,$TI_3\vdash Con(I)$. By Theorem \ref{theorem:int}.1, $SLP_3\vdash int(Con(I))$ and  $SLP_3\vdash Con(I)$ by Lemma \ref{lemma:int}.

Part 3 follows from part 2.
\end{proof}

The theory $SLP$ is clearly weaker than the fully impredicative theory $T$ but  it has the same strength as the quite impredicative theory $TI$ and is stronger than the more predicative theory $I$. In particular, $SLP$ is stronger than the second order arithmetic.

\section{Discussion}

In this paper we constructed a Beth model $\mathcal{B}_s$ for the language of intuitionistic functionals of types $1, 2, \ldots, s$. We combined the intuitionistic principles that hold in the models $\mathcal{B}_s (s\geqslant 1)$ into a relatively strong intuitionistic theory $SLP$. The theory $SLP$ contains axioms for lawless functionals of high types, some versions of choice axioms and the theory of the "creating subject". 

We did not manage to construct one Beth model for the entire theory $SLP$; combining all the models $\mathcal{B}_s$ in one does not work because their cardinalities increase indefinitely with the increase of $s$. However, the models $\mathcal{B}_s (s\geqslant 1)$ are sufficient, since any proof in $SLP$ is finite and contains only a finite number of types.

We compared $SLP$ with a classical typed set theory $TI$ that has a restricted but impredicative comprehension axiom. By formalising the Beth model $\mathcal{B}_s$  in the language $TI$, we showed the equiconsistency of each fragment $TI_s$ with the corresponding fragment $SLP_s$ and also the equiconsistency of $TI$ and $SLP$. Thus, $SLP$ is stronger than any fragment $TI_s$ and in particular, it is stronger than the second order arithmetic. 

We interpreted the classical theory $TI$ in our intuitionistic theory $SLP$. Many parts of classical mathematics can be developed in $TI$ and therefore justified from the intuitionistic point of view, due to this interpretation.

\bibliographystyle{asl}
\bibliography{Farida}

\end{document}